\definecolor{viola}{rgb}{0.3,0,0.7}
\definecolor{ciclamino}{rgb}{0.5,0,0.5}
\newcommand{\bele}{\begin{lemm}}
\newcommand{\enle}{\end{lemm}}
\newcommand{\bedef}{\begin{defi}}
\newcommand{\bete}{\begin{teor}}
\newcommand{\eddef}{\end{defi}}
\newcommand{\ente}{\end{teor}}
\newcommand{\beos}{\begin{osse}}
\newcommand{\eddos}{\end{osse}}
\newcommand{\bepr}{\begin{prop}}
\newcommand{\empr}{\end{prop}}
\newcommand{\bepro}{\begin{prob}}
\newcommand{\empro}{\end{prob}}
\newcommand{\bede}{\begin{defin}}
\newcommand{\edde}{\end{defin}}
\newcommand{\beco}{\begin{coro}}
\newcommand{\enco}{\end{coro}}
\newcommand{\MC}{\color{black}}
\newcommand{\RRR}{\color{black}}
\newcommand{\RCOMM}{\color{black}}
\newcommand{\EEE}{\color{black}}
\newcommand{\beeq}[1]{\begin{equation}   \label{#1}}
\newcommand{\eddeq}{\end{equation}}
\newcommand{\beeqa}[1]{\begin{eqnarray}  \label{#1}}
\newcommand{\eddeqa}{\end{eqnarray}}
\newcommand{\beal}[1]{\begin{align}      \label{#1}}
\newcommand{\eddal}{\end{align}}
\newcommand{\bespl}[1]{\begin{split}     \label{#1}}
\newcommand{\edspl}{\end{split}}
\newcommand{\bega}[1]{\begin{gather}     \label{#1}}
\newcommand{\edga}{\end{gather}}
\newcommand{\beeqax}{\begin{eqnarray*}}
\newcommand{\eddeqax}{\end{eqnarray*}}
\newcommand{\tensore}{\varepsilon(\uu)}
\newcommand{\tensoret}{\varepsilon({\mathbf{u}_t})}
\newcommand{\teta}{\theta}
\newcommand{\nn}{{\bf n}}
\newcommand{\uu}{{\bf u}}
\newcommand{\vv}{{\bf v}}
\newcommand{\eeta}{{\mbox{\boldmath$\eta$}}}
\newcommand{\itt}{\int_0^t}
\newcommand{\io}{\int_\Omega}
\newcommand{\eps}{\varepsilon}
\newcommand{\weak}{\rightharpoonup}
\newlength{\dhatheight}
\newcommand{\doublehhat}[1]{%
    \settoheight{\dhatheight}{\ensuremath{\widehat{#1}}}%
    \addtolength{\dhatheight}{-0.35ex}%
    \widehat{\vphantom{\rule{1pt}{\dhatheight}}%
    \smash{\widehat{#1}}}}
    \newcommand{\hhat}{\widehat}
\let\TeXchi\chi
\def\chi{{\setbox0 \hbox{\mathsurround0pt
$\TeXchi$}\hbox{\raise\dp0 \copy0 }}}
\renewcommand{\part}{\partial_t}
\newcommand{\weaksto}{{\rightharpoonup^*}}
\newcommand{\weakto}{\rightharpoonup}
\newcommand{\debole}{\,\weak\,}
\newcommand{\pairing}[4]{ \sideset{_{#1 }}{_{ #2}}  {\mathop{\langle #3 , #4  \rangle}}}
 \def\fin{\hfill
         \trait .3 5 0
         \trait 5 .3 0
         \kern-5pt
         \trait 5 5 -4.7
         \trait 0.3 5 0
 \medskip}
 \def\trait #1 #2 #3 {\vrule width #1pt height #2pt depth #3pt}
\newcommand{\foraa}{\text{for a.a.}}
\newcommand{\aein}{\text{a.e.\ in}}
\newcommand{\down}{\downarrow}
\newcommand{\up}{\to}
\newcommand{\GC}{\Gamma_{\mathrm{C}}}
\newcommand{\GD}{\Gamma_{\mathrm{D}}}
\newcommand{\GN}{\Gamma_{\mathrm{N}}}
\newcommand{\overlineGC}{\overline{\Gamma}_{\mathrm{C}}}
\newcommand{\Dir}{{\mathrm{Dir}}}
\newcommand{\R}{\mathbb{R}}
\newcommand{\N}{\mathbb{N}}
\newcommand{\DDD}[2]{\begin{array}[t]{c}#1\vspace*{-1em}\\_{#2}\end{array}}
\newcommand{\ddd}[2]{\DDD{\begin{array}[t]{c}\underbrace{#1}\vspace*{.6em}\end{array}}{\text{\footnotesize #2}}}
\newcommand{\dd}{\,\mathrm{d}}
\newcommand{\tetas}{\theta_{\mathrm{s}}}
\newcommand{\tetaso}{\theta_{\mathrm{s}}^0}
\newcommand{\bsY}{\mathbf{Y}}
\newcommand{\bsVD}{H^1_{\GD}(\Omega;\R^3)}
\newcommand{\bsVDn}{H^1(\Omega)}
\newcommand{\calE}{\mathcal{E}}
\newcommand{\thetas}{\theta_{\mathrm{s}}}
\newcommand{\thetasq}{\theta_{\mathrm{s}}^2}
\newcommand{\balpha}{{\mbox{\boldmath$\eta$}}}
\newcommand{\bwalpha}{{\mbox{\boldmath$\widehat{\eta}$}}}
\newcommand{\Gdir}{\GD} 
\newcommand{\Gnew}{\GN}
\newcommand{\overlineGD}{\overline{\Gamma}_{\mathrm{D}}}
\newcommand{\overlineGN}{\overline{\Gamma}_{\mathrm{N}}}
\newcommand{\kr}{j}
\newcommand{\nlocs}[2]{\mathcal{J}[#1](#2)}
\newcommand{\nlocss}[1]{\mathcal{J}[#1]}
\newcommand{\nlname}{\mathcal{J}}
\newcommand{\ab}{\alpha}
\newcommand{\as}{\alpha}
\newcommand{\etens}{\mathbb{E}}
\newcommand{\vtens}{\mathbb{V}}
\newcommand{\VC}{H^1(\GC)}
\newcommand{\HC}{L^2(\GC)}
\newcommand{\pwM}[2]{\widetilde{#1}_{\kern-1pt#2}}
\def\trait #1 #2 #3 {\vrule width #1pt height #2pt depth #3pt}
\newcommand{\pwN}[2]{#1_{\kern-1pt#2}}
\def\trait #1 #2 #3 {\vrule width #1pt height #2pt depth #3pt}
\newcommand{\ds}[3]{{#1}_{#2}^{#3}}
\newcommand{\dss}[2]{\theta_{\mathrm{s},#1}^{#2}}
\newcommand{\sft}{\mathsf{t}}
\newcommand{\efm}{\mathsf{e}}
\newcommand{\vfm}{\mathsf{v}}
\newcommand{\zzeta}{{\mbox{\boldmath$\zeta$}}}
\newcommand{\xxi}{{\mbox{\boldmath$\xi$}}}
\newtheorem{maintheorem}{Theorem}
\newtheorem{theorem}{Theorem}[section]
\newtheorem{lemma}{Lemma}[section]
\newtheorem{proposition}[lemma]{Proposition}
\newtheorem{definition}[lemma]{Definition}%
\newtheorem{remark}[lemma]{Remark}%
\newtheorem{problem}[lemma]{Problem}
\newtheorem{notation}[lemma]{Notation}
\newcommand{\tetak}{\pwN {\theta^k}{\tau}}
\newcommand{\tetakmu}{\pwN {\theta^{k-1}}{\tau}}
\newcommand{\tetask}{\pwN {\theta^k}{s,\tau}}
\newcommand{\tetaskmu}{\pwN {\theta^{k-1}}{s,\tau}}
\newcommand{\hk}{\pwN {h^k}{\tau}}
\newcommand{\lk}{\pwN {\ell^k}{\tau}}
\newcommand{\fk}{\pwN {\mathbf{f}^k}{\tau}}
\newcommand{\gk}{\pwN {\mathbf{g}^k}{\tau}}
\newcommand{\chikmu}{\pwN {\chi^{k-1}}{\tau}}
\newcommand{\tk}{\pwN {t^k}{\tau}}
\newcommand{\tkmu}{\pwN {t^{k-1}}{\tau}}
\newcommand{\chik}{\pwN {\chi^k}{\tau}}
\newcommand{\chikp}{\RCNEW (\chik)^+\EEE}
\newcommand{\chikmup}{\RCNEW (\chikmu)^+\EEE}
\newcommand{\xik}{\RCNEW \pwN {\sigma^k}{\tau} \EEE}
\newcommand{\xiek}{\RCNEW \ds \sigma {\epsilon} k \EEE}
\newcommand{\fff}{\mathbf{f}}
\newcommand{\uuk}{\pwN {\uu^k}{\tau}}
\newcommand{\uukmu}{\pwN {\uu^{k-1}}{\tau}}
\newcommand{\tetaink}{\pwN {\teta^0}{\tau}}
\newcommand{\tetasink}{\dss \tau 0}
\newcommand{\tensoretk}{\varepsilon\bigg(\frac{\uuk - \uukmu}{\tau} \bigg)}
\newcommand{\accauno}{H^1(\Omega)}
\newcommand{\FF}{{\bf F}}
\newcommand{\piecewiseConstant}[2]{\overline{#1}_{\kern-1pt#2}}
\newcommand{\pwc}{\piecewiseConstant}
\newcommand{\piecewiseLinear}[2]{\widehat{#1}_{\kern-1pt#2}}
\newcommand{\pwl}{\piecewiseLinear}
\newcommand{\upiecewiseConstant}[2]{\underline{#1}_{\kern-1pt#2}}
\newcommand{\upwc}{\upiecewiseConstant}
\newcommand{\piecewiseConstants}[2]{\overline{#1}_{\kern-1pt{\mathrm{s},#2}}}
\newcommand{\pwcs}{\piecewiseConstants}
\newcommand{\piecewiseLinears}[2]{\widehat{#1}_{\kern-1pt{\mathrm{s},#2}}}
\newcommand{\pwls}{\piecewiseLinears}
\newcommand{\upiecewiseConstants}[2]{\underline{#1}_{\kern-1pt{\mathrm{s},#2}}}
\newcommand{\upwcs}{\upiecewiseConstants}
\newcommand{\RNEW}{\color{black}}
\newcommand{\RCOMMN}{\color{black}}{\color{red}}
\newcommand{\RCNEW}{\color{black}}
\newcommand{\RCORR}{\color{black}}
\newcommand{\RRRN}{\color{red}}
\newcommand{\QED}{\mbox{}\hfill\rule{5pt}{5pt}\medskip\par}
\newcommand{\tetaj}{\teta_{\rho_j}}
\newcommand{\tetasj}{\teta_{\mathrm{s},\rho_j}}
\newcommand{\uuj}{\uu_{\rho_j}}
\newcommand{\chij}{\chi_{\rho_j}}
\newcommand{\chijp}{\RCNEW(\chi_{\rho_j})^+\EEE}
\newcommand{\chijpt}{\RCNEW(\chi_{\rho_j}(t))^+\EEE}
\newcommand{\zzetaj}{\zzeta_{\rho_j}}
\newcommand{\xij}{\xi_{\rho_j}}
\newcommand{\betar}{\beta_{\varsigma}}
\newcommand{\wbetar}{\widehat{\beta}_{\varsigma}}
\newcommand{\etar}{\eta_{\varsigma}}
\newcommand{\wetar}{\widehat{\eta}_{\varsigma}}
\newcommand{\tetan}{\theta_{\varsigma_n}}
\newcommand{\tetasn}{\theta_{\mathrm{s},\varsigma_n}}
\newcommand{\uun}{\uu_{\varsigma_n}}
\newcommand{\chin}{\chi_{\varsigma_n}}
\newcommand{\xin}{\xi_{\varsigma_n}}
\newcommand{\zzetan}{\zzeta_{\varsigma_n}}
\newcommand{\tetasrhos}{\teta_{\mathrm{s},\rho,\varsigma}}
\newcommand{\xirs}{\xi_{\rho,\varsigma}}
\newcommand{\zzetars}{\zzeta_{\rho,\varsigma}}
\newcommand{\tetars}{\teta_{\rho,\varsigma}}
\newcommand{\chirs}{\chi_{\rho,\varsigma}}
\newcommand{\uurs}{\uu_{\rho,\varsigma}}
\newcommand{\uprs}{\RCNEW \sigma_{\rho,\varsigma}\EEE}
\newcommand{\upj}{\RCNEW \sigma_{\rho_j}\EEE}
\newcommand{\upin}{\RCNEW \sigma_{\varsigma_n}\EEE}
\newcommand{\chip}{\RCNEW (\chi)^+\EEE}
\newcommand{\chipt}{\RCNEW (\chi(t))^+\EEE}
\newcommand{\chinp}{\RCNEW (\chin)^+\EEE}
\newcommand{\RF}{\color{black}}
\begin{document}
\date{March 31st, 2021}

   \title{Global existence for a highly nonlinear temperature-dependent system modeling  nonlocal adhesive contact}

\author{Giovanna Bonfanti}
\address{G.\ Bonfanti, Sezione di Matematica, DICATAM, Universit\`a degli studi di Brescia. Via Valotti 9, I--25133 Brescia -- ITALY}
\email{giovanna.bonfanti\,@\,unibs.it}

\author{Michele Colturato}
\address{M.\ Colturato, Sezione di Matematica, DICATAM,  Universit\`a degli studi di Brescia. Via Valotti 9, I--25133 Brescia -- ITALY}
\email{michele.colturato\,@\,unibs.it}

\author{Riccarda Rossi}
\address{R.\ Rossi, DIMI, Universit\`a degli studi di Brescia. Via Branze 38, I--25133 Brescia -- ITALY}
\email{riccarda.rossi\,@\,unibs.it}

\maketitle

\numberwithin{equation}{section}

\begin{abstract}
In this paper
we \RRR analyze   a new temperature-dependent model for adhesive contact that
encompasses \EEE nonlocal adhesive forces and damage effects, as well as nonlocal heat flux contributions on the contact surface.
 The related PDE system combines heat equations,  in the bulk domain and on the contact surface, with   mechanical force balances, including micro-forces, \RRR  that result in the equation for the displacements and in the flow rule for the  damage-type internal variable describing the state of the adhesive bonds.
 \EEE 
 Nonlocal \RRR effects \EEE  are accounted for by  terms \RRR featuring integral operators \EEE  on the contact surface.
 \par
 The analysis of this system   \EEE  
  poses several difficulties due to its overall highly nonlinear character, and in particular to the presence of quadratic terms, in the rates of the strain tensor and of the internal variable, that feature in the bulk and surface heat equations. Another major challenge is related to proving \emph{strict} positivity for  the bulk and surface temperatures.
 \par
We tackle these issues by
very careful  estimates that enable us
to prove the existence of global-in-time solutions
and could be useful in other contexts.
All calculations are rigorously rendered on an accurately  devised
time discretization scheme in which the limit passage is carried out via variational techniques.  \EEE

\end{abstract}
\vskip3mm \noindent {\bf Key words:}  Contact; adhesion;  nonlocal effects; \RRR temperature; weak solvability; existence results; time discretization. \EEE
\vskip3mm \noindent {\bf AMS (MOS) Subject Classification: 35K55,
35Q72,  74A15, 74M15.}

\pagestyle{myheadings} \markright{\it Bonfanti, Colturato, Rossi / Nonlocal  adhesion with temperature}

\section{Introduction}
In this paper we investigate a PDE system describing adhesive contact between a thermoviscoelastic body and a rigid support, in the presence of nonlocal thermo-mechanical effects. Its overall highly nonlinear character
is in particular manifest in the heat equations in the bulk domain and on the contact surface. To prove the existence of global-in-time solutions
 we develop some techniques that could be of interest for the analysis of  other thermodynamically consistent systems in solid mechanics.
\subsection{The model and the PDE system}
The study of adhesive contact and delamination phenomena is of applicative interest   due to the extensive presence of 
\RCOMMN layered \EEE structures  in  several industrial contexts,
 cf.\ e.g.
\cite{RKMPVZ, Shillor-Sofonea-Telega} and the references therein.  In this paper we address  a temperature-dependent  model
for adhesive contact that originates   from the theory  by \textsc{M.\ Fr\'emond} \cite{fre},
in the broader framework of the  theory of generalized standard materials \cite{HalNgu75MSG}  (see \cite{BCNS} for a (partial) survey of adhesive contact and delamination models pertaining to this cadre).
Our model includes  \emph{nonlocal adhesive and damage forces} as originally proposed, in the \emph{isothermal} case,  in \cite{freddi-fremond}, as well as
\emph{nonlocal heat sources} on the contact surface. Its  derivation, based on the principle of virtual power also encompassing `microscopic movements' as in the approach from \cite{fre}, was carried out in \cite{BCNS}.
 \par
%
%
%
%
%
%
%
%
%
%
%
\par
The motivation for including  nonlocal effects  in adhesive contact modeling  stems from experiments showing that elongation, i.e.\ a variation of the distance of two distinct points on the contact surface, \RCOMMN may have damaging effects
\RF on the substance gluing the body to the support along such surface. \EEE
\EEE 
This is thoroughly illustrated in \cite{freddi-fremond} also via numerical experiments. The analysis of the isothermal model from \cite{freddi-fremond} was first carried out in \cite{BBRnonloc1}.
In the model derived in \cite{BCNS} we have additionally encompassed
a nonlocal interaction between the body and the adhesive substance as far as it concerns  heat exchange on the contact surface.
\par
More precisely,
during a time interval $(0,T),\, T>0$, we consider a thermoviscoelastic body  located in a smooth and bounded domain $\Omega\subset{\R}^3$ and lying on a rigid support on a part of its boundary, on which some adhesive substance is present. Hence,   $\partial\Omega= \overlineGD \cup \overlineGN \cup \overlineGC$
with
(1) $\GC$
 the contact surface, hereafter assumed   \emph{flat} and identified with a subset of  \RCORR $\R^2$, \EEE (2)
 $\GD$ the Dirichlet part of the boundary, with positive measure,  on which homogeneous boundary conditions are prescribed,  and (3) $\GN$ the Neumann part, on which a traction is applied.
%
%
%
%
The state variables
in the bulk domain $\Omega$
are the   absolute temperature $\teta$ of the body and its displacement $\uu$ 
\RCORR (at small strains); \EEE
the state variables defined on the contact surface $\GC$ are the absolute temperature
$\tetas$ of the adhesive substance,  and a   surface damage-type parameter $\chi$ representing
%
the fraction of fully effective links in the bonding. As such, $\chi$  takes values in $[0,1]$, with $\chi=0$ for completely damaged bonds, $\chi=1$ for fully intact bonds, and $\chi \in (0,1)$ for the intermediate states.
\RCORR The distinction \EEE between the temperature $\teta$ of the bulk domain and the temperature $\tetas$ of the adhesive substance is typical of
\textsc{Fr\'emond}'s approach \RCORR to the modeling of thermal effects in rate-dependent adhesive contact,  cf.\ e.g.\ \cite{BBR3, BBR8}. Nonetheless, it  has also been discussed  in the context of  the rate-independent modeling of delamination,  see \cite[Sec.\ 5.3.3.3]{Mielke-Roub-book}. \EEE
\par
The evolution of the variables $(\teta,\uu,\tetas,\chi)$ during the time interval $(0,T)$ is governed by a system of \RCOMMN PDEs \EEE  in the bulk domain and on the contact surface derived
from  the general laws of Thermomechanics and from suitable  choices for the  free energy and pseudo-potential of dissipation that also account for nonlocal interactions between the body and its support.
Indeed,
\begin{subequations}
	\label{PDE-INTRO}
the principle of virtual power leads to the quasistatic momentum balance for the macroscopic movements
\begin{equation}
\label{mom-balance}
	-\mathrm{div}(\etens \tensore + \vtens \tensoret + \teta{\mathbb I})={\bf f} \qquad  \textrm{in } \Omega\times (0,T),
\end{equation}
supplemented by the following boundary conditions
\begin{align} 		
&
	\label{Dir}
		\mathbf{u}= {\bf 0} &&   \text{in } \GD\times (0,T),
		\\
		&
		\label{condIi}
 (\etens \tensore+\vtens \tensoret +\theta{\mathbb I}){\bf n}={\bf g} &&  \text{in } \Gamma_{\mathrm{N}}\times (0,T),
\\ & 	 	\label{condIii}	
		(\etens\tensore + \vtens\tensoret   + \theta{\mathbb I}){\bf n}  +\chi{\bf u}+
		\partial I_{(-\infty,0]}(\uu\cdot {\bf n}){\bf n}  	
		 +\int_{\GC}\mu(|x{-}y|) \uu(x) \chi(x)\chi(y)
		\dd y 		\ni{\bf 0}   && \text{in }  \GC \times (0,T).
\end{align}
Here,  $\etens $ and $\vtens$ denote the elasticity and the viscosity tensors, $\mathbb{I}$ the identity matrix, $\bf{n}$ the outward unit normal vector to the boundary $\partial \Omega$.
Moreover, $\bf{f}$ is a volume force while $\bf{g}$  is a traction applied on $\GN$.
Condition
\eqref{condIii},
coupling the evolution of $\uu$ (hereafter, we shall denote its trace on $\GC$ by the same symbol) to that
of $\chi$, can be understood as a generalization of the classical \emph{Signorini contact} conditions. Indeed, it features a selection
$\xxi \in
\partial I_{(-\infty,0]}(\uu {\cdot}\mathbf{n}) \mathbf{n}$, with
$\partial I_{(-\infty,0]}$
the subdifferential of the indicator function of the interval $(-\infty,0]$, that represents a reaction force activated when the non-interpenetration constraint $\uu {\cdot}\mathbf{n}\leq 0$ on $\GC$ holds as  \RCORR an equality,
i.e.\ when \EEE
$\uu {\cdot}\mathbf{n}= 0$.
The normal reaction on the boundary condition described by \eqref{condIii} also includes
 the  nonlocal contribution
\[
\int_{\GC}\mu(|x{-}y|) \uu(x) \chi(x)\chi(y)
		\dd y \,,
\]
\RCOMMN where the positive function $\mu$ \EEE
accounts for
the attenuation of nonlocal interactions as the  distance $|x-y|$ between two points $x$ and $y$ on the contact surface increases.
Observe that in \eqref{condIii} (and in the forthcoming \eqref{eqII}, \RCOMMN \eqref{eqI-cont} \EEE and \eqref{eq-tetas}),  we have written explicitly the dependence of the unknows $(\teta,\uu, \tetas, \chi)$ on the variable $x \in \GC$ only in the nonlocal terms involving integrals (with respect to the spatial variable $y \in \GC$).
\par
As customary in \textsc{Fr\'emond}'s approach, the principle of virtual power leads to  a micro-force balance on the contact surface that results  in  the \RCORR following \EEE  flow rule for the damage-like parameter $\chi$
\begin{equation} \label{eqII}
	\begin{aligned}
		&
	\chi_t -\Delta\chi  +\partial I_{[0,1]}(\chi)   +
	\gamma'(\chi)+\lambda'(\chi)(\thetas-\theta_*)
	\\
	&
	\qquad  \ni -\frac 1 2\vert{\uu }\vert^2-\frac 1 2\int_{\GC} \mu(|x{-}y|)  (|\uu(x)|^2 + |\uu(y)|^2 )\chi(y)
	\dd y  \qquad \text{in } \GC \times (0,T),
		\end{aligned}
\end{equation}
 supplemented by the  no-flux boundary condition
\begin{equation}\label{bord-chi}
\partial_{\nn_\mathrm{s}} \chi=0 \qquad   \text{in } \partial \GC \times		(0,T).
\end{equation}
In \eqref{eqII}--\eqref{bord-chi},  $\nn_\mathrm{s}$ denotes the outward unit normal vector to $\partial \GC$, $\theta_* >0$ is a phase transition temperature, the function $\lambda$ is related to the latent heat while $\gamma$ describes possible non-monotone dynamics for $\chi$  (it may model some cohesion in the material).
Moreover, the subdifferential term $\partial I_{[0,1]}(\chi)$ ($I_{[0,1]}(\cdot)$ denoting the indicator function of the interval $[0,1]$) enforces the physical constraint that $\chi$ takes values in $[0,1]$.
The source of damage on the right-hand side of \eqref{eqII} features local and nonlocal terms and, in particular, it may differ from zero even in the case in which  $\uu={\bf 0}$, due to the nonlocal,  integral contribution
 that renders the damaging effects of  elongation.
\par
The equations for the bulk and surface  temperature variables $\teta$ and $\tetas$
 are recovered from the first principle of Thermodynamics. The internal energy balance equation written
 in
the bulk domain 
  reads
\begin{equation} 			\label{eqI}
	\theta_t-\theta\mathrm{div}(\uu_t)-\mathrm{div}(\ab(\theta)\nabla\theta)
	= h + \tensoret  \vtens  \tensoret, \qquad  \textrm{in } \Omega\times (0,T),
\end{equation}
with prescribed boundary conditions
\begin{align}
 			\label{eqI-bdry}
			&
\ab(\theta)\nabla\theta\cdot{\bf n}=0 &&  \text{in } (\GD{\cup}\Gnew)\times (0,T),
\\
&			\label{eqI-cont}
	\ab(\theta)\nabla\theta \cdot{\bf n}
	=-\theta\left(k(\chi)(\theta{-}\thetas){+}\int_{\GC} \mu(|x{-}y|) 	(\theta(x){-}\thetas(y))
	\chi(x) \chi(y)\dd y\right) &&   \text{in } \GC \times (0,T).
\end{align}
It is coupled to the  internal energy balance equation on the contact surface
\begin{align}
			\label{eq-tetas}
			&
\begin{aligned}
	&
	\partial_t {\thetas}-\thetas \lambda'(\chi)\chi_t-\mathrm{div}(\as(\thetas)\nabla\thetas)\\
	&=\ell + |\chi_t|^2+\thetas\left(k(\chi)(\theta{-}\thetas) {+} \int_{\GC}\mu(|x{-}y|)
	(\theta(y){-}\thetas(x)) \chi(x)\chi(y)\dd y\right)    \qquad \qquad  \text{in } \GC\times (0,T),
\end{aligned}
\\
&
\label{eq-tetas-b}
	\as(\thetas)\nabla\thetas\cdot\RCOMMN {\nn_\mathrm{s}}\EEE=0  \qquad \qquad  \qquad \qquad  \qquad \qquad  \qquad \qquad  \qquad \qquad    \qquad \qquad  \qquad \quad \text{in } \partial\GC\times (0,T).
\end{align}
\end{subequations}
Here, the positive function $\ab$ \RCOMMN represents the \EEE  heat conductivity coefficient both  in the bulk domain and on the contact surface, $k$ is a surface thermal diffusion coefficient, \RCOMMN while $h$ and $l$ are volume and surface heat sources. \EEE
The evolutions of the bulk and surface temperatures $\teta$ and $\tetas$ are coupled by
\RCORR the boundary condition \EEE
 \eqref{eqI-cont}, featuring two  distinct contributions. 
 The first one has a `local' character, as it depends on the quantity  $(\teta{-}\tetas)$ evaluated at the \emph{same point} $x \in \GC$ (again, we keep on denoting by $\teta$ the trace of the absolute temperature on $\GC$). Instead, the second term on the right-hand side of \eqref{eqI-cont} \RCOMMN has a nonlocal character as it \EEE
involves
 the quantity 
 \[
\int_{\GC} \mu(|x{-}y|) 	(\theta(x){-}\thetas(y))
	\chi(x) \chi(y)\dd y
 \]
featuring  the thermal gap between \emph{different} points $x$ and $y$ on the contact surface.
\par
Actually,  in what follows we will tackle the analysis of a
 generalized version of system  \eqref{PDE-INTRO},
  in which  the nonlocal integral terms
  are replaced by more  general nonlocal operators and where the various, concrete, subdifferential operators are replaced by general  maximal monotone nonlinearities.
  \RCORR Nonetheless, throughout this Introduction we shall confine the discussion to system \eqref{PDE-INTRO}. \EEE
  \subsection{Analytical difficulties}
 The major difficulties related to the analysis of
 system \eqref{PDE-INTRO} are that
\begin{enumerate}
\item  it encompasses both  bulk and surface equations. In particular, the evolutions of  the displacement variable $\uu$ and  of the adhesion parameter $\chi$ are coupled through the Robin-type boundary condition \eqref{condIii}. This  prevents us from  applying  regularity results for elliptic systems
 that would lead to  enhanced spatial (e.g., $H^2$-) regularity for $\uu$ and $\uu_t$. In turn, such regularity would be handy, for instance, to better control the right-hand side of the heat equation \eqref{eqI}, since,
  indeed,
  \medskip
 \item the bulk temperature  and displacement equation (the surface temperature equation and the flow rule, respectively) are coupled by the quadratic term $\eps(\uu_t) \vtens \eps (\uu_t)$ (by the term $|\chi_t|^2$, \RCOMMN resp.\EEE) that is just in  $L^1 (\Omega{\times}(0,T))$ \RCOMMN (in $L^1 (\GC{\times}(0,T))$, resp.) \EEE once the basic energy estimates on system \eqref{PDE-INTRO} are performed. Other nonlinear coupling terms between bulk and surface equations occur in \eqref{condIii}, \eqref{eqII}, \eqref{eqI-cont}, and \eqref{eq-tetas}, but the $L^1$-character of the right-hand side of the heat equations poses the most prominent challenge, together with
 \medskip
 \item proving that the temperature variables $\teta$ and $\tetas$ are \emph{strictly} positive. As a matter of fact, because of the nonlocal terms in \eqref{eqI-cont} and \eqref{eq-tetas},
 well-established techniques \RCORR for proving strict positivity of  $\teta$ and $\tetas$, \EEE
based on comparison arguments,
  fail.
\end{enumerate}
\par
The presence of quadratic terms  in the rate of the internal variable (and in the rate of the strain tensor, when the momentum balance is also included in the system) is typical of  \emph{thermodynamically consistent} models; \emph{strict} positivity of the temperature is also a key ingredient for their compliance \RCORR with \EEE the laws of Thermodynamics. In fact, the challenges  in items (2) \& (3)  of the above list transcend the specific problem examined here, and have stimulated the development of a variety of techniques over the last two  decades.
\par
The first existence result for the `full' model by \textsc{Fr\'emond} for  solid-liquid phase transitions
(here `full' refers to the fact the quadratic term, in the rate of the phase-field parameter, on the right-hand side of the heat equation is not neglected), dates back to  \cite{Lut-Ste1,Lut-Ste2}, for (spatially) \emph{one-dimensional} systems. To our knowledge, the analysis of  a  `full' model in the three-dimensional case was first addressed in \cite{Bonetti-Bonfanti-1}, tackling a  thermodynamically consistent PDE system for damage in thermo-visco-elastic materials. Therein, the heat equation featured the quadratic terms $|\chi_t|^2$ and $|\nabla\chi_t|^2$ (with $\chi$ the damage parameter), as well as $\eps(\uu_t) {:} \eps(\uu_t)$, on its right-hand side, while the heat conduction coefficient $\alpha = \alpha(\teta) $ was assumed to be constant. In that framework, only a local-in-time existence result was obtained. Ever since, in  most of the papers addressing
the analysis of thermo-(visco-)elastic  models with $L^1$ right-hand sides in the heat equation, \emph{global-in-time} existence results have been obtained under suitable growth conditions, either on the non-constant heat conductivity, or on a non-constant heat capacity coefficient.
\par
The latter course has been  pursued in a series of papers by \textsc{T.\ Roub\'{\i}\v{c}ek}, starting from \cite{tr1} \RCORR that address \EEE the analysis of a broad  class of thermomechanical, thermodynamically consistent, \emph{rate-independent} processes.    In \cite{tr1} and in several subsequent papers covering a wide range of applications (cf., e.g.,
\cite{tr1-bis, tr2, tr3, Mie_Rou20}; see also \cite{trr,BBR8} for applications to adhesive contact and delamination)
\textsc{Roub\'{\i}\v{c}ek} switches to an alternative thermal variable, the `enthalpy', defined in terms of a primitive of the heat-capacity. In this way, the nonlinear character of the heat equation is partially `tamed'; its $L^1$ right-hand side (featuring quadratic terms in the rates of the strain tensor and of the internal variables of the model) is dealt with by means of  \textsc{Boccardo-Gallou\"et} type estimates \cite{Boccardo},
as adapted in \cite{FM}. Such estimates  yield   a limited spatial regularity for the enthalpy/temperature variable, \RCORR which is \EEE estimated in the space $W^{1,r}(\Omega)$ for some specific $r\in (1,2)$. Hence, in  the aforementioned papers (global-in-time) existence results are typically obtained for a formulation of the heat/enthalpy equation with spatially smooth test functions.
\par
In turn, \cite{FPR} pioneered an alternative approach to the analysis of the heat equation with a $L^1$-right-hand side in the `full' model for solid-liquid phase transitions \RCORR by \textsc{Fr\'emond}. \EEE The core assumption there is some suitable growth condition on the heat conductivity $\alpha$.
 This leads to a $H^1$-spatial regularity for the temperature variable, albeit in the context of a quite weak formulation for thermal evolution. Specifically,  in \cite{FPR, FPR-errata} the heat equation is formulated, consistently with the laws of Thermodynamics, in terms of an entropy inequality, involving smooth test functions, and of a total energy balance. The `entropic' solution concept
 advanced in \cite{FPR} has proved to be remarkably flexible. It
  has been extended to various contexts, from
the evolution of non-isothermal nematic liquid crystals \cite{FFRS, FRSZ}, to models for damage and phase separation in thermo-visco-elastic solids in $\R^d$, $d\in \{2,3\}$,  cf.\  \cite{RocRos, HKRR}.
In the latter papers the existence of `entropic' solutions was proved
under the condition that
%
%
\begin{equation}
\label{growth-alpha-RR}
\exists\, c_0,\, c_1> 0 \ \ \exists\, \mu>1 \ \  \forall\, \teta \in \R^+ \, : \quad   c_0(1{+}\teta^\mu) \leq \alpha(\teta) \leq c_1(1{+}\teta^\mu) 
\end{equation}
\RF (cf., e.g., \cite{zr} for examples of nonlinear heat conduction). \EEE
Under the more restrictive condition that
\begin{equation}
\label{more-restrictive}
\text{the exponent $\mu$ in \eqref{growth-alpha-RR} satisfies }  \mu \begin{cases} \in (1,2),
\\
\in \left(1, \tfrac 53 \right)
\end{cases}
 \text{ if the space dimension } d = \begin{cases}
 2,
 \\
 3
 \end{cases}
\end{equation}
 \cite{RocRos} showed
the existence of `conventional' weak solutions to  the PDE system coupling the momentum balance, the flow rule for the damage parameter, and the heat equation,  which was
formulated in a variational way, with suitable test functions.
\par
Finally, we would like to mention the analysis of  a (still thermodynamically consistent) PDE system for thermo-visco-plasticity at small strains from \cite{HMS}. Via  maximal parabolic regularity arguments,
the authors succeeded in proving  the existence  of
global-in-time solutions to a suitable weak formulation of the system without resorting to growth conditions on
the heat conductity $\alpha(\teta) \equiv 1$.
\subsection{Our results}
With \underline{\textbf{our main result, Theorem \ref{thm:1}}} ahead, we are going to prove  the existence of  global-in-time, weak solutions to system \eqref{PDE-INTRO}, under the \emph{more general condition} \eqref{growth-alpha-RR}: in particular, 
\RF we are not going to restrict the range of \EEE
 the exponent $\mu$.
We highlight, in the similar contexts of
\cite{RocRos, HKRR}, \eqref{growth-alpha-RR}
 previously granted the existence of `entropic' solutions, only, with the heat equation  formulated via \RCORR an \EEE entropy inequality and an overall energy balance. Therefore here,  under the same conditions as in \cite{RocRos, HKRR},  we succeed in bypassing entropic solutions and directly conclude the existence  of `conventionally'
 weak solutions, \RCORR that we will term \emph{weak energy solutions}. \EEE
 We are also going to obtain the \emph{strict} positivity properties
 \begin{equation}
 \label{strict-positivity-intro}
 \exists\, \overline{\theta}, \,  \overline{\theta}_{\mathrm{s}} > 0\, : \qquad
 \theta \geq \overline{\theta}  \ \, \text{a.e. in $\Omega \times (0,T)$,} \quad
\thetas \geq \overline{\theta}_{\mathrm{s}} \ \,  \text{a.e. in $\GC \times (0,T)$},
 \end{equation}
 for which the comparison arguments often used in the literature
 are not applicable due to the nonlocal terms in the heat equations and related boundary conditions.
 The cornerstone of our existence proof for weak solutions, under the \emph{sole}
\eqref{growth-alpha-RR},
is a suitable  estimate for the temperature variables, akin to the estimate that
  lies at the core of the proof of \eqref{strict-positivity-intro}.
 \par
Indeed,  for proving \eqref{strict-positivity-intro}    we will revisit a powerful technique, advanced in
\cite{SS},
 that consists in testing the heat equations \eqref{eqI} and \eqref{eq-tetas} by the \emph{negative} powers  $-\teta^{-p}$ and $-\tetas^{-p}$, respectively, with  \RCORR an \emph{arbitrary} $p>2$.  \EEE As it will be shown in Section \ref{ss:3.2},
 this leads
 to an estimate for $\frac1\teta$ and $\frac1{\tetas}$ in $L^\infty (0,T;L^{p-1}(\Omega))$ and
 $L^\infty (0,T;L^{p-1}(\GC))$, respectively. Letting $p\to\infty$ leads to an estimate for the reciprocal temperatures in $L^\infty(\Omega{\times}(0,T))$ and  $L^\infty(\GC{\times}(0,T))$, which gives
\eqref{strict-positivity-intro}.
\par
It turns out that a closely related idea will   also allow  us to `tame' the $L^1$ right-hand sides of the heat equations. For that, the key issue is estimating the spatial gradient of the  temperatures $\teta$ and $\tetas$. This will result from testing  \eqref{eqI} and \eqref{eq-tetas} by
$ \theta^{\nu -1}$
and $ \thetas^{\nu-1}$, respectively, for an arbitrary $\nu \in (0,1)$ 
(cf.\ Section\  \ref{sss:3.3.3} ahead). 
This will  lead
to the bounds
\begin{equation} \label{3rd-estimate-INTRO}
\| \theta^{(\mu{+}\nu)/2} \|_{L^{2}(0,T;H^1(\Omega))}
+\| \thetas^{(\mu{+}\nu)/2} \|_{L^{2}(0,T;H^1(\GC))} \RCOMMN \leq C \EEE 
\end{equation}
\RCORR (recall that the exponent $\mu$ featured in the growth condition \eqref{growth-alpha-RR}). \EEE
In turn, via  interpolation arguments, \eqref{3rd-estimate-INTRO} shall  bring to
 higher integrability estimates  for the temperature variables, which will  allow us to estimate their derivatives $\teta_t$ and $\partial_t\tetas$ in $L^1(0,T;W^{1,3+\epsilon}(\Omega)^*)$ and $L^1(0,T;W^{1,2+\epsilon}(\GC)^*)$ for all $\epsilon>0$, respectively.
Clearly,   from
the estimates of  the  gradients and the time derivatives of $\teta$ and $\tetas$ we will
extract all the compactness information necessary for dealing with the heat equations. The analysis of the momentum balance and of the flow rule  will follow more standard paths.
\par
\RCORR The estimates described above will be formally developed in  Sections\  \ref{ss:3.2} and \ref{s:3-calculations}. 
Making them rigorous \EEE
  in the frame of a time discretization scheme for system \eqref{PDE-INTRO}, which might be conducive to its numerical analysis,
has been a challenging issue by itself.  First of all, in devising the approximation scheme for \eqref{PDE-INTRO}
we have had to carefully balance the terms to be kept implicit with those to be kept explicit. In this way,  we have ensured
 the validity of a discrete form of the total energy balance associated with \eqref{PDE-INTRO}, whence all the basic energy estimates stem. Secondly, we have had to combine time discretization with an additional regularization obtained by (1) adding the higher order terms $-\rho \mathrm{div}(|\tensoret |^{\omega-2} \tensoret )$ and $ \rho |\chi_t|^{\omega-2}\chi_t\,,  \omega>4,  $ to the momentum balance and to the flow rule for $\chi$;
 (2) replacing the maximal monotone operators in the flow rule for $\chi$ and in the boundary condition for $\uu$ on $\GC$ by their Yosida \RCORR regularizations.  \EEE The reason for this
   \emph{threefold} approximation procedure essentially
 resides in the fact that, on the time-discrete level,
we shall not be able to
fully carry out the
arguments from \cite{SS}, leading to a uniform, in space and time, estimate for the reciprocal temperatures.
Namely,
 for the discrete bulk and surface temperatures, we shall only  prove a  strict positivity
 property, but not a lower bound by a positive constant as in
 \eqref{strict-positivity-intro}. Therefore, in order to rigorously perform the test of
  the temperature equations by negative powers of $\teta$ and $\tetas$
  that leads to \eqref{3rd-estimate-INTRO}, we will need to work on the  regularized version of system \eqref{PDE-INTRO} described in the above lines, cf.\ system \eqref{PDE-regul} ahead.
  \par
 We believe that the formal estimates from  Sections  \ref{ss:3.2} and \ref{s:3-calculations}, 
 as well as  the technical machinery rigorously supporting them developed in Sections \ref{s:5} and \ref{s:6},  are robust enough to be applied to other thermodynamically consistent models in solid mechanics. In particular, the analysis of a PDE system for damage in thermo-visco-elastic materials will be carried out in \RCOMMN future work, \EEE 
 in which the issues related to the unidirectionality of the evolution of the damage parameter will also be addressed.
 \paragraph{\bf Plan of the paper.}
 \RCORR In Section \ref{s:3}, after settling some preliminary results and all our conditions on the constitutive functions of the model, and on the problem data, we will consider a generalized version of system \eqref{PDE-INTRO} and introduce
  our notion of `weak energy solution' to the associated Cauchy problem. We will then state our main existence result, Theorem \ref{thm:1}.
Throughout   Section \ref{s:4} we will  carry out in a formal way all the calculations that provide the strict positivity properties \eqref{strict-positivity-intro}, 
 and all the a priori estimates at the core of the proof of
Theorem \ref{thm:1}.   In Section \ref{s:3.4} we will then introduce the regularized 
 system
 \eqref{PDE-regul}   on which  all estimates will be rigorously performed. The existence of solutions to \RCORR the Cauchy problem for system \eqref{PDE-regul}  will be proved, via a careful time discretization procedure, throughout Sections \ref{s:5}--\ref{s:6}. Finally, in Section \ref{s:7} we will take the limit of the regularized system in two steps, and thus conclude the proof of Theorem \ref{thm:1}.  \EEE


\section{The main result}
\label{s:3}
Let us  fix some general notation that will be used throughout the paper.
\begin{notation}
\upshape
 For a given a Banach space $X$, we will  denote by
 $\pairing{}{X}{\cdot}{\cdot}$ the duality pairing
between   $X'$ and $X$; to avoid overburdening notation, we shall write   $\Vert\cdot\Vert_X$
both the norm in $X$ and in any power of it.
\par
We will work with  the space
\[
\begin{aligned}
\bsVD:=\{{\bf v}\in H^1 (\Omega;\R^3)\, : \ {\bf v}={\bf 0}\hbox{ a.e. on }\GD \}\,,
\end{aligned}
\]
endowed with the natural norm induced by
$H^1 (\Omega;\R^3)$, and denote  the Laplace operator with homogeneous boundary conditions 
by
\begin{equation}
\nonumber
A:H^1(\GC) \to H^1(\GC)^* \qquad \pairing{}{H^1(\GC)}{A\chi}{w}:= \int_{\GC} \nabla
\chi \nabla w \dd x \  \text{ for all }\chi, \, w \in H^1(\GC).
\end{equation}
Moreover, we shall
use  special  notation for the following  function space
\[
\begin{aligned}
 \bsY:
= H^{1/2}_{00,\GD}(\GC;\R^3) =
\Big\{ \mathbf{w} \in H^{1/2}(\GC;\R^3)\, :  \ \exists\,
\widetilde{\mathbf{w}}\in H^{1/2}(\partial\Omega;\R^3) \text{ with }
\widetilde{\mathbf{w}}=\mathbf{w}  \text{ in $\GC$,} \
\widetilde{\mathbf{w}}=\mathbf{0} \text{ in $\GD$} \Big\}\,.
\end{aligned}
\]
\end{notation}
\paragraph{\bf Preliminary results.}
Throughout the paper, we will also use that
\begin{equation}
\label{embedding}
\bsVD \subset L^4(\GC) \text{ continuously, and } \bsVD \Subset L^{4-s}(\GC) \text{ compactly for all  }  s \in (0,3],
\end{equation}
where the above embeddings have to be understood in the sense of traces.
\par
\RNEW Finally,
we shall resort to the following \emph{nonlinear}  Poincar\'{e}-type inequality
 (cf.\  e.g.\ \cite[Lemma 2.2]{gmrs})
  \begin{equation}
 \label{poincare-type}
 \forall\, q>0 \quad \exists\, C_q >0 \quad \forall\, w \in H^1(\Omega)\, : \qquad
 \| |w|^{q} w \|_{H^1(\Omega)} \leq C_q (\| \nabla (|w|^{q} w )\|_{L^2(\Omega)} + |m(w)|^{q+1})\,,
\end{equation}
\RCORR (with  $m(w)$ the mean value of $w$), \EEE
and to the well-known interpolation formula for Lebesgue spaces, holding for every measurable $O \subset \R^d$, $d\geq 1$:
\begin{equation}
\label{interpolation-Lebesgue}
L^r(0,T;L^s(O)) \cap L^p(0,T;L^q(O)) \subset L^a(0,T;L^b(O)) \quad \text{with } \quad \begin{cases}
\frac1a = \frac{\vartheta}r + \frac{1-\vartheta}p,
\\
\frac1b = \frac{\vartheta}s + \frac{1-\vartheta}q,
\end{cases}
\text{ for some } \vartheta \in (0,1)
\end{equation}
with a continuous embedding.
\EEE

\subsection{Setup and assumptions}
\label{ss:2.1}
We start by detailing our conditions on the reference configuration:
 \begin{equation}
 \label{ass:domain}
 \begin{gathered}
 \text{
 $\Omega$ is a
bounded   Lipschitz  domain in $\R^3$, with }
\\
\partial\Omega= \overlineGD \cup \overlineGN \cup \overlineGC, \ \text{ $\Gdir$, $\Gnew$, $\GC$,
 open disjoint subsets in the relative topology of $\partial\Omega$,
such that  }
\\
\mathcal{H}^{2}(\Gdir), \,  \mathcal{H}^{2}(\GC)>0,
\text{ and ${\GC} \subset \R^2$ a
\emph{flat} surface,}
\end{gathered}
\end{equation}
which means that $\GC$ is a subset of a hyperplane of
$\R^3$ and on $\GC$  the Lebesgue and Hausdorff measures $\mathcal{L}^2 $ and $
\mathcal{H}^2$ coincide.
\par
Let us now fix
\begin{enumerate}
\item
the properties of the elasticity and viscosity tensors: 
 we assume that the fourth-order tensors $\etens=(e_{ijkh})$ and  $\vtens=(v_{ijkh})$,
satisfy the classical symmetry and ellipticity conditions
\begin{subequations}
\label{ass-K}
\begin{equation}
\label{ass-K-sym-ell}
\begin{aligned}
& e_{ijkh}=e_{jikh}=e_{khij}\,,\quad  \quad
v_{ijkh}=v_{jikh}=v_{khij}\,,
  \quad   i,j,k,h=1,2,3,
\\
&  \exists \, \eps_0>0 \quad  \forall\, \xi_{ij}\colon \xi_{ij}=
\xi_{ji}\,,\quad i,j=1,2,3 \,:  \qquad e_{ijkh} \xi_{ij}\xi_{kh}\geq
\eps_0\xi_{ij}\xi_{ij}\,,
\\
& \exists \, \nu_0>0 \quad \forall\, \xi_{ij}\colon \xi_{ij}=
\xi_{ji}\,,\quad i,j=1,2,3 \,:  \qquad v_{ijkh} \xi_{ij}\xi_{kh}\geq
\nu_0\xi_{ij}\xi_{ij}\,,
\end{aligned}
\end{equation}
where the usual summation convention is used.
 Moreover, we require that
\begin{equation}
\label{ass-K-bdd}
e_{ijkh}, v_{ijkh} \in L^{\infty}(\Omega)\,, \quad  i,j,k,h=1,2,3.
\end{equation}
\end{subequations}
Observe that conditions \eqref{ass-K} are compatible with the properties  of an anisotropic and
inhomogeneous material. They ensure that
 the  associated  bilinear forms $\efm, \vfm : H^1 (\Omega;\R^3) \times H^1 (\Omega;\R^3)
\to \R$,   defined by
$$
\begin{aligned}
&
\efm({\bf u},{\bf v}):=\int_{\Omega} e_{ijkh} \varepsilon_{kh}({\bf u})
\varepsilon_{ij}({\bf v}) \dd x   &&   \text{for all } \uu, \vv \in H^1 (\Omega;\R^3),
\\
&
\vfm({\bf u},{\bf v}):=\int_{\Omega} v_{ijkh} \varepsilon_{kh}({\bf u})
\varepsilon_{ij}({\bf v}) \dd x   &&   \text{for all }  \uu, \vv \in H^1 (\Omega;\R^3)
\end{aligned}
$$
are continuous and symmetric, i.e.\
\begin{equation}
\label{continuity} \exists \, M >0: \ |\efm(\uu, \vv)| +   |\vfm(\uu,
\vv)| \leq M \| \uu\|_{H^1 (\Omega)} \| \vv\|_{H^1 (\Omega)} \quad \text{for all }
\uu, \vv \in H^1 (\Omega;\R^3).
\end{equation}
Furthermore, since $\GD$ has positive measure,
 by Korn's inequality we deduce that  \RCORR the forms  $\efm(\cdot,\cdot)$ and
$\vfm(\cdot,\cdot)$ are $H^1 (\Omega;\R^3)$-elliptic on $\bsVD\times \bsVD$, i.e.\ \EEE there exist $C_{\mathrm{e}},
C_{\mathrm{v}}>0 $ such that
\begin{align}
\label{korn}
 \efm({\bf u},{\bf u})\geq C_{\mathrm{e}}\Vert{\bf u}\Vert^2_{H^1 (\Omega)},
 \qquad \vfm({\bf u},{\bf u})\geq C_{\mathrm{v}}\Vert{\bf u}\Vert^2_{H^1 (\Omega)} \qquad
\text{for all }\uu\in \bsVD.
\end{align}
\end{enumerate}
We will in fact deal with a extended version of system  \eqref{PDE-INTRO}, where
the subdifferentials $\partial I_{(-\infty,0]}$ and $\partial I_{[0,1]}$ will be replaced by general maximal monotone operators.
Namely,
\begin{enumerate}
 \setcounter{enumi}{1}
\item we     consider  a function
\begin{equation}
\label{hyp-alpha-eta}
\widehat{\eta} : \R \to [0,+\infty] \quad \text{proper, convex, and lower semicontinuous, with } \widehat{\eta}(0)=0
\end{equation}
 (note that, if  $0\in \mathrm{dom}(\widehat\eta)$, we can always reduce to the case
$\widehat{\eta}(0)=0$ by a translation).
 Then, we  introduce the proper, convex and lower semicontinuous functional
\[
\bwalpha:  \bsY \to [0,+\infty] \quad \text{ defined by } \quad \bwalpha(\uu): = \begin{cases}
\int_{\GC}\widehat{\eta}(\uu \cdot \mathbf{n}) \,\dd x & \text{if } \widehat{\eta}(\uu\cdot\mathbf{n}) \in L^1(\GC),
\\
+\infty &\text{otherwise}.
\end{cases}
\]
We set
$
\balpha: = \partial \bwalpha: \bsY\rightrightarrows \bsY^*.
$
It follows from \eqref{hyp-alpha-eta} that $\mathbf{0} \in \balpha(\mathbf{0})$.
The subdifferential $\balpha(\uu)$ shall replace the term
 $ \partial I_{(-\infty,0]}(\uu{\cdot} \mathbf{n}) \mathbf{n}
$  in the boundary condition  \eqref{condIii}. Observe that the impenetrability condition $\uu \cdot \mathbf{n}\leq 0$ a.e.\ on $\GC$ is rendered as soon as $\mathrm{dom}(\widehat\eta)\subset (-\infty,0] $.
\item
We also generalize the subdifferential $\partial I_{[0,1]}$ to the subdiffential of a function
 \begin{equation}
\label{hyp-beta}
\widehat{\beta} : \R \to [0,+\infty]  \text{ proper, convex, lower semicontinuous, with }
\mathrm{dom}(\widehat{\beta})\subset [0,1] \text{ and } \widehat{\beta}(0)=0,
\end{equation}
and set $\beta: = \partial\widehat{\beta}:\R \rightrightarrows \R$.
 \end{enumerate}
Observe that the integral terms encompassing nonlocal effects
  in \eqref{eqI-cont}, \eqref{condIii}, \eqref{eq-tetas}, and
\eqref{eqII} can be rewritten as
\[
\begin{aligned}
&
\begin{cases}
-\RCOMMN \theta^2(x)\EEE \chi(x) \int_{\GC} \kr(x,y) \chi(y) \dd y + \RCOMMN \theta (x)\EEE \chi(x)  \int_{\GC} \kr(x,y) \thetas(y) \chi(y) \dd y,
\\
 \uu(x) \chi(x) \int_{\GC} \kr(x,y) \chi(y)\dd y,
 \\
 \RCOMMN \thetas(x)\EEE \chi(x)  \int_{\GC} \kr(x,y)\theta(y) \chi(y) \dd y - \chi(x) \RCOMMN \thetas^2(x)\EEE  \int_{\GC} \kr(x,y) \chi(y) \dd y,
 \\
-\frac12 |\uu(x)|^2 \int_{\GC} \kr(x,y) \chi(y) \dd y -\frac12  \int_{\GC} \kr(x,y)  |\uu(y)|^2 \chi(y) \dd y
\end{cases} \qquad \qquad
\\ &  \RCORR  \text{ with  } \RCOMMN  \kr(x,y) : = \mu(|x-y|)\,. \EEE 
\end{aligned}
\]
It is thus natural to
generalize these terms by considering
\begin{enumerate}
 \setcounter{enumi}{3}
 \item
 a kernel
 \begin{equation}
 \label{hyp-k}
 \kr:\GC \times \GC \to [0,+\infty) \text{  symmetric, positive,  with } \kr \in L^\infty((\GC{\times}\GC);\R^+)
 \end{equation}
\end{enumerate}
and introducing the associated nonlocal operator
 \begin{equation}
 \label{def-nonloc-operator}
 \nlname: L^1(\GC)\to L^\infty(\GC) \qquad \nlocs w x: = \int_{\GC} \kr(x,y)w(y) \dd y \quad \text{for all } w \in L^1(\GC)\,.
 \end{equation}
Lemma  \ref{lemmaK} ahead  will provide  some key properties of the operator
$\nlname$.
\par
With the above outlined generalizations,
system \eqref{PDE-INTRO} turns into the PDE system
\begin{subequations}
	\label{PDE-true}
	\begin{align}
		\label{eqI-true}
		&
		\begin{aligned}
		\theta_t-\theta\mathrm{div}(\uu_t)-\mathrm{div}(\ab(\theta)\nabla\theta)
		= h + \tensoret \, \vtens \, \tensoret \quad  \text{ in }  \Omega\times (0,T),
		\end{aligned}
				\\
			\label{eqI-bdry-true}
		&\ab(\theta)\nabla\theta\cdot{\bf n}=0  \quad  \text{ in }   \GD\cup\Gnew\times (0,T),
		\\
		&
			\label{eqI-cont-true}
		\begin{aligned}
		\ab(\theta)\nabla\theta \cdot{\bf n}
		 =
		 -k(\chi)\teta(\theta{-}\thetas) - \nlocss {\chi} \chi \teta^2  + \nlocss {\chi\tetas} \chi \teta  \quad  \text{ in }     \GC \times (0,T),
		\end{aligned}
		\\
		&
		\label{mom-balance-true}
		-\mathrm{div}(\etens \tensore + \vtens \tensoret + \theta{\mathbb I})={\bf f}  \quad  \text{ in }  \Omega\times (0,T),\\
		\label{Dir-true}
		&\mathbf{u}= {\bf 0}  \quad  \text{ in }   \GD\times (0,T),
		\\
		&
		(\etens \tensore+\vtens \tensoret +\theta{\mathbb I}){\bf n}={\bf g}  \quad  \text{ in } \Gamma_{\mathrm{N}}\times (0,T),\label{condIi-true}\\
		&
		\begin{aligned}
	 (\etens\tensore + \vtens\tensoret   + \theta{\mathbb I}){\bf n}  +\chi{\bf u}+
		\balpha(\uu)  +\nlocss{\chi} \chi \uu \ni  {\bf 0}  \quad  \text{ in }    \GC \times (0,T),
		\end{aligned}
		\label{condIii-true}
		\\
		 &
		 \begin{aligned}
		 \label{eq-tetas-true}
 &  \partial_t {\thetas}-\thetas \lambda'(\chi)\chi_t-\mathrm{div}(\as(\thetas)\nabla\thetas)
 \\ & \quad
		=\ell+|\chi_t|^2+k(\chi)(\theta{-}\thetas)\tetas +
		\nlocss{\chi\teta}\chi \tetas-\nlocss{\chi}\chi\tetas^2  \quad  \text{ in }  \GC\times (0,T),
		\end{aligned}
		\\  \label{eq-tetas-b-true}
		& \as(\thetas)\nabla\thetas\cdot\RCOMMN {\nn_\mathrm{s}} \EEE=0  \quad  \text{ in }  \partial\GC\times (0,T),
		\\
		&
		\begin{aligned}
		\chi_t -\Delta\chi +\beta(\chi)   +
		\gamma'(\chi)+\lambda'(\chi)\thetas
		 \ni -\frac 1 2\vert{\uu }\vert^2-\frac 12\nlocss{\chi}|\uu|^2 - \frac 1 2\nlocss{\chi |\uu|^2}  \quad  \text{ in }   \GC
		\times (0,T),
		\end{aligned}
		\label{eqII-true}
				\\
&\partial_{\nn_\mathrm{s}} \chi=0   \quad  \text{ in }  \partial \GC \times
		(0,T),\label{bord-chi-true}
\end{align}
		\end{subequations}
 \RCORR that \EEE will be studied in the sequel (note that, here  in \eqref{eqII-true},
we have incorporated the term
			$-\lambda'(\chi)\teta_*$, \RCORR featuring \EEE  on the left-hand side of the former \eqref{eqII} into the function $\gamma'$).
Let us finally specify
 \begin{enumerate}
 \setcounter{enumi}{4}
 \item
 our requirements on the heat conductivity:  the function
$\ab:  [0,+\infty) \to \R^+$ is continuous and fulfills
\begin{equation}
\label{hyp-alpha}
\exists \, c_0,\, c_1>0 \quad \RCORR \exists\, \mu>1 \EEE \quad  \forall\, \teta\in  [0,+\infty) \, :  \qquad \qquad  c_0 (1+\theta^\mu) \leq \ab(\theta) \leq c_1 (1+\theta^\mu)\,.
\end{equation}
\RF We will work with its primitive  $\widehat{\alpha} : \R^+ \to \R^+ $  defined by \EEE
\begin{equation}
\label{def-hat-alpha}
\hhat{\alpha}(r): = \int_0^r \alpha(s) \dd s\,;
\end{equation}
 \item our conditions on the  nonlinear functions $k$, $\gamma$, and
 $\lambda$:
 \begin{align}
 &
 \label{hyp-kappa}
 \begin{aligned}
 &
 k: \R \to [0,+\infty) \quad \text{ has polynomial growth, i.e.}
 \\
 &
 \exists\, s>1 \,\, \exists\, C_{k}>0 \  \forall\, x\in \R \, : \quad
 \RCNEW
 k(x)  \EEE \leq C_{k}(|x|^s+1)\,;
 \end{aligned}
 \\ \label{hyp-lambda}
 &
 \lambda: \R \to \R \quad \text{ is Lipschitz continuous and $\delta$-concave for some  $\delta \in \R$};
\\
&
\begin{aligned}
&
\gamma\in \mathrm{C}^1(\R)  \text{ with } \gamma' :\R \to \R \text{ Lipschitz continuous,
$\nu$-convex for some $\nu \in \R$,
and such that }
\\ \label{hyp-W}
& \quad
\exists\, C_W>0 \ \forall\, x \in \R\, :  W(x): = \widehat{\beta}(x) + \gamma(x) \geq -C_W\,;
\end{aligned}
 \end{align}
 \item
 our conditions on the heat sources
$h$ and $\ell$ and on the forces
$\mathbf{f}$ and $ \mathbf{g}$:
\begin{subequations}
\label{cond-data}
\begin{align}
&
\label{cond-h}
h\in L^1(0,T;L^1(\Omega)) \cap L^2(0,T;H^1(\Omega)^*), \qquad h \geq 0 \ \aein \  \Omega \times (0,T),
\\
&
\label{cond-ell}
\ell \in L^1(0,T;L^1(\GC)) \cap L^2(0,T;H^1(\GC)^*), \qquad \ell \geq 0 \ \aein \  \GC \times (0,T),
\\
&
\label{cond-bf-f}
\mathbf{f}\in H^1(0,T;\bsVD^*),
\\
&
\label{cond-bf-h}
\mathbf{g}\in H^1(0,T;\bsY^*).
\end{align}
\end{subequations}
We then introduce the function
\begin{equation}
 \label{cond-F}
\mathbf{F}\in H^1(0,T;\bsVD^*), \qquad \pairing{}{\bsVD}{\mathbf{F}(t)}{\vv}: =  \pairing{}{\bsVD}{\mathbf{f}(t)}{\vv} +
 \pairing{}{\bsY}{\mathbf{g}(t)}{\vv}  \quad \foraa\, t \in (0,T).   \EEE
\end{equation}
\item
As for the initial data $\teta_0, \, \tetaso, \, \uu_0,\, \chi_0$  we suppose that
\begin{subequations}
\label{cond-init}
\begin{align}
&
\label{cond-teta0}
\teta_0\in L^1(\Omega) \qquad \text{with } \quad \inf_{x\in\Omega}\teta_0(x) \geq \theta^*>0,
\\
&
\label{cond-tetaso}
\tetaso\in L^1(\GC) \qquad \text{with } \quad \inf_{x\in\GC}\tetaso(x) \geq \thetas^*>0,
\\
&
\label{cond-u0}
\uu_0\in \bsVD, \qquad  \RCOMMN \uu_0\in \mathrm{dom} (\widehat{\balpha}), \EEE
\\
&
\label{cond-chi0}
 \chi_0\in H^1(\GC), \qquad   \widehat{\beta}(\chi_0) \in L^1(\GC). 
\end{align}
\end{subequations}
\end{enumerate}
 \begin{remark}
 \label{rmk:on-assumptions}
 \upshape
 Observe that the $\delta$-concavity and $\nu$-convexity requirements for $\lambda$
 and $\gamma$ mean that
 \[
\text{ \RCORR the function \EEE } \quad \begin{cases}
r\mapsto \lambda(r) -\tfrac \delta2 r^2 \text{ is  concave;}
\\
r\mapsto \gamma(r) +\tfrac \nu 2 r^2 \text{ is convex.}
\end{cases}
 \]
 These properties will be used for devising a time-discretization scheme of system \eqref{PDE-true} such that the validity of a discrete form of the total energy inequality is ensured, cf.\ Remark
 \ref{rmk:comm-discretiz} ahead.
 \par
 Also the growth condition for $k$ from  \eqref{hyp-kappa} is functional to our approximation scheme, or rather serves to the purpose of simplifying it, cf.\ Remark \ref{rmk:could-be-dispensed} ahead.
 \end{remark}

\subsection{Our existence result}
\label{ss:2.2}
We will prove the existence of weak solutions in the sense specified by Definition \ref{def:weak-sol} below. We mention in advance that
our notion of `weak energy solution' to (the Cauchy problem for)
system \eqref{PDE-true}  features
\begin{itemize}
\item the weak formulation of the   heat equations  \eqref{eqI} and  \eqref{eq-tetas}
with test functions $v\in W^{1,3+\epsilon}(\Omega)$ and $w\in W^{1,2+\epsilon}(\GC)$ for any $\epsilon>0$;
\item the standard weak formulation of the displacement equation \eqref{mom-balance-true}, with test functions in
$\bsVD$;
\item the pointwise formulation  (a.e.\ in $\GC \times (0,T)$) of the flow rule  for the adhesion parameter; 
\item
the \emph{total energy balance}
\begin{equation}
\label{total-enbal}
\begin{aligned}
& \calE(\teta(t),\tetas(t),\uu(t),\chi(t))
  +\int_s^t \int_{\GC} k(\chi(x,r))(\teta(x,r){-}\tetas(x,r))^2 \dd x \dd r
 \\
 &
 \qquad +
\int_s^t \RCOMMN \iint_{\GC\times \GC} \EEE \kr(x,y)\chi(x,r)\chi(y,r)(\teta(x,r){-}\tetas(y,r))^2 \dd x \dd y  \dd r
\\
&
=
\calE(\teta(s),\tetas(s),\uu(s),\chi(s)) + \int_s^t \int_\Omega h \dd x \dd r + \int_s^t \int_{\GC} \ell \dd x \dd r +\int_s^t \pairing{}{\bsVD}{\mathbf{F}}{\uu_t} \dd r 
\end{aligned}
\end{equation}
\RCORR for every $0 \leq s \leq t \leq T$, \EEE
featuring
 the stored energy of the system
\begin{equation}
\label{stored-energy}
\begin{aligned}
\calE(\teta,\tetas,\uu,\chi): =  & \int_\Omega \teta \dd x   + \int_{\GC}\tetas \dd x
\\ &  \
+\frac12 \efm(\uu,\uu) + \bwalpha(\uu)
+\frac12 \int_{\GC} \left( \chi|\uu|^2 {+}\chi|\uu|^2 \nlocss\chi \right) \dd x
+\int_{\GC} \left( \frac12|\nabla\chi|^2 {+}W(\chi)\right) \dd x.
\end{aligned}
\end{equation}
\end{itemize}
\begin{definition}
\label{def:weak-sol}
Given  initial data $(\teta_0,\tetaso,\uu_0,\chi_0)$ fulfilling \eqref{cond-init}, we call
 a quadruple $(\teta,\tetas,\uu,\chi)$ a \emph{weak energy solution} to the Cauchy problem for system
  \eqref{PDE-true} if
   \begin{subequations}
   \label{reg-entropic}
   \begin{align}
   &
   \label{reg-teta}
   \teta \in L^2(0,T;H^1(\Omega)) \cap L^\infty (0,T;L^1(\Omega)) \cap W^{1,1}(0,T;W^{1,3+\epsilon}(\Omega)^*),
   \\
   &
      \label{reg-hatalpha-teta}
 \widehat{\alpha}(\teta) \in L^1(0,T;W^{1,(3+\epsilon)/(2+\epsilon)}(\Omega)),    && \text{for all } \epsilon>0,
   \\
      \label{reg-tetas}
   & \tetas \in L^2(0,T;\VC) \cap L^\infty (0,T;L^1(\GC))  \cap W^{1,1}(0,T;W^{1,2+\epsilon}(\GC)^*), 
   \\
   &
      \label{reg-hatalpha-tetas}
    \widehat{\alpha}(\tetas) \in L^1(0,T;W^{1,(2+\epsilon)/(1+\epsilon)}(\GC)), && \text{for all } \epsilon>0,
   \\
   &
   \label{reg-uu}
   \uu \in H^1(0,T;\bsVD), &&
   \\
   &
   \label{reg-chi}
   \chi \in L^2(0,T;H^2(\GC)) \cap L^\infty(0,T;\VC) \cap H^1(0,T;\HC), &&
   \end{align}
   \end{subequations}
   the quadruple $(\teta,\uu, \tetas, \chi)$ comply with the initial conditions
   \begin{equation}
   \label{init-cond}
   \begin{aligned}
   &
  \teta(x,0) = \RCOMMN \teta_0(x), \quad  \uu(x,0) = \uu_0(x)\quad  && \RCOMMN \foraa\, x \in \Omega,
   \\
   & \RCOMMN \tetas(x,0) =\tetaso(x), \quad \chi(x,0)=\chi_0(x) \quad  && \RCOMMN \foraa\, x \in \GC, \EEE
   \end{aligned}
   \end{equation}
   and with the \emph{positivity} properties
      \begin{equation}
   \label{teta-pos}
   \begin{aligned}
   &
   \teta(x,t) >0  &&   \foraa\, (x,t) \in \Omega \times (0,T)\,,
    \\
&
 \tetas(x,t)>0  &&   \foraa\, (x,t) \in \GC \times (0,T)\,,
    \end{aligned}
   \end{equation}
and there exist
\begin{equation}
\label{reg-selections}
\zzeta \in L^2(0,T;\bsY^*), \qquad \xi \in L^2(0,T;\HC)
\end{equation}
such that the functions $(\teta,\uu,\tetas,\chi,\zzeta,\xi)$ fulfill
\begin{itemize}
   \item the weak formulation of the bulk heat equation
   \begin{equation}
 \label{weak_theta}
\begin{aligned}
  &  \pairing{}{W^{1,3+\epsilon}(\Omega)}{\theta_t}{v} 
- \io \theta\mathrm{div}(\uu_t) v \dd x
+ \io \nabla(\widehat{\alpha}(\theta)) \cdot  \nabla v \dd x
+ \int_{\GC} k(\chi) \theta (\theta - \thetas) v \dd x
\\
& \qquad
+ \int_{\GC} \nlocss{\chi} \chi \theta^2 v \dd x
- \int_{\GC} \nlocss{\chi \thetas} \chi \theta v \dd x
\\
& = \io \tensoret \mathbb{V} \tensoret v  \dd x + \io h v \dd x \qquad \text{for all   } v \in W^{1,3+\epsilon}(\Omega), \ \epsilon>0,  \qquad \aein\, (0,T);
\end{aligned}
\end{equation}
   \item the weak formulation of the surface heat equation
\begin{equation} \label{weak_thetas}
\begin{aligned}
&
 \pairing{}{W^{1,2+\epsilon}(\GC)}{\partial_t {\thetas}}{w} 
- \int_{\GC} \thetas \lambda'(\chi) \chi_t w \dd x
+ \int_{\GC}  \nabla(\widehat{\alpha}(\thetas)) \cdot  \nabla w  \dd x
\\
& =
\int_{\GC} \ell w \dd x +
  \int_{\GC} |\chi_t|^2 w \dd x
+ \int_{\GC}  k(\chi)\thetas(\theta - \thetas) w \dd x
+ \int_{\GC} \nlocss{\chi\theta} \chi \thetas w \dd x
- \int_{\GC} \nlocss{\chi} \chi \thetasq  w \dd x
\\
& \qquad \qquad
 \qquad \text{for all  } w \in W^{1,2+\epsilon}(\GC), \ \epsilon>0, \qquad \aein\, (0,T);
 \end{aligned}
\end{equation}
\item the weak formulation of the displacement equation
\begin{subequations}
\label{weak-U}
\begin{align}
&
\label{weak-displ}
\vfm(\uu_t,\mathbf{v}) + \efm(\uu,\mathbf{v})
+\int_{\Omega} \teta \mathrm{div}(\vv) \dd x +
 \int_{\GC} \chi\uu \mathbf{v} \dd x +
 \langle \zzeta,\mathbf{v} \rangle_{\bsY}
+  \int_{\GC} \chi\uu \nlocss\chi \mathbf{v} \dd x =
 \pairing{}{\bsVD}{\mathbf{F}}{\vv}
 \\
 \intertext{for all $ \vv \in \bsVD$, with }
 \label{zeta}
 &  \zzeta\in L^2(0,T;\bsY^*) \text{ fulfilling }  \zzeta(t) \in \balpha(\uu(t))  \text{ in } \bsY^* \ \foraa\, t \in (0,T);
\end{align}
\end{subequations}
\item \RNEW the pointwise formulation of the flow rule for the adhesion parameter \EEE
   \item the \emph{total energy balance}  \eqref{total-enbal}.
%
\end{itemize}
\end{definition}
\par
We are now in a position to state the main result of the paper: \RCNEW for technical reasons related to our approximation scheme,  we will 
  prove  the existence of a weak energy
solution  such that  the pointwise flow rule for the adhesion parameter  holds with an additional measurable coefficient $\sigma = \sigma(x,t)\in [0,1]$  for the terms on its right-hand side. However, we point out that the function $\sigma$  can take values different from $1$ only on the
set $\{ \chi =0 \}$. \EEE
\begin{maintheorem}[Global existence of weak  \RCORR energy \EEE solutions]
\label{thm:1} 
Assume
\eqref{ass:domain}--\eqref{hyp-k} and \eqref{hyp-alpha}--\eqref{cond-data}.
Then, for every \RCORR quadruple of initial data \EEE  $(\theta_0,\tetaso,\uu_0,\chi_0)$ as in \eqref{cond-init} there exists  a  \emph{weak energy solution} $(\theta,\thetas,\uu,\chi)$ to the Cauchy problem for system
\eqref{PDE-true}, 
\RF with an associated selection $\zzeta$ fulfilling \eqref{weak-displ}--\eqref{zeta}, and a pair $(\xi,\sigma)
\in L^2(0,T;L^2(\GC)) \times  L^\infty(\GC{\times}(0,T))$ \EEE such that
 the pointwise formulation of the flow rule for $\chi$ holds in the following form:
\begin{subequations}
\label{weak-chi-sigma}
 \begin{align}
&  \chi_t+ A\chi +\xi + \gamma'(\chi) +\lambda'(\chi)\tetas= -\frac12 |\uu|^2 \sigma -\frac12 \nlocss{\chi}\, |\uu|^2 \sigma -\frac12 \nlocss{\chi|\uu|^2}\, \sigma \qquad \aein\, \GC \times (0,T),
 \\
 & \text{with } \xi \in \beta(\chi) \qquad \aein\  \GC\times (0,T),
 \\
 & \text{and } \sigma \begin{cases}
  \equiv 1 & \text{on } \{(x,t)\in \GC\times (0,T)\, : \ \chi(x,t)>0\},
 \\
 \in [0,1] & \text{on } \{(x,t)\in \GC\times (0,T)\, : \ \chi(x,t)=0\}.
 \end{cases}
 \end{align}
 \end{subequations} \EEE
 In addition, $\theta$ and
$\thetas$ comply with the positivity properties
\begin{equation}
\label{teta-strict-pos}
\theta \geq \overline{\theta}> 0  \quad \text{a.e. in $\Omega \times (0,T)$}, \qquad
\thetas \geq \overline{\theta}_{\mathrm{s}} > 0 \ \quad \text{a.e. in $\GC \times (0,T)$}
\end{equation}
for some positive constants $\overline\theta$ and $\overline{\theta}_{\mathrm{s}}$.
\end{maintheorem}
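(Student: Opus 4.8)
The proof hinges on a \emph{threefold approximation} of system \eqref{PDE-true} --- a regularization, a time discretization of the regularized system, and two nested limit passages --- and on a battery of a priori estimates that I would first derive \emph{formally} on \eqref{PDE-true}. Testing \eqref{mom-balance-true} by $\uu_t$, \eqref{eqII-true} by $\chi_t$, and the heat equations \eqref{eqI-true}, \eqref{eq-tetas-true} by the constant $1$, and summing, one obtains the total energy balance \eqref{total-enbal}, whence the basic bounds $\teta,\tetas\in L^\infty(0,T;L^1)$, $\uu\in H^1(0,T;\bsVD)$, $\chi\in L^\infty(0,T;\VC)\cap H^1(0,T;\HC)$, and $\tensoret\,\vtens\,\tensoret,\ |\chi_t|^2\in L^1$.

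The two further, more delicate, families of estimates are the ones that ``tame'' the $L^1$ right-hand sides and yield strict positivity. First, testing \eqref{eqI-true} and \eqref{eq-tetas-true} by the negative powers $-\teta^{-p}$ and $-\tetas^{-p}$ with $p>2$, and exploiting that the local and nonlocal boundary couplings acquire a favourable sign once multiplied by such powers, one controls $1/\teta$ and $1/\tetas$ in $L^\infty(0,T;L^{p-1})$ uniformly as $p\to\infty$; this is precisely \eqref{teta-strict-pos}. Second, testing by $\teta^{\nu-1}$ and $\tetas^{\nu-1}$ with $\nu\in(0,1)$ and invoking the growth condition \eqref{hyp-alpha} gives the gradient bound \eqref{3rd-estimate-INTRO}; interpolating it against the $L^\infty(L^1)$ bound through \eqref{interpolation-Lebesgue} and \eqref{poincare-type} produces higher integrability of $\teta,\tetas$, and a comparison in the heat equations then bounds $\teta_t$ and $\partial_t\tetas$ in $L^1(0,T;W^{1,3+\epsilon}(\Omega)^*)$ and $L^1(0,T;W^{1,2+\epsilon}(\GC)^*)$, respectively --- exactly the regularity required in Definition~\ref{def:weak-sol}.

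These tests are not legitimate on \eqref{PDE-true} itself --- in particular the negative-power test presupposes a positivity that, on the time-discrete level, will only be available pointwise, not by a uniform constant. I would therefore pass to the regularized system \eqref{PDE-regul}, obtained by adding the higher-order dissipation terms $-\rho\,\dive(|\tensoret|^{\omega-2}\tensoret)$ and $\rho|\chi_t|^{\omega-2}\chi_t$ with a fixed $\omega>4$ (which upgrades $\uu_t,\chi_t$ to $L^\omega$, hence supplies compactness and enough integrability for the products with the temperature) and by replacing $\balpha,\beta$ with their Yosida regularizations. For \eqref{PDE-regul} I would set up an implicit--explicit Euler time discretization, keeping $\lambda'(\chi)$ and $\gamma'(\chi)$ at the time level dictated by the $\delta$-concavity of $\lambda$ and the $\nu$-convexity of $\gamma'$ from \eqref{hyp-lambda}--\eqref{hyp-W}, so that a \emph{discrete} total energy inequality holds; existence at each step follows from coercivity and monotonicity of the elliptic subproblems together with a fixed-point argument. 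From the discrete energy inequality I would extract the $\tau$-uniform analogues of all the estimates above --- including a pointwise strict positivity of the discrete temperatures, which is what legitimizes the negative-power test at this level --- and then pass to the limit $\tau\to0$ by Aubin--Lions--Simon compactness for $\teta,\tetas$ (from the weak time-derivative bounds), strong convergence of $\uu_t,\chi_t$ (from the $\rho$-terms), the compactness of $\nlname:L^1(\GC)\to L^\infty(\GC)$ from Lemma~\ref{lemmaK}, and a Minty argument identifying the Yosida graphs, obtaining a solution of \eqref{PDE-regul} with its own total energy balance (the inequality recovered by lower semicontinuity, the equality by a subsequent testing).

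The final step removes the regularization, sending $\rho\to0$ and then the Yosida parameter to $0$; this is the \textbf{main obstacle}. As $\rho\to0$ the strong control of $\uu_t,\chi_t$ degenerates, whereas the quadratic heat sources $\tensoret\,\vtens\,\tensoret$ and $|\chi_t|^2$ are only bounded in $L^1$, so to pass to the limit in them I would recover strong convergence of $\uu_t$ in $L^2(0,T;\bsVD)$ and of $\chi_t$ in $L^2(0,T;\HC)$ by a $\limsup$ argument on the energy balance --- the nonnegative $\rho$-terms only helping there --- combined with the strong convergence of $\teta,\tetas$ furnished by the $H^1$-gradient and higher-integrability estimates, which survive the limit because \eqref{hyp-alpha} is used with \emph{no} restriction on $\mu$. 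Once these convergences are in hand, the negative-power test becomes rigorous on the limit problem and yields the uniform bound on $1/\teta,1/\tetas$, i.e.\ \eqref{teta-strict-pos}. Finally, the extra coefficient $\sigma\in[0,1]$ in the limiting flow rule, supported on $\{\chi=0\}$, appears because on the approximate level the damage source $-\tfrac12|\uu|^2(\cdots)$ is truncated by a $\chi$-dependent factor enforcing $\chi\ge0$, and this truncation is completely undone in the limit only where $\chi>0$.
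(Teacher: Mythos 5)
Your overall architecture coincides with the paper's: formal estimates culminating in the total energy balance, the $\rho$-regularization with $\omega>4$ combined with Yosida approximations, a time discretization exploiting the concave--convex splitting of $\lambda$ and $\gamma$ to get a discrete energy inequality, the limits $\tau\to0$, then $\rho\to0$, then the Yosida parameter, strong convergence of the rates via $\limsup$--energy arguments and Minty-type identification, and the coefficient $\sigma$ arising from the positive-part/$\partial\varphi$ device. The genuine gap is in \emph{where} you obtain the strict positivity and, with it, the $\mu$-unrestricted temperature estimates. You claim that pointwise strict positivity of the discrete temperatures ``legitimizes the negative-power test at this level'', and, at the very end, that ``the negative-power test becomes rigorous on the limit problem''. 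Neither holds. At the discrete level, $\theta>0$ a.e.\ does not put $-\theta^{-p}$ (or $\theta^{\nu-1}$) into $H^1(\Omega)$; the paper can only run a truncated version of this test, and the resulting $L^p$ bounds on $1/\theta$ hold only for $\tau<\bar\tau_p$ with $\bar\tau_p\to0$ as $p\to\infty$ (cf.\ \eqref{Lp-est-4-recipr} and Remark \ref{rmk:failure}), so no uniform lower bound --- and hence none of the Second/Third estimates --- is available before $\tau\to0$. On the final limit problem \eqref{PDE-true} the situation is worse: its weak formulation only admits test functions in $W^{1,3+\epsilon}(\Omega)$ and $W^{1,2+\epsilon}(\GC)$, with $\theta_t$, $\partial_t\tetas$ merely $L^1$ in time, so negative powers of the temperatures are not admissible test functions there either, and proving \eqref{teta-strict-pos} at that stage is circular (you would already need $1/\theta\in L^\infty$ to make the test admissible).

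This ordering defect is not cosmetic: your passage $\rho\to0$ in the heat equations relies on strong convergence of $\theta,\tetas$ and on the $L^1$-in-time bounds for $\theta_t,\partial_t\tetas$, which come from the gradient/higher-integrability estimates obtained by testing with $\theta^{\nu-1}$, $\tetas^{\nu-1}$ (or negative powers when $\mu\ge2$); these tests require the uniform positivity \emph{before} $\rho\to0$, i.e.\ uniformly in $\rho$ and $\varsigma$. The paper's resolution --- the very reason for the threefold scheme --- is to transfer the discrete $L^p$ bounds on the reciprocals to the $\tau\to0$ limit via Fatou, obtaining $1/\theta,\,1/\tetas\in L^\infty$ with a constant independent of $\rho,\varsigma$ for the solutions of \eqref{PDE-regul} (estimate \eqref{che-fatica-pos}), and only then to perform the negative-power and $\nu$-power tests rigorously at the time-continuous regularized level, where $\theta\in L^2(0,T;H^1(\Omega))$ is available with $\rho$-dependent bounds obtained by testing with $\theta$ itself, which is what the $L^\omega$ control of $\eps(\uu_t)$ and $\chi_t$ buys; the resulting estimates are then uniform in $\rho,\varsigma$ and survive both remaining limits, while the positivity simply persists by a.e.\ convergence. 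You should restructure your argument accordingly: establish \eqref{che-fatica-pos} for \eqref{PDE-regul} immediately after $\tau\to0$, derive the Second--Seventh estimates there uniformly in $\rho,\varsigma$, and let \eqref{teta-strict-pos} pass to the limit rather than proving it on \eqref{PDE-true}.
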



\EEE
\section{Formal a priori estimates and strategy of the proof of Theorem \ref{thm:1}} \label{s:4}
In this Section we derive the basic a priori estimates on the solutions
to system \eqref{PDE-true},
that are at the core of our definition of \emph{weak  energy  solution}, by carrying out a series of  formal calculations in Section \ref{s:3-calculations} \RCORR ahead. \EEE
Prior to that, we will fix some preliminary results in Sec.\ \ref{ss:3.1} and, again formally, prove the strict positivity of the temperature variables in Sec.\ \ref{ss:3.2}.
 All  \RCORR the calculations in Sec.\ \ref{ss:3.2} and \ref{s:3-calculations} \EEE will be rendered rigorously in the context of  (the time discretization scheme for) a suitable approximation of system  \eqref{PDE-true}, set forth in Sec.\ \ref{s:3.4}. Therein, we will also outline the scheme of the proof of Theorem \ref{thm:1}.
 \par
In what follows, we shall work under the  assumptions listed in
Section \ref{ss:2.1};  in particular,
we  will omit to explicitly invoke them in the statements of Lemmas \ref{lemmaK} and \ref{LemmaE}.
\par
Finally, let us point out that
throughout the paper,
we will use the symbols $c, c', C, C',\ldots$, with meaning that  possibly varies in the same line,  to denote several positive constants only depending on known quantities. Analogously, with the symbols $I_1,I_2,\ldots$ we will denote several integral terms appearing in the estimates.
\subsection{Preliminaries}
\label{ss:3.1}
We will extensively use the following result
\RCORR (cf., e.g., \cite{BBRnonloc1}), \EEE
 collecting key properties of the nonlocal operator $\nlname$ from \eqref{def-nonloc-operator}.
\begin{lemma}\label{lemmaK}
The operator
$\nlname: L^1(\GC) \to L^\infty(\GC)$  is well defined, linear and bounded,
with
\begin{equation}
\label{stima-puntuale}
\RCNEW  \| \nlocss w\|_{L^\infty(\GC)} \leq \| j \|_{L^\infty(\GC\times \GC)}  \|w\|_{L^1(\GC)} \quad \text{for all } w \in L^1(\GC); \EEE
\end{equation}
$\nlname$ also enjoys the positivity property
\begin{equation}
\label{positivity-J}
w \geq 0 \quad \aein\  \GC \ \ \Rightarrow \
\nlocss w \geq  0 \qquad \aein\, \ \GC\,.
\end{equation}
 Furthermore,  for every $1\leq p<\infty$  the operator $\nlname$ is
 continuous from $L^1(\GC)$, equipped with the weak topology, to  $L^p(\GC)$  with the strong topology, namely
    if $w_n\debole w$ in $L^1(\GC)$ then $\nlocss{w_n}\rightarrow \nlocss{w}$ in $L^p(\GC)$.  Finally, there holds
\begin{equation}
\label{nl-symm}
\int_{\GC} \nlocs {w_1}x \, w_2(x) \dd  x = \int_{\GC} \nlocs {w_2}x \, w_1(x) \dd x \qquad \text{for all } w_1,\, w_2 \in L^1(\GC)\,.
\end{equation}
\end{lemma}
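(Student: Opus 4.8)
The statement to prove is Lemma \ref{lemmaK}, collecting properties of the nonlocal operator $\nlname$. Let me sketch a proof.

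The plan is to verify each of the four assertions in turn, all of which follow from the boundedness of the kernel $\kr$ via Fubini's theorem and elementary measure theory.

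\textbf{Boundedness and the pointwise estimate \eqref{stima-puntuale}.} First I would observe that, for $w \in L^1(\GC)$ and a.e.\ $x \in \GC$, we have
\[
|\nlocs wx| = \left| \int_{\GC} \kr(x,y) w(y) \dd y\right| \leq \| \kr\|_{L^\infty(\GC\times\GC)} \int_{\GC} |w(y)| \dd y = \| \kr\|_{L^\infty(\GC\times\GC)} \| w\|_{L^1(\GC)}\,,
\]
which is exactly \eqref{stima-puntuale} and shows $\nlocss w \in L^\infty(\GC)$; measurability of $x\mapsto \nlocs wx$ follows from Fubini--Tonelli since $(x,y)\mapsto \kr(x,y)w(y)$ is measurable on $\GC\times\GC$ and integrable in $y$ for a.e.\ $x$. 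Linearity is immediate from linearity of the integral.

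\textbf{Positivity \eqref{positivity-J}.} This is immediate: if $w \geq 0$ a.e.\ on $\GC$, then since $\kr \geq 0$ (assumption \eqref{hyp-k}), the integrand $\kr(x,y)w(y)$ is nonnegative for a.e.\ $(x,y)$, so $\nlocs wx \geq 0$ for a.e.\ $x \in \GC$.

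\textbf{Weak-to-strong continuity.} This is the main point requiring a little care, though it is still routine. Fix $1\leq p<\infty$ and let $w_n \debole w$ in $L^1(\GC)$. Then for a.e.\ fixed $x\in\GC$, the function $y\mapsto \kr(x,y)$ belongs to $L^\infty(\GC)$, hence testing the weak convergence against it gives $\nlocs{w_n}x \to \nlocs wx$ pointwise a.e.\ in $\GC$. Moreover $\sup_n \| \nlocss{w_n}\|_{L^\infty(\GC)} \leq \| \kr\|_{L^\infty(\GC\times\GC)} \sup_n \| w_n\|_{L^1(\GC)} < \infty$, the last supremum being finite because weakly convergent sequences are bounded. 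Since $\GC$ has finite measure, the dominated convergence theorem (with the constant dominating function $\sup_n \| \nlocss{w_n}\|_{L^\infty(\GC)}$, which is in $L^p(\GC)$) yields $\nlocss{w_n}\to \nlocss w$ in $L^p(\GC)$. The main obstacle here is purely the bookkeeping of null sets: one must ensure the exceptional set where $y\mapsto\kr(x,y)\notin L^\infty$ or where the pointwise limit fails has measure zero, which follows from $\kr\in L^\infty(\GC\times\GC)$ and Fubini.

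\textbf{Symmetry \eqref{nl-symm}.} For $w_1,w_2 \in L^1(\GC)$, the function $(x,y)\mapsto \kr(x,y)\, w_1(y)\, w_2(x)$ is integrable on $\GC\times\GC$ (bounded kernel times an $L^1\otimes L^1$ function on a finite-measure product), so Fubini's theorem applies:
\[
\int_{\GC} \nlocs{w_1}x\, w_2(x)\dd x = \iint_{\GC\times\GC} \kr(x,y) w_1(y) w_2(x) \dd y \dd x = \iint_{\GC\times\GC} \kr(y,x) w_1(x) w_2(y) \dd x \dd y\,,
\]
where the last equality uses the change of names $x\leftrightarrow y$ together with the symmetry $\kr(x,y)=\kr(y,x)$ from \eqref{hyp-k}; the right-hand side equals $\int_{\GC}\nlocs{w_2}x\, w_1(x)\dd x$. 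This completes the proof.

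Overall none of the steps is genuinely hard; the only place demanding attention is the weak-to-strong continuity, where one combines pointwise convergence (obtained by pairing the weakly convergent sequence with the fixed $L^\infty$ kernel slices) with a uniform $L^\infty$ bound and dominated convergence on the finite-measure set $\GC$.
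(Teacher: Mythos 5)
Your proof is correct. Note that the paper itself does not prove Lemma \ref{lemmaK} at all: it quotes it, with a pointer to \cite{BBRnonloc1}, so there is no in-paper argument to compare against. What you wrote is precisely the standard argument one expects behind that citation: the pointwise estimate \eqref{stima-puntuale} and positivity \eqref{positivity-J} from boundedness and nonnegativity of the kernel, the symmetry \eqref{nl-symm} from Fubini plus $\kr(x,y)=\kr(y,x)$, and — the only step with real content — the weak-$L^1$-to-strong-$L^p$ continuity obtained by pairing $w_n\debole w$ with the a.e.\ defined $L^\infty$ slices $\kr(x,\cdot)$ to get pointwise convergence, then invoking the uniform $L^\infty$ bound (weakly convergent sequences in $L^1$ are bounded) and dominated convergence on the finite-measure set $\GC$; your handling of the null sets via Fubini is the right bookkeeping.
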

\paragraph{\bf Variational formulations \RCORR of the heat equations.}
In the following  calculations, we shall (formally) use
the variational formulation of the \RCORR boundary-value problem  \eqref{eqI-true}--\eqref{eqI-cont-true}  \EEE for the bulk heat equation, namely
\begin{equation}
 \label{form_debole_theta}
\begin{aligned}
  & \io \theta_t v \dd x
- \io \theta\mathrm{div}(\uu_t) v \dd x
+ \io \ab(\theta) \nabla\theta \nabla v \dd x
+ \int_{\GC} k(\chi) \theta (\theta - \thetas) v \dd x
\\
& \qquad
+ \int_{\GC} \nlocss{\chi} \chi \theta^2 v \dd x
- \int_{\GC} \nlocss{\chi \thetas} \chi \theta v \dd x
\\
& = \io \tensoret \mathbb{V} \tensoret v  \dd x + \io h v \dd x \qquad \text{for all suitable test functions } v, \qquad \aein\, (0,T)
\end{aligned}
\end{equation}
and of   \RCORR the boundary value problem \eqref{eq-tetas-true}--\eqref{eq-tetas-b-true} for the \EEE surface heat equation, namely
\begin{equation} \label{form_debole_thetas}
\begin{aligned}
&
  \int_{\GC} \partial_t {\thetas} w \dd x
- \int_{\GC} \thetas \lambda'(\chi) \chi_t w \dd x
+ \int_{\GC} \as(\thetas) \nabla\thetas \nabla w  \dd x
\\
& =
\int_{\GC} \ell w \dd x +
  \int_{\GC} |\chi_t|^2 w \dd x
+ \int_{\GC}  k(\chi)\thetas(\theta - \thetas) w \dd x
+ \int_{\GC} \nlocss{\chi\theta} \chi \thetas w \dd x
- \int_{\GC} \nlocss{\chi} \chi \thetasq  w \dd x
\\
& \qquad \qquad
 \qquad \text{for all suitable test functions } w, \qquad \aein\, (0,T).
 \end{aligned}
\end{equation}
We have been purposefully imprecise in
\eqref{form_debole_theta}
and \eqref{form_debole_thetas} since, in any case,
the choices  of the test functions  that we will
make in the  calculations carried out in \RCORR  Sections \ref{ss:3.2} and \ref{s:3-calculations} \EEE
 will be only formal.
\paragraph{\bf Derivation of the total energy balance \eqref{total-enbal}.}
We test the bulk heat equation
\eqref{form_debole_theta}
by $1$, 
 the displacement equation
 \eqref{mom-balance-true}
  by $\uu_t$, 
 the surface heat equation
 \eqref{form_debole_thetas}
 by $1$, and the flow rule
 \eqref{eqII-true}
 for $\chi$ by $\chi_t$.
 Adding up the resulting relations, observing the cancellation of some terms, and
  integrating on  a time interval $(s,t)\subset(0,T)$,  we obtain
  \begin{equation}
  \label{energy-test}
  \begin{aligned}
  &
  \int_s^t \int_\Omega \teta_t \dd x \dd r
  + \ddd{\int_s^t \int_{\GC} k(\chi)(\theta{-}\tetas)\theta \dd x \dd r}{$\doteq I_1$}
  + \ddd{\int_s^t \iint_{\GC\times\GC} \kr(x,y) \chi(x)\theta(x) \chi(y)(\teta(x){-}\tetas(y))  \dd x \dd y \dd r}{$\doteq I_2$}
\\
&  \quad +\int_s^t \efm(\uu,\uu_t) \dd r
  +\ddd{\int_s^t  \langle \balpha(\uu), \uu_t \rangle_{\bsY} \dd r }{$\doteq I_3$}
  +\ddd{\int_s^t \int_{\GC}\chi \uu \uu_t \dd x \dd r}{$\doteq I_4$}
  \\
   & \quad + \ddd{\int_s^t \iint_{\GC\times\GC} \kr(x,y) \chi(x)\uu(x) \uu_t(x) \chi(y)   \dd x \dd y \dd r}{$\doteq I_5$}
+  \int_s^t \int_{\GC} \partial_t \tetas \dd x \dd r
  -  \ddd{\int_s^t \int_{\GC} k(\chi)(\theta{-}\tetas)\tetas \dd x \dd r}{$\doteq I_6$}
  \\
  &\quad
 -  \ddd{\int_s^t \iint_{\GC} \kr(x,y) \chi(x)\thetas(x) \chi(y)(\teta(y){-}\tetas(x))  \dd x \dd y \dd r }{$\doteq I_7$}
 +\int_s^t\int_{\GC} \nabla\chi \cdot \nabla\chi_t \dd x \dd r
 \\
 &\quad
  +\ddd{\int_s^t \int_{\GC} \beta(\chi)\chi_t  \dd x \dd r}{$\doteq I_8$}
 +\ddd{\int_s^t \int_{\GC} \gamma'(\chi)\chi_t  \dd x \dd r}{$\doteq I_{9}$}
 +\ddd{\int_s^t \int_{\GC} \frac12 |\uu|^2\chi_t \dd x \dd r}{$\doteq I_{10}$}
 \\
 & \quad
  +\ddd{\int_s^t \iint_{\GC\times\GC} \frac12 \kr(x,y) \chi_t(x)|\uu|^2(x) \chi(y) \dd x \dd y \dd r}{$\doteq I_{11}$}
 +\ddd{\int_s^t \iint_{\GC} \frac12 \kr(x,y) \chi_t(x)|\uu|^2(y) \chi(y) \dd x \dd y \dd r }{$\doteq I_{12}$}
 \\
 &
 =\int_s^t \int_\Omega h \dd x \dd r + \int_s^t \int_{\GC} \ell \dd x \dd r +\int_s^t \pairing{}{\bsVD}{\mathbf{F}}{\uu_t} \dd r,
 \end{aligned}
  \end{equation}
 where we have formally written the subdifferentials $\balpha(\uu)$ and $\beta(\chi)$ as
  if  singletons.
  We then observe that
  \begin{align}
  &
  I_1-I_6= \int_s^t \int_{\GC} k(\chi)(\theta{-}\tetas)^2 \dd x \dd r,
  \nonumber
  \\
  &
  \label{key-posit-conto}
  \begin{aligned}
  I_2-I_7 & =
  \int_s^t \iint_{\GC\times\GC} \kr(x,y) \chi(x)\theta(x) \chi(y)(\teta(x){-}\tetas(y))  \dd x \dd y \dd r
  \\
  & \quad
  -\int_s^t \iint_{\GC\times\GC} \kr(x,y) \chi(y)\thetas(y) \chi(x)(\teta(x){-}\tetas(y))  \dd x \dd y \dd r
 \\ &  \stackrel{(1)}{=}   \int_s^t \iint_{\GC} \kr(x,y) \chi(x) \chi(y)(\teta(x){-}\tetas(y))^2   \dd x \dd y \dd r
  \end{aligned}
  \\
  &
  \nonumber
  I_3 \stackrel{(2)}{=}  \widehat{\balpha}(\uu(t)) -  \widehat{\balpha}(\uu(s)), \quad
 I_8+I_{9} \stackrel{(3)}{=}  \int_{\GC} W(\chi(t)) \dd x -  \int_{\GC} W(\chi(s)) \dd x,
 \\
 &
 \nonumber
 I_4+I_{10} = \int_{\GC} \frac12\chi(t)|\uu(t)|^2 \dd x -   \int_{\GC} \frac12\chi(s)|\uu(s)|^2 \dd x,
 \\
 &
 I_5+I_{11}+I_{12} =\int_s^t \int_{\GC}
  \frac{\dd}{\dd t} \left(\frac12 \chi|\uu|^2 \nlocss{\chi} \right)  \dd x \dd r =
   \int_{\GC} \frac12\chi(t)|\uu(t)|^2 \nlocs\chi t   \dd x -   \int_{\GC} \frac12\chi(s)|\uu(s)|^2
    \nlocs\chi s   \dd x,
    \nonumber
  \end{align}
  where for  (1)  we have used that $\kr$ is symmetric, for   (2)
  and   (3) we have applied  the chain rule for the subdifferential operators
  $\balpha$ and $\beta$. All in all, we conclude \eqref{total-enbal}.
\paragraph{\bf Coercivity properties of the energy functional $\calE$.}
For our first a priori estimate we  will indeed start from the energy balance  \eqref{total-enbal} and
derive the \RCORR energy \EEE bound $\sup_{t\in (0,T)} |\calE(\theta(t),\thetas(t), \uu(t),\chi(t))| \leq C$ that will be combined with
Lemma \ref{LemmaE} below to derive a series of uniform-in-time estimates for the solutions.
\begin{lemma}\label{LemmaE}
There exist two constants $C_1$, $C_2 >0$ such that
for all $\theta\in L^1(\Omega;\R^+)$, $\thetas \in L^1(\GC;\R^+)$,
$\uu \in \bsVD$, and $\chi \in \VC$, there holds
\begin{equation}
\label{coerc-ene}
\calE(\theta,\thetas, \uu,\chi) \geq
 C_1 \Big( \| \theta \|_{L^1(\Omega)} + \| \thetas \|_{L^1(\GC)} + \| \uu \|^2_{H^1 (\Omega)}
+ \| \chi \|_{\VC \cap L^{\infty}(\GC)}^2 \Big) - C_2.
\end{equation}
\end{lemma}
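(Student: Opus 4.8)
The plan is to establish the coercivity inequality \eqref{coerc-ene} by examining each summand in the definition \eqref{stored-energy} of $\calE(\teta,\tetas,\uu,\chi)$ separately and bounding it from below by the corresponding piece of the right-hand side of \eqref{coerc-ene}, up to an additive constant. First, since $\teta\in L^1(\Omega;\R^+)$ and $\tetas\in L^1(\GC;\R^+)$, the two linear terms satisfy $\int_\Omega \teta \dd x = \|\teta\|_{L^1(\Omega)}$ and $\int_{\GC}\tetas \dd x = \|\tetas\|_{L^1(\GC)}$ exactly, so these contribute the desired $L^1$-norms with constant $1$. Next, by Korn's inequality \eqref{korn} the elastic term obeys $\tfrac12 \efm(\uu,\uu)\geq \tfrac{C_{\mathrm e}}2 \|\uu\|_{H^1(\Omega)}^2$, which takes care of the displacement contribution; and since $\bwalpha\geq 0$ (it is the integral of the nonnegative function $\widehat\eta$, by \eqref{hyp-alpha-eta}) and the nonlocal term $\tfrac12\int_{\GC}(\chi|\uu|^2 + \chi|\uu|^2\nlocss\chi)\dd x$ is nonnegative on the domain where $\chi\in[0,1]$ — which is forced by $\widehat\beta(\chi)\in L^1(\GC)$ together with $\mathrm{dom}(\widehat\beta)\subset[0,1]$ in \eqref{hyp-beta} and the positivity property \eqref{positivity-J} of $\nlname$ — these terms are simply discarded (bounded below by $0$).

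The remaining term is $\int_{\GC}\bigl(\tfrac12|\nabla\chi|^2 + W(\chi)\bigr)\dd x$, and here I would use the lower bound $W(\chi)\geq -C_W$ from \eqref{hyp-W} to get $\int_{\GC}\bigl(\tfrac12|\nabla\chi|^2 + W(\chi)\bigr)\dd x \geq \tfrac12\|\nabla\chi\|_{L^2(\GC)}^2 - C_W\,\mathcal{H}^2(\GC)$. This controls $\|\nabla\chi\|_{L^2(\GC)}^2$ but not the full $\VC=H^1(\GC)$-norm nor the $L^\infty(\GC)$-norm. To recover these, I would invoke the constraint $\chi\in[0,1]$ a.e.\ on $\GC$ (again from $\mathrm{dom}(\widehat\beta)\subset[0,1]$, which is implicit in requiring $\calE(\teta,\tetas,\uu,\chi)<\infty$, or more precisely one should note the inequality is only non-vacuous when all the energy terms are finite), whence $\|\chi\|_{L^\infty(\GC)}\leq 1$ and $\|\chi\|_{L^2(\GC)}^2\leq \mathcal{H}^2(\GC)$ directly. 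Combining, $\|\chi\|_{\VC\cap L^\infty(\GC)}^2 = \|\chi\|_{H^1(\GC)}^2 + \|\chi\|_{L^\infty(\GC)}^2 \leq \|\nabla\chi\|_{L^2(\GC)}^2 + \mathcal{H}^2(\GC) + 1$, so the gradient bound plus a constant suffices.

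Finally I would collect all the pieces: choosing $C_1:=\min\{1,\tfrac{C_{\mathrm e}}2,\tfrac12\}$ (possibly with an extra factor to absorb the $\mathcal{H}^2(\GC)$ and $+1$ terms appearing when passing from $\|\nabla\chi\|_{L^2(\GC)}^2$ to $\|\chi\|_{\VC\cap L^\infty(\GC)}^2$) and $C_2:=C_W\mathcal{H}^2(\GC) + C_1(\mathcal{H}^2(\GC)+1)$, the desired bound \eqref{coerc-ene} follows by summing the term-by-term estimates. The only mildly delicate point — and the one I would be most careful about — is the bookkeeping needed to turn the gradient-only lower bound from $\int_{\GC}(\tfrac12|\nabla\chi|^2 + W(\chi))$ into a bound on the full $H^1\cap L^\infty$-norm, which rests essentially on the hidden $[0,1]$-valuedness of $\chi$; everything else is a routine assembly of nonnegativity observations, Korn's inequality, and the lower bound on $W$.
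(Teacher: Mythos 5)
Your proof is correct and follows essentially the same route as the paper: reduce to the case $\calE<+\infty$, use finiteness of $\int_{\GC}W(\chi)$ together with $\mathrm{dom}(\widehat\beta)\subset[0,1]$ to get $\chi\in[0,1]$ a.e., keep the $L^1$-terms, apply Korn's inequality, discard the nonnegative adhesion and nonlocal terms, and use $W\geq -C_W$ for the gradient. The only cosmetic difference is that you bound $\|\chi\|_{L^2(\GC)}$ directly by $\mathcal{H}^2(\GC)^{1/2}$ from the $[0,1]$-constraint, whereas the paper phrases this step via the Poincar\'e inequality; both are fine.
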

\begin{proof}
First of all, we may suppose that $\calE(\theta,\thetas, \uu,\chi)<+\infty$,
otherwise  estimate \eqref{coerc-ene} is trivial.
Hence, from
$\int_{\GC}W(\chi)\dd x <+\infty$ we infer that $\chi \in [0,1]$ a.e.\ in $\GC$.
Recalling the definition of the stored energy $\calE$ stated by \eqref{stored-energy}
 and taking into account that $\teta$ and
 $\thetas$ are positive functions,
 we have
\[
\begin{aligned}
\calE(\theta,\thetas, \uu,\chi)  & =
\| \teta \|_{L^1(\Omega)}
+ \| \tetas \|_{L^1(\GC)}
+\frac12 \efm(\uu,\uu)
+ \bwalpha(\uu)
\\
& \quad
+\frac12 \int_{\GC} \left( \chi|\uu|^2 +\chi|\uu|^2 \nlocss{\chi} \right)  \dd x
+\int_{\GC} \left( \frac12|\nabla\chi|^2 +W(\chi)\right) \dd x \,.
\end{aligned}
\]
By Korn's inequality \eqref{korn}  we have that
\begin{equation} \label{lemma-stima1}
\frac12 \efm(\uu,\uu) \geq \frac12{C_{\mathrm{e}}} \| \uu \|^2_{H^1(\Omega)}.
\end{equation}
Since  \RCORR $\chi \geq 0$ and $\nlocss{\chi} \geq 0$ \EEE  a.e.\ in $\GC$ by \eqref{positivity-J}, we find that
 \[
  \int_{\GC} \left( \chi|\uu|^2 +\chi|\uu|^2 \nlocss{\chi} \right)  \dd x \geq 0.
  \]
  We also have $\bwalpha(\uu)\geq 0$
while,
according to \eqref{hyp-W},  we have
\begin{equation}
\int_{\GC} W(\chi(t)) \geq -C_{W} |\GC|.
\end{equation}
Finally, since $\calE(\theta,\thetas, \uu,\chi) $ estimates
$\int_{\GC} |\nabla\chi|^2  \dd x $ and
$\int_{\GC} \widehat{\beta}(\chi) \dd x$, and
\RCORR taking into account that \EEE $\mathrm{dom}(\widehat{\beta}) \subset [0,1]$,
we readily conclude the bound for $\|\chi\|_{L^\infty(\GC)}$, and \RCOMMN  for
$\|\chi\|_{\VC}$ \EEE via the Poincar\'e inequality. Hence,  \eqref{coerc-ene} follows.
\end{proof}

\subsection{Strict positivity of $\theta$ and $\thetas$}
\label{ss:3.2}
  In the following calculations we will resort to monotonicity arguments that will be repeatedly used throughout the paper.
\par
\noindent
 We test \eqref{form_debole_theta} and \eqref{form_debole_thetas} by $-\theta^{-p}$ and
$-\thetas^{-p}$, respectively, with
$p>2$.
\par
This choice of  \RCORR these test functions \EEE is only formal for a two-fold reason:
the powers  $-\theta^{-p}$ and $-\thetas^{-p}$, with $p>2$ an arbitrary \emph{real} exponent, are well defined
only if $\theta$ and $\thetas$ are strictly positive. Furthermore,
\RCORR $-\theta^{-p}$ and $-\thetas^{-p}$
 lack  sufficient  spatial regularity to be admissible test functions for the heat equations. \EEE
Anyhow, these issues will be fixed when performing this estimate on a suitably regularized time-discretization scheme for  system \eqref{PDE-true}.
\par
 Adding up,  integrating over $(0,t)$, \RNEW and recalling \eqref{korn},  \EEE we obtain that
\begin{equation} \label{posit1}
\begin{aligned}
&
\frac{1}{p-1} \io \theta^{1-p}(t)  \dd x
+ \frac{1}{p-1} \int_{\GC} \thetas^{1-p}(t)  \dd x
+ p \itt \io  \ab(\theta) \theta^{-(1+p)} | \nabla\theta |^2 \dd x  \dd s
+ \RNEW C_{\mathrm{v}} \EEE \itt \io \frac{|\tensoret|^2}{\theta^p} \dd x \dd s
\\
& \quad
 + \itt \int_{\GC} \frac{|\chi_t|^2}{\thetas^p}  \dd x \dd s
+ p \itt \int_{\GC} \as(\thetas) \thetas^{-(1+p)} | \nabla\thetas|^2 \dd x \dd s
\\
& \quad
- \itt \int_{\GC}\nlocss{\chi} \chi\teta^{2-p} \dd x \dd s +\itt \int_{\GC} \nlocss{\chi\thetas} \chi\teta^{1-p} \dd x \dd s
+\itt \int_{\GC} \chi\thetas^{1-p}  \nlocss{\chi\theta} \dd x \dd s - \int_0^t \int_{\GC} \chi\thetas^{2-p} \nlocss{\chi} \dd x \dd s
\\
& \quad
+ \itt \int_{\GC} k(\chi)(\theta-\thetas) \Big(-\theta^{1-p} - (-\thetas^{1-p})\Big)   \dd x \dd s + \itt \io h \theta^{-p} \dd x \dd s + \int_0^t \int_{\GC} \ell \thetas^{-p} \dd x \dd s
\\
& \RNEW \leq \EEE
\frac{1}{p-1} \io \theta_0^{1-p} \dd x
+ \frac{1}{p-1} \int_{\GC} (\tetaso)^{1-p} \dd x
- \itt \io \theta^{1-p} \mathrm{div}(\uu_t) \dd x \dd s
- \itt \int_{\GC} \thetas^{1-p} \lambda'(\chi) \chi_t  \dd x  \dd s\,.
\end{aligned}
\end{equation}
Due to \eqref{cond-teta0}--\eqref{cond-tetaso}, for the first two terms on the right hand side of \eqref{posit1} it holds
\begin{equation} \label{posult1}
 \io \theta_0^{1-p} +  \int_{\GC} (\tetaso)^{1-p}
\leq \bigg(\frac{1}{\theta^*} \bigg)^{p-1} |\Omega| + \bigg(\frac{1}{\thetas^*} \bigg)^{p-1} |\GC|,
\end{equation}
(recall that $0 < \theta^* \leq \inf_{\Omega} \theta_0$ and $0 <\thetas^* \leq \inf_{\GC}\tetaso$).
Moreover, the third and the fourth term on the right-hand side of \eqref{posit1} can be estimated as follows:
\RCORR  we have \EEE
\begin{equation}
\label{posult2}
\begin{aligned}
- \itt \io \theta^{1-p} \mathrm{div}(\uu_t) \dd x \dd s
&\stackrel{(1)}{\leq}&& \RNEW  \frac{C_{\mathrm{v}}}{2} \EEE  \itt \io \frac{|\tensoret|^2}{\theta^p} \dd x \dd s
+ c \itt \io \theta^{2-p} \dd x \dd s  \\
&\stackrel{(2)} {\leq}&&  \RNEW  \frac{C_{\mathrm{v}}}{2} \EEE \itt \io \frac{|\tensoret|^2}{\theta^p} \dd x \dd s + C \Bigg( \frac{1}{p-1} + \frac{p-2}{p-1} \itt \io \theta^{1-p}
\dd x \dd s\Bigg)   \, \\
&=&&  \RNEW  \frac{C_{\mathrm{v}}}{2} \EEE \itt \io \frac{|\tensoret|^2}{\theta^p} \dd x \dd s + C \left( \frac{1}{p-1} + \frac{p-2}{p-1} \itt \bigg\| \frac{1}{\theta} (s)\bigg\|_{L^{p-1}(\Omega)}^{p-1}  \dd s\right),
\end{aligned}
\end{equation}
where
for (1)
we have used that
\begin{equation}
\label{divut}
|\mathrm{div}(\uu_t)| \leq c_d |\tensoret|,
\end{equation}
 with $c_d>0$ a constant only depending on the space dimension $d=3$. Moreover
we have resorted to the Young inequality
\begin{equation}
\label{Young}
ab \leq \frac{(\delta a)^q}{q} + \frac{1}{q'} \bigg( \frac{b}{\delta} \bigg)^{q'},
\end{equation}
for all $a,b\in\R^+\,, \delta>0$ and $q,q'>1$, such that
$\frac{1}{q}+\frac{1}{q'}=1$. In particular for (2) we have used \eqref{Young} with the choices $b=\delta=1$, $q=(p-1)/(p-2)$, $q'=p-1$ and $a=\theta^{2-p}$.
Furthermore,
\begin{equation}
\label{posult3}
\begin{aligned}
- \itt \int_{\GC} \thetas^{1-p} \lambda'(\chi)\chi_t
&\stackrel{(1)}{\leq}&&
 \frac{1}{2} \itt \int_{\GC} \frac{|\chi_t|^2}{\thetas^p} \dd x \dd s  + c \itt \int_{\GC} \thetas^{2-p} \dd x \dd s  \\
&\stackrel{(2)}{\leq}&&  \frac{1}{2} \itt \int_{\GC} \frac{|\chi_t|^2}{\thetas^p} \dd x \dd s  + C \left( \frac{1}{p-1} + \frac{p-2}{p-1} \itt \int_{\GC} \thetas^{1-p} \dd x \dd s  \right) \\
&=&& \frac{1}{2} \itt \int_{\GC} \frac{|\chi_t|^2}{\thetas^p}  \dd x \dd s+ C \Bigg( \frac{1}{p-1} + \frac{p-2}{p-1}
\itt \left\| \frac{1}{\thetas} (s)\bigg\|_{L^{p-1}(\GC)}^{p-1} \dd s\right),
\end{aligned}
\end{equation}
where (1) follows from
the Lipschitz continuity of $\lambda$ stated by \eqref{hyp-lambda} and (2), again, \RCORR from \EEE \eqref{Young}.
\par
As for  the left-hand side of  \eqref{posit1},
we observe that all terms from the first to the sixth are positive.  We rewrite the sum of the seventh, eighth, ninth and tenth terms as
\begin{equation}
\label{posult4}
\begin{aligned}
&
-\itt \int_{\GC} \theta^{1-p} \left(\nlocss{\chi} \chi\theta - \nlocss{\chi\thetas}\chi\right) \dd x \dd s +\itt\int_{\GC} \thetas^{1-p} \left( \nlocss{\chi\theta}\chi - \nlocss{\chi}\chi\thetas\right) \dd x \dd s
\\
&
\stackrel{(1)}{=} -\itt \iint_{\GC\times\GC}  j(x,y)\chi(x)\chi(y)\theta^{1-p}(x) \Big( \theta(x)-\thetas(y)\Big) \dd x \dd y \dd s
 \\
 & \qquad
 + \itt \iint_{\GC\times \GC} j(x,y)\chi(x)\chi(y)\thetas^{1-p}(y) \Big( \theta(x)-\thetas(y)\Big) \dd x \dd y \dd s
 \\
 & = -  \itt \iint_{\GC \times \GC} j(x,y)\chi(x)\chi(y) \Big(\theta^{1-p}(x) {-}  \thetas^{1-p}  (y)\Big) \Big(\theta(x){-}\thetas(y)\Big) \dd x \dd y \dd s  \stackrel{(2)}{\geq} 0
\end{aligned}
\end{equation}
where for (1) we have exchanged $x$ and $y$ in the second integral and used that the kernel $j$ is symmetric (cf.\ also \eqref{key-posit-conto}), and inequality (2)  follows from the fact that the function
$(0,+\infty) \ni r\mapsto -r^{1-p}$ is strictly increasing (since $p>2$) and from the positivity of the kernel $j$ and of $\chi$.
By the same monotonicity argument we also have that
\begin{equation}
\label{posult5}
\itt \int_{\GC} k(\chi)(\theta-\thetas) \big(-\theta^{1-p} - (-\thetas^{1-p})\big) \dd x \dd s  \geq 0.
\end{equation}
Finally, due to \eqref{cond-h} and  \eqref{cond-ell}, we have that
\begin{equation} \label{posult6}
\itt \io h \theta^{-p} \dd x \dd s \geq 0\,, \qquad \int_0^t \int_{\GC} \ell \thetas^{-p} \dd x \dd s \geq 0\,.
\end{equation}
%
Combining \eqref{posit1} with \eqref{posult1}--\eqref{posult2} and \eqref{posult3}--\eqref{posult6} we infer that
\begin{equation}
\label{posit1b}
\begin{aligned}
&
\bigg\| \frac{1}{\theta} (t)\bigg\|_{L^{p-1}(\Omega)}^{p-1}
+ p (p-1) \itt \io  \ab(\theta) \theta^{-(1+p)} | \nabla\theta |^2 \dd x \dd s+ \frac{\RNEW C_{\mathrm{v}} \EEE (p-1)}{2} \itt \io \frac{|\tensoret|^2}{\theta^p} \dd x \dd s
\\
&
+ \bigg\| \frac{1}{\thetas} (t)\bigg\|_{L^{p-1}(\GC)}^{p-1}+ p(p-1) \itt \int_{\GC} \as(\thetas) \thetas^{-(1+p)} | \nabla\thetas|^2 \dd x \dd s
+ \frac{(p-1)}{2}\itt \int_{\GC} \frac{|\chi_t|^2}{\thetas^p} \dd x \dd s
\\
&
\leq
 \bigg(\frac{1}{\theta^*} \bigg)^{p-1} |\Omega| + \bigg(\frac{1}{\thetas^*} \bigg)^{p-1} |\GC|
+ C \left(1+(p-2) \itt \bigg( \bigg\| \frac{1}{\theta} (s)\bigg\|_{L^{p-1}(\Omega)}^{p-1}
+ \bigg\| \frac{1}{\thetas} (s)\bigg\|_{L^{p-1}(\GC)}^{p-1}  \bigg) \dd s\right)
\end{aligned}
\end{equation}
\RCORR with the constant $C$ on the right-hand side of \eqref{posit1b} independent of $p$.
Thus, \EEE
\begin{equation}
\label{posit2b}
\begin{aligned}
&
\bigg\| \frac{1}{\theta} (t)\bigg\|_{L^{p-1}(\Omega)}^{p-1}
+ \bigg\| \frac{1}{\thetas} (t)\bigg\|_{L^{p-1}(\GC)}^{p-1}\\
&\leq
|\Omega| \bigg(\frac{1}{\theta^*} \bigg)^{p-1}  +  |\GC|\bigg(\frac{1}{\thetas^*} \bigg)^{p-1}
+ C \left(1+(p-2) \itt \bigg( \bigg\| \frac{1}{\theta} (s)\bigg\|_{L^{p-1}(\Omega)}^{p-1}
+ \bigg\| \frac{1}{\thetas} (s)\bigg\|_{L^{p-1}(\GC)}^{p-1}  \bigg) \dd s\right).
\end{aligned}
\end{equation}
Applying the Gronwall Lemma, we \RCORR therefore \EEE obtain that
\[
\bigg\| \frac{1}{\theta} (t)\bigg\|_{L^{p-1}(\Omega)}^{p-1}
+ \bigg\| \frac{1}{\thetas} (t)\bigg\|_{L^{p-1}(\GC)}^{p-1}
\leq \left( |\Omega| \bigg(\frac{1}{\theta^*} \bigg)^{p-1} +  |\GC| \bigg(\frac{1}{\thetas^*} \bigg)^{p-1}+C \right) \exp (C T (p-2))\,.
\]
Therefore,
\begin{equation}
\label{quoted-discrete}
\begin{aligned}
&\max\left\{ \bigg\| \frac{1}{\theta} (t)\bigg\|_{L^{p-1}(\Omega)},   \bigg\| \frac{1}{\thetas} (t)\bigg\|_{L^{p-1}(\GC)} \right \}
 \leq \left(  |\Omega|\bigg(\frac{1}{\theta^*} \bigg)^{p-1} +  |\GC| \bigg(\frac{1}{\thetas^*} \bigg)^{p-1} +C\right)^{1/(p-1)}
\exp \left(C T \frac{(p-2)}{(p-1)}\right)
\\ &
 \leq \left( |\Omega|^{1/(p-1)}\bigg(\frac{1}{\theta^*} \bigg)  + |\GC|^{1/(p-1)}\bigg(\frac{1}{\thetas^*} \bigg) + C^{1/(p-1)} \right)\exp (C T) \leq \overline{C},
 \end{aligned}
\end{equation}
where for the last estimate we have used that $ |\Omega|^{1/(p-1)} \leq |\Omega| +1$ and analogously for $|\GC|^{1/(p-1)}$ and $C^{1/(p-1)}$.
Since  the positive constant $\overline{C}$ is independent of $p$,
we are allowed to conclude that the above estimate holds for arbitrary $p$.
All in all, we find
\begin{equation}
\bigg\| \frac{1}{\theta} \bigg\|_{L^{\infty}(\Omega \times (0,T))} + \bigg\| \frac{1}{\thetas} \bigg\|_{L^{\infty}(\GC \times (0,T))} \leq C.
\end{equation}
Consequently, we infer that the positivity properties \eqref{teta-strict-pos} hold.
\subsection{A priori estimates}
\label{s:3-calculations} We are now in a position to (formally) derive all of our a priori estimates on the solutions to system \eqref{PDE-true}.
\subsubsection{\bf First a priori estimate}
\label{sss:3.3.1}
We consider the
total energy balance
\eqref{total-enbal} on a generic interval $(0,t)$, $t\in (0,T)$. Taking into account the positivity of the second and third terms on the left-hand side, we infer
\begin{equation}
\label{E0}
\begin{aligned}
 \calE(\teta(t),\tetas(t),\uu(t),\chi(t))
 &  \leq
\calE(\teta_0,\tetaso,\uu_0,\chi_0) +
\int_0^t \int_\Omega h \dd x \dd r
+\int_0^t \int_{\GC} \ell \dd x \dd r
 +\int_0^t \pairing{}{\bsVD}{\mathbf{F}}{\uu_t} \dd r
 \\
 &
\doteq I_0+I_1+I_2 \RCORR + I_3 \EEE \,.
\end{aligned}
\end{equation}
Now, by \eqref{cond-init} and \eqref{cond-h}--\eqref{cond-ell}  we have $I_0 + I_1 \RCORR +I_2\EEE \leq C$.
Integrating by parts in time, we have
\begin{equation}
\label{E1}
\begin{aligned}
&
\RCORR I_3 = \EEE
\int_0^t \pairing{}{\bsVD}{\mathbf{F}}{\uu_t} \dd r =  \pairing{}{\bsVD}{ \mathbf{F}(t)}{\mathbf{u}(t)}
-  \pairing{}{\bsVD}{ \mathbf{F}(0)}{\mathbf{u}_0}
- \int_0^t   \pairing{}{\bsVD}{ \mathbf{F}_t}{\mathbf{u}} \dd r
\\
&
\leq \frac{C_1}{2} \| \uu(t) \|_{\bsVDn}^2
+ c \bigg( \| \mathbf{F} \|_{L^{\infty}(0,T; H_{\GD}^1(\Omega)^*)}^2 + \| \mathbf{F}_t \|_{L^2(0,T; H_{\GD}^1(\Omega)^*)}^2 + \| \mathbf{u}_0 \|^2_{\bsVDn}
+  \itt \| \mathbf{u}(s) \|^2_{\bsVDn} \dd s \bigg),
\end{aligned}
\end{equation}
with $C_1>0$ the constant from  the coercivity estimate \eqref{coerc-ene}.  We combine \eqref{E0} and \eqref{E1}; taking into account \eqref{coerc-ene}, we may absorb the term  $\frac{C_1}{2} \| \uu(t) \|^2_{H^1(\Omega)}$,  into the left-hand side of \eqref{E0}. Applying the Gronwall Lemma we conclude that
$\| \uu \|_{L^\infty(0,T;H^1(\Omega;\R^3)} \leq C$.
  Then,
a fortiori, the term \RCOMMN $I_3$ \EEE on the right-hand side of \eqref{E0} is estimated by a constant. All in all, also taking into account that $\calE$ is bounded from below,  we conclude that
\[
\sup_{t\in (0,T)} | \calE(\teta(t),\tetas(t),\uu(t),\chi(t))| \leq C
\]
and then,
by \eqref{coerc-ene}, we find
that
\begin{equation} \label{posit-completa}
\| \theta \|_{L^{\infty}(0,T;L^1(\Omega))}
+ \| \thetas \|_{L^{\infty}(0,T;L^1(\GC))}
+ \| \uu \|_{L^{\infty}(0,T; {H^1(\Omega)})}
+ \| \chi \|_{L^{\infty}(0,T; L^{\infty}(\GC)\cap \VC)} \leq C.
\end{equation}
\begin{remark}
\label{rmk:expl-Linfty}
\upshape
In the calculations for the following a priori estimates we shall not use the
$L^{\infty}(0,T; L^{\infty}(\GC))$-bound for $\chi$. The reason is that
these computations will be rendered rigorously once performed on a suitable approximation of
system
\eqref{PDE-true}, cf.\ system \eqref{PDE-regul} ahead, in which, in particular, the maximal monotone operator $\beta$ is replaced by its Yosida regularization $\betar$, with primitive $\wbetar$. Therefore, on the approximate level the bound  for
$\sup_{t\in (0,T)} \int_{\GC} \wbetar(\chi) \dd x$ \RCORR will no longer yield \EEE the information that
$\chi$ takes values in the interval $[0,1]$ a.e.\ in $\GC$; \RCNEW in particular, some technical adjustments in devising system \eqref{PDE-regul} will be necessary to cope with the lack of positivity of $\chi$. \EEE
\par
In any case, the following calculations can be carried out without resorting to the $L^{\infty}(0,T; L^{\infty}(\GC))$-bound for $\chi$. In this way, they will be immediately translated in the context of system  \eqref{PDE-regul}.
\end{remark}
\subsubsection{\bf Second a priori estimate}
\label{sss:3.3.2}
We first carry out the calculations in the case $\mu\in (1,2),$ then address the cases $\mu>2$ and $\mu=2$.
\paragraph{\bf Case $\mu \in (1,2)$.}
We introduce the function
\begin{equation} \label{Fausiliaria}
F(v):= v^{\nu}/\nu, \quad \quad F'(v):= v^{\nu-1}, \quad \text{with } \nu = 2-\mu \in (0,1). 
\end{equation}
Then, we test \eqref{form_debole_theta} by $F'(\theta)= \theta^{\nu -1}$
and \eqref{form_debole_thetas} by $F'(\thetas)= \thetas^{\nu-1}$, respectively.
Integrating over $(0,t)$ and adding the corresponding equations,
with easy calculations \RNEW (and again recalling \eqref{korn}) \EEE
we obtain that
\begin{equation}
\label{est-teta-1}
\begin{aligned}
&
\itt \int_{\GC} |\chi_t|^2 F'(\thetas) \dd x \dd r
+\RNEW C_{\mathrm{v}} \EEE \itt \io |\tensoret|^2 F'(\theta)\dd x \dd r
\\
&
\quad
+ \ddd{\itt \int_{\GC} \nlocss{\chi \thetas} \chi \theta F'(\theta) \dd x \dd r }{$I_1$}
+ \ddd{\itt \int_{\GC} \nlocss{\chi\theta} \chi \thetas F'(\thetas) \dd x \dd r}{$I_2$}
\\
& \quad
-\ddd{ \itt \io \ab(\theta) \nabla\theta \nabla(F'(\theta)) \dd x \dd r}{$I_3$}
- \ddd{\itt \int_{\GC} \as(\thetas) \nabla\thetas \nabla (F'(\thetas)) \dd x \dd r}{$I_4$}
\\
&
\RNEW\leq \EEE
\ddd{\itt \int_{\GC} k(\chi)  (\theta - \thetas)(\theta F'(\theta)
-\thetas F'(\thetas)) \dd x \dd r}{$I_5$}
+ \ddd{\itt \int_{\GC} \nlocss{\chi} \chi \theta^2 F'(\theta) \dd x \dd r}{$I_6$}
\\
& \quad
+\ddd{ \itt \int_{\GC} \nlocss{\chi} \chi \thetas^2 F'(\thetas) \dd x \dd r}{$I_7$}
+ \ddd{\itt \io \theta_t F'(\theta) \dd x \dd r }{$I_8$}
\\
& \quad
+\ddd{
\itt \int_{\GC} \partial_t {\thetas} F'(\thetas) \dd x \dd r
}{$I_9$}
- \ddd{\itt \io \theta \mathrm{div}(\uu_t) F'(\theta) \dd x \dd r}{$I_{10}$}
\\
&\quad
- \ddd{\itt \int_{\GC} \thetas \lambda'(\chi) \chi_t F'(\thetas) \dd x \dd r}{$I_{11}$}
- \ddd{\itt \io h F'(\theta) \dd x \dd r}{$I_{12}$}- \ddd{\itt \int_{\GC} \ell F'(\thetas) \dd x \dd r}{$I_{13}$}\,.
\end{aligned}
\end{equation}
Now, by the previously proved positivity of $\teta$ and $\tetas$, it is immediate to see that
$I_1\geq 0$ and $I_2\geq 0$.  Recalling \RNEW the growth properties of $\ab$ (cf.\ \eqref{hyp-alpha}), \EEE
we have that
\begin{equation}
\label{termI3}
-I_3 = - \itt \int_{\Omega} \ab(\theta) \nabla\theta \nabla (F'(\theta)) \dd x \dd r
=  (1-\nu) \itt \int_{\Omega} \ab(\theta) |\nabla\theta|^2 \theta^{\nu-2}
\dd x \dd r \geq c \int_0^t  \int_{\Omega} |\nabla\theta|^2  \dd x \dd r
\end{equation}
since $\nu = 2-\mu \RCORR <1 $. \EEE  Analogously, we find that
\begin{equation}
\label{termI4}
-I_4 = - \itt \int_{\GC} \as(\thetas) \nabla\thetas \nabla(F'(\thetas)) \dd x \dd r  \geq  c \itt \int_{\GC} | \nabla\tetas|^2 \dd x \dd r.
\end{equation}
As for  the terms on the right-hand side of \eqref{est-teta-1}, we have that
\begin{eqnarray} \nonumber
I_5 &\leq &
 \itt 	\int_{\GC}  |k(\chi)| (\thetas + \theta) (\thetas^{\nu} + \theta^{\nu} ) \dd x \dd r  \\
& = & C_k \itt \int_{\GC} \RCOMMN (|\chi|^s+1) \EEE (\theta^{\nu+1} +\theta \thetas^{\nu} +\thetas \theta^{\nu} + \thetas^{\nu +1} ) \dd x \dd r
\leq C \itt 	\int_{\GC} (|\chi|^s+1) \Big( \theta^{\nu+1} + \thetas^{\nu+1} \Big) \dd x \dd r,
\nonumber
\end{eqnarray}
where we have used the polynomial growth  of $k$ (cf.\ \eqref{hyp-kappa}), and
 and the previously obtained positivity of $\teta$ and $\tetas$.
We have that
\[
I_6 \leq \itt \int_{\GC} |\nlocss{\chi} |\chi \theta^2 F'(\theta) \dd x \dd r  \leq C \itt   \int_{\GC}  (|\chi|^s+1)\theta^{\nu+1} \dd x \dd r,
\]
where we have again used $\eqref{hyp-kappa}$ and the fact that $\| \chi \|_{L^\infty(0,T;L^1(\GC))} \leq C$, so  that
 $ \|\nlocss{\chi}\|_{L^\infty(\GC{\times}(0,T))} \leq C$ by Lemma \ref{lemmaK}.
Analogously, we have
\[
I_7 \leq  C \itt  \int_{\GC}(|\chi|^s+1)\thetas^{\nu+1} \dd x \dd r.
\]
We clearly have
\[
\begin{aligned}
&
I_8 = \itt \io \theta_t F'(\theta)  \dd x \dd r =\int_\Omega \left(  F(\theta(t)) - F(\theta_0) \right) \dd x = \frac{1}{\nu} \int_\Omega \theta^{\nu}(t) \dd x -  \frac{1}{\nu} \int_\Omega \theta_0^{\nu} \dd x ,\\
&
I_9 = \itt \RCOMMN\int_{\GC} \EEE \partial_t {\thetas} F'(\thetas) \dd x \dd r = \int_{\GC} \left( F(\thetas(t)) - F(\tetaso) \right) \dd x  = \frac{1}{\nu} \int_{\GC}\thetas^{\nu}(t) \dd x -\frac1{\nu}\int_{\GC} (\tetaso)^{\nu} \dd x.
\end{aligned}
\]
Applying  Young's inequality and recalling \eqref{divut},
we obtain that
\[
I_{10} \leq \itt \io |\theta \mathrm{div}(\uu_t) F'(\theta)| \dd x \dd r
\leq \frac{\RCORR C_{\mathrm{v}}\EEE}2 \itt \io |\tensoret|^2  F'(\theta) \dd x \dd r  + C \itt \io \theta^{\nu + 1} \dd x \dd r
\]
while, due to the Lipschitz continuity of $\lambda$,
 we have that
\[
\begin{aligned}
I_{11}
\leq \itt \int_{\GC} \thetas| \lambda'(\chi) ||\chi_t | F'(\thetas) \dd x \dd r
\leq \frac12 \itt \int_{\GC} | \chi_t |^2 F'(\thetas)  \dd x \dd r + C \itt \int_{\GC}  \thetas^{\nu+1} \dd x \dd r.
\end{aligned}
\]
Finally,
by the  positivity  assumptions in \eqref{cond-h} and \eqref{cond-ell},
 we have that
\[
I_{12} = - \itt \io h F'(\theta)  \dd x \dd r \leq 0 \,, \qquad
I_{13} = - \itt \int_{\GC} \ell F'(\thetas)  \dd x \dd r \leq 0 \,.
\]
\par
Collecting all of the above estimates,
we arrive at
\begin{equation}
\label{very-last-calculation}
\begin{aligned}
 & \frac12\itt \int_{\GC} |\chi_t|^2 F'(\thetas) \dd x \dd r
+ \RNEW \frac{C_{\mathrm{v}}}2 \EEE \itt \io |\tensoret|^2 F'(\theta)\dd x \dd r
+c\int_0^t \int_\Omega |\nabla\theta|^2 \dd x \dd r
+c\int_0^t \int_{\GC} |\nabla\thetas|^2 \dd x \dd r
\\
& \quad +  \frac{1}{\nu}\left(  \int_\Omega \theta_0^{\nu} \dd x+ \int_{\GC}  (\tetaso)^{\nu}  \dd x \right)
\\
&
\leq \frac1\nu\left(  \int_{\Omega} \theta^\nu(t) \dd x + \int_{\GC} \thetas^\nu(t) \dd x \right)
+C \int_0^t  \int_{\GC} (|\chi|^s+1) \theta^{\nu+1} \dd x \dd r  + C \itt \int_{\GC} (|\chi|^s+1)  \thetas^{\nu+1} \dd x \dd r
\\
&
\doteq \RCOMMN I_{14}+I_{15}+I_{16}+I_{17} \EEE
\end{aligned}
\end{equation}
Now, since $\nu<1$, we clearly have
\begin{equation}
\label{va-citata}
I_{14} \leq C \left( \|\theta(t)\|_{L^1(\Omega)}^{\nu} {+}1 \right) \leq C, \qquad I_{15} \leq C  \left( \|\thetas(t)\|_{L^1(\GC)}^{\nu} {+}1 \right)  \leq C,
\end{equation}
where the last estimates are due to the previously obtained \eqref{posit-completa}.
Furthermore,  we have
\begin{equation}
\label{thanks-polynomial-growth}
\begin{aligned}
I_{16} \stackrel{(1)}{\leq} C
\int_0^t ( \| \chi^s\|_{L^{\varrho'}(\GC)}+1)\|\teta^{\nu+1}\|_{L^{\varrho}(\GC)} \dd r
 & =  C \int_0^t ( \| \chi\|_{L^{s \varrho'}(\GC)}^s +1) \|\teta\|_{L^{(\nu+1)\varrho}(\GC)}^{\nu+1} \dd r
\\ &  \stackrel{(2)}{\leq} C \int_0^t \|\teta\|_{H^1(\Omega)}^{\nu+1} \dd r
 \\ & \stackrel{(3)}{\leq} C\left(\itt \|\nabla\theta\|_{L^2(\Omega)}^{\nu+1} \dd r + \|\theta\|_{L^\infty(0,T;L^1(\Omega))}^{\nu+1}\right)
 \\ &
 \stackrel{(4)}{\leq} \frac c2 \itt \int_{\Omega} |\nabla\theta|^2 \dd x \dd r + C
 \end{aligned}
\end{equation}
where for (1) we have used  H\"older's inequality with some  \RCORR $\varrho>1$,  \EEE
chosen in such a way that
$(\nu+1)\varrho \leq 4$ so that, by Sobolev embeddings and trace theorems, we have that
$ \|\teta\|_{L^{(\nu+1)\varrho}(\GC)} \leq C \| \teta \|_{H^1(\Omega)}$. Then,
taking into account the previously proved estimate for $\chi$ in $L^\infty(0,T;H^1(\GC))$ and, a fortiori, in $L^\infty(0,T;L^q(\GC))$ for all $1\leq q<\infty$, we conclude (2).
 Estimate
 (3) follows from  the Poincar\'e inequality, and (4) from Young's inequality (since $\nu+1<2$) and, again, \eqref{posit-completa}. In this way, the term
$\RCOMMN \frac c2 \EEE \itt \int_{\Omega} |\nabla\theta|^2 \dd x \dd r$ can be absorbed into the left-hand side of \eqref{very-last-calculation}. The term $\itt \int_{\GC} (|\chi|^s+1)  \thetas^{\nu+1} \dd x \dd r
$ can be treated in a completely analogous way. Hence, from \eqref{very-last-calculation} we  conclude
\begin{equation} \label{2nd-estimate}
\| \theta \|_{L^{2}(0,T;H^1(\Omega))}
+\| \thetas \|_{L^{2}(0,T;H^1(\GC))} \leq C.
\end{equation}
\paragraph{\bf Case $\mu > 2$.}  We test  \eqref{form_debole_theta} and \eqref{form_debole_thetas} by $-\theta^{-q}$ and
$-\thetas^{-q}$, respectively, with $q = \mu -1$.
Adding the resulting relations and integrating over $(0,t)$
we obtain the analogue of  \eqref{posit1}, \RCORR with $q$ in place of $p$. \EEE
We observe that the first two terms on the right-hand side of  \eqref{posit1} can be \RCORR  estimated \EEE as in \eqref{posult1}, while the last seven terms on the left-hand side of \eqref{posit1} can be handled
by monotonicity arguments  as in \eqref{posult4}--\eqref{posult6}.
Since $q=\mu -1$, \RNEW in view of the growth properties of $\alpha$, cf.\ \eqref{hyp-alpha},  \EEE
we have that
\begin{equation} \label{dx1}
q \itt \io  \ab(\theta) \theta^{-(1+q)} | \nabla\theta |^2 \dd x  \dd s
\geq c_0 q \itt \io   \theta^{\mu-(1+q)} | \nabla\theta |^2 \dd x  \dd s = c_0 q \itt \io | \nabla\theta |^2 \dd x  \dd s,
\end{equation}
\begin{equation}\label{dx2}
q \itt \int_{\GC} \as(\thetas) \thetas^{-(1+q)} | \nabla\thetas|^2 \dd x \dd s
\geq  c_0 q \itt \int_{\GC} \thetas^{\mu-(1+q)} | \nabla\thetas|^2 \dd x \dd s
=  c_0  q \itt \int_{\GC} | \nabla\thetas|^2 \dd x \dd s.
\end{equation}
Besides, using \eqref{divut} and the Young inequality \eqref{Young},
the third term on the right-hand side of \eqref{posit1} can be estimated as follows:
\begin{equation} \label{gm}
- \itt \io \theta^{1-q} \mathrm{div}(\uu_t) \dd x \dd s
\leq  \RNEW \frac{C_{\mathrm{v}}}{2} \EEE \itt \io \frac{|\tensoret|^2}{\theta^q} \dd x \dd s  + c \Bigg( \itt \io \theta^{2-q} \dd x \dd s \Bigg).
\end{equation}
Since $\theta\geq\theta^*>0$ \RNEW a.e.\ in $\Omega\times (0,T)$, \EEE we have that
\begin{equation} \label{1aint}
\itt \io \theta^{2-q} \dd x \dd s \leq  \itt \io (\theta^*)^{2-q} \dd x \dd s \leq c \quad \textrm{whenever $q \geq 2$},
\end{equation}
while
\begin{equation} \label{2aint}
\itt \io \theta^{2-q} \dd x \dd s \leq  \itt \io \theta \dd x \dd s + |\Omega| T  \quad \textrm{whenever $1<q<2$}.
\end{equation}
Combining \eqref{gm} with \eqref{1aint}--\eqref{2aint} and recalling that
$
\| \theta \|_{L^{\infty}(0,T;L^1(\Omega))} \leq c,
$ by \eqref{posit-completa},
we conclude that
\begin{equation}
\label{chiuso1}
- \itt \io \theta^{1-q} \mathrm{div}(\uu_t) \dd x \dd s
\leq   \frac{C_{\mathrm{v}}}{2}   \itt \io \frac{|\tensoret|^2}{\theta^q} \dd x \dd s  + c \,.
\end{equation}
Arguing in a similar way and recalling the bound
$\| \thetas \|_{L^{\infty}(0,T;L^1(\GC))} \leq c $,
we infer that
\begin{equation}
\label{chiuso2}
\begin{aligned}
- \itt \int_{\GC} \thetas^{1-q} \lambda'(\chi)\chi_t \dd x \dd s
&\leq \frac{1}{2} \itt \int_{\GC} \frac{|\chi_t|^2}{\thetas^q} \dd x \dd s  + c \itt \int_{\GC} \thetas^{2-q} \dd x \dd s  \\
&\leq \frac{1}{2} \itt \int_{\GC} \frac{|\chi_t|^2}{\thetas^q} \dd x \dd s  + c,
\end{aligned}
\end{equation}
where we have taken into account the Lipschitz continuity of $\lambda$ stated by \eqref{hyp-lambda}.
Combining
 the analogue of  \eqref{posit1}
 with \eqref{dx1}--\eqref{dx2}, \eqref{chiuso1}, \RCORR \eqref{chiuso2}, \RCOMMN
we obtain that \EEE
\[
\begin{aligned}
&
\bigg\| \frac{1}{\theta} (t) \bigg\|^{q-1}_{L^{q-1}(\Omega)}
+ \bigg\| \frac{1}{\thetas} (t)  \bigg\|^{q-1}_{L^{q-1}(\GC)}
+ 
\itt \io \frac{|\tensoret|^2}{\theta^q} \dd x \dd s
\RCOMMN + \EEE \itt \int_{\GC} \frac{|\chi_t|^2}{\thetas^q}  \dd x \dd s
\\
& \quad
+ \itt \io  | \nabla\theta |^2 \dd x  \dd s
+  \itt \int_{\GC} | \nabla\thetas|^2 \dd x \dd s
\leq  c,
\end{aligned}
\]
whence 
estimate \eqref{2nd-estimate} \RCORR follows. \EEE

\paragraph{\bf Case $\mu=2$.}
We test \eqref{form_debole_theta} and \eqref{form_debole_thetas} by $-\theta^{-1}$ and
$-\thetas^{-1}$, respectively. Adding the resulting relations, integrating over $(0,t)$
using \eqref{hyp-alpha}, \eqref{posit-completa}, recalling  that the kernel $j$ is symmetric and that $h \geq 0$ a.e.\ in $\Omega \times (0,T)$ and $\ell \geq 0$ a.e.\ in
$\GC \times (0,T)$,
exploiting cancellations we obtain
\begin{equation}
 \label{mic1}
\begin{aligned}
&
c_0  \itt \io  | \nabla\theta |^2 \dd x  \dd s
+ c_0  \itt \int_{\GC} | \nabla\thetas|^2 \dd x \dd s
+\RNEW C_{\mathrm{v}} \EEE \itt \io \frac{|\tensoret|^2}{\theta} \dd x \dd s
+ \itt \int_{\GC} \frac{|\chi_t|^2}{\thetas}  \dd x \dd s
\\
&
\leq - \io \ln(\theta^0) \dd x  - \int_{\GC} \ln(\thetas^0) \dd x
+ \io \ln(\theta(t)) \dd x + \int_{\GC} \ln(\thetas(t)) \dd x
- \itt \io \mathrm{div}(\uu_t) \dd x \dd s
\\
& \quad
- \itt \int_{\GC} \lambda'(\chi) \chi_t  \dd x  \dd s.
\end{aligned}
\end{equation}
The first two terms on the  \RCORR right-hand \EEE side of \eqref{mic1} are bounded, due to  \eqref{cond-teta0}--\eqref{cond-tetaso}.
Since $\ln(r) \leq 0$ whenever $0<r\leq 1$ and $\ln(r) \leq r$ for every $r>1$,
estimates \eqref{posit-completa} 
ensure that the second and the third term on the right hand side of \eqref{mic1} can be estimated as follows:
\begin{equation} \label{m1}
\io \ln(\theta(t)) \dd x \leq \int_{\Omega \cap \{\theta >1 \}} \ln(\theta(t)) \dd x \leq \int_{\Omega} \theta(t) \dd x \leq c,
\end{equation}
\begin{equation}
\int_{\GC} \ln(\thetas(t)) \dd x \leq \int_{\GC \cap \{\thetas >1 \}} \ln(\thetas(t)) \dd x \leq \int_{\GC} \thetas(t) \dd x \leq c.
\end{equation}
Finally, the last two terms on the \RCORR  right-hand side \EEE can be estimated using \eqref{divut}, \eqref{posit-completa}, 
and the Young inequality:
\begin{eqnarray} \nonumber
- \itt \io \mathrm{div}(\uu_t) \dd x \dd s
\leq c_d \itt \io \frac{|\tensoret|}{\theta^{1/2}} \theta^{1/2} \dd x \dd s
&\leq& \RNEW \frac{C_{\mathrm{v}}}{2}  \EEE \itt \io \frac{|\tensoret|^2}{\theta} \dd x \dd s + c \itt \io \theta  \dd x \dd s \\
&\leq& \RNEW  \frac{C_{\mathrm{v}}}{2} \EEE \itt \io \frac{|\tensoret|^2}{\theta} \dd x \dd s + c, \\ \nonumber
- \itt \int_{\GC} \lambda'(\chi) \chi_t  \dd x  \dd s
\leq c \itt \int_{\GC} \frac{|\chi_t|}{\thetas^{1/2}} \thetas^{1/2} \dd x  \dd s
&\leq& \frac{1}{2} \itt \int_{\GC} \RCORR \frac{|\chi_t|^2}{\thetas} \EEE  \dd x  \dd s + c \itt \int_{\GC} \thetas \dd x  \dd s \\ \label{m2}
&\leq& \frac{1}{2} \itt \int_{\GC} \RCORR \frac{|\chi_t|^2}{\thetas} \EEE  \dd x  \dd s + c,
\end{eqnarray}
again using  the Lipschitz continuity of $\lambda$ ensured by \eqref{hyp-lambda}.
Combing \eqref{mic1} with \eqref{m1}--\eqref{m2}, we infer that
\begin{equation} \nonumber
c_0  \itt \io  | \nabla\theta |^2 \dd x  \dd s
+ \RCOMMN c_0 \EEE \itt \int_{\GC} | \nabla\thetas|^2 \dd x \dd s
+ \frac{\RNEW C_{\mathrm{v}} \EEE}{2}\itt \io \frac{|\tensoret|^2}{\theta} \dd x \dd s
+ \frac{1}{2} \itt \int_{\GC} \frac{|\chi_t|^2}{\thetas}  \dd x \dd s
\leq c,
\end{equation}
whence, recalling \eqref{posit-completa}, 
we have
\eqref{2nd-estimate}.

\subsubsection{\bf Third a priori estimate}
\label{sss:3.3.3}
  We enhance estimate \eqref{2nd-estimate} by  testing \eqref{form_debole_theta} by $F'(\theta)= \theta^{\nu -1}$
and \eqref{form_debole_thetas} by $F'(\thetas)= \thetas^{\nu-1}$, where $\nu \in (0,1)$ is now arbitrary.
Hence, for the terms $I_3$ and $I_4$
from \eqref{termI3} \& \eqref{termI4}
 contributing to
\eqref{est-teta-1} we now find
\[
-I_3 \geq
c  \itt \int_{\Omega} \theta^{\mu+\nu-2} |\nabla\theta|^2 
\dd x \dd r = c \int_0^t  \int_{\Omega} |\nabla(\theta^{(\mu{+}\nu)/2})|^2  \dd x \dd r
\]
and, in the same way,
\[
-I_4  \geq  c \itt \int_{\GC} | \nabla(\tetas^{(\mu{+}\nu)/2})|^2 \dd x \dd r.
\]
With the very same calculations that lead to \eqref{very-last-calculation},
and recalling  \eqref{va-citata},
  \begin{equation}
\label{very-last-calculationMC}
\begin{aligned}
 &
 \int_0^t  \int_{\Omega} |\nabla(\theta^{(\mu{+}\nu)/2})|^2  \dd x \dd r + \int_0^t  \int_{\Omega} |\nabla(\theta^{(\mu{+}\nu)/2})|^2  \dd x \dd r
 \\
 &
 \leq
C
+C \int_0^t  \int_{\GC} (|\chi|^s+1)  \theta^{\nu+1} \dd x \dd r  + C \itt \int_{\GC}  (|\chi|^s+1)\thetas^{\nu+1} \dd x \dd r \,.
\end{aligned}
\end{equation}
Now, as  in \eqref{thanks-polynomial-growth}
we control
$\int_0^t  \int_{\GC} (|\chi|^s+1)  \theta^{\nu+1} \dd x \dd r$
by
 means of $C\|\theta\|_{L^2(0,T; H^1(\Omega))}^2 + C $.
 We proceed analogously for the last term on the right-hand side of \eqref{very-last-calculationMC}. In
 view of the previously proved
estimate  \eqref{2nd-estimate},
we thus
 conclude that \RCOMMN for all  $\mu>1$ and $\nu \in (0,1)$ there exists a positive constant $C$ such that \EEE
\begin{equation} \label{3rd-estimate}
\| \theta^{(\mu{+}\nu)/2} \|_{L^{2}(0,T;H^1(\Omega))}
+\| \thetas^{(\mu{+}\nu)/2} \|_{L^{2}(0,T;H^1(\GC))} \leq C. 
\end{equation}
Notice that
the estimate for the full $H^1$-norm of $\theta^{(\mu{+}\nu)/2}$ (of $\thetas^{(\mu{+}\nu)/2}$, respectively) follows
 by the fact that $\| \teta\|_{L^\infty(0,T;L^1(\Omega))} \leq C$ and
 $\| \tetas\|_{L^\infty(0,T;L^1(\GC))} \leq C$
\RNEW (cf.\ \eqref{posit-completa}) via, e.g., the \RCORR
 Poincar\'e-type \EEE inequality from \eqref{poincare-type}. \EEE
\subsubsection{\bf Fourth a priori estimate}
\label{sss:3.3.4}
We test the weak formulation \eqref{weak-U} of the displacement equation by $\uu_t$,
the weak formulation  of
the flow rule for the adhesion parameter
\begin{equation}
\label{weak-chi}
\chi_t+ A\chi +\xi + \gamma'(\chi) +\lambda'(\chi)\tetas= -\frac12 |\uu|^2 -\frac12 \nlocss{\chi}\, |\uu|^2 -\frac12 \nlocss{\chi|\uu|^2} \qquad \aein\, \GC \times (0,T)
\end{equation}
(with $\xi \in \beta(\chi) $ a.e.\ in $\GC\times (0,T)$),
by
 $\chi_t$, add the resulting equations and integrate in time.
We thus arrive at
\begin{equation}
\label{mechanical-energy-bala}
\begin{aligned}
&
\int_0^t \vfm(\uu_t,\uu_t) \dd r  + \frac12\efm(\uu(t),\uu(t))
+\widehat{\balpha}(\uu(t))  +\frac12\int_{\GC} \chi(t) |\uu(t)|^2 \dd x + \frac12 \int_{\GC}\chi(t)|\uu(t)|^2  \nlocss{\chi(t)} \dd x  \\
&
 + \int_0^t \int_{\GC} |\chi_t|^2 \dd x \dd r
+ \int_{\GC} \left( \frac12 |\nabla \chi(t)|^2 + W (\chi(t)) \right) \dd x
\qquad \qquad
\\
&
\RCNEW = \EEE
\frac12\efm(\uu_0,\uu_0) +  \widehat{\balpha}(\uu_0)  +\frac12\int_{\GC} \chi_0 |\uu_0|^2 \dd x + \frac12 \int_{\GC}\chi_0|\uu_0|^2  \nlocss{\chi_0} \dd x  %
\RCNEW + \int_{\GC} \left( \frac12 |\nabla \chi_0|^2 + W (\chi_0) \right) \dd x
 \EEE
 \\
 & + \int_0^t  \pairing{}{\bsVD}{\mathbf{F}}{\uu_t} \dd r -\int_0^t \int_\Omega \theta \mathrm{div}(\uu_t) \dd x \dd r -\int_0^t \int_{\GC} \lambda'(\chi) \thetas \chi_t \dd x \dd r\,.
 \end{aligned}
 \end{equation}
Now, the first five terms on the right-hand side of \eqref{mechanical-energy-bala} are estimated by a constant in view of  conditions \eqref{cond-init}, also taking into account Lemma \ref{lemmaK}. Furthermore, we find
\[
\begin{aligned}
&
\left|  \int_0^t  \pairing{}{\bsVD}{\mathbf{F}}{\uu_t} \dd r  \right| \stackrel{(1)}{\leq C} \| \mathbf{F}\|_{L^2(0,T;H_{\GD}^1(\Omega)^*)}^2 +\frac14 \int_0^t \vfm(\uu_t,\uu_t) \dd r
\\
&
\left| \int_0^t \int_\Omega \theta \mathrm{div}(\uu_t) \dd x \dd r \right| \stackrel{(2)}{\leq} C \| \theta\|_{L^2(0,T;L^2(\Omega))}^2 +\frac14 \int_0^t \vfm(\uu_t,\uu_t) \dd r
\\
& \left|\int_0^t \int_{\GC} \lambda'(\chi) \thetas \chi_t \dd x \dd r \right|  \stackrel{(3)}{\leq} C    \| \thetas\|_{L^2(0,T;L^2(\GC))}^2 +\frac14 \int_0^t  \int_{\GC}  |\chi_t|^2 \dd x  \dd r
\end{aligned}
\]
where (1) \& (2) follow from Korn's inequality (cf.\ \eqref{korn}), while (3) is due to \eqref{hyp-lambda}. Combining the above estimates with \eqref{mechanical-energy-bala} we deduce
\begin{equation} \label{4th-estimate}
\| \uu \|_{H^{1}(0,T;H_{\GD}^1(\Omega))}  + \|\chi\|_{H^1(0,T;L^2(\GC))} \leq C.
\end{equation}
\subsubsection{\bf Fifth a priori estimate}
\label{sss:3.3.5}
Taking into account the previously obtained \eqref{posit-completa}, \eqref{2nd-estimate}, and \eqref{4th-estimate},
 it is immediate to see, arguing by comparison in the flow rule for the adhesion parameter, that
\[
 \| A\chi +\xi \|_{L^2(0,T;L^2(\GC))} \leq C.
\]
 Hence, well-known arguments from theory of maximal monotone operators
yield  a separate estimate for  $A\chi$ and $\xi$, namely
 \begin{equation}
 \label{comparison-flow-rule}
  \| A\chi \|_{L^2(0,T;L^2(\GC))}  +\|\xi \|_{L^2(0,T;L^2(\GC))} \leq C
  \end{equation}
  so that, by elliptic regularity, we infer that
  \begin{equation}
  \label{L2H2-est}
  \|\chi \|_{L^2(0,T;H^2(\GC))} \leq C\,.
  \end{equation}
\subsubsection{\bf Sixth a priori estimate}
\label{sss:3.3.6}
It follows from \eqref{3rd-estimate} that
$\theta^{(\mu{+}\nu)/2}  $ is estimated in $L^2(0,T;L^6(\Omega))$, namely that
\begin{equation}
\label{inter-b}
\RCOMMN \forall\, \mu >1 \ \forall\, \nu \in (0,1)\quad  \exists C>0\,: \EEE \qquad
  \|\theta\|_{L^{\mu+\nu} (0,T;L^{3(\mu+\nu)}(\Omega))}  \leq C\,.
  \end{equation}
We combine this with the previously found estimate for $\| \theta\|_{L^\infty(0,T;L^1(\Omega))} $: by \eqref{interpolation-Lebesgue} we have the continuous embedding
\[
\begin{gathered}
L^{\mu+\nu} (0,T;L^{3(\mu+\nu)}(\Omega)) \cap L^\infty(0,T;L^1(\Omega)) \subset L^{a}(0,T;L^b(\Omega))
\\
 \text{
 with } \quad  \begin{cases}
\frac1a = \frac{\vartheta}{\mu+\nu},
\\
\frac1b = \frac1{3a} +1-\vartheta
\end{cases}
 \text{ and } \vartheta  = \frac{\mu+\nu}{\mu-\nu+2}
 \end{gathered}
\]
(observe that, with such a choice \RCORR one has \EEE $\vartheta \in (0,1) $ since $\nu \in (0,1)$).
Therefore, we obtain $a = \mu-\nu+2$ and $b = \frac{3(\mu-\nu+2)}{7-6\nu}$, so that
we conclude, from \RCORR \eqref{posit-completa} and \eqref{inter-b}, the bound \EEE
\begin{equation}
\label{genialata-gc}
\RCOMMN \forall\, \mu >1 \  \forall\, \nu \in (0,1)\quad  \exists C>0\,: \EEE \qquad
  \|\theta\|_{L^{\mu-\nu+2} (0,T;L^{3(\mu-\nu+2)/(7-6\nu)}(\Omega))}  \leq C\,.
\end{equation}
\par
Analogously,
due
 to
\eqref{3rd-estimate} and the continuous embedding
$H^1(\GC) \subset L^q(\GC)$ for all
$q<\infty$, \RCOMMN we have \EEE that
$\thetas^{(\mu{+}\nu)/2}  $ is estimated in $L^2(0,T;L^q(\GC))$ for every $q\in [1,\infty)$. Thus
\begin{equation}
\label{interp-s}
\RCOMMN \forall\, \mu >1 \ \forall\,  \nu \in (0,1) \  \forall\, q \in [1,\infty)\quad  \exists C>0\,: \EEE \qquad
  \|\thetas\|_{L^{\mu+\nu} (0,T;L^{q}(\GC))}  \leq C\,.
  \end{equation}
We combine this with the previously found estimate for $\| \thetas\|_{L^\infty(0,T;L^1(\GC))} $. Indeed,  again resorting to
 by \eqref{interpolation-Lebesgue} we observe that the continuous embedding
\[
\begin{gathered}
L^{\mu+\nu} (0,T;L^{q}(\GC)) \cap L^\infty(0,T;L^1(\GC)) \subset L^{a}(0,T;L^b(\GC))
\\
 \text{
 with } \quad  \begin{cases}
\frac1a = \frac{\vartheta}{\mu+\nu},
\\
\frac1b = \frac\vartheta{q} +1-\vartheta
\end{cases}
 \text{ and } \vartheta  = \frac{\mu+\nu}{\mu-\nu+2}
 \end{gathered}
\]
holds.
Now, since $q\in [1,\infty)$ is arbitrary,  the exponent $b$ can be chosen
arbitrarily close to
$\frac{\mu-\nu+2}{2-2\nu}$, \RCOMMN while $a=\mu-\nu+2$.   \EEE
Therefore, \RCORR from \eqref{posit-completa}  and
\eqref{interp-s} \EEE
we conclude that
\begin{equation}
\label{genialata-gc-2}
\RCOMMN \forall\, \mu >1 \ \forall\, \nu \in (0,1)  \ \forall\, l \in \left(1, \frac{\mu-\nu+2}{2-2\nu} \right)\, \quad  \exists C>0\ : \EEE
 \qquad
  \|\thetas \|_{L^{\mu-\nu+2} (0,T;L^{l}(\GC))}  \leq C\,.
\end{equation}

\subsubsection{\bf Seventh estimate on the bulk heat equation}
\label{sss:3.3.7}
In the weak formulation \eqref{form_debole_theta}
of the bulk heat equation we (formally)  choose
 a test function
$v \in W^{1,3+\epsilon}(\Omega) \subset \mathrm{C}^0(\overline\Omega)$, with $\epsilon>0$.
By comparison, we have that
\begin{equation} \label{stim6-mic}
\bigg| \io \theta_t v \dd x \bigg|
\leq
\bigg| \io \mathcal{L}_1 v \dd x  \bigg|
+ \bigg| \int_{\GC} \mathcal{L}_2 v \dd x \bigg|
+ \bigg| \io \ab(\theta) \nabla\theta \nabla v \dd x \bigg|
\doteq I_1 +I_2 +I_3,
\end{equation}
where
\begin{equation} \nonumber
\mathcal{L}_1 := \theta\mathrm{div}(\uu_t) + \RNEW \tensoret \mathbb{V} \tensoret \EEE + h, \quad \quad
\mathcal{L}_2 := - k(\chi) \theta (\theta - \thetas) - \nlocss{\chi} \chi \theta^2 + \nlocss{\chi \thetas} \chi \theta.
\end{equation}
It follows from \RCOMMN \eqref{cond-h}, \eqref{2nd-estimate} \EEE and \eqref{4th-estimate} that
$\| \mathcal{L}_1\|_{L^1(0,T;L^1(\Omega))} \leq C$.
In order to estimate
$\mathcal{L}_2$,
 recalling \eqref{hyp-kappa}
 we observe that
 \begin{equation}
 \label{ancora-kappa}
 \begin{aligned}
 \| k(\chi) \theta (\theta - \thetas)\|_{L^1(\GC)}   &  \leq C \int_{\GC} (|\chi|^s+1) \theta (\theta-\thetas) \dd x
 \\
 &  \leq C(\| \chi\|^s_{L^{s\varrho'}(\GC)}+1) (\| \theta\|_{L^{2\varrho}(\GC)}^{2} + \| \theta\|_{L^{2\varrho}(\GC)}\| \thetas\|_{L^{2\varrho}(\GC)})
 \\
 &
 \leq C (  \|\theta\|_{H^1(\Omega)}^2 +  \|\theta\|_{H^1(\GC)}^2) \,,
 \end{aligned}
 \end{equation}
 where the exponent $\varrho$ is chosen  such  that $\varrho\leq 2$, so that
 $\RCOMMN \| \theta\|_{L^{2\varrho}(\GC)} \EEE \leq C  \|\theta\|_{H^1(\Omega)}$.
In order to
control the terms  $\nlocss{\chi} \chi \theta^2$ and $ \nlocss{\chi \thetas} \chi \theta$ we  resort to an analogous  H\"older estimate,
also taking into account
 that
$\| \nlocss{\chi}  \|_{L^\infty (0,T;L^\infty(\GC))} \leq C$
 and $\|\nlocss{\chi \thetas} \|_{L^\infty (0,T;L^\infty(\GC))} \leq C$  thanks to  \RCOMMN \eqref{posit-completa} \EEE 
 and Lemma \ref{lemmaK}.
 All in all, \RCOMMN thanks again to \eqref{2nd-estimate}, \EEE we conclude that
$\| \mathcal{L}_2\|_{L^1(0,T;L^1(\GC))} \leq C$.
Therefore, denoting by $L_1:= \| \mathcal{L}_1(t) \|_{L^1(\Omega)}$ and $L_2:= \| \mathcal{L}_2(t) \|_{L^1(\GC)}$,
we obtain
\begin{align} \label{6stim1}
I_1 &\leq L_1(t)  \| v \|_{L^{\infty}(\Omega)} \quad \quad \  \textrm{with $L_1 \in L^1(0,T)$}, \\ \label{6stim2}
I_2 &\leq L_2(t)  \| v \|_{L^{\infty}(\GC)} \quad \quad  \textrm{with $L_2 \in L^1(0,T)$}.
\end{align}
\par
Now, by the growth condition on $\alpha$ we have that
\[
I_3 \leq C  \int_\Omega  (1+\teta^\mu) |\nabla \theta| |\nabla v| \dd x \doteq I_{3,1}+I_{3,2}\,.
\]
Clearly,
\[
I_{3,1} \leq  C  \| \nabla \theta\|_{L^2(\Omega)} \| \nabla v \|_{L^2(\Omega)}\,.
\]
\par
In order to estimate the integral term $I_{3,2}$ we resort to
estimate \eqref{genialata-gc}, which yields the bound
\begin{equation}
\label{genialata-omega-2}
\| \teta^{(\mu-\nu+2)/2}\|_{L^2(0,T;L^{6/(7-6\nu)}(\Omega))} \RCOMMN \leq C\,.\EEE
\end{equation}
Therefore,
\begin{equation} \label{6stim3}
\begin{aligned}
I_{3,2} &  \leq
  C  \| \theta^{(\mu-\nu+2)/2} \|_{L^{6/(7-6\nu)}(\Omega)}
\|\RCOMMN \theta^{(\mu+\nu-2)/2} \EEE \nabla \theta\|_{L^2(\Omega)} \| \nabla v \|_{L^{3+\epsilon}(\Omega)} \\ &  =
 C   \| \theta^{(\mu-\nu+2)/2} \|_{L^{6/(7-6\nu)}(\Omega)}
\| \nabla (\theta^{(\mu+\nu)/2} )\|_{L^2(\Omega)} \| \nabla v \|_{L^{3+\epsilon}(\Omega)}
\end{aligned}
\end{equation}
where we have applied H\"older's inequality, choosing
$\nu \in (0,1)$ such that
\[
\frac{7-6\nu}6+ \frac12+\frac1{3+\epsilon} =1.
\]
Therefore, taking into account the previously obtained \eqref{3rd-estimate} and \eqref{genialata-omega-2}, we conclude  that
\[
I_3\leq L_3(t)\| \nabla v \|_{L^{3+\epsilon}(\Omega)} \quad \quad  \textrm{with $L_3 \in L^1(0,T)$}.
\]
All in all, we conclude that
\begin{equation}
\RCOMMN \forall\, \epsilon>0 \quad \exists\, C>0 \,: \EEE \quad
\|  \theta_t \|_{L^1(0,T; W^{1,3+\epsilon}(\Omega)^*)}  \leq C\,.
\end{equation}
%
%
%
\subsubsection{\bf Seventh estimate on the surface heat equation}
\label{sss:3.3.8}
We (formally) test  the surface heat equation \eqref{form_debole_thetas}
by a function $w \in W^{1,2+\epsilon}(\GC) \subset \mathrm{C}^0(\overlineGC)$, with $\epsilon>0$.
By comparison, we have that
\begin{equation} \label{stim6-mics}
\bigg| \int_{\GC} \partial_t {\thetas} w \dd x  \bigg|
\leq
\bigg| \int_{\GC} \mathcal{F} w \dd x \bigg|
+ \bigg| \int_{\GC} \as(\thetas) \nabla\thetas \nabla w  \dd x  \bigg| \doteq I_1 + I_2,
\end{equation}
where
\begin{equation} \nonumber
\mathcal{F}:= \thetas \lambda'(\chi) \chi_t +\ell + |\chi_t|^2
+ k(\chi)\thetas(\theta - \thetas) + \nlocss{\chi\theta} \chi \thetas
- \nlocss{\chi} \chi \thetasq.
\end{equation}
Thanks to   \eqref{2nd-estimate}, \eqref{posit-completa}, \eqref{3rd-estimate}, \eqref{4th-estimate},
and arguing as for \eqref{ancora-kappa},
 we obtain that $\|\RCORR \mathcal{F} \EEE \|_{L^1(0,T;L^1(\GC))} \leq C$.
Therefore,
\begin{equation}
I_1 \leq F(t)  \| w \|_{L^{\infty}(\GC)}, \quad \quad  \textrm{where $F(t):= \| \mathcal{F}(t)\|_{L^1(\GC)}$ and $F\in L^1(0,T)$}.
\end{equation}
In analogy with the calculations in the previous paragraph, we estimate
\[
I_2 \leq C  \int_{\GC}  (1+\thetas^\mu) |\nabla \thetas| |\nabla w| \dd x \doteq I_{2,1}+I_{2,2}\,,
\]
where, again, we trivially estimate
\[
I_{2,1} \leq   C \| \nabla \thetas\|_{L^2(\GC)} \| \nabla w \|_{L^2(\GC)}\,.
\]
In turn, as in \eqref{6stim3} we have
\begin{equation} \label{6stim3-thetas}
\begin{aligned}
I_{2,2} &  \leq
C    \| \thetas^{(\mu-\nu+2)/2} \|_{L^{l}(\GC)}
\| \nabla (\thetas^{(\mu+\nu)/2} )\|_{L^2(\GC)} \| \nabla w \|_{L^{2+\epsilon}(\GC)}
\end{aligned}
\end{equation}
where we have applied H\"older's inequality \RCORR and  chosen \EEE
$\nu \in (0,1)$ such that the exponent $l $ from \eqref{genialata-gc-2} fulfills
\[
\frac{1}l+ \frac12+\frac1{2+\epsilon} =1
\]
\RCORR Hence, we have that \EEE
\[
I_2\leq 
\RCORR F_2(t) \EEE
\| \nabla w \|_{L^{2+\epsilon}(\GC)} \quad \quad  \textrm{with $
\RCORR F_2 \EEE
\in L^1(0,T)$}.
\]
All in all, we conclude that
\begin{equation}
\RCOMMN \forall\, \epsilon>0 \quad \exists\, C>0 \,: \EEE \quad
\|  \partial_t \thetas \|_{L^1(0,T; W^{1,2+\epsilon}(\GC)^*)}  \leq C\,.
\end{equation}

\subsection{Outline of the proof of Theorem \ref{thm:1}}
\label{s:3.4}
As already mentioned, we will   rigorously render  the calculations
in Sec.\ \ref{s:3-calculations}, and thus the resulting a priori estimates,
 by working on a carefully devised time discretization scheme featuring
 \begin{enumerate}
 \item additional regularizing terms for
 system \eqref{PDE-true}, \RCORR modulated by a paramer $\rho>0$, \EEE
 \item
  the Yosida  regularizations \RCORR $\beta_\varsigma$ and $\eta_\varsigma$, $\varsigma>0$,  \EEE of the maximal monotone operators
in the flow rule for $\chi$, and in the boundary condition for $\uu$ on $\GC$,
\end{enumerate}
and where
 \begin{enumerate}
 \setcounter{enumi}{2}
 \item
 \RCNEW several occurrences of the term $\chi$ have been replaced by its positive part $(\chi)^+$  and the right-hand side of the flow rule for $\chi$ has been modulated by a selection in the subdifferential of the positive part of $\chi$ (see \eqref{eqII-regul} and \eqref{subdiff-pos-part} below). 
 \end{enumerate}
 \par
\RCNEW On the one hand,  the latter changes are motivated by the fact that, since the
 subdifferential operator $\beta :\R \rightrightarrows \R$, with domain in  $[0,1]$,  has been replaced by its Yosida regularization, we can no longer exploit the information that $\chi\geq 0$ a.e.\ $\GC\times (0,T)$ which, in turn, would be crucial to estimate from below several integral terms in the subsequent estimates. Clearly, upon passing to the limit as $\varsigma \down 0$ we shall recover positivity  of $\chi$. Correspondingly, the choice to replace
 $\chi$ by its positive part has led to the presence of the subdifferential of the positive part of $\chi$ in the flow rule (for a modelling justification, see, e.g., \cite{BBR4}).
 \par
On the other hand,
 the reason for this threefold approximation procedure
 (the time discretization of system \eqref{PDE-regul} combined with the double-parameter approximation), and
 hence for a threefold  passage to the limit, \EEE resides in the fact that
we shall not be able to obtain
 the positivity
  estimates \eqref{teta-strict-pos} \RCORR for the temperatures \EEE  on the time-discrete level.   Namely,
 for the discrete bulk and surface temperatures, we shall only  prove a  strict positivity
 property (cf.\ \eqref{discrete-positivity} ahead), but not a lower bound by a positive constant as in \eqref{teta-strict-pos}; we postpone to
 Remark \ref{rmk:failure} later on a thorough explanation for this.
  In turn, recall that the
   Second a priori estimate  involves  testing  the temperature equations by negative powers of $\teta$ and $\tetas$: in order to carry it out rigorously, such powers should be in $H^1(\Omega)$/$H^1(\GC)$, respectively. 
   \RCORR By the lack of the \emph{uniform positivity} estimates  \eqref{teta-strict-pos}, \EEE
    we will not have such information at disposal for the discrete bulk and surface temperatures. Hence, we will not be able to replicate the Second estimate (and, a fortiori, the Third, Fifth, and Sixth estimates) on the time discrete scheme.
   \par
   We shall be able to rigorously perform the Second estimate only
   on the time-continuous level, by working with  (a weak formulation of) 
 the following regularized system 
  \begin{subequations}
	\label{PDE-regul}
	\begin{align}
		\label{eqI-regul}
		&
		\begin{aligned}
		\theta_t-\theta\mathrm{div}(\uu_t)-\mathrm{div}(\ab(\theta)\nabla\theta)
		=\tensoret \, \vtens \, \tensoret +h \quad  \text{ in }  \Omega\times (0,T),
		\end{aligned}
				\\
			\label{eqI-bdry-regul}
		&\ab(\theta)\nabla\theta\cdot{\bf n}=0  \quad  \text{ in }   \GD\cup\Gnew\times (0,T),
		\\
		&
			\label{eqI-cont-regul}
		\begin{aligned}
		\ab(\theta)\nabla\theta \cdot{\bf n}
		 =
		 -k(\chi)\teta(\theta{-}\thetas) - \nlocss {\chip}\, \chip \teta^2  + \nlocss {\chip \tetas}\,  \chip \teta  \quad  \text{ in }     \GC \times (0,T),
		\end{aligned}
		\\
		&
		\label{mom-balance-regul}
		-\mathrm{div}(\etens \tensore + \vtens \tensoret + \theta{\mathbb I}) - \rho \mathrm{div}(|\tensoret |^{\omega-2} \tensoret )={\bf f}  \quad  \text{ in }  \Omega\times (0,T),
		\qquad \omega>4,
		\\
		\label{Dir-regul}
		&\mathbf{u}= {\bf 0}  \quad  \text{ in }   \GD\times (0,T),
		\\
		&
		(\etens \tensore+\vtens \tensoret +\theta{\mathbb I}){\bf n}={\bf g}  \quad  \text{ in } \Gamma_{\mathrm{N}}\times (0,T),\label{condIi-regul}\\
		&
		\begin{aligned}
	 (\etens\tensore + \vtens\tensoret   + \theta{\mathbb I}){\bf n}  +\chip{\bf u}+
		\zzeta_\varsigma  +\nlocss{\chip}\, \RCOMMN \chip \EEE \uu =  {\bf 0}   \quad  \text{ in }    \GC \times (0,T),
		\end{aligned}
		\label{condIii-regul}
		\\
		 &
		 \begin{aligned}
		 \label{eq-tetas-regul}
 &\partial_t {\thetas}-\thetas \lambda'(\chi)\chi_t-\mathrm{div}(\as(\thetas)\nabla\thetas)
\\ &
		 \quad
= \ell + |\chi_t|^2+k(\chi)(\theta{-}\thetas)\tetas +
		\nlocss{\chip\teta}\, \RCOMMN\chip \EEE \tetas-\nlocss{\chip}\,\RCOMMN \chip \EEE\tetas^2  \quad  \text{ in }  \GC\times (0,T),
		\end{aligned}
		\\  \label{eq-tetas-b-regul}
		& \as(\thetas)\nabla\thetas\cdot{\RCOMMN \nn_\mathrm{s}} \EEE=0  \quad  \text{ in }  \partial\GC\times (0,T),
		\\
		&
		\begin{aligned}
		& \chi_t + \rho |\chi_t|^{\omega-2}\chi_t  -\Delta\chi +\betar(\chi)   +
		\gamma'(\chi)+\lambda'(\chi)\thetas
		\\ &
		 \quad  =  -\frac 1 2\vert{\uu }\vert^2  \sigma -\frac 12\nlocss{\chip}\,|\uu|^2  \sigma- \frac 1 2\nlocss{\chip |\uu|^2} \sigma  \quad  \text{ in }   \GC
		\times (0,T), \qquad \omega>4,
		\end{aligned}
		\label{eqII-regul}
				\\
&
 \text{with } \sigma\in \partial \varphi(\chi) \quad  \text{ in }  \GC\times (0,T), 
\\
&\partial_{\nn_\mathrm{s}} \chi=0   \quad  \text{ in }  \partial \GC \times
		(0,T),\label{bord-chi-regul}
\end{align}
		\end{subequations}
 where
$\varphi: \R \to [0,+\infty)$ is defined by $\varphi(x) := (x)^+ \ \text{ for all } x \in \R\,$ and hence
\begin{equation}
\label{subdiff-pos-part}
\partial\varphi(x) = \begin{cases}
\{0\} &\text{if } x <0,
\\
[0,1] & \text{if } x=0,
\\
\{1\} & \text{if } x>0.
\end{cases}
\end{equation}
\par
System \eqref{PDE-regul}
features  two parameters   $\rho,\, \varsigma>0$, where:
  \begin{enumerate}
  \item
   the higher order terms $-\rho \mathrm{div}(|\tensoret |^{\omega-2} \tensoret )$ and $ \rho |\chi_t|^{\omega-2}\chi_t$,
   have been added
   to the left-hand sides of the momentum balance and of the flow rule for the adhesion parameter
   in order to compensate  the quadratic terms on the right-hand sides of the bulk and surface heat equations.
   This will pave the way to further estimates, and enhanced regularity, for the temperature variables which, in turn,  will enables us to rigorously perform the estimates from Sec.\ \ref{s:3-calculations} on system \eqref{PDE-regul};
   \item in place of a selection $\zzeta \in \balpha (\uu)$,
   the boundary condition \eqref{condIii-regul} features the term
   \begin{equation}
   \label{zeta-rho}
   \zzeta_\varsigma = \etar(\uu \cdot \mathbf{n}) \mathbf{n}
   \end{equation}
   where $\etar$ is the Yosida regularization of the subdifferential $\RCORR \eta = \EEE \partial\widehat\eta: \R \rightrightarrows \R$ of $\widehat \eta$ from  \eqref{hyp-alpha-eta};
   \item in the flow rule \eqref{eqII-regul}  we have considered the Yosida regularization
   $\betar$
   of the subdifferential  operator $\beta:\R \rightrightarrows \R$.
   \end{enumerate}
   The Yosida regularizations  in the momentum balance equation and in the flow rule for $\chi$ are motivated by the   presence of the nonlinear terms in $\eps(\uu_t)$ and $\chi_t$ in the approximate momentum balance and flow rules.    The parameter $\rho$ is kept distinct from the parameter of the Yosida regularization
   because, for technical reasons 
    it will be necessary to perform two different limit passages, in system \eqref{PDE-regul}. First, we shall let $\rho \down 0$ with fixed $
   \varsigma>0$, while
    the identification of the maximal monotone operators $\balpha$ and $\beta$
    in the momentum balance and in the flow rule
    will be performed  in the limit passage as $\varsigma \down 0$.
   \begin{remark}
\upshape
\label{rmk:could-be-dispensed}
As we have pointed out in Remark \ref{rmk:expl-Linfty},  by replacing the operator $\beta$, with domain in $[0,1]$,
by its Yosida regularization in the flow rule \eqref{eqII-regul}
we can no longer deduce a uniform-in-time bound for $\| \chi \|_{L^\infty(\GC)}$  from the First a priori estimate.
This is the reason why we need to impose
the  growth condition \eqref{hyp-kappa} for the function $k$,
that has been indeed used in Sections \ref{sss:3.3.2},  \ref{sss:3.3.3},  \ref{sss:3.3.7} and \ref{sss:3.3.8}.
\par
A close perusal at those calculations reveal that condition
 \eqref{hyp-kappa}
 could be dispensed with at the price of adding  an
additional approximation
 to system \eqref{PDE-regul}. Namely, it should be necessary to truncate the term $k(\chi)$, and remove the truncation in the limit as  $\varsigma \down 0$. 
  However, to avoid overburdening the analysis we have chosen not to do so.
\end{remark}
   \par
   We will supplement system \eqref{PDE-regul} with initial data
   \begin{subequations}
\label{approx-initial-data}
\begin{align}
&
\begin{aligned}
&
(\teta_\rho^0)_{\rho} \subset L^{\mu+2}(\Omega), \ (\dss \rho0)_{\rho} \subset L^{\mu+2}(\GC)  \text{ fulfilling \eqref{cond-teta0}--\eqref{cond-tetaso}, and such that }
\\
&
\begin{cases}
\teta_\rho^0 \to \teta_0 \text{ in } L^1(\Omega),
\\
\dss  \rho 0 \to \tetaso \text{ in } L^1(\GC)
\end{cases}
\qquad \text{ \RCORR as $\rho \down 0$,}
\end{aligned}
\intertext{with $\mu$ from \eqref{hyp-alpha},}
&
(\ds \uu \rho 0)_\rho \subset  W^{1,\omega}_{\mathrm{D}}(\Omega;\R^3) \text{ and such that } \ds \uu \rho 0\to \uu_0 \text{ in } \bsVD \quad \text{ as $\rho \down 0$,} 
\end{align}
\end{subequations}
where we have used the notation $W^{1,\omega}_{\mathrm{D}}(\Omega;\R^3) := W^{1,\omega}(\Omega;\R^3) \cap \bsVD$.
\par
Our strategy for proving Theorem  \ref{thm:1} is the following:
   \begin{enumerate}
   \item in
   Section \ref{s:5} we will devise a careful time discretization scheme for \RCORR system \eqref{PDE-regul}, \EEE and show that it admits a solution (cf.\ Proposition
   \ref{prop:exists-discrete});  
   \item in Section  \ref{s:6} we will derive a series of a priori estimates on the discrete solutions, and prove that, as the time step vanishes, they converge to a
  (weak) solution of system \eqref{PDE-regul}, cf.\ Theorem \ref{thm:exist-regul} ahead, \RCORR such that the temperature variables $\teta$ and $\tetas$   enjoy the positivity properties \eqref{teta-strict-pos}. This information will enable us to perform  the a priori estimates, formally carried out  in Section \ref{s:3-calculations},   in a rigorous way on the solutions  to system \eqref{PDE-regul}; \EEE
   \item in Section \ref{s:7} we will then address the limit passage in system \eqref{PDE-regul},
   first as $\rho \down 0$ and then as $\varsigma\down 0$. In this way, we shall  obtain the existence of
   \RCORR \emph{weak energy solutions} \EEE  
    to
   system \eqref{PDE-true} \RCORR and conclude the proof of Theorem  \ref{thm:1}. \EEE
   \end{enumerate}

\section{Time discretization}
\label{s:5}
%
%
%
Given a time step $\tau >0$ and an equidistant partition of $[0,T]$
with nodes $\tk := k \tau$, $k=0, \ldots, K_{\tau}$, we approximate the data
$\fff$, $\mathbf{g}$, $h$, and $\ell $ by local means, namely \RCORR we set for $k = 1, \ldots, K_\tau$ \EEE
\begin{equation}
\begin{aligned}
&
\fk := \frac{1}{\tau} \int_{\tkmu}^{\tk} \fff (s) \dd s, \quad \quad
\gk := \frac{1}{\tau} \int_{\tkmu}^{\tk} \mathbf{g} (s) \dd s,
\\
&
\hk := \frac{1}{\tau} \int_{\tkmu}^{\tk} h (s) \dd s,
\quad \quad
\lk := \frac{1}{\tau} \int_{\tkmu}^{\tk} \ell (s) \dd s.
\end{aligned}
\end{equation}
Accordingly, we will also consider the local means $( \ds {\mathbf{F}}\tau k)_{k=1}^{K_\tau}$
 of the function $\mathbf{F}$ from \eqref{cond-F}.
 \par
We shall construct discrete solutions to system \eqref{PDE-regul} by recursively solving an elliptic system, \eqref{discr-syst} below. In particular, for the discrete version of the flow rule for the adhesion parameter we shall use that, thanks to \eqref{hyp-lambda} and \eqref{hyp-W},
 the functions $\lambda$ and $\gamma$ decompose as
\begin{equation}
\label{concave/convex-decomp}
\begin{aligned}
&
\lambda(r) =  \lambda(r) -\frac\delta 2 r^2+ \frac\delta 2 r^2 \doteq \lambda_\delta(r) + \frac\delta 2 r^2  && \text{with } \lambda_\delta &&\text{concave},
\\
&
\gamma(r) =  \gamma(r) +\frac\nu 2 r^2- \frac\nu 2 r^2 \doteq \gamma_\nu (r) - \frac\nu 2 r^2  && \text{with } \gamma_\nu &&\text{convex}.
\end{aligned}
\end{equation}
We will look for the temperature components $\ds \teta \tau k$ and $\dss \tau k$ of  \RCORR the  solutions
to the discrete system \eqref{discr-syst} below \EEE
 in the spaces \RF (recall that $\widehat\alpha$ is the primitive of $\alpha$  null in $0$), \EEE
 \begin{equation}
\label{spaces-X}
\begin{aligned}
&
X = \{ \teta \in \accauno : \quad \hhat{\alpha}(\teta) \in H^1(\Omega) \},
\\
&
X_{\mathrm{s}} = \{ \tetas \in H^1(\GC) : \quad \hhat{\alpha}(\tetas) \in H^1(\GC) \},
\end{aligned}
\end{equation}
\par
We are now in a position to introduce our time discretization scheme for system \RCORR \eqref{PDE-regul}, \EEE postponing to Remark \ref{rmk:comm-discretiz}
below further comments on our choices.
\begin{problem}
Let $\omega>4$. Starting from the initial data
$(\tetaink, \ds \uu \tau 0, \tetasink, \ds \chi \tau 0)$ with
$\tetaink = \ds \teta \rho 0$,
$
\ds \uu \tau 0 = \ds \uu \rho 0$,
$\tetasink = \dss \rho 0$ (cf.\  \eqref{approx-initial-data}),
 and
 $\ds \chi \tau 0=\chi_0$  (with $\chi_0$ from
\eqref{cond-chi0}),
 find
\[
 \{ ( \tetak, \uuk, \dss  \tau k, \chik) \}^{K_{\tau}}_{k=1}
\subset  X \times  W_{\mathrm{D}}^{1,\omega}(\Omega;\R^3) \times X_{\mathrm{s}} \times  H^2(\GC),
\]
fulfilling
\begin{subequations}
\label{discr-syst}
\begin{itemize}
\item[-] the discrete bulk temperature equation
\begin{equation}
\label{discr-b-heat}
\begin{aligned}
&
\io \frac{\tetak - \tetakmu}{\tau} v \dd x
- \io \tetak \mathrm{div} \bigg( \frac{\uuk-\uukmu}{\tau} \bigg) v \dd x
+ \io \alpha(\tetak) \nabla \tetak \nabla v \dd x
\\
& \quad
+ \int_{\GC}  k(\chikmu) \tetak(\tetak-\dss \tau k) v \dd x
+ \int_{\GC} \nlocss {\chikmup}\, \chikmup \big( \tetak \big)^2 v \dd x
- \int_{\GC} \nlocss {\chikmup \tetask}\, \chikmup \tetak v \dd x
\\ &
=
\io \tensoretk \, \vtens \, \tensoretk v  \dd x
+ \pairing{}{H^1(\Omega)}{\hk}{v}
\end{aligned}
\end{equation}
for all test functions $v \in H^1(\Omega)$;
\item[-] the discrete momentum balance equation
\begin{equation}
\label{discr-mom-bal}
\begin{aligned}
&
\vfm \bigg( \frac{\uuk-\uukmu}{\tau}, \vv \bigg)
+ \rho \io \left| \eps\left(\frac{\uuk-\uukmu}{\tau} \right)\right|^{\omega - 2} \eps\left(\frac{\uuk-\uukmu}{\tau} \right)  \eps (\vv)  \dd x
+ \efm \Big( \uuk , \vv \Big)
\\ & \quad  + \io \tetak \mathrm{div}(\vv) \dd x
+ \int_{\GC} \chikp \uuk \vv \dd x
+\int_{\GC} \ds \zzeta \tau k \cdot \vv \dd x
+ \int_{\GC} \nlocss {\chikp}\, \chikp \uuk \vv \dd x
=
\langle \RCOMMN \FF_{\tau}^k, \EEE \vv \rangle_{\bsVD},
\\
&
\text{with } \ds \zzeta \tau k   =  \etar (\ds \uu\tau k \cdot \mathbf{n}) \mathbf{n}
\end{aligned}
\end{equation}
for all test functions $\vv \in W^{1,\omega}_{\mathrm{D}}(\Omega;\R^3)$;
\item[-] the discrete surface temperature equation
\begin{equation}
\label{discr-surf-temp}
\begin{aligned}
&
\int_{\GC} \frac{\tetask - \tetaskmu}{\tau} v \dd x
- \int_{\GC} \tetask  \frac{\lambda(\ds \chi\tau k)-\lambda(\ds \chi \tau{k-1})}{\tau}  v \dd x
+ \int_{\GC} \alpha(\tetask) \nabla \tetask \nabla v \dd x
\\
&
=
\int_{\GC} \bigg| \frac{\chik - \chikmu}{\tau}\bigg|^2 v \dd x
+ \int_{\GC} k(\chikmu)(\tetak -\tetask) \tetask v \dd x
+ \int_{\GC} \nlocss {\chikmup \tetak} \, \chikmup \tetask v \dd x
\\
& \qquad - \int_{\GC} \nlocss {\chikmup} \chikmup \big(\tetask \big)^2 v \dd x + \pairing{}{H^1(\GC)}{\lk}{v}
\end{aligned}
\end{equation}
for all test functions $v \in  H^1(\GC) $;
\item[-]
the discrete flow rule for the adhesion parameter
\begin{equation}
\label{discr-flow-rule}
\begin{aligned}
&
\frac{\chik-\chikmu}{\tau}
+\rho \left|\frac{\ds\chi\tau k - \ds \chi\tau{k-1}}\tau \right|^{\omega-2} \frac{ \ds \chi \tau k - \ds \chi \tau{k-1}}\tau
 + A\ds \chi \tau k +
 \betar(\ds \chi \tau k)
+\gamma_\nu'(\ds \chi \tau k) - \nu \ds \chi \tau{k-1}
\\
&
= - \lambda_\delta'(\chikmu) \tetask -\delta \ds \chi \tau k \tetask -
 \frac12 |\uukmu|^2\xik -\frac12 \nlocss {\chikp}\, |\uukmu|^2 \xik -\frac12 \nlocss {\chikmup |\uukmu|^2}\xik
\qquad \aein \, \GC\,,
\\
&
\text{with } \xik  \RCNEW \in \partial \varphi(\chik) \qquad \aein\, \GC\,. \EEE
\end{aligned}
\end{equation}
\end{itemize}
\end{subequations}
\end{problem}
Observe that, thanks to the request that
$\widehat{\alpha}(\ds \teta \tau k) \in H^1(\Omega)$ and $\widehat{\alpha}(\dss \tau  k) \in H^1(\GC)$, the weak \RCORR formulations \EEE for the bulk and surface
equation  with test functions in $H^1(\Omega)$ and $H^1(\GC)$, respectively, \RCORR are \EEE  appropriately posed.
Furthermore, taking into account the growth properties of $\alpha$ (cf.\ also \eqref{growth-primitives} ahead), \RCORR from $\widehat{\alpha}(\ds \teta \tau k) \in H^1(\Omega)$ \EEE we conclude that
$\ds \teta \tau k \in L^{6\mu+6} (\Omega)$. \RCORR An even higher integrability property
holds for
$\dss \tau k$, as a consequence of the fact that $\widehat{\alpha}(\dss \tau k) \in H^1(\GC)$,
taking into account that $H^1(\GC) \subset L^q(\GC)$ for all $1\leq q<\infty$.
Therefore,
starting from initial data
$(\ds \teta \tau 0, \ds \uu \tau 0, \tetasink, \ds \chi\tau 0)$ with
$\ds \teta \tau 0 \in  L^{\mu+2}(\Omega)$ and $\tetasink \in   L^{\mu+2}(\GC)$, we will  gain the same integrability property (an even higher one)  also for the discrete solutions. This information shall be used for the rigorous a priori estimates  performed on the time-discrete scheme in Section \ref{s:6}.
 \EEE
\begin{remark}
\label{rmk:comm-discretiz}
\upshape The time-discretization scheme \eqref{discr-syst}  has been carefully devised in such a way as to ensure the validity of a
 form of the total energy balance (cf.\  \eqref{discr-tot-enbal-trunc} and \eqref{discr-tot-enbal}  ahead) for the discrete solutions.
 This has motivated
 \begin{itemize}
 \item[-] the choice of the terms to be kept implicit instead of explicit;
 \item[-]
  the usage of the convex and concave decompositions of
 the functions $\gamma$ and $\lambda$ in the discrete flow rule for $\chi$, which will allow us to exploit suitable convexity/concavity inequalities
 (cf.\ \eqref{conv/conc-ineqs} ahead) that are  instrumental to the discrete total energy balance
   \RCNEW and, likewise,
 \item[-] the presence of the selections $\xik \in \partial\varphi(\chik)$ in  the terms of the discrete flow rule
  that  are coupled with the terms of the discrete 
  momentum balance featuring the positive parts $\chikp$.  \EEE
  \end{itemize}
\par
As it turns out,    scheme  \eqref{discr-syst} is fully implicit, with  all equations tightly coupled one with another. Because of this,
	it will not be possible to prove the existence of solutions to \eqref{discr-syst} by  separately solving the discrete bulk temperature equation, the
	 momentum balance equation, the
	 surface temperature equation, and  the flow rule for the adhesion parameter. Instead, to prove existence for (a suitably truncated version of)
	system  \eqref{discr-syst}  we will resort to
	a fixed-point type
	 existence result for equations featuring pseudo-monotone operators.
%
%
%
\end{remark}

\par
The main result of this section ensures the existence of solutions to scheme \eqref{discr-syst}, as well as the strict positivity of the discrete temperatures (cf.\ \eqref{discrete-positivity}).

We will also show that the solutions to system \eqref{discr-syst} comply with
the total energy inequality \eqref{discr-tot-enbal} below,
featuring the energy functional $ \RCORR \calE_\varsigma: \EEE L^1(\Omega)\times L^1(\GC) \times H_{\Dir}^1(\Omega;\R^3) \times H^1(\GC) \to \R$ defined by
\begin{equation}
\label{stored-energy-rho}
\begin{aligned}
&\calE_\varsigma(\teta,\tetas,\uu,\chi): =   \int_\Omega \teta \dd x   + \int_{\GC}\tetas \dd x
+\frac12 \efm(\uu,\uu)
\\
&
+ \int_{\GC} \RCORR \wetar \EEE (\uu {\cdot} \mathbf{n}) \dd x
+\frac12 \int_{\GC} \left( \chip|\uu|^2 {+}\chip|\uu|^2 \nlocss\chip \right) \dd x
+\int_{\GC} \left( \frac12|\nabla\chi|^2 {+}\wbetar(\chi) + \gamma(\chi)\right) \dd x,
\end{aligned}
\end{equation}
with $\RCORR \wetar \EEE$ and $\wbetar$ the Yosida approximations of the
functions $\widehat\eta$ and $\widehat\beta$.
 In fact, inequality \eqref{discr-tot-enbal} will be the starting point for the derivation of the estimates,  uniform w.r.t.\ $\tau>0$,
in Sec.\ \ref{s:6}.
\par
In the statements of all the following results, we will omit to explicitly invoke the assumptions of Theorem  \ref{thm:1}.
\begin{proposition}
\label{prop:exists-discrete}
 Let $\tau>0$, sufficiently small, be fixed.
Start from initial data
\begin{equation}
\label{previous-data}
\begin{gathered}
   (\ds \teta \tau 0, \ds \uu \tau 0, \tetasink, \ds \chi\tau 0) =
\RCORR  (\teta_{\rho}^0, \uu_\rho^0, \teta_{\mathrm{s},\rho}^0,\chi_0) \EEE
   \in L^{\mu+2}(\Omega)
   \times W_{\mathrm{D}}^{1,\omega}(\Omega;\R^3)  \times L^{\mu+2}(\GC) \times H^2(\GC)
   \\
      \text{fulfilling
\eqref{cond-teta0} and \eqref{cond-tetaso}.}
\end{gathered}
\end{equation}
Then,  for every $k \in \{1, \ldots, K_\tau\}$ and
there exists  a quadruple  $(\ds \teta \tau{k}, \ds \uu \tau{k}, \dss \tau{k}, \ds \chi\tau{k}) \in
\RCNEW X  \EEE 
\times  W_{\mathrm{D}}^{1,\omega}(\Omega;\R^3) \times
\RCNEW X_{\mathrm{s}} \EEE
\times H^2(\GC)
$, \RCNEW with an associated $\xik \in L^\infty(\GC)$ such that  $\xik \RCNEW \in \partial \varphi(\chik)$ a.e.\ in $\GC$,
 \RCORR solving \eqref{discr-syst}. \EEE
\par
Furthermore, the discrete solutions $(\ds \teta \tau k )_{k=1}^{K_\tau}$
 and $(\dss  \tau k)_{k=1}^{K_\tau}$  enjoy the following estimate
\begin{equation}
\label{Lp-est-4-recipr}
\exists\, S_0 >0 \ \ \forall\, p \in [1,\infty) \ \ \exists\, \bar{\tau}_p>0 \ \ \forall\, \tau \in (0,\bar{\tau}_p)\,  \ \ \forall\, k \in \{1, \ldots, K_\tau\} \, : \quad
\left\| \frac1{\ds \teta \tau k}\right\|_{L^p(\Omega)} + \left\| \frac1{\dss  \tau k}\right\|_{L^p(\GC)} \leq S_0.
\end{equation}
In particular,
\begin{equation}
\label{discrete-positivity}
\ds \teta \tau k >0 \quad \aein\ \Omega, \qquad \dss \tau k>0 \quad \aein \ \GC \qquad \text{for all } k \in \{1, \ldots, K_\tau\}.
\end{equation}
%
\par Finally, there holds
\begin{equation}
\label{discr-tot-enbal}
\begin{aligned}
&
\calE_\varsigma (\ds \teta \tau k,  \dss \tau k,  \ds \uu \tau k, \ds \chi  \tau k)
+\rho\tau \int_{\Omega} \left|\eps \left( \frac{\ds \uu \tau k {-} \ds \uu \tau k}{\tau}\right) \right|^\omega \dd x  + \rho \tau \int_{\GC} \left|\frac{\ds \chi \tau k - \ds \chi \tau{k-1}}{\tau}\right|^\omega \dd x
\\
& \quad
+\tau
\int_{\GC} k(\ds \chi \tau{k-1}) (\ds \teta \tau k {-} \dss \tau k)^2
\dd x
+\tau \int_{\GC\times \GC} j(x,y) \RCNEW (\ds \chi \tau {k-1} (x) )^+ \RCNEW (\ds \chi \tau {k-1} (y))^+ \EEE (\ds \teta \tau k (x){-} \dss \tau k(y))^2 \dd x \dd y
\\
&
 \leq  \calE_\varsigma(\ds \teta \tau {k-1}, \dss  \tau {k-1}, \ds \uu \tau{k-1}, \ds \chi \tau{k-1} ) + \tau \int_{\Omega} \ds h \tau k \dd x +\tau \int_{\GC} \ds \ell \tau k \dd x +
 \tau\pairing{}{\bsVD}{\ds {\mathbf{F}}\tau k}{\frac{\ds \uu {\tau} k-\ds \uu \tau {k-1}}{\tau}}\,.
 \end{aligned}
\end{equation}
\end{proposition}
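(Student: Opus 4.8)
The plan is organised in three steps, matching the three assertions of the statement, plus a final remark on the main difficulties.

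\emph{Step 1 (existence of a solution to \eqref{discr-syst}).} We proceed by induction on $k$. Passing to the Kirchhoff-type variables $w:=\widehat\alpha(\teta)$, $w_{\mathrm{s}}:=\widehat\alpha(\tetas)$ (admissible since $\widehat\alpha$ is, by \eqref{hyp-alpha}, an increasing homeomorphism of $\R^+$), the two discrete heat equations of \eqref{discr-syst} become semilinear, with leading term $-\Delta(\cdot)$ and a monotone lower-order term $\tfrac1\tau\widehat\alpha^{-1}(\cdot)$; accordingly we look for $(w^k,\uu^k_\tau,w^k_{\mathrm{s}},\chi^k_\tau)$ in $H^1(\Omega)\times W^{1,\omega}_{\mathrm{D}}(\Omega;\R^3)\times H^1(\GC)\times H^1(\GC)$, which after inverting the Kirchhoff transform gives $\teta^k_\tau\in X$, $\tetas^k_\tau\in X_{\mathrm{s}}$ (cf.\ \eqref{spaces-X}). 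To handle the genuinely nonlinear, low-integrability terms $|\eps(\uu_t)|^2$, $|\chi_t|^2$ and $k(\cdot)$ we first replace them by truncations $\mathrm{T}_M(\cdot)$ at a level $M>0$, and recast the truncated $k$-th step as a single operator equation in the above product space. By \eqref{hyp-alpha} and Korn's inequality \eqref{korn}, the principal part — the two Laplacians, the monotone substitutions $\tfrac1\tau\widehat\alpha^{-1}$, the $\omega$-Laplacian-type terms carried by $\rho$, the elasticity operator, and the single-valued Lipschitz Yosida graphs $\betar,\etar$, together with the bounded maximal monotone graph $\partial\varphi$ of \eqref{subdiff-pos-part} — is bounded, coercive and pseudo-monotone, whereas the remaining (truncated) couplings are lower-order and strongly continuous thanks to the compact Sobolev and trace embeddings \eqref{embedding}. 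A classical surjectivity theorem for coercive pseudo-monotone operators (cf., e.g., \cite{tr1}) then yields a solution, together with a selection $\xik\in L^\infty(\GC)$, $\xik\in\partial\varphi(\chi^k_\tau)$. Arguing by comparison in the flow rule — where $\uu^{k-1}_\tau$ has an $L^\infty(\GC)$ trace, $\chi^k_\tau\in H^1(\GC)\subset L^q(\GC)$ for all $q<\infty$ so that $\rho|\chi_t^k|^{\omega-2}\chi_t^k\in L^2(\GC)$, and $\tetas^k_\tau\in H^1(\GC)$ — we get $-\Delta\chi^k_\tau\in L^2(\GC)$, hence $\chi^k_\tau\in H^2(\GC)$ by elliptic regularity. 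Finally, the $M$-independent bounds obtained in Steps 2 and 3 below — in particular the control of $\|\eps(\uu_t^k)\|_{L^\omega(\Omega)}$ and $\|\chi_t^k\|_{L^\omega(\GC)}$ by the $\rho$-terms in \eqref{discr-tot-enbal} — let us choose $M$ so large that all truncations become inactive, so that the quadruple solves the untruncated $k$-th step of \eqref{discr-syst}.

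\emph{Step 2 (positivity, \eqref{Lp-est-4-recipr}--\eqref{discrete-positivity}).} Testing the $k$-th discrete bulk and surface heat equations by the negative parts $(\teta^k_\tau)^-$, $(\tetas^k_\tau)^-$, and using $\teta^{k-1}_\tau\geq0$, $\tetas^{k-1}_\tau\geq0$ (valid for $k=0$ by \eqref{cond-teta0}--\eqref{cond-tetaso}), the nonnegativity $h^k_\tau,\ell^k_\tau\geq0$ coming from \eqref{cond-h}--\eqref{cond-ell}, and the monotonicity of the thermal couplings, we obtain inductively $\teta^k_\tau\geq0$, $\tetas^k_\tau\geq0$. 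We then reproduce, on the time-discrete level, the computation of Section \ref{ss:3.2}: for fixed $p\in[1,\infty)$ and $\epsilon>0$ we test the bulk (resp.\ surface) equation by $-(\teta^k_\tau+\epsilon)^{-p}$ (resp.\ $-(\tetas^k_\tau+\epsilon)^{-p}$), which now lies in $H^1(\Omega)$ (resp.\ $H^1(\GC)$). The discrete time-derivative term is handled by the convexity inequality $\phi(a)-\phi(b)\leq\phi'(a)(a-b)$ for $\phi(r):=\tfrac1{p-1}(r+\epsilon)^{1-p}$, which telescopes after summation over $k$; the diffusion terms are nonnegative; the terms $k(\chi)(\teta-\tetas)(\cdot)$ and the nonlocal contributions are nonnegative by the monotonicity of $r\mapsto-(r+\epsilon)^{1-p}$ and the symmetry of $j$, exactly as in \eqref{key-posit-conto} and \eqref{nl-symm}; the couplings $-\teta^k_\tau\,\mathrm{div}(\uu_t^k)$ and $-\tetas^k_\tau\lambda'(\chi^k_\tau)\chi_t^k$ are absorbed by Young's inequality as in \eqref{posult2}--\eqref{posult3}, using $|\mathrm{div}(\uu_t^k)|\leq c_d|\eps(\uu_t^k)|$ and the Lipschitz continuity of $\lambda$. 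A discrete Gronwall argument then gives $\|(\teta^k_\tau+\epsilon)^{-1}\|_{L^{p-1}(\Omega)}+\|(\tetas^k_\tau+\epsilon)^{-1}\|_{L^{p-1}(\GC)}\leq S_0$, uniformly in $\epsilon$, for all $k$ and all $\tau<\bar\tau_p$; the $p$-dependence of the threshold stems, precisely as in the factor $\exp(CT(p-2))$ of \eqref{quoted-discrete}, from the discrete Gronwall constant. Letting $\epsilon\down0$ and using Fatou's lemma yields \eqref{Lp-est-4-recipr}, and \eqref{discrete-positivity} follows.

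\emph{Step 3 (discrete total energy inequality \eqref{discr-tot-enbal}).} We test the $k$-th bulk heat equation by $v=1$, the $k$-th momentum balance by $\vv=(\uu^k_\tau-\uu^{k-1}_\tau)/\tau$, the $k$-th surface heat equation by $v=1$, and the $k$-th flow rule \eqref{discr-flow-rule} (a pointwise identity on $\GC$) by $(\chi^k_\tau-\chi^{k-1}_\tau)/\tau$, we multiply each relation by $\tau$ and add. The coupling terms $\pm\tau\io\teta^k_\tau\,\mathrm{div}(\uu_t^k)\dd x$ cancel, and so do the dissipative terms $\tau\io\eps(\uu_t^k)\,\vtens\,\eps(\uu_t^k)\dd x$ and $\tau\int_{\GC}|\chi_t^k|^2\dd x$. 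The remaining potential-type terms telescope by means of: the elementary identity $\efm(a,a-b)\geq\tfrac12\efm(a,a)-\tfrac12\efm(b,b)$ and its analogue for $\tfrac12\|\nabla\cdot\|^2$; the convexity of $\wetar,\wbetar,\gamma_\nu$ and the concavity of $\lambda_\delta$ — exactly what the decompositions \eqref{concave/convex-decomp} and the Yosida regularizations are designed to provide — the latter used after multiplication by $\tetas^k_\tau>0$, which is legitimate by Step 2; the convexity of $\varphi(r)=(r)^+$ together with $\xik\in\partial\varphi(\chi^k_\tau)$ and the bound $\uu^k_\tau\cdot(\uu^k_\tau-\uu^{k-1}_\tau)\geq\tfrac12(|\uu^k_\tau|^2-|\uu^{k-1}_\tau|^2)$, which — since $(\chi^k_\tau)^+\geq0$ — let us dominate the increments of $\tfrac12(\chi)^+|\uu|^2$ and of $\tfrac12(\chi)^+|\uu|^2\nlocss{(\chi)^+}$ from above by the corresponding momentum-balance and flow-rule contributions (this pairing is the very reason for the selections $\xik$ in \eqref{discr-flow-rule}); and, finally, the combination of $k(\chi^{k-1}_\tau)(\teta^k_\tau-\tetas^k_\tau)(\cdot)$ and of the two nonlocal thermal couplings into the nonnegative squares on the left-hand side of \eqref{discr-tot-enbal}, again as in \eqref{key-posit-conto} using the symmetry of $j$. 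Recalling the definition \eqref{stored-energy-rho} of $\calE_\varsigma$ and collecting all contributions, we arrive at \eqref{discr-tot-enbal}.

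\emph{Main obstacles.} The delicate points are, first, Step 1 — setting up the coercive pseudo-monotone operator for the strongly coupled, truncated elliptic system, checking its structural properties, and then verifying that the $M$-independent bounds of Steps 2--3, which rest crucially on the $\rho$-regularizing terms, make the truncation inactive — and, second, the discrete positivity estimate of Step 2, which is the discrete counterpart of the formal argument of Section \ref{ss:3.2}: it requires the regularization $\teta\mapsto(\teta+\epsilon)^{-p}$ of the test functions, the correct discrete convexity inequalities, and a discrete Gronwall argument whose constant degenerates as $p\to\infty$ — the very reason why, unlike on the time-continuous level, one obtains here only \eqref{discrete-positivity} and not a uniform lower bound for the discrete temperatures.
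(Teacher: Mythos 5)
Your overall skeleton (approximate, solve by a coercive pseudo-monotone operator, prove a discrete analogue of the Section \ref{ss:3.2} estimate, derive the energy inequality via the convex/concave decompositions and the selections $\xik$) matches the paper's strategy, and your Step 3 is essentially the paper's Lemma \ref{l:discr-tot-enbal-trunc}. However, there are two genuine gaps.

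First, your Step 2 does not work as written. You keep the discrete heat equations untruncated in the temperatures and test with $-(\tetak+\epsilon)^{-p}$, claiming the bulk--surface couplings are nonnegative ``by the monotonicity of $r\mapsto-(r+\epsilon)^{1-p}$''. But the function that actually appears in the pairing is not $-(r+\epsilon)^{1-p}$: the bulk coupling is $k(\chikmu)\,\tetak(\tetak-\tetask)$, so the tested product involves $g_\epsilon(r):=r\,(r+\epsilon)^{-p}$, and $g_\epsilon'(r)=(r+\epsilon)^{-p-1}\bigl[\epsilon-(p-1)r\bigr]$ changes sign at $r=\epsilon/(p-1)$; the same happens for the nonlocal terms $\nlocss{\cdot}\chi\teta^2$ and $\nlocss{\cdot}\chi\teta$. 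Hence the combined term $\int_{\GC}k(\chikmu)(\tetak-\tetask)\bigl(g_\epsilon(\tetask)-g_\epsilon(\tetak)\bigr)\dd x$ has no sign, and the correction $g_\epsilon(r)-(r+\epsilon)^{1-p}=-\epsilon(r+\epsilon)^{-p}$ produces an error of size $\epsilon^{1-p}$, which blows up as $\epsilon\down0$, so it cannot be discarded uniformly in $\epsilon$ and Fatou does not apply. This is exactly why the paper builds the truncation $\mathcal{T}_\epsilon$ \emph{into the approximate equations} (system \eqref{discr-syst-trunc}, with $k(\chikmu)\mathcal{T}_\epsilon(\tetak)(\tetak-\tetask)$ etc., together with the positive parts $(\tetak)^+$, $\chikmup$): then the tested products are exactly the monotone quantities $\mathcal{T}_\epsilon(\cdot)^{1-p}$, both the nonnegativity proof and estimate \eqref{Lp-est-4-recipr-eM} go through with constants independent of $\epsilon$, and \eqref{Lp-est-4-recipr}--\eqref{discrete-positivity} are then recovered only after a separate limit passage $\epsilon\down0$ (Lemma \ref{limit-passage-as-epsilon}), which requires its own a priori estimates and identification arguments that your proposal omits.

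Second, in Step 1 you remove the $M$-truncation of $|\eps(\uu_t)|^2$, $|\chi_t|^2$ and $k(\cdot)$ by ``choosing $M$ so large that all truncations become inactive'', invoking $M$-independent $L^\omega$ bounds. An $L^\omega(\Omega)$ bound on $\eps(\uu_t^k)$ gives no pointwise bound, so no finite $M$ renders a pointwise truncation inactive; removing it would require a genuine limit passage $M\uparrow\infty$ with weak-convergence and identification arguments. (In fact these terms need no truncation at all at the discrete level: $\uu^k\in W^{1,\omega}_{\mathrm{D}}$ with $\omega>4$ and $\chi^{k-1}\in H^2(\GC)\subset L^\infty(\GC)$ already give enough integrability, which is what the paper exploits; its $M$-truncation is applied instead to the heat conductivity $\alpha$, precisely the nonlinearity your Kirchhoff substitution is designed to avoid, and is removed by the dedicated limit $M\uparrow\infty$ of Section \ref{ss:4.3}, relying on the test by $\widehat\alpha(\mathcal{T}_M(\cdot))$ and the $L^{\mu+2}$ integrability of the initial data.) Your Kirchhoff-transform route for the elliptic part is a reasonable alternative to the paper's $\alpha_M$-truncation, but as it stands the proposal neither justifies the removal of its own truncations nor delivers the uniform-in-$\epsilon$ positivity estimate, which are the two technical cores of the proposition.
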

\begin{remark}
\label{rmk:failure}
\upshape
Although
 the constant $S_0$ in
  \eqref{Lp-est-4-recipr} is independent of  the exponent $p$, for any fixed $p$
  \RCORR estimate \EEE
  \eqref{Lp-est-4-recipr}   in fact holds for only $\tau<\tau_p$, for a certain threshold $\tau_p$ that tends to $0$ as $p \to \infty$ \RCOMMN (cf. \eqref{taup} below). \EEE
This is the reason why, unlike in the time-continuous case (cf.\  the arguments in Section \ref{ss:3.2}), \RCORR from \EEE  the arbitrariness of $p$ in
\eqref{Lp-est-4-recipr} we  cannot deduce a uniform $L^\infty(\Omega)$-bound for the the quantities  $\tfrac1{\ds \teta \tau k}$,
which would provide a lower bound for the discrete bulk temperatures $(\ds \teta \tau k)_{k=1}^{K_\tau}$ by a strictly positive constant.
The same considerations apply to the discrete surface temperatures $(\dss  \tau k)_{k=1}^{K_\tau}$.
\par
In any case, the weaker positivity information \eqref{discrete-positivity} will be sufficient to replicate on the time-discrete level all the estimates needed to prove the existence of solutions to system \eqref{PDE-regul}.
\end{remark}

\par
We will prove Proposition \ref{prop:exists-discrete}
 by approximating system  \eqref{discr-syst} 
 via suitable truncations depending on two parameters $0<\epsilon \ll 1$
 and $M \gg 1$; we postpone to Remark \ref{rmk:explain-truncation} the motivation for such truncations, resulting in system
 \eqref{discr-syst-trunc} below.
 Next, we  will pass to the limit  in \eqref{discr-syst-trunc} as $\epsilon \down 0$, first, and then as $M\up +\infty$. Namely,
 \begin{enumerate}
 \item
 in the upcoming Section \ref{ss:4.1} we will address the existence of solutions to the  $(\epsilon,M)$-truncated system  \eqref{discr-syst-trunc};
 \item
 we shall perform the limit passage
 as $\epsilon \down 0$ in Section  \ref{ss:4.2};
 \item
  and the limit passage as $M\up +\infty$ in Section  \ref{ss:4.3},  thus concluding the proof of Prop.\ \ref{prop:exists-discrete}.
  \end{enumerate}
 \par
 In what follows, we will resort to the following \emph{discrete} Gronwall Lemma, whose proof can be found, e.g., in
 \cite[Lemma 4.5]{RossiSavare06}.
  \begin{lemma}
\label{l:discrG1/2}
Let $K_\tau \in \N$ and $ b, \, \lambda,\,  \Lambda  \in (0,+\infty)$ fulfill $1-b \geq \tfrac 1\lambda>0$; let $(a_k)_{k=1}^{K_\tau} \subset [0,+\infty)$
satisfy
\[
a_k \leq  \Lambda  + b \sum_{j=1}^k a_j \qquad \text{for all } k \in \{1,\ldots, K_\tau\}.
\]
Then,  there holds
\begin{equation}
\label{discrG1/2}
a_k\leq \lambda  \Lambda  \exp(\lambda b k) \quad \text{for all } k \in \{1,\ldots, K_\tau\}.
\end{equation}
\end{lemma}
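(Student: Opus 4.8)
\textbf{Plan of proof for Lemma \ref{l:discrG1/2} (discrete Gronwall).}

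The plan is to argue by finite induction on $k$, turning the hypothesis $a_k \le \Lambda + b\sum_{j=1}^{k} a_j$ into the explicit geometric bound \eqref{discrG1/2}. The first step is to isolate $a_k$ on the left-hand side: since $b\sum_{j=1}^{k}a_j = b\sum_{j=1}^{k-1}a_j + b a_k$, the hypothesis reads
\[
(1-b)\, a_k \le \Lambda + b \sum_{j=1}^{k-1} a_j.
\]
The assumption $1-b \ge \tfrac1\lambda > 0$ guarantees both that $1-b>0$ (so we may divide) and that $\tfrac{1}{1-b} \le \lambda$. Hence
\[
a_k \le \frac{\Lambda}{1-b} + \frac{b}{1-b} \sum_{j=1}^{k-1} a_j \le \lambda \Lambda + \lambda b \sum_{j=1}^{k-1} a_j =: \Lambda' + b' \sum_{j=1}^{k-1} a_j,
\]
with $\Lambda' := \lambda\Lambda$ and $b' := \lambda b$. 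This is the key reduction: a self-referential estimate involving $\sum_{j=1}^{k}a_j$ has been replaced by a genuine recursion in which $a_k$ is bounded purely in terms of the \emph{earlier} terms $a_1,\dots,a_{k-1}$.

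Next I would prove by induction on $k$ that $a_k \le \Lambda' (1+b')^{k-1}$. For $k=1$ the sum $\sum_{j=1}^{0}$ is empty, so $a_1 \le \Lambda' = \Lambda'(1+b')^{0}$. Assuming $a_j \le \Lambda'(1+b')^{j-1}$ for all $j \le k-1$, the recursion gives
\[
a_k \le \Lambda' + b' \sum_{j=1}^{k-1} \Lambda' (1+b')^{j-1} = \Lambda' \Big( 1 + b' \cdot \frac{(1+b')^{k-1}-1}{b'} \Big) = \Lambda' (1+b')^{k-1},
\]
using the geometric sum formula $\sum_{j=1}^{k-1}(1+b')^{j-1} = \frac{(1+b')^{k-1}-1}{b'}$. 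This closes the induction.

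Finally, I would pass from the factor $(1+b')^{k-1}$ to an exponential: since $1+x \le e^{x}$ for all $x \ge 0$, we have $(1+b')^{k-1} \le e^{b'(k-1)} \le e^{b'k} = e^{\lambda b k}$. Combining with $\Lambda' = \lambda\Lambda$ yields $a_k \le \lambda\Lambda\,\exp(\lambda b k)$ for every $k \in \{1,\dots,K_\tau\}$, which is exactly \eqref{discrG1/2}. There is no real obstacle here — the only point demanding care is the legitimacy of dividing by $1-b$ and the bookkeeping of the two constants $\Lambda'$, $b'$; everything else is the standard geometric-series manipulation. (Alternatively one could cite \cite[Lemma 4.5]{RossiSavare06} directly, but the short self-contained argument above is preferable for completeness.)
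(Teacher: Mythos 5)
Your proof is correct: the absorption of the $b\,a_k$ term using $1-b\geq \tfrac1\lambda$, the induction giving $a_k\leq \lambda\Lambda(1+\lambda b)^{k-1}$, and the bound $1+x\leq e^x$ together yield exactly \eqref{discrG1/2}. The paper itself offers no proof but simply cites \cite[Lemma 4.5]{RossiSavare06}, and your self-contained argument is the standard one underlying that reference, so there is nothing to object to.
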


 \subsection{Existence of solutions to the $(\epsilon,M)$-truncated discrete system}
\label{ss:4.1}

Recall the notation $(r)^+: = \max \{r,0\}$
and $(r)^-: = \max\{ -r, 0\}$ for the positive and  negative parts of a real number $r\in \R$.
Furthermore, for $\epsilon\in (0,1)$ and $M\geq 1$ we  introduce the truncation operators
\begin{equation}
\label{truncation-operators}
\begin{aligned}
&
\mathcal{T}_{\epsilon}: \R \to \R,  &&  \mathcal{T}_{\epsilon}(r): = \max \{r, \epsilon\}\,,
\\
&
\mathcal{T}_{M}: \R \to \R,  &&  \mathcal{T}_{M}(r): = \min\{ \max \{r, 0\}, M\}=
\begin{cases}
0 & \text{if } r <0,
\\
r & \text{if }0 \leq  r \leq  M,
\\
M & \text{if } r > M.
\end{cases}
\end{aligned}
\end{equation}
Accordingly, we define
\[
\alpha_{M}: \R \to (0,+\infty), \qquad \alpha_{M}(r): = \alpha \left( \mathcal{T}_{M}(r) \right)\,.
\]
It follows from \eqref{hyp-alpha} that
\begin{equation}
\label{positiv-alpha-M}
\alpha_{M}(r) \geq c_0 \qquad \text{for all } r \in \R.
\end{equation}
We consider the $(\epsilon,M)$-truncated system, consisting of
\begin{subequations}
\label{discr-syst-trunc}
\begin{itemize}
\item[-] the discrete bulk temperature equation
\begin{equation}
\label{discr-b-heat-trunc}
\begin{aligned}
&
\io \frac{\tetak - \mathcal{T}_\epsilon(\tetakmu)}{\tau} v \dd x
- \io (\tetak)^+ \mathrm{div} \bigg( \frac{\uuk-\uukmu}{\tau} \bigg) v \dd x
+ \io \alpha_{M}(\tetak) \nabla \tetak \nabla v \dd x
\\
& \quad
+ \int_{\GC} k(\chikmu)\mathcal{T}_\epsilon(\tetak)(\tetak-\dss \tau k) v \dd x
+ \int_{\GC} \nlocss {\chikmup}\, \chikmup  \tetak  \mathcal{T}_\epsilon(\tetak)   v \dd x
\\
& \quad
- \int_{\GC} \nlocss {\chikmup \tetask} \, \chikmup \mathcal{T}_\epsilon(\tetak) v \dd x
\\ &
=
\io \tensoretk \, \vtens \, \tensoretk v  \dd x
+ \pairing{}{H^1(\Omega)}{\hk}{ v}  \qquad \text{for all } v \in H^1(\Omega);
\end{aligned}
\end{equation}
\item[-] the discrete momentum balance equation
\begin{equation}
\label{discr-mom-bal-trunc}
\begin{aligned}
&
\vfm \bigg( \frac{\uuk-\uukmu}{\tau}, \vv \bigg)
+ \efm \Big( \uuk , \vv \Big)
+ \rho \io \left| \eps\left(\frac{\ds \uu \tau k -\ds \uu \tau{k-1}}{\tau}\right)\right|^{\omega - 2} \eps\left(\frac{\ds \uu \tau k -\ds \uu \tau{k-1}}{\tau}\right) \eps (\vv)  \dd x
\\
& \quad
+ \io (\tetak)^+ \mathrm{div}(\vv) \dd x
+ \int_{\GC} \chikp \uuk \vv \dd x
+\int_{\GC} \ds \zzeta \tau k \cdot  \vv \dd x 
+ \int_{\GC} \nlocss {\chikp}\, \chikp \uuk \vv \dd x
\\
&=
\langle \FF_{\tau}^k \vv \rangle_{\bsVD}, 
\\
&
 \text{ with  } \ \   \ds \zzeta \tau k  = \etar(\RCORR \ds \uu\tau k\EEE {\cdot} \mathbf{n}) \mathbf{n},
 \qquad \text{for all } \vv \in W^{1,\omega}_{\mathrm{D}}(\Omega;\R^3);
\end{aligned}
\end{equation}
\item[-] the discrete surface temperature equation
\begin{equation}
\label{discr-surf-temp-trunc}
\begin{aligned}
&
\int_{\GC} \frac{\tetask - \mathcal{T}_\epsilon(\tetaskmu)}{\tau} v \dd x
- \int_{\GC} (\tetask)^+  \frac{\lambda(\ds \chi\tau k)-\lambda(\ds \chi \tau{k-1})}{\tau}  v \dd x
+ \int_{\GC} \alpha_M(\tetask) \nabla \tetask \nabla v \dd x
\\
&
=
\int_{\GC} \bigg| \frac{\chik - \chikmu}{\tau}\bigg|^2 v \dd x
+ \int_{\GC} k(\chikmu)(\tetak -\tetask) \mathcal{T}_\epsilon(\tetask) v \dd x
+ \int_{\GC} \nlocss {\chikmup \tetak} \, (\chikmu)^+  \mathcal{T}_\epsilon(\tetask) v \dd x
\\
& \qquad - \int_{\GC} \nlocss {\chikmup}\, \chikmup \tetask \mathcal{T}_\epsilon(\tetask) v \dd x +\pairing{}{H^1(\GC)}{\lk}{v} \qquad \text{for all } v \in H^1(\GC);
\end{aligned}
\end{equation}
\item[-]
the discrete flow rule for the adhesion parameter
\begin{equation}
\label{discr-flow-rule-trunc}
\begin{aligned}
&
\frac{\chik-\chikmu}{\tau}
+\rho \left|\frac{\ds\chi\tau k - \ds \chi \tau{k-1}}\tau \right|^{\omega-2} \frac{\ds\chi\tau k - \ds \chi \tau{k-1}}\tau
 + A\ds \chi \tau k +
\betar(\ds \chi\tau k)
+\gamma_\nu'(\ds \chi \tau k) - \nu \ds \chi \tau{k-1}
\\
&
= - \lambda_\delta'(\chikmu) (\tetask)^+ -\delta \ds \chi \tau k (\tetask)^+ -
 \frac12 |\uukmu|^2\xik  -\frac12 \nlocss {\chikp}\, |\uukmu|^2 \xik -\frac12 \nlocss {\chikmup |\uukmu|^2}\, \xik
\qquad \aein \, \GC\,,
\\
&
\text{with } \xik \RCNEW \in \partial \varphi(\chik) \qquad \aein \, \GC\,. \EEE
\end{aligned}
\end{equation}
\end{itemize}
\end{subequations}
For notational simplicity, we have not highlighted the dependence of a solution
$(\ds \teta \tau k, \ds \uu \tau k, \dss \tau k, \ds \chi \tau k)$ to system  \eqref{discr-syst-trunc} on the parameters
$\epsilon $ and $M$, and we shall not do so, with the exception of the statements
of Proposition \ref{lemma:exists-discrete} and Lemma \ref{l:discr-tot-enbal-trunc} below.
\begin{proposition}
\label{lemma:exists-discrete}
 For any fixed  $\tau>0$, sufficiently small, for every
 $k \in \{1, \ldots, K_\tau\}$ and $ (\ds \teta \tau{k-1}, \ds \uu \tau{k-1}, \dss \tau{k-1}, \ds \chi\tau{k-1}) $ as in \eqref{previous-data},
there exists  a quadruple
\[ (\ds \teta {\tau,\epsilon,M} {k}, \ds \uu  {\tau,\epsilon,M}{k}, \dss  {\tau,\epsilon,M}{k}, \ds \chi {\tau,\epsilon,M}{k}) \in
H^1(\Omega) \times  W_{\mathrm{D}}^{1,\omega}(\Omega;\R^3) \times H^1(\GC) \times H^2(\GC)
\]
\RCNEW with an associated $ \RCNEW \ds \sigma {\tau,\epsilon, M}k \in \partial \RCOMMN \varphi(\ds \chi {\tau,\epsilon,M}{k}) \EEE $ a.e.\ in $\GC$, \EEE
solving \eqref{discr-syst-trunc}.
\par
Furthermore, for every  $k \in \{1, \ldots, K_\tau\}$ we have that
\begin{equation}
\label{non-negativity}
\ds \teta {\tau,\epsilon,M} {k}\geq 0 \quad \aein\ \Omega, \qquad \dss  {\tau,\epsilon,M} {k}\geq 0 \quad \aein\ \GC.
\end{equation}
Finally,
\begin{equation}
 \label{Lp-est-4-recipr-eM}
\begin{aligned}
&
\exists\, S_0 >0 \  \forall\, p \in [1,\infty) \  \exists\, \bar{\tau}_p>0 \
\forall\, \epsilon, M>0 \
\forall\, \tau \in (0,\bar{\tau}_p)\,
 \  \forall\, k \in \{1, \ldots, K_\tau\} \, :
 \\
 &
 \quad
\left\| \frac1{\mathcal{T}_\epsilon(\ds \teta {\tau, \epsilon, M} k)}\right\|_{L^p(\Omega)} + \left\| \frac1{\mathcal{T}_\epsilon(\dss  {\tau,\epsilon,M} k)}\right\|_{L^p(\GC)} \leq S_0.
\end{aligned}
\end{equation}
\end{proposition}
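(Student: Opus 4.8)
The plan is to solve the $(\epsilon,M)$-truncated system \eqref{discr-syst-trunc} recursively in $k$, and for each fixed $k$ to set up a fixed-point argument. Since $\betar$, $\etar$, $\gamma_\nu'$, $\lambda_\delta'$ and $\lambda$ are all Lipschitz continuous and $\gamma_\nu$ is convex, the only genuinely nonlinear and non-coercive issue is the quadratic coupling between the bulk/surface temperature equations and the dissipative terms $|\eps((\uuk{-}\uukmu)/\tau)|^2$, $|(\chik{-}\chikmu)/\tau|^2$, which is precisely why the truncations $\mathcal T_\epsilon$ and $\mathcal T_M$ (and the positive parts $(\cdot)^+$ on $\tetak$, $\tetask$) were inserted. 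First I would fix $(\uu,\chi)\in W^{1,\omega}_{\mathrm D}(\Omega;\R^3)\times H^1(\GC)$ and observe that, with the coefficient $\alpha_M(\cdot)\geq c_0$ (cf.\ \eqref{positiv-alpha-M}) and the Lipschitz truncated zeroth-order terms, the decoupled bulk and surface temperature equations \eqref{discr-b-heat-trunc}, \eqref{discr-surf-temp-trunc} define, via Lax--Milgram together with a monotonicity/Browder--Minty argument for the quasilinear elliptic term $-\dive(\alpha_M(\tetak)\nabla\tetak)$, a unique pair $(\tetak,\tetask)\in H^1(\Omega)\times H^1(\GC)$; then the momentum balance \eqref{discr-mom-bal-trunc} with the strictly monotone, coercive operator $\vv\mapsto \vfm(\vv,\cdot)+\rho\,p_\omega(\vv)$ (where $p_\omega$ is the $\omega$-Laplacian-type term) plus $\efm$ plus the Lipschitz boundary terms has a unique solution $\uuk\in W^{1,\omega}_{\mathrm D}(\Omega;\R^3)$ by standard pseudo-monotone operator theory (e.g.\ the Brezis theorem on coercive pseudo-monotone operators); finally the flow rule \eqref{discr-flow-rule-trunc} is the Euler equation of a strictly convex coercive functional on $H^1(\GC)$ (the $H^1$ Dirichlet form plus the convex $\wbetar$, $\gamma_\nu$, $\rho|\cdot|^\omega$ pieces, the remaining terms being Lipschitz perturbations absorbed for $\tau$ small using the $\nu$- and $\delta$-decompositions), yielding a unique $\chik\in H^2(\GC)$ by elliptic regularity together with the selection $\xik\in\partial\varphi(\chik)$. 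This defines a solution map $\mathcal S$ on a suitable bounded closed convex subset of, say, $L^2(\Omega)\times L^2(\GC)\times L^2(\GC)$; a priori bounds on its range (obtained by testing each equation by its own solution and using that all zeroth-order terms are now bounded/Lipschitz) give invariance of a ball, and compactness of $\mathcal S$ follows from the compact embeddings $H^1\Subset L^2$, $H^2\Subset H^1$, so Schauder's fixed-point theorem applies.

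The non-negativity \eqref{non-negativity} is obtained by testing \eqref{discr-b-heat-trunc} by $-(\tetak)^-\in H^1(\Omega)$: the term $-\io(\tetak)^+\dive((\uuk{-}\uukmu)/\tau)(-(\tetak)^-)$ vanishes because $(\tetak)^+(\tetak)^-=0$ pointwise, the quasilinear diffusion term gives $+\io\alpha_M(\tetak)|\nabla(\tetak)^-|^2$, the surface zeroth-order terms and the right-hand side $\eps\vtens\eps\geq0$, $\hk\geq0$ all contribute with the right sign once one notes $\mathcal T_\epsilon(\tetakmu)\geq\epsilon>0$, forcing $(\tetak)^-\equiv0$ for $\tau$ sufficiently small (absorbing the lower-order Lipschitz surface contributions); the same argument, tested by $-(\tetask)^-$, gives $\tetask\geq0$, noting $|\chik{-}\chikmu|^2/\tau^2\geq0$, $\lk\geq0$.

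Finally, for the reciprocal estimate \eqref{Lp-est-4-recipr-eM} I would, for fixed $p\geq1$, test \eqref{discr-b-heat-trunc} by $-\mathcal T_\epsilon(\tetak)^{-p}$ and \eqref{discr-surf-temp-trunc} by $-\mathcal T_\epsilon(\tetask)^{-p}$ — these are now \emph{admissible} $H^1$ test functions precisely because $\mathcal T_\epsilon(\cdot)\geq\epsilon$ keeps them bounded and Lipschitz — and replicate, on the discrete level and term by term, the formal computation of Section \ref{ss:3.2}: the diffusion terms yield nonnegative gradient contributions, the nonlocal terms are handled by the symmetry of $j$ and the monotonicity of $r\mapsto -r^{1-p}$ exactly as in \eqref{posult4}, the $k(\chikmu)$-coupling term is nonnegative by \eqref{posult5}, the $\dive(\uuk{-}\uukmu)$ and $\lambda$-rate terms are absorbed via Young's inequality \eqref{Young} into the dissipative terms $|\eps|^2/\tetak^p$, $|\chi_t|^2/\tetask^p$ as in \eqref{posult2}--\eqref{posult3}, and the discrete time-derivative terms telescope using the convexity inequality $a^{1-p}(a-b)\geq\frac1{1-p}(a^{1-p}-b^{1-p})$; summing over $k$ and invoking the discrete Gronwall Lemma \ref{l:discrG1/2} (whose hypothesis $1-b\geq1/\lambda$ forces the threshold $\bar\tau_p$, decreasing in $p$ since $b\sim(p-2)\tau$, cf.\ the remark in \eqref{taup}) produces the bound $S_0$, which — crucially — is obtained $p$-independent by the same $|\Omega|^{1/(p-1)}\leq|\Omega|+1$ trick as in \eqref{quoted-discrete}. \textbf{The main obstacle} is ensuring that every test function used is legitimate on the truncated level and that all the sign and absorption estimates of Section \ref{ss:3.2} survive the presence of the truncations $\mathcal T_\epsilon$, $\mathcal T_M$ and the positive parts on $\tetak,\tetask$ — in particular that the mismatch between $\mathcal T_\epsilon(\tetak)$ and $\tetak$ in the various coupling terms does not spoil positivity of the key quadratic terms; this is exactly what the careful placement of $\mathcal T_\epsilon$ in \eqref{discr-b-heat-trunc}--\eqref{discr-surf-temp-trunc} is designed to guarantee, and checking it is the bulk of the work.
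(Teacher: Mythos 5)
Your plan for the non-negativity and for estimate \eqref{Lp-est-4-recipr-eM} is essentially the paper's, but your existence step contains a genuine gap. The truncated system \eqref{discr-syst-trunc} is fully implicit, with all four equations tightly coupled, and the paper solves it \emph{in one stroke}: it is rewritten as an inclusion $\partial\Psi(\mathbf{x})+\mathscr{A}(\mathbf{x})\ni 0$ on the product space $H^1(\Omega)\times W^{1,\omega}_{\mathrm D}(\Omega;\R^3)\times H^1(\GC)\times H^1(\GC)$, with $\mathscr{A}$ a coercive pseudomonotone operator assembled from all four equations and $\Psi$ the convex potential \eqref{def-Psi} generating the $\partial\varphi$-terms, and existence follows from a theorem for pseudomonotone perturbations of subdifferentials (\cite[Cor.~5.17]{roub-NPDE}). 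Your sequential decoupling plus Schauder requires the solution map $\mathcal S$ to be \emph{single-valued and continuous}, and that is exactly where the argument fails: the quasilinear operator $\teta\mapsto-\dive(\alpha_M(\teta)\nabla\teta)$ is pseudomonotone but not monotone, so neither Lax--Milgram nor Browder--Minty gives uniqueness for the temperature block, and the quadratic boundary terms $\nlocss{\chikmup}\chikmup\tetak\mathcal{T}_\epsilon(\tetak)$, $k(\chikmu)\mathcal{T}_\epsilon(\tetak)(\tetak-\tetask)$ make uniqueness all the more doubtful; moreover the bulk and surface temperature equations are coupled \emph{to each other} through precisely these terms, so they cannot be solved ``decoupled'' as you state, only jointly, and again only existence (not uniqueness) is available. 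Without well-definedness and continuity of $\mathcal S$, Schauder does not apply; you would have to pass to a set-valued fixed-point theorem and prove convexity of the solution sets, which is not clear. Note also that the flow rule contains the selection $\xik\in\partial\varphi(\chik)$ multiplied by coefficients involving $\nlocss{\chikp}$, i.e.\ the unknown itself, which the paper absorbs into the convex potential $\Psi$ rather than into a ``strictly convex functional'' with frozen coefficients.

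Two further points. In the non-negativity step you test the two heat equations \emph{separately} and invoke smallness of $\tau$ to ``absorb'' surface contributions; this does not work, because the coupling terms have no sign when tested one equation at a time (on $\{\tetak<0\}$ the term $\int_{\GC}k(\chikmu)\mathcal{T}_\epsilon(\tetak)(\tetak-\tetask)\bigl(-(\tetak)^-\bigr)\dd x$ contains $-\epsilon\,\tetask\,\tetak$, whose sign is unknown before $\tetask\geq0$ is proved, and it cannot be controlled by $\|(\tetak)^-\|_{L^2}$ alone). One must add the two tested relations, as in the paper's Step~2, after which every term is signed (using that $r\mapsto\mathcal{T}_\epsilon(r)r^-$ is non-increasing and the symmetry of $\kr$) and no smallness of $\tau$ is needed there. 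Your Step for \eqref{Lp-est-4-recipr-eM} — adding the two relations tested by $-\mathcal{T}_\epsilon(\tetak)^{-p}$, $-\mathcal{T}_\epsilon(\tetask)^{-p}$, using monotonicity and the symmetry of $\kr$, Young's inequality, the convexity inequality for $r\mapsto r^{1-p}$, and the discrete Gronwall Lemma with the $p$-dependent threshold $\bar\tau_p$ while keeping $S_0$ independent of $p$ — coincides with the paper's argument and is fine.
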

\begin{proof}
\underline{\emph{Step $1$: existence for system \eqref{discr-syst-trunc}:}}
We observe that a quadruple
$(\teta,\uu,\tetas,\chi) $ solves the elliptic system \eqref{discr-syst-trunc} if and only if it solves
\begin{equation}
\label{subdiffential-inclusion}
\RCNEW \partial \Psi(\ds \teta \tau k, \ds \uu \tau k, \dss \tau k, \ds \chi \tau k) + \EEE
 \mathscr{A}(\ds \teta \tau k, \ds \uu \tau k, \dss \tau k, \ds \chi \tau k)\ni 0 \qquad \text{in } \mathbf{X}^*,
\end{equation}
where
$\mathbf{X}$ is a suitable ambient space
\RCNEW $\Psi: \mathbf{X} \to [0,+\infty]$ is a (proper) convex and l.s.c.\ potential,
with subdifferential $\partial \Psi: \mathbf{X}  \rightrightarrows \mathbf{X}^* $, \EEE
 and $\mathscr{A}: \mathbf{X} \to \mathbf{X}^*$ an appropriate pseudomonotone operator. As we will see,
\RCNEW both $\Psi$ and
 $\mathscr{A}$ depend \EEE on the discrete solutions
 $ (\ds \teta \tau{k-1}, \ds \uu \tau{k-1}, \dss \tau{k-1}, \ds \chi\tau{k-1}) $ at the previous step, as well as on the parameters $\epsilon$ and $M$. However, we choose not to highlight this in their notation.
 \par
 Indeed,
let us set $\mathbf{X}: = H^1(\Omega) \times W^{1,\omega}_{\mathrm{D}}(\Omega;\R^3)  \times H^1(\GC) \times H^1(\GC) $
and define
\[
\mathscr{A}: \mathbf{X} \to \mathbf{X}^*  \quad \text{by} \quad \mathscr{A} (\teta,\uu, \tetas,\chi): = \left( \begin{array}{ll}
 \mathscr{A}_1 (\teta,\uu, \tetas,\chi)
 \\
  \mathscr{A}_2 (\teta,\uu, \tetas,\chi)
  \\
   \mathscr{A}_3 (\teta,\uu, \tetas,\chi)
   \\
    \mathscr{A}_4 (\teta,\uu, \tetas,\chi)
\end{array} \right)
\]
where
\begin{subequations}
\label{defA}
\begin{enumerate}
\item $ \mathscr{A}_1:  \mathbf{X} \to H^1(\Omega)^*$ is defined via
\begin{equation}
\label{defA_1}
 \mathscr{A}_1 (\teta,\uu, \tetas,\chi)
:= \teta- \teta^+ \mathrm{div}(\uu{-}\ds \uu \tau {k-1}) + \tau  \RCORR \mathcal{A}^M(\teta) \EEE  + \tau  B_1(\teta,\tetas) - \frac1\tau \eps(\uu) \mathbb{V}
\eps(\uu{-}2\ds \uu \tau{k-1})  -\tau  F_1
\end{equation}
with
\begin{align}
&
\label{def-AM}
\mathcal{A}_M:H^1(\Omega) \to H^1(\Omega)^*, \qquad \pairing{}{H^1(\Omega)}{\mathcal{A}_M(\teta)}{v} : =
 \int_\Omega \alpha_M(\teta) \nabla \teta \cdot \nabla v \dd x;
 \\
 &
 \label{def-B_1}
 \begin{aligned}
 &
 B_1: H^1(\Omega) \times H^1(\GC) \to H^1(\Omega)^*,
 \\
  &
  \pairing{}{H^1(\Omega)}{B_1(\teta,\tetas)}{v} : =
  \\
  & \qquad   \int_{\GC} \left( k(\chikmu)\mathcal{T}_\epsilon(\teta)(\teta-\tetas) {+}
  \nlocss {\chikmup}\, \chikmup  \teta  \mathcal{T}_\epsilon(\teta) {-} \nlocss {\chikmup \tetas}\, \chikmup \mathcal{T}_\epsilon(\teta)  \right) v \dd x,
  \end{aligned}
  \\
  & \label{def-F_1}
  F_1:= \mathcal{T}_\epsilon(\ds  \teta\tau{k-1})  + \frac1\tau \eps(\ds \uu \tau {k-1}) \mathbb{V} \eps(\ds \uu \tau {k-1}) + \ds h \tau k;
 \end{align}
\item
$ \mathscr{A}_2:  \mathbf{X} \to W^{1,\omega}(\Omega;\R^3)^*$ is defined via
\begin{equation}
\label{defA_2}
 \mathscr{A}_2 (\teta,\uu, \tetas,\chi)
:= - \mathrm{div} (\mathbb{V}\uu  +\tau \mathbb{E} \uu + \tau^{2-\omega}\rho |\eps(\uu {-} \ds \uu\tau{k-1})|^{\omega-2} \eps(\uu {-} \ds \uu\tau{k-1})+
 \tau \teta^+ \mathbb{I})
+\tau  B_2(\uu, \chi) -\tau  F_2
\end{equation}
with
\begin{align}
 &
 \label{def-B_2}
 \begin{aligned}
 &
 B_2:  W^{1,\omega}(\Omega;\R^3) \times H^1(\GC) \to  W^{1,\omega}(\Omega;\R^3)^*,
 \\
 & \quad
  \pairing{}{ W^{1,\omega}(\Omega;\R^3)}{B_2(\uu,\chi)}{\vv} : =    \int_{\GC} \chip \uu \vv \dd x
+ \int_{\GC} \nlocss {\chip}\, \chip \uu \vv \dd x + \int_{\GC} \etar(\uu \cdot \mathbf{n})\mathbf{n} \cdot \vv \dd x,
\end{aligned}
  \\
  & \label{def-F_2}
  F_2:=  \frac1\tau \mathrm{div} (\mathbb{V} \ds \uu \tau {k-1})  + \ds {\mathbf{F}} \tau k;
 \end{align}
\item $ \mathscr{A}_3:  \mathbf{X} \to H^1(\GC)^*$ is defined via
\begin{equation}
\label{defA_3}
 \mathscr{A}_3 (\teta,\uu, \tetas,\chi)
:= \tetas- (\tetas)^+  (\lambda(\chi){-}\lambda(\ds \chi \tau {k-1})) + \tau \RCORR \mathcal{A}_{\mathrm{s}}^M \EEE(\tetas)  - \tau  B_3(\teta,\tetas) - \frac1\tau \chi(\chi{-}2\ds \chi \tau{k-1})
 -\tau  F_3
\end{equation}
with
\begin{align}
&
\label{def-AMs}
\RCORR \mathcal{A}_{M,\mathrm{s}}: \EEE H^1(\GC) \to H^1(\GC)^*, \qquad \pairing{}{H^1(\GC)}{\mathcal{A}_{M, \mathrm{s}}(\tetas)}{v} : =
 \int_\Omega \alpha_M(\tetas) \nabla \tetas \cdot \nabla v \dd x;
 \\
 &
 \label{def-B_3}
 \begin{aligned}
 &
 B_3: H^1(\Omega) \times H^1(\GC) \to H^1(\GC)^*,
 \\
  &
  \pairing{}{H^1(\Omega)}{B_3(\teta,\tetas)}{v} : =
  \\
  & \qquad  \int_{\GC} \left( k(\chikmu)\mathcal{T}_\epsilon(\tetas)(\teta-\tetas) {+}
  \nlocss {\chikmup \teta} \, \chikmup    \mathcal{T}_\epsilon(\tetas) {-} \nlocss {\chikmup}\, \chikmup \tetas \mathcal{T}_\epsilon(\tetas)  \right) v \dd x,
  \end{aligned}
  \\
  & \label{def-F_3}
  F_3:= \mathcal{T}_\epsilon(\dss \tau{k-1})  + \frac1\tau |\ds \chi \tau {k-1}|^2 +\lk;
 \end{align}
\item $ \mathscr{A}_4:  \mathbf{X} \to H^1(\GC)^*$ is defined via
\label{defA_4}
\begin{equation}
\begin{aligned}
 \mathscr{A}_4 (\teta,\uu, \tetas,\chi)
:=  & \chi+\tau^{2-\omega} \rho |\chi{-}\ds \chi \tau {k-1}|^{\omega-2} (\chi{-}\ds \chi\tau{k-1}) + \tau A\chi +\tau \betar(\chi)
\\
 & \quad \RCNEW + \tau \gamma_\nu'(\chi)  + \tau \lambda_\delta'(\ds \chi \tau {k-1}) (\tetas)^+
 + \tau \delta \chi (\tetas)^+  
 - \tau F_4 
\end{aligned}
\end{equation}
with
\begin{align}
 \label{def-F_4}
 \RCNEW  F_4:= \frac1\tau \ds\chi \tau{k-1}  + \nu  \ds \chi \tau{k-1}\,.   \EEE 
 \end{align}
 \end{enumerate}
\end{subequations}
\RCNEW
The potential $\Psi: \mathbf{X} \to [0,+\infty)$ featuring in \eqref{subdiffential-inclusion} is defined by
\begin{equation}
\label{def-Psi}
\Psi(\teta, \uu, \tetas, \chi)  = \Psi(\chi) : =  \frac{\tau}2 \int_{\GC} \left(  |\ds \uu \tau{k-1}|^2 \chip{+}  \nlocss{\ds \chi \tau{k-1} |\ds \uu \tau{k-1}|^2}\, \chip {+} 
\nlocss{\chip} \, \chip |\ds \uu \tau{k-1}|^2  \right)  \dd x  \,.
\end{equation}
It can be readily checked that with $\mathscr{A}$ defined by \eqref{defA}  \RCNEW and $\Psi$ by \eqref{def-Psi}, system \EEE
\eqref{subdiffential-inclusion} yields solutions to
system \eqref{discr-syst-trunc} in which the discrete flow rule for the adhesion parameter holds as a subdifferential inclusion in $H^1(\GC)^*$. However, a comparison argument in  \eqref{discr-syst-trunc} yields a fortiori that $A \ds \chi \tau k \in L^2(\GC)$, and thus $\chi \in H^2(\GC)$ and  \eqref{discr-syst-trunc} holds a.e.\ in $\GC$.
%
Let us then show that \eqref{subdiffential-inclusion} does admit solutions.
\par
Standard arguments in the theory of quasilinear elliptic equations (cf.\ \cite[Chap.\ 2.4]{roub-NPDE})  yield that the operator $\mathscr{A}$ is pseudomonotone. The next step is to  verify that
$\mathscr{A}$ is coercive, namely that (using \RCNEW the variable \EEE $\mathbf{x}$ as a place-holder for $(\teta,\uu,\tetas,\chi)$)
\[
\lim_{\|\mathbf{x}\|_{\mathbf{X}}\to +\infty} \frac{\pairing{}{\mathbf{X}}{\mathscr{A}(\mathbf{x})}{\mathbf{x}}}{\|\mathbf{x}\|_{\mathbf{X}}} =+\infty\,.
\]
This can be done by the very same calculations that we will carry out in the proof of Lemma \ref{l:epsi-a-prio} ahead.
\par
Hence, the existence theorem in \RCORR  \cite[Cor.\ 5.17]{roub-NPDE}, \EEE 
yields that \eqref{subdiffential-inclusion}
admits a solution $(\teta,\uu,\tetas,\chi) =: (\ds \teta {\tau,\epsilon,M} {k}, \ds \uu  {\tau,\epsilon,M}{k}, \dss  {\tau,\epsilon,M}{k}, \ds \chi {\tau,\epsilon,M}{k})$.
\medskip

\par
\noindent
\underline{\emph{Step $2$: proof of the non-negativity properties \eqref{non-negativity}}}
We test the discrete bulk heat equation \eqref{discr-b-heat-trunc} by $-(\ds \teta \tau k)^{-}$ and the
discrete surface heat equation \eqref{discr-surf-temp-trunc} by $-(\dss  \tau k)^{-}$ and sum  the resulting relations. \RCORR Thus, \EEE  we get
\begin{equation}
\label{conto-for-pos}
\begin{aligned}
0  & =    \io \frac{\tetak - \mathcal{T}_\epsilon(\tetakmu)}{\tau} (-(\ds \teta  \tau k)^{-}) \dd x
+\int_{\GC} \frac{\tetask - \mathcal{T}_\epsilon(\dss \tau {k-1})}{\tau} (-(\dss   \tau {k})^{-}) \dd x
\\
&\quad
+\io (\tetak)^+ \mathrm{div} \bigg( \frac{\uuk-\uukmu}{\tau} \bigg) (\ds \teta  \tau k)^{-} \dd x
+ \int_{\GC} (\tetask)^+  \frac{\lambda(\ds \chi\tau k)-\lambda(\ds \chi \tau{k-1})}{\tau} (\dss   \tau {k})^{-}   \dd x
\\
& \quad
+ \io \alpha_{M}(\tetak) |\nabla (\tetak)^{-}|^2 \dd x
+ \int_{\GC} \alpha_M(\tetask) |\nabla (\tetask)^{-}|^2 \dd x
\\
&\quad
    + \int_{\GC} k(\chikmu)(\tetak-\dss \tau k) \left(\mathcal{T}_\epsilon(\dss \tau k) (\dss \tau k)^-{-} \mathcal{T}_\epsilon(\tetak)(\ds \teta \tau k)^-  \right) \dd x
\\
& \quad
- \int_{\GC}  (\ds \teta  \tau k)^{-}  \left(  \nlocss {\chikmup}\, \chikmup  \tetak  \mathcal{T}_\epsilon(\tetak) {-}  \nlocss {\chikmup \tetask}\, \chikmup \mathcal{T}_\epsilon(\tetak) \right)    \dd x
\\
& \quad
+ \int_{\GC}    (\dss   \tau k)^{-} \left(  \nlocss {\chikmup \tetak}\, \chikmup \mathcal{T}_\epsilon(\tetask) {-} \nlocss {\chikmup}\, \chikmup \tetask \mathcal{T}_\epsilon(\tetask)  \right) \dd x
\\
& \quad
+
\io \tensoretk \, \vtens \, \tensoretk (\ds \teta \tau k)^-   \dd x
+ \io \hk (\ds \teta \tau k)^-  \dd x+ \int_{\GC} \lk (\tetask)^-  \dd x
\\
& \quad
\RCORR + \EEE \int_{\GC} \bigg| \frac{\chik - \chikmu}{\tau}\bigg|^2 (\dss \tau k)^- \dd x
\\
&
\doteq I_1 +I_2+I_3+I_4+I_5+I_6+I_7+I_8+I_9+I_{10} +I_{11} +I_{12} +I_{13} .\EEE
\end{aligned}
\end{equation}
 First, 
we have that
$
I_1 \geq  \tfrac1\tau \| (\ds \teta \tau k)^-\|_{L^2(\Omega)}^2$  and  $ I_2 \geq  \tfrac1\tau \| (\dss  \tau k)^-\|_{L^2(\GC)}^2$.
Moreover,  since $r^+ r^- =0$ for all $r\in \R$, $I_3=I_4 =0$, whereas by \eqref{positiv-alpha-M} we find that
$I_5 \geq c_0 \| \nabla (\ds \teta \tau k)^-\|_{L^2(\Omega)}^2$ and $I_6 \geq c_0 \| \nabla (\dss  \tau k)^-\|_{L^2(\GC)}^2$.
Observing that  $\mathcal{T}_\epsilon(r) r^-=\epsilon r^-$ for all $r\in \R$, 
the function $r \mapsto  \mathcal{T}_\epsilon(r) r^{-} $ is non-increasing  and hence  $I_7 \geq 0$, while the very same arguments as in
\eqref{posult4} show that $I_8+I_9 \geq 0$.  Clearly, $I_{10}\geq 0$ and $I_{13}\geq 0$. The positivity of
$I_{11} $ is due to the fact that $\ds h \tau k \geq 0$ a.e.\ in $\Omega$ by \eqref{cond-h}; analogously we have that $I_{12} \geq 0$.
All in all, from \eqref{conto-for-pos} we gather that
\[
 \| (\ds \teta \tau k)^-\|_{L^2(\Omega)}^2 + \| (\dss  \tau k)^-\|_{L^2(\GC)}^2 \leq 0, \quad \text{whence} \quad (\ds \teta \tau k)^- =0 \ \aein\, \Omega, \quad (\dss \tau k)^-=0 \ \aein\, \GC,
\]
whence \RCORR the non-negativity properties \EEE   \eqref{non-negativity}.
\medskip

\par
\noindent
\underline{\emph{Step $3$: proof of estimate \eqref{Lp-est-4-recipr-eM}:}} Mimicking the calculations from Section \ref{ss:3.2}, for $p>2$
we test the discrete bulk heat equation \eqref{discr-b-heat-trunc} by $-(\mathcal{T}_\epsilon(\ds \teta \tau k))^{-p}$ and the
discrete surface heat equation \eqref{discr-surf-temp-trunc} by $-(\mathcal{T}_\epsilon(\dss  \tau k))^{-p}$. Summing the resulting relations, we get
\begin{equation}
\label{towards-discr-posi}
\begin{aligned}
0 = &
\io \frac{\tetak - \mathcal{T}_\epsilon(\tetakmu)}{\tau} (-(\mathcal{T}_\epsilon(\ds \teta \tau k))^{-p})  \dd x
+\int_{\GC} \frac{\tetask -\mathcal{T}_\epsilon( \tetaskmu)}{\tau} (-(\mathcal{T}_\epsilon(\dss  \tau k))^{-p})  \dd x
\\
&
\
+\io \tetak \mathrm{div} \left( \frac{\uuk-\uukmu}{\tau} \right) (\mathcal{T}_\epsilon(\ds \teta \tau k))^{-p} \dd x
+\int_{\GC} \tetask  \frac{\lambda(\ds \chi\tau k)-\lambda(\ds \chi \tau{k-1})}{\tau}    (\mathcal{T}_\epsilon(\dss  \tau k))^{-p} \dd x
\\
&
\
+  p  \io \alpha_{M}(\tetak) (\mathcal{T}_\epsilon(\tetak))^{-(1+p)} |\nabla \tetak|^2 \dd x
+  p  \int_{\GC} \alpha_{M}(\tetask) (\mathcal{T}_\epsilon(\tetask))^{-(1+p)} |\nabla \tetask|^2 \dd x
\\
&
\
+ \int_{\GC} k(\chikmu)(\tetak-\dss \tau k)  ((\mathcal{T}_\epsilon(\tetask))^{1-p} {-} (\mathcal{T}_\epsilon(\tetak))^{1-p} ) \dd x
\\
&
\
- \int_{\GC}   \mathcal{T}_\epsilon(\tetak)^{1-p}  \left(  \nlocss {\chikmup}\, \chikmup  \tetak  {-}  \nlocss {\chikmup \tetask}\, \chikmup \right)    \dd x
\\
& \
+ \int_{\GC}     \mathcal{T}_\epsilon(\dss \tau k)^{1-p} \left(  \nlocss {\chikmup \tetak}\, \chikmup  {-} \nlocss {\chikmup}\, \chikmup \tetask   \right) \dd x
\\
& \
 + \io \tensoretk \, \vtens \, \tensoretk \frac1{(\mathcal{T}_\epsilon(\ds \teta \tau k))^p}  \dd x
+ \io \hk \frac1{(\mathcal{T}_\epsilon(\ds \teta \tau k))^p} \dd x
+\int_{\GC}  \lk \frac1{(\mathcal{T}_\epsilon(\ds \teta \tau k))^p} \dd x
\\
& \
+ \int_{\GC} \bigg| \frac{\chik - \chikmu}{\tau}\bigg|^2 \frac1{(\mathcal{T}_\epsilon(\ds \teta \tau k))^p}  \dd x
\\
& \doteq I_1 +I_2+I_3+I_4+I_5+I_6+I_7+I_8+I_9+I_{10} +I_{11} +I_{12} +I_{13} \,,
\end{aligned}
\end{equation}
where we have used that $(\tetak)^+  = \tetak $ and $ (\tetask)^+ = \tetask$ by the previously obtained  \eqref{non-negativity}.
Then, we observe that, as $\tetak \leq \mathcal{T}_\epsilon(\tetak)$ a.e.\ in $\Omega$,
we have
\[
I_1 \geq \io \frac{\mathcal{T}_\epsilon(\tetak) - \mathcal{T}_\epsilon(\tetakmu)}{\tau} (-(\mathcal{T}_\epsilon(\ds \teta \tau k))^{-p})  \dd x
\geq \frac1{(p-1)\tau} \int_\Omega ( \mathcal{T}_\epsilon(\tetak)^{1-p}{-} \mathcal{T}_\epsilon(\tetakmu)^{1-p} ) \dd x,
\]
where the last estimate follows from the convexity inequality $-r^{-p}(r-s) \geq \tfrac1{p-1} (r^{1-p}{-}s^{1-p})$ for every $r,s \in (0,+\infty)$.
Analogously,
\[
I_2 \geq  \frac1{(p-1)\tau} \int_{\GC} ( \mathcal{T}_\epsilon(\tetask)^{1-p}{-} \mathcal{T}_\epsilon(\tetaskmu)^{1-p} ) \dd x.
\]
Clearly, $I_5\geq 0$ and $I_6 \geq 0$. Since the function $r \mapsto \mathcal{T}_\epsilon(r)^{1-p}$ is non-increasing, we have that $I_7 \geq 0$. The very same arguments as
for \eqref{posult4} show that $I_8+I_9 \geq 0$, and, again by \eqref{cond-h} and \eqref{cond-ell}, we have that $I_{11} \geq 0$ and $I_{12} \geq 0$.
Finally, we observe that
\[
\begin{aligned}
I_3+I_{10}  & \stackrel{(1)}{\geq} -  c_d\int_\Omega  \left| \eps  \left( \frac{\uuk-\uukmu}{\tau} \right) \right| (\mathcal{T}_\epsilon(\tetak))^{1-p} \dd x + C_{\mathrm{v}} \int_\Omega
 \left| \eps  \left( \frac{\uuk-\uukmu}{\tau} \right) \right|^2  (\mathcal{T}_\epsilon(\tetak))^{-p}
\dd x
\\
 &
  \stackrel{(2)}{\geq}
 - C \int_\Omega (\mathcal{T}_\epsilon(\tetak))^{2-p} \dd x +  \frac{C_{\mathrm{v}}}2 \int_\Omega
 \left| \eps  \left( \frac{\uuk-\uukmu}{\tau} \right) \right|^2  (\mathcal{T}_\epsilon(\tetak))^{-p} \dd x
 \\
 &
  \stackrel{(3)}{\geq}  -\frac{C}{p-1}  - \frac{C(p-2)}{p-1} \int_\Omega (\mathcal{T}_\epsilon(\tetak))^{1-p} \dd x  +  \frac{C_{\mathrm{v}}}2 \int_\Omega
 \left| \eps  \left( \frac{\uuk-\uukmu}{\tau} \right) \right|^2   (\mathcal{T}_\epsilon(\tetak))^{-p} \dd x
 \end{aligned}
\]
 where (1) is due to \eqref{divut} and to \eqref{korn},  while (2) and (3) are due to Young's inequality, arguing as in \eqref{posult2}. Analogously,
 we find that
 \[
 \begin{aligned}
 I_{4} +I_{12} \geq  -\frac C{p-1}  - \frac{C(p-2)}{p-1} \int_{\GC}(\mathcal{T}_\epsilon(\tetask))^{1-p} \dd x  +  \frac12
  \int_{\GC} \bigg| \frac{\chik - \chikmu}{\tau}\bigg|^2  (\mathcal{T}_\epsilon(\tetask))^{-p} \dd x \,.
 \end{aligned}
 \]
 Combining all of the above calculations  we arrive at
\[
\begin{aligned}
 & \frac1{(p-1)\tau} \int_\Omega \mathcal{T}_\epsilon(\tetak)^{1-p} \dd x  + \frac1{(p-1)\tau} \int_{\GC} \mathcal{T}_\epsilon(\tetask)^{1-p} \dd x
\\ & \leq \frac1{(p-1)\tau} \int_\Omega \mathcal{T}_\epsilon(\tetakmu)^{1-p} \dd x  + \frac1{(p-1)\tau} \int_{\GC} \mathcal{T}_\epsilon(\tetaskmu)^{1-p} \dd x
\\ & \quad +\frac C{p-1} + \frac{C(p-2)}{p-1} \left( \int_\Omega (\mathcal{T}_\epsilon(\tetak))^{1-p} \dd x{+}  \int_{\GC} (\mathcal{T}_\epsilon(\tetask))^{1-p} \dd x \right),
\end{aligned}
\]
whence,
multiplying the  inequality by
$(p-1)\tau$ and
summing over the index  $j$, for $j \in \{1, \ldots, k\}$ with an arbitrary $k \in \{1,\ldots, K_\tau\}$,
we arrive at the relation
\[
\mathcal{T}_k  \leq \mathcal{T}_0 + CT + \sum_{j=1}^{k} C(p-2)\tau \mathcal{T}_j
\]
with the place-holder $\mathcal{T}_k: =  \int_\Omega \mathcal{T}_\epsilon(\tetak)^{1-p} \dd x +\int_{\GC} \mathcal{T}_\epsilon(\tetask)^{1-p} \dd x$.
The discrete Gronwall Lemma \ref{l:discrG1/2}
yields,
\[
\mathcal{T}_k \leq  \frac{ \mathcal{T}_0 {+} CT}{1-C(p-2)\tau } \exp \left( \frac{C(p-2)\tau k}{1-C(p-2)\tau } \right)
\]
for $\tau \in \RCORR  (0,\bar{\tau}_p)$ \EEE with
\begin{equation}
\label{taup}
\RCORR \bar{\tau}_p= \frac1{2C(p-2)}\,. \EEE
\end{equation}
\RCORR Now, since $\tau<\bar{\tau}_p$, we have that $1-C(p-2)\tau >\tfrac12$ and thus we get the analogue of estimate \eqref{quoted-discrete}, i.e.\ \EEE
\[
\max\left\{ \left\| \frac1{ \mathcal{T}_\epsilon(\tetak)}\right\|_{L^{p-1}(\Omega)}  , \left\| \frac1{ \mathcal{T}_\epsilon(\tetask)}\right\|_{L^{p-1}(\GC)}  \right\}
\leq  \left( 2C'\right)^{1/(p-1)}   \exp \left( \frac{2CT(p-2)}{(p-1)} \right) \leq S_0
\]
\RCORR with $C'= \mathcal{T}_0+CT$ and \EEE
 $S_0 = 2C' \exp (2CT)$. Clearly, by the arbitrariness of $p>2$ we conclude \eqref{Lp-est-4-recipr-eM}.
\end{proof}

\begin{remark}
\label{rmk:explain-truncation}
\upshape
A careful perusal of the calculations in Step $3$ shows the role of the  \RCORR
positive parts $(\ds \teta  \tau k)^{+}$ and $\chikmup$, as well as of
 the  truncation operator $\mathcal{T}_\epsilon$,  in  ensuring the positivity of various integral terms that appear in \EEE the proof of estimate  \eqref{Lp-est-4-recipr-eM}.
\end{remark}
\par
We conclude this section by showing that the discrete  solutions $(\ds \teta {\tau,\epsilon,M} k,  \ds \uu {\tau,\epsilon,M} k, \dss {\tau,\epsilon,M} k, \ds \chi  {\tau,\epsilon,M} k)$ fulfill
an energy inequality, involving the stored energy functional \RCORR  $\calE_\varsigma$  from
\eqref{stored-energy-rho}, \EEE   that will play a crucial role for the limit passage as $\epsilon \down 0$.
In the proof of \eqref{discr-tot-enbal-trunc} below  we will use in a key way the convex
and concave decompositions from  \eqref{concave/convex-decomp}.
\begin{lemma}
\label{l:discr-tot-enbal-trunc}
The functions $(\ds \teta {\tau,\epsilon,M} k,  \ds \uu {\tau,\epsilon,M} k, \dss {\tau,\epsilon,M} k, \ds \chi  {\tau,\epsilon,M} k)$ fulfill
\begin{equation}
\label{discr-tot-enbal-trunc}
\begin{aligned}
&
\calE_\varsigma(\ds \teta {\tau,\epsilon,M} k,  \dss {\tau,\epsilon,M} k,  \ds \uu {\tau,\epsilon,M} k, \ds \chi  {\tau,\epsilon,M} k)
+\rho\tau \int_{\Omega} \left|\eps \left( \frac{\ds \uu {\tau,\epsilon,M}k {-}  \ds \uu \tau {k-1}}{\tau}\right) \right|^\omega \dd x  + \rho \tau \int_{\GC} \left|\frac{\ds \chi {\tau,\epsilon,M} k - \ds \chi \tau{k-1}}{\tau}\right|^\omega \dd x
\\
& \quad
+\tau
\int_{\GC} k(\ds \chi \tau{k-1}) (\ds \teta {\tau,\epsilon,M} k {-} \dss {\tau,\epsilon,M} k) (\mathcal{T}_\epsilon(\ds \teta {\tau,\epsilon,M} k) {-} \mathcal{T}_\epsilon( \dss {\tau,\epsilon,M} k))
\dd x
\\
& \quad +\tau  \iint_{\GC\times\GC} 
j(x,y)  \RCNEW (\ds \chi \tau {k-1} (x))^+ (\ds \chi \tau {k-1} (y))^+ \EEE  (\mathcal{T}_\epsilon(\ds \teta {\tau,\epsilon,M} k(x)) {-} \mathcal{T}_\epsilon( \dss {\tau,\epsilon,M} k)(y))
(\ds \teta {\tau,\epsilon,M} k (x){-} \dss {\tau,\epsilon,M} k(y)) \dd x \dd y
\\
&
 \leq  \calE_\varsigma (\mathcal{T}_\epsilon(\ds \teta \tau {k-1}), \mathcal{T}_\epsilon(\dss  \tau {k-1}), \ds \uu \tau{k-1}, \ds \chi \tau{k-1} ) + \tau \int_{\Omega} \ds h \tau k \dd x + \tau \int_{\GC} \ds \ell \tau k \dd x  +
 \tau\pairing{}{\bsVD}{\ds {\mathbf{F}}\tau k}{\frac{\ds \uu {\tau,\epsilon, M} k-\ds \uu \tau {k-1}}{\tau}}\,.
 \end{aligned}
\end{equation}
\end{lemma}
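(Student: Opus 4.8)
The plan is to derive the discrete total energy inequality \eqref{discr-tot-enbal-trunc} by testing the four equations of system \eqref{discr-syst-trunc} by the ``natural'' test functions and summing, exactly mirroring the formal derivation of \eqref{total-enbal} carried out earlier in the Introduction. Concretely, I would test the discrete bulk temperature equation \eqref{discr-b-heat-trunc} by $v \equiv 1$, the discrete momentum balance \eqref{discr-mom-bal-trunc} by $\vv = (\ds \uu {\tau,\epsilon,M} k - \ds \uu \tau{k-1})/\tau$, the discrete surface temperature equation \eqref{discr-surf-temp-trunc} by $v\equiv 1$, and the discrete flow rule \eqref{discr-flow-rule-trunc} by $(\ds \chi {\tau,\epsilon,M} k - \ds \chi \tau{k-1})/\tau$. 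Summing the resulting identities, I expect the $\io \theta_t \mathrm{div}(\uu_t)$-type terms and the $\nlocss{\cdot}$-coupling terms between bulk and surface heat equations to cancel against matching terms in the momentum balance and flow rule, just as in \eqref{energy-test}, leaving only the ``dissipation'' squares and the data terms. Multiplying through by $\tau$ throughout (the test functions already carry a $1/\tau$) is what produces the form displayed in \eqref{discr-tot-enbal-trunc}.

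The key point making this work at the discrete level is convexity/concavity. For each quadratic/nonlinear term appearing in $\calE_\varsigma$, the discrete time-increment, paired with the discrete velocity, must be bounded below by the increment of the corresponding energy contribution. Specifically I would use: the elementary convexity inequality $a(a-b) \geq \tfrac12 a^2 - \tfrac12 b^2$ for the quadratic forms $\efm$, for $\tfrac12\int_{\GC}\chip|\uu|^2$ and the nonlocal analogue $\tfrac12\int_{\GC}\chip|\uu|^2\nlocss{\chip}$, and for $\tfrac12|\nabla\chi|^2$; the subgradient inequalities $\betar(\chik)(\chik-\chikmu)\geq \wbetar(\chik)-\wbetar(\chikmu)$ and $\etar(\uuk\cdot\nn)(\uuk-\uukmu)\cdot\nn \geq \wetar(\uuk\cdot\nn)-\wetar(\uukmu\cdot\nn)$ for the Yosida-regularized potentials; and the convexity of $\gamma_\nu$ together with the concavity of $\lambda_\delta$ (cf.\ \eqref{concave/convex-decomp}) to handle the non-monotone terms $\gamma_\nu'(\chik)-\nu\chikmu$ and $\lambda_\delta'(\chikmu)(\tetask)^+ + \delta\chik(\tetask)^+$: these are precisely the terms for which the convex/concave splitting was engineered, as recalled in Remark \ref{rmk:comm-discretiz}. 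Here one also needs the cancellation between the term $-(\tetask)^+(\lambda(\chik)-\lambda(\chikmu))/\tau$ in the surface heat equation and the $\lambda'$-terms in the flow rule, which is where the decomposition $\lambda = \lambda_\delta + \tfrac\delta2 r^2$ is used so that the explicit vs.\ implicit placement of arguments matches up. The terms in the discrete flow rule \eqref{discr-flow-rule-trunc} featuring $\xik\in\partial\varphi(\chik)$ and $|\uukmu|^2$ must cancel against the $\chip\uuk\vv$-type terms in the momentum balance; this requires observing that, on the set $\{\chik>0\}$ one has $\xik=1$ and $\chikp = \chik$ so the cancellation is exact, while on $\{\chik\le 0\}$ one has $\chikp=0$ so the momentum-balance contribution vanishes and only the nonnegative $\Psi$-gradient term (the $\xik$-weighted integrals, which are $\geq 0$ since $|\uukmu|^2\geq 0$, $\xik\geq 0$, $\nlocss{\cdot}\geq 0$) survives, landing on the correct side of the inequality.

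The main obstacle, as usual in such fully-coupled implicit schemes, is bookkeeping the coupling terms so that the cancellations are genuinely exact and the residual terms all have the correct sign. In particular, the bulk heat equation was tested by $1$ but its left-hand side features $\mathcal{T}_\epsilon(\tetak)$ inside the $k(\chikmu)$ and nonlocal terms rather than $\tetak$ itself, whereas the flow rule/momentum balance produce $\tetak$ (via $(\tetak)^+=\tetak$); this mismatch is exactly why \eqref{discr-tot-enbal-trunc} carries the products $(\ds\teta{\tau,\epsilon,M}k - \dss{\tau,\epsilon,M}k)(\mathcal{T}_\epsilon(\ds\teta{\tau,\epsilon,M}k) - \mathcal{T}_\epsilon(\dss{\tau,\epsilon,M}k))$ rather than clean squares, and why $\calE_\varsigma$ on the right is evaluated at $\mathcal{T}_\epsilon(\ds\teta\tau{k-1})$, $\mathcal{T}_\epsilon(\dss\tau{k-1})$. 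Thus the argument for the first two terms of $\calE_\varsigma$ is: test the bulk equation by $1$, use $-\theta^+\mathrm{div}(\uu_t)=-\theta\,\mathrm{div}(\uu_t)$ by \eqref{non-negativity}, integrate, and note $\io(\tetak - \mathcal{T}_\epsilon(\tetakmu))\dd x \geq \io \tetak\dd x - \io \mathcal{T}_\epsilon(\tetakmu)\dd x$ since $\mathcal{T}_\epsilon(\tetak)\geq \tetak$ is not what is needed — rather one simply has equality here and the $\mathcal{T}_\epsilon$ on the previous step appears directly. I would then organize the proof as: (i) write out the four tested identities; (ii) collect the coupling terms and verify the cancellations using symmetry of $\kr$ (Lemma \ref{lemmaK}, \eqref{nl-symm}) and the $\chip$/$\xik$ matching; (iii) apply each convexity/concavity/subgradient inequality to the remaining increment terms; (iv) discard the nonnegative dissipation terms on the left that are not displayed (e.g.\ gradient-of-temperature terms, which are $\geq0$) and keep the $\rho$-power terms and the quadratic heat-exchange terms; (v) collect the data terms $\tau\int_\Omega\hk + \tau\int_{\GC}\lk + \tau\langle\FF_\tau^k,\cdot\rangle$ on the right. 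No step is deep, but step (ii) is the one where an error is easiest to make, so I would carry it out termwise with explicit reference to which term of \eqref{discr-syst-trunc} cancels which.
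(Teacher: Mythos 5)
Your overall strategy is exactly the paper's: test the four equations of \eqref{discr-syst-trunc} by $\tau$, by $\uuk-\uukmu$, by $\tau$, and by $\chik-\chikmu$ (your $1/\tau$-rescaling is immaterial), observe the cancellation of the $\theta\,\mathrm{div}$-coupling, of the viscous quadratic term and of the $|\chi_t|^2$-term, telescope the energy through convexity/concavity and the Yosida subgradient inequalities (including the $\lambda_\delta$/$\gamma_\nu$ splitting of \eqref{concave/convex-decomp}, cf.\ \eqref{conv/conc-ineqs}), and combine the $k(\chikmu)$- and $j$-terms of the two heat equations via the symmetry of the kernel, which is what produces the $\mathcal{T}_\epsilon$-products on the left and the $\mathcal{T}_\epsilon$-shifted energy on the right of \eqref{discr-tot-enbal-trunc}.

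There is, however, a genuine flaw in the one non-routine step, namely your treatment of the coupling between the $\chikp\uuk$-terms of \eqref{discr-mom-bal-trunc} and the $\xik$-weighted terms of \eqref{discr-flow-rule-trunc} (this coupling is the very reason the scheme carries $\xik\in\partial\varphi(\chik)$, cf.\ Remark \ref{rmk:comm-discretiz}). These terms do \emph{not} cancel, not even on $\{\chik>0\}$: $\int_{\GC}\chikp\,\uuk\cdot(\uuk-\uukmu)\dd x$ is an increment in $\uu$ while $\frac12\int_{\GC}|\uukmu|^2\,\xik\,(\chik-\chikmu)\dd x$ is an increment in $\chi$, and they must be \emph{combined} to telescope $\frac12\int_{\GC}\chip|\uu|^2\dd x$ and its nonlocal analogue. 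Moreover, your claim that on $\{\chik\le 0\}$ the surviving $\xik$-weighted integrands are nonnegative is false: on $\{\chik=0,\ \chikmu>0\}$ one has $\frac12|\uukmu|^2\,\xik\,(\chik-\chikmu)=-\frac12\,\xik\,\chikmu\,|\uukmu|^2$, which may be strictly negative. What actually closes the argument is the subgradient inequality of the positive part, $\xik(\chik-\chikmu)\geq \chikp-\chikmup$ a.e.\ in $\GC$ (valid precisely because $\xik\in\partial\varphi(\chik)$ with $\varphi(x)=x^+$, and in particular $\xik\le 1$), multiplied by the nonnegative weights $\frac12|\uukmu|^2$, $\frac12\nlocss{\chikp}\,|\uukmu|^2$ and $\frac12\nlocss{\chikmup|\uukmu|^2}$, together with the convexity inequality $\chikp\,\uuk\cdot(\uuk-\uukmu)\geq\frac12\chikp|\uuk|^2-\frac12\chikp|\uukmu|^2$ and, for the nonlocal contributions, the symmetry of $j$ used to rewrite $\int_{\GC}\nlocss{\chikmup|\uukmu|^2}\,(\chikp-\chikmup)\dd x=\int_{\GC}\nlocss{\chikp-\chikmup}\,\chikmup|\uukmu|^2\dd x$; this is exactly what the paper does in \eqref{elementary-cvx} and \eqref{nonlocs-d-1}--\eqref{nonlocs-d-3}. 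Replace your cancellation/sign case analysis by these inequalities and the rest of your outline (the $\mathcal{T}_\epsilon$-mismatch bookkeeping, discarding the nonnegative elliptic terms, collecting the data terms) goes through as in the paper.
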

\begin{proof}
We test  \eqref{discr-b-heat-trunc} by $\tau$, \eqref{discr-mom-bal-trunc} by $(\ds \uu \tau{k}{-} \ds \uu \tau{k-1})$, \eqref{discr-surf-temp-trunc} by $\tau$, and
\eqref{discr-flow-rule-trunc} by $(\ds \chi\tau k{-} \ds \chi \tau{k-1})$, add the resulting relations, and observe the cancellation of the terms
\[
- \tau \io (\tetak)^+ \mathrm{div} \left( \tfrac{\uuk-\uukmu}{\tau} \right) \dd x, \qquad
 \tau \io \tensoretk \, \vtens \, \tensoretk  \dd x, \qquad \tau\int_{\GC} \left| \tfrac{\chik - \chikmu}{\tau}\right|^2  \dd x .
 \]
 \par
We now manipulate the remaining terms. Elementary convexity inequalities give that
\begin{equation}
\label{elementary-cvx}
\begin{aligned}
&
 \efm \left( \uuk , \ds \uu \tau{k}{-} \ds \uu \tau{k-1} \right)\geq \frac12 \efm(\ds \uu \tau k ) - \frac12 \efm (\ds \uu \tau {k-1}),
 \\
 &
\begin{aligned}
&
\begin{aligned}
&
 \int_{\GC} \chikp \uuk  (\ds \uu \tau k{-} \ds \uu \tau {k-1}) \dd x
+ \frac12 \int_{\GC} |\uukmu|^2 \xik (\ds \chi \tau  k {-} \ds \chi \tau{k-1}) \dd x
\\
&
\geq \frac12 \int_{\GC}\chikp |\ds \uu \tau k|^2 \dd x   -
\frac12 \int_{\GC}\chikp |\ds \uu \tau {k-1}|^2 \dd x
+\frac12\int_{\GC} \chikp   |\ds \uu \tau {k-1}|^2 \dd x - \frac12 \int_{\GC} \chikmup  |\ds \uu \tau {k-1}|^2 \dd x
\\
& = \frac12 \int_{\GC}\chikp |\ds \uu \tau k|^2 \dd x
-  \frac12 \int_{\GC}  \chikmup |\ds \uu \tau {k-1}|^2 \dd x,
\end{aligned}
\end{aligned}
\\
&
\int_{\GC} \ds \zzeta \tau k \cdot ( \ds \uu \tau{k}{-} \ds \uu \tau{k-1}) \dd x \geq
\int_{\GC} \widehat{\eta}_\varsigma ( \ds \uu \tau{k} {\cdot} \mathbf{n})  \dd x -\int_{\GC} \widehat{\eta}_\varsigma ( \ds \uu \tau{k-1} {\cdot} \mathbf{n})  \dd x,
 \\
 &
 \int_{\GC} \nabla \ds \chi \tau k \cdot \nabla (\ds \chi \tau k {-}\ds \chi \tau {k-1}) \dd x \geq \frac12 \int_{\GC} |\nabla \ds \chi \tau k|^2 \dd x - \frac12 \int_{\GC} |\nabla \ds \chi \tau {k-1}|^2 \dd x,
 \\
 & \int_{\GC} \betar(\ds \chi \tau k) (\ds \chi \tau k{-} \ds \chi \tau{k-1}) \dd x \geq \int_{\GC} \wbetar(\ds \chi \tau k) \dd x -  \int_{\GC} \wbetar(\ds \chi \tau {k-1}) \dd x.
 \end{aligned}
 \end{equation}
 Using that $\lambda_\delta (r) = \lambda(r) - \tfrac\delta 2r^2 $  is concave we infer that
 \begin{subequations}
\label{conv/conc-ineqs}
\begin{equation}
\begin{aligned}
&
 - \int_{\GC} (\tetask)^+  \left( \lambda(\ds \chi\tau k)-\lambda(\ds \chi \tau{k-1})  -\lambda_\delta'(\chikmu)(\ds \chi \tau k {-} \ds \chi \tau{k-1})  - \delta \ds \chi \tau k (\ds \chi \tau k {-} \ds \chi \tau{k-1}) \right) \dd x
 \\
  & \geq
 -\int_{\GC}  (\tetask)^+ \left(  \lambda(\ds \chi\tau k)-\lambda(\ds \chi \tau{k-1}) +\lambda_\delta(\ds \chi \tau {k-1}) - \lambda_\delta(\ds \chi \tau{k}) +\tfrac\delta2 |\ds \chi \tau {k-1}|^2 -\tfrac \delta2 |\ds \chi\tau{k}|^2  \right) \dd x =0\,.
 \end{aligned}
\end{equation}
  Analogously, exploiting that
   $\gamma_\nu(r) = \gamma(r) +\tfrac\nu 2 r^2$ is convex  we find that
 \begin{equation}
 \begin{aligned}
 \int_{\GC} \gamma_\nu'(\ds \chi \tau k)(\ds \chi \tau k{-} \ds \chi \tau{k-1}) - \nu  \int_{\GC}\ds \chi \tau{k-1}(\ds \chi \tau k{-} \ds \chi \tau {k-1}) \dd x
 &  \geq \int_{\GC} \left(  \gamma_\nu(\ds \chi \tau k) {-} \gamma_\nu(\ds \chi \tau {k-1})  {+}\tfrac\nu2 |\ds \chi \tau {k-1}|^2 {-}  \tfrac \nu2 |\ds \chi \tau k|^2 \right) \dd x
 \\
  & \geq \int_{\GC} \left( \gamma(\ds \chi \tau k) {-} \gamma(\ds \chi \tau{k-1}) \right) \dd x \,.
  \end{aligned}
 \end{equation}
 \end{subequations}
 \begin{subequations}
 \label{nonlocal-terms}
 As for the nonlocal terms in the discrete displacement equation, \RCORR we have \EEE
 \begin{equation}
 \label{nonlocs-d-1}
 \int_{\GC} \nlocss{\chikp}\,  \chikp  \ds \uu \tau k \cdot (\ds \uu \tau {k} {-} \ds \uu {\tau}{k-1}) \dd x \geq \frac12 \int_{\GC}\nlocss{\chikp}\, \chikp |\ds \uu \tau k|^2 \dd x - \frac12 \int_{\GC}\nlocss{\chikp}\, \chikp |\ds \uu \tau {k-1}|^2 \dd x,
 \end{equation}
 whereas the nonlocal terms in the discrete flow rule for the adhesion parameter yield
 \begin{align}
 \label{nonlocs-d-2}
 &
 \begin{aligned}
 &
 \int_{\GC} \frac12 \nlocss{\chikp}\,\xik  (\ds \chi \tau {k} {-} \ds \chi {\tau}{k-1}) |\ds \uu \tau {k-1}|^2 \dd x
 \\
 & \RCNEW \geq \EEE
 \frac12 \int_{\GC}\nlocss{\chikp}\, \chikp |\ds \uu \tau {k-1}|^2 \dd x -  \frac12 \int_{\GC}\nlocss{\chikp}\,\chikmup |\ds \uu \tau {k-1}|^2 \dd x,
 \end{aligned}
 \\
 &
  \label{nonlocs-d-3}
 \begin{aligned}
 &
\RCNEW  \frac12 \int_{\GC} \nlocss {\chikmup |\ds \uu \tau {k-1}|^2}  \, \xik \RCNEW ( \ds \chi \tau k{-} \ds \chi \tau {k-1}) \dd x
\\
 & \geq \frac12 \int_{\GC}\nlocss {\chikmup |\ds \uu \tau {k-1}|^2} \, (\chikp{-}\chikmup) \dd x
 \\
  & = \frac12 \int_{\GC}\nlocss{\chikp}\, \chikmup |\ds \uu \tau {k-1}|^2 \dd x
- \frac12 \int_{\GC} \nlocss {\chikmup}\, \chikmup |\ds \uu \tau {k-1}|^2   \dd x\,,
\end{aligned}
 \end{align}
\end{subequations}
\RCORR where we have used the symmetry properties of  $\mathcal{J}$. \EEE
  Adding \eqref{nonlocs-d-1}, \eqref{nonlocs-d-2}, and \eqref{nonlocs-d-3}, we obtain
  \[
\frac12 \int_{\GC}\nlocss{\chikp}\,  \chikp  |\ds \uu \tau k|^2 \dd x - \frac12 \int_{\GC} \nlocss {\chikmup} \, \chikmup |\ds \uu \tau {k-1}|^2
 \dd x\,.
\]
All in all, summing the terms on the right-hand sides of the inequalities in \eqref{elementary-cvx}--\eqref{nonlocal-terms} with the temperature terms we obtain
\RCORR $\calE_\varsigma (\ds \teta {\tau} k,  \dss {\tau} k,  \ds \uu {\tau} k, \ds \chi  {\tau} k)  - \calE_\varsigma(\mathcal{T}_\epsilon(\ds \teta\tau{k-1}),\mathcal{T}_\epsilon(\dss\tau{k-1}), \ds \uu \tau {k-1}, \ds \chi \tau{k-1})  $. \EEE
\par
 Furthermore,
the integrals
$\tau  \int_{\GC} k(\chikmu)\mathcal{T}_\epsilon(\tetak)(\tetak-\dss \tau k) \dd x $ and $-\tau\int_{\GC} k(\chikmu)(\tetak -\tetask) \mathcal{T}_\epsilon(\tetask)  \dd x $
combine to give the  fourth  term on the left-hand side of \eqref{discr-tot-enbal-trunc}. Repeating the same calculations as in \eqref{key-posit-conto}, we see that
\[
\begin{aligned}
&
\tau \int_{\GC}\mathcal{T}_\epsilon(\tetak) \left(  \nlocss {\chikmu} \chikmu  \tetak    {-} \nlocss {\chikmu \tetask} \chikmu   \right) \dd x
-  \tau \int_{\GC} \mathcal{T}_\epsilon(\tetask) \left(  \nlocss {\chikmu \tetak} \chikmu  {-} \nlocss {\chikmu} \chikmu \tetask  \right) \dd x
\\
 & = \tau  \iint_{\GC\times\GC} 
 j(x,y) \ds \chi \tau {k-1} (x) \ds \chi \tau {k-1} (y)  (\mathcal{T}_\epsilon(\ds \teta {\tau,\epsilon,M} k(x)) {-} \mathcal{T}_\epsilon( \dss {\tau,\epsilon,M} k)(y))
(\ds \teta {\tau,\epsilon,M} k (x){-} \dss {\tau,\epsilon,M} k(y)) \dd x \dd y \,.
\end{aligned}
\]
\par
Finally, we find
\[
\begin{aligned}
&
\rho \io \left| \eps\left(\frac{\ds \uu \tau k -\ds \uu \tau{k-1}}{\tau}\right)\right|^{\omega - 2} \eps\left(\frac{\ds \uu \tau k -\ds \uu \tau{k-1}}{\tau}\right) \eps (\ds \uu \tau k{-} \ds \uu \tau{k-1})  \dd x
= \rho \tau \io  \left| \eps\left(\frac{\ds \uu \tau k -\ds \uu \tau{k-1}}{\tau}\right)\right|^{\omega}
\dd x,
\\
&
\rho \int_{\GC} \left| \frac{\ds \chi \tau k -\ds \chi \tau{k-1}}{\tau} \right|^{\omega-2} \frac{\ds \chi \tau k - \ds \chi \tau {k-1}}\tau (\ds \chi \tau k {-}\ds \chi \tau{k-1}) \dd x = \rho\tau \int_{\GC}
\left| \frac{\ds \chi \tau k -\ds \chi \tau{k-1}}{\tau} \right|^{\omega} \dd x\,.
\end{aligned}
\]
Taking into account the above calculations, we conclude \eqref{discr-tot-enbal-trunc}.
\end{proof}

%
%
%

\subsection{Existence of solutions to the $M$-truncated discrete system}
\label{ss:4.2}
Now, we perform, for $\tau>0$, $M>0$, and $k \in \{ 1, \ldots, K_\tau \}$  fixed, the limit passage in system \eqref{discr-syst-trunc} as $\epsilon \down 0$.
In this way, we will obtain discrete solutions to a discrete system featuring only the $M$-truncation in $\alpha$.
To shorten notation, throughout this section we will abbreviate  the solution quadruple $(\ds \teta {\tau,\epsilon,M}k,\ds \uu {\tau,\epsilon,M}k,  \dss  {\tau,\epsilon,M}k, \ds \chi {\tau,\epsilon,M}k)$
with $(\ds \teta \epsilon k, \ds \uu \epsilon k, \dss \epsilon k, \ds \chi \epsilon k)$. Likewise, we will simply denote by
\RCNEW $\ds \sigma \epsilon k$ the selections in $\partial \varphi(\ds \chi \epsilon k)$, and use the notation
$\ds \zzeta \epsilon k : = \eta_\varsigma(\ds \uu {\epsilon}k {\cdot}\mathbf{n}) \mathbf{n}$. \EEE
\par
Our first result collects a series of a priori estimate on the sequence  $(\ds \teta \epsilon k, \ds \uu \epsilon k, \dss \epsilon k, \ds \chi \epsilon k, \xiek)_\epsilon$. They hold
\RCORR uniformly w.r.t.\ $\tau$ in $(0,\bar{\tau})$ for some $\bar\tau>0$ that shall be  specified
in the proof), \EEE
uniformly w.r.t.\ $\epsilon$ and, in fact, w.r.t.\ $M>0$.
\begin{lemma}
\label{l:epsi-a-prio}
Let $\tau \in (0,\bar\tau)$, for some   $\bar{\tau}>0$,  and $k \in \{ 1, \ldots, K_\tau \}$ be  fixed. There exists a constant $S_1>0$, also \emph{independent} of $M>0$,  such that the following estimate holds
\begin{equation}
\label{a-prio-est}
\sup_{\epsilon>0} \left(\| \ds\teta \epsilon k\|_{H^1(\Omega)} {+} \| \dss \epsilon k\|_{H^1(\GC)} {+} \|  \ds\uu \epsilon k\|_{W^{1,\omega}(\Omega;\R^3)}
{+} \| \ds \chi \epsilon k \|_{H^2(\GC)} \RCNEW {+} \| \xiek \RCNEW \|_{L^\infty(\GC)} \EEE \right) \leq S_1\,,
\end{equation}
in addition to estimate \eqref{Lp-est-4-recipr-eM}. 
\end{lemma}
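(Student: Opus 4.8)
The plan is to obtain estimate \eqref{a-prio-est} by testing the four equations of system \eqref{discr-syst-trunc} with appropriate functions and summing, so that the ``bad'' coupling terms cancel or are absorbed. Since $k$ is frozen and $\tau$ is fixed, the discrete time-derivative terms $\tfrac1\tau(\teta_\tau^k-\mathcal T_\epsilon(\teta_\tau^{k-1}))$ etc.\ act as coercive zeroth-order terms, so the estimate is essentially the elliptic coercivity estimate for the operator $\mathscr A$ of \eqref{defA}, made uniform in $\epsilon$ and $M$. The key point, already flagged in the text (``This can be done by the very same calculations that we will carry out in the proof of Lemma \ref{l:epsi-a-prio}''), is that the constant $c_0$ in \eqref{positiv-alpha-M} is \emph{independent} of $M$, so the gradient terms of $\teta_\epsilon^k$ and $\tetas_\epsilon^k$ are controlled with an $M$-free constant; and the nonlinear terms $-\rho\,\mathrm{div}(|\eps(\cdots)|^{\omega-2}\eps(\cdots))$, $\rho|\cdots|^{\omega-2}(\cdots)$ in the momentum balance and flow rule supply $W^{1,\omega}$-coercivity of $\uu_\epsilon^k$ and (together with $A$) $H^2$-coercivity of $\chi_\epsilon^k$.

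\textbf{Key steps.} First I would test \eqref{discr-b-heat-trunc} by $\teta_\epsilon^k$, \eqref{discr-mom-bal-trunc} by $\uu_\epsilon^k-\uu_\tau^{k-1}$, \eqref{discr-surf-temp-trunc} by $\tetas_\epsilon^k$, and \eqref{discr-flow-rule-trunc} by $\chi_\epsilon^k-\chi_\tau^{k-1}$, and sum. The term $\tfrac1\tau(\teta_\epsilon^k)^2$ is coercive; the term $-\io(\teta_\epsilon^k)^+\mathrm{div}(\tfrac{\uu_\epsilon^k-\uu_\tau^{k-1}}\tau)\teta_\epsilon^k$ from the bulk equation is matched (up to the test-function mismatch, which I would handle by also testing the bulk equation additively by $1$ and by $\teta_\epsilon^k$ as in the derivation of \eqref{discr-tot-enbal-trunc}; cf.\ Remark \ref{rmk:comm-discretiz}) by $\io(\teta_\epsilon^k)^+\mathrm{div}(\vv)\,\dd x$ with $\vv=\uu_\epsilon^k-\uu_\tau^{k-1}$; alternatively, since now $\teta_\epsilon^k\ge0$ and $\|\tfrac1{\mathcal T_\epsilon(\teta_\epsilon^k)}\|_{L^p}\le S_0$ from \eqref{Lp-est-4-recipr-eM}, one can instead bound this cross term directly by Young's inequality using the $W^{1,\omega}$-norm of $\uu_\epsilon^k$. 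The nonlocal terms are $\le 0$ or absorbed exactly as in \eqref{posult4}, using Lemma \ref{lemmaK} and the $L^\infty$-bound on $\nlname$ of terms involving the (fixed) data $\chi_\tau^{k-1}$, $\uu_\tau^{k-1}$. The terms $k(\chi_\tau^{k-1})\mathcal T_\epsilon(\cdot)(\cdots)$ combine as in \eqref{discr-tot-enbal-trunc} into a nonnegative quadratic form plus an error controlled by $\|\teta_\epsilon^k\|_{H^1}^2+\|\tetas_\epsilon^k\|_{H^1}^2$ times a small constant (here I would use $\mathcal T_\epsilon(r)\le r+\epsilon$ and $\epsilon<1$ to make the constant $\epsilon$-uniform), then absorbed. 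The Lipschitz/concavity structure of $\lambda,\gamma$ from \eqref{hyp-lambda}, \eqref{hyp-W}, \eqref{concave/convex-decomp} handles $\gamma_\nu',\lambda_\delta'$, and $W(\chi_\epsilon^k)\ge-C_W|\GC|$ from \eqref{hyp-W} bounds $\wbetar(\chi_\epsilon^k)$ from below. The right-hand data $h_\tau^k,\ell_\tau^k,\FF_\tau^k$ are estimated by Young against the coercive terms using \eqref{cond-data}. This gives
\[
\|\teta_\epsilon^k\|_{H^1(\Omega)}+\|\tetas_\epsilon^k\|_{H^1(\GC)}+\|\uu_\epsilon^k\|_{W^{1,\omega}(\Omega;\R^3)}+\|\nabla\chi_\epsilon^k\|_{L^2(\GC)}\le C,
\]
uniformly in $\epsilon,M$. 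Then a comparison argument in \eqref{discr-flow-rule-trunc} — exactly as in Section \ref{sss:3.3.5}, using that the right-hand side is now bounded in $L^2(\GC)$ by the above estimates, the fixed data, and $|\xi_\epsilon^k|\le1$ (since $\xi_\epsilon^k\in\partial\varphi(\chi_\epsilon^k)\subset[0,1]$), together with $\|\tetas_\epsilon^k\|_{L^2}\le C$ — yields $\|A\chi_\epsilon^k+\betar(\chi_\epsilon^k)\|_{L^2(\GC)}\le C$, hence by monotonicity a separate bound on each, hence $\|\chi_\epsilon^k\|_{H^2(\GC)}\le C$ by elliptic regularity. The bound $\|\xi_\epsilon^k\|_{L^\infty(\GC)}\le1$ is immediate from \eqref{subdiff-pos-part}. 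Combining everything gives \eqref{a-prio-est} with a constant $S_1$ independent of $\epsilon$, $M$, and of $\tau\in(0,\bar\tau)$.

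\textbf{Main obstacle.} The delicate point is keeping \emph{all} constants independent of $M$: this forces systematic use of \eqref{positiv-alpha-M} (never of the upper growth bound \eqref{hyp-alpha} evaluated at $\teta_\epsilon^k$, since $\alpha_M$ is bounded above by $c_1(1+M^\mu)$, which is $M$-dependent) for the elliptic terms, and of the quadratic-form structure $k(\cdot)(\teta-\tetas)(\mathcal T_\epsilon(\teta)-\mathcal T_\epsilon(\tetas))\ge0$ together with the monotonicity of $r\mapsto\mathcal T_\epsilon(r)^{1-p}$ etc.\ for the $k$- and nonlocal terms, rather than naive Hölder estimates that would reintroduce $M$ through $\alpha_M$. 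A secondary subtlety is the mismatch between the $H^1$-test function used in the bulk heat equation and the $W^{1,\omega}$-test function used in the momentum balance: this is resolved exactly as in the proof of Lemma \ref{l:discr-tot-enbal-trunc}, by testing the bulk equation additively by the constant $\tau$ and recovering the energy-type cancellation, or — since positivity \eqref{non-negativity} and the reciprocal bound \eqref{Lp-est-4-recipr-eM} are already available — by a direct Young-inequality bound of the cross term. I expect the bookkeeping of these cross terms, and the verification that $\epsilon<1$ suffices to make $\mathcal T_\epsilon$-errors $\epsilon$-uniform, to be the most error-prone part, but it is entirely routine given the structure already established.
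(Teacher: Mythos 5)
Your proposal follows, in substance, the same route as the paper: first the discrete energy inequality \eqref{discr-tot-enbal-trunc}, combined with the coercivity of $\calE_\varsigma$ (to absorb the $\| \uu_\epsilon^k\|_{H^1(\Omega)}$ coming from the $\mathbf{F}$-term), yields the $L^1$ bounds on the two temperatures, the $W^{1,\omega}$ bound on $\uu_\epsilon^k$, the $H^1$ bound on $\chi_\epsilon^k$, and $\|\xiek\|_{L^\infty(\GC)}\le 1$ directly from \eqref{subdiff-pos-part}; then the two heat equations are tested by $\teta_\epsilon^k$ and $\theta_{\mathrm{s},\epsilon}^k$, with the $M$-independence coming solely from $\alpha_M\ge c_0$ in \eqref{positiv-alpha-M} and with the $k(\cdot)$- and nonlocal coupling terms dropped by monotonicity; finally a comparison in the discrete flow rule plus elliptic regularity gives the $H^2$ bound. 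So the essential ideas match.

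Two cautions, one of them substantive. The order of the two estimates is not optional, and the ``alternative'' you offer would fail: if you test all four equations simultaneously and try to close the cross term $\tau\io \teta_\epsilon^k\,\mathrm{div}\bigl((\uu_\epsilon^k{-}\uu_\tau^{k-1})/\tau\bigr)\,\teta_\epsilon^k\dd x$ by Young against the coercive terms $\rho\tau\io|\eps((\uu_\epsilon^k{-}\uu_\tau^{k-1})/\tau)|^\omega\dd x$, $\tau c_0\|\nabla\teta_\epsilon^k\|^2_{L^2(\Omega)}$ and $\|\teta_\epsilon^k\|^2_{L^2(\Omega)}$ (note that \eqref{Lp-est-4-recipr-eM} and positivity do not help here, only \eqref{non-negativity} is used), the exponents do not close: after Gagliardo--Nirenberg the worst contribution is $\tau\,\|\eps(\cdot)\|_{L^4}\,\|\nabla\teta_\epsilon^k\|_{L^2}^{3/4}\,\|\teta_\epsilon^k\|_{L^2}^{5/4}$, and a three-factor Young landing in the above terms requires exponents at most $\omega$, $8/3$ and $8/5$, whose reciprocals sum to at least $1+1/\omega>1$, which is impossible. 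The argument closes only because the energy inequality is exploited \emph{first}, so that when testing by $\teta_\epsilon^k$ the discrete velocity is already bounded in $W^{1,4}(\Omega)$ (this is precisely where $\omega>4$ enters) and the only absorption left is that of $C\tau^2\|\teta_\epsilon^k\|_{H^1(\Omega)}^2$, whence the restriction $\tau<\bar\tau$ in the statement; this is exactly the paper's two-step structure, and it is what your fallback ``test additively by $1$ and by $\teta_\epsilon^k$'' amounts to. Minor points: the $k(\chi_\tau^{k-1})$-terms are exactly nonnegative after summing the two tested heat equations, by monotonicity of $r\mapsto\mathcal{T}_\epsilon(r)r$, so no $\mathcal{T}_\epsilon(r)\le r+\epsilon$ error term is needed; and in the comparison step you should also record that $\rho|(\chi_\epsilon^k{-}\chi_\tau^{k-1})/\tau|^{\omega-2}(\chi_\epsilon^k{-}\chi_\tau^{k-1})/\tau$ lies in $L^2(\GC)$ (via $H^1(\GC)\subset L^q(\GC)$, $q<\infty$) before concluding the $L^2(\GC)$ bound on $A\chi_\epsilon^k$.
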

\begin{proof}
Observe that the fourth and the fifth terms on the left-hand side of the energy inequality \eqref{discr-tot-enbal-trunc} are positive. Taking into account that the functions $(\ds \teta \tau {k-1}, \ds \uu \tau {k-1},
\dss \tau {k-1},\ds \chi \tau{k-1})$ are given, and recalling \RCNEW assumptions
\eqref{cond-data} on the problem data, \EEE  we thus infer from \eqref{discr-tot-enbal-trunc}  that
\[
\begin{aligned}
&
\calE_\varsigma (\ds \teta {\epsilon} k,  \dss {\epsilon} k,  \ds \uu {\epsilon} k, \ds \chi  {\epsilon} k)
+\rho\tau \int_{\Omega} \left|\eps \left(\tfrac{\ds \uu \epsilon k {-} \ds \uu \tau {k-1}}{\tau} \right) \right|^\omega \dd x  + \rho \tau \int_{\GC} \left|\tfrac{\ds \chi \epsilon k - \ds \chi \tau{k-1}}{\tau}\right|^\omega \dd x
 \leq  C + C \| \ds \uu \epsilon k \|_{H^1 (\Omega)}\,.
 \end{aligned}
\]
Now, the functional  \RCORR $\calE_\varsigma$ enjoys the coercivity properties  \eqref{coerc-ene}  (with the exception of the
control of the $\|\cdot\|_{L^\infty(\GC)}$-norm, and of the enforcement of the positivity, \EEE of $\chi$). Taking them into account,
we absorb the second term on the right-hand side of the above estimate into the energy term
$\calE_\varsigma (\ds \teta {\epsilon} k,  \dss {\epsilon} k,  \ds \uu {\epsilon} k, \ds \chi  {\epsilon} k)$, and thus conclude a bound for the whole
left-hand side. Again in view of \eqref{coerc-ene},
we thus conclude that
\begin{equation}
\label{right-from-enest}
\sup_{\epsilon>0}  \left(  \| \ds\teta \epsilon k\|_{L^1(\Omega)} {+} \| \dss \epsilon k\|_{L^1(\GC)} {+}
\RCNEW   \| \ds\uu \epsilon k  \|_{W^{1,\omega}(\Omega;\R^3)}
{+} \| \ds \chi \epsilon k \|_{H^1(\GC)}
\RCNEW  \right)   \leq C. \EEE
\end{equation}
\RCNEW Furthermore, by the definition of $\partial\varphi$ (cf.\
\eqref{subdiff-pos-part}) \EEE
we have that $ \| \xiek \RCNEW \|_{L^\infty(\GC)} \leq C$. \EEE
\par
Next, we test \eqref{discr-b-heat-trunc} by $\ds \teta \epsilon k$ and \eqref{discr-surf-temp-trunc} by $\dss \epsilon k$
and add the resulting relations; observe that all the positive parts $(\cdot)^+$  can be
removed, in view of \eqref{non-negativity}.
We thus obtain
\begin{equation}
\label{new-calcul-epsl}
\begin{aligned}
 & \| \ds \teta \epsilon k\|_{L^2(\Omega)}^2 +\| \dss  \epsilon k\|_{L^2(\GC)}^2
+
\tau \io \alpha_M(\ds \teta \epsilon k) |\nabla \ds \teta \epsilon k|^2 \dd x
+
\tau \int_{\GC} \alpha_M(\dss \epsilon k) |\nabla \dss \epsilon k|^2 \dd x
+ I_1 +I_2
\\ &  = I_3 +I_4 +I_5+I_6+I_7+I_8+I_9+I_{10}
 \end{aligned}
\end{equation}
with
\[
\begin{aligned}
&
I_1=\tau \int_{\GC} k(\chikmu) (\mathcal{T}_\epsilon (\ds \teta \epsilon k) \ds \teta \epsilon k {-} \mathcal{T}_\epsilon (\dss  \epsilon k) \dss \epsilon k) (\ds \teta \epsilon k {-} \dss\epsilon k)\dd x \geq 0,
\\
&
\begin{aligned}
I_2
 & =  \tau\int_{\GC}  \mathcal{T}_\epsilon(\ds \teta \epsilon k) \ds \teta \epsilon k \left(  \nlocss {\chikmup}\, \chikmup \ds \teta \epsilon k {-} \nlocss {\chikmup \dss \epsilon k}\, \chikmup  \right) \dd x
\\
& \quad - \tau \int_{\GC} \mathcal{T}_\epsilon(\dss \epsilon k)  \dss \epsilon k \left( \nlocss {\chikmup \ds \teta \epsilon k} \,\chikmup{-} \nlocss {\chikmup}\, \chikmup \dss \epsilon k \right) \dd x
\\
& \stackrel{(1)}{=} \tau  \iint_{\GC\times\GC}  j(x,y) \RCNEW ( \chikmu (x))^+( \chikmu(y))^+ \EEE ( \ds \teta \epsilon k(x) -  \dss \epsilon k(y))(\mathcal{T}_\epsilon(\ds \teta \epsilon k(x)) \ds \teta \epsilon k(x) {-}
\mathcal{T}_\epsilon(\dss \epsilon k(y))  \dss \epsilon k(y)) \dd x \dd y \geq 0 \MC,\EEE
\end{aligned}
\end{aligned}
\]
where (1) follows by the very same arguments used for \eqref{key-posit-conto}; both $I_1$ and $I_2$ are positive since the function $r\mapsto \mathcal{T}_\epsilon(r) r$ is increasing.  
As for the terms on the right-hand side, we have
\[
\begin{aligned}
&
I_3 = \int_\Omega  \mathcal{T}_\epsilon(\tetakmu) \ds \teta \epsilon k  \dd x \leq C \| \ds \teta \epsilon k\|_{L^2(\Omega)} \leq \frac18  \| \ds \teta \epsilon k\|_{L^2(\Omega)}^2 + C,
\\
&
I_4 = \int_{\GC}  \mathcal{T}_\epsilon(\tetaskmu) \dss \epsilon k  \dd x \leq C \| \dss  \epsilon k\|_{L^2(\GC)} \leq \frac18  \| \dss  \epsilon k\|_{L^2(\GC)}^2 +C,
\\
&
\begin{aligned}
I_5 = \tau \io  \mathrm{div} \bigg( \tfrac{\ds \uu \epsilon k-\uukmu}{\tau} \bigg) |\ds \teta \epsilon k|^2  \dd x  & \leq C \tau \left\| \tfrac{\ds \uu \epsilon k-\uukmu}\tau \right\|_{W^{1,4}(\Omega)} \| \ds \teta \epsilon k \|_{L^4(\Omega)}
\| \ds \teta \epsilon k \|_{L^2(\Omega)}  \\ & \stackrel{(2)}{\leq} C\tau \| \ds \teta \epsilon k \|_{L^4(\Omega)}
\| \ds \teta \epsilon k \|_{L^2(\Omega)}  \stackrel{(3)}{\leq}   C \tau^2 \| \ds \teta \epsilon k\|_{H^1(\Omega)}^2 +  \frac18  \| \ds \teta \epsilon k \|_{L^2(\Omega)}^2,
\end{aligned}
\\
&
I_6 = \tau
\io \eps\left( \tfrac{\ds \uu \epsilon k {-} \uukmu}{\tau}\right) \, \vtens \, \eps\left( \tfrac{\ds \uu \epsilon k {-} \uukmu}{\tau}\right)  \ds \teta \epsilon k   \dd x
\leq C\tau \|  \eps\left( \tfrac{\ds \uu \epsilon k {-} \uukmu}{\tau}\right)\|_{L^4(\Omega)}^2 \| \ds \teta \epsilon k \|_{L^2(\Omega)} \stackrel{(4)}{\leq}   \frac18  \| \ds \teta \epsilon k\|_{L^2(\Omega)}^2 + C,
\\
&
I_7  =  \tau \pairing{}{H^1(\Omega)}{\hk}{\ds \teta \epsilon k}    \stackrel{(5)}{\leq} C\tau   \| \ds \teta \epsilon k\|_{H^1(\Omega)} \leq \frac{\tau^2}2  \| \ds \teta \epsilon k\|_{H^1(\Omega)}^2 +C,
\\
&
I_8  =  \tau \pairing{}{H^1(\GC)}{\lk}{\dss  \epsilon k}    \stackrel{(6)}{\leq} C\tau   \| \dss \epsilon k\|_{H^1(\GC)} \leq \frac{\tau^2}2  \| \dss \epsilon k\|_{H^1(\GC)}^2 +C,
\\
&
\begin{aligned}
I_9 = \tau\int_{\GC} \tfrac{\lambda(\ds \chi\epsilon k)-\lambda(\ds \chi \tau{k-1})}\tau  |\dss \epsilon k|^2  \dd x &  \stackrel{(7)}{\leq}  C \tau  \left\| \tfrac{\ds \chi \epsilon k {-} \ds \chi \tau {k-1}}\tau\right \|_{L^4(\GC)}
\| \dss \epsilon k \|_{L^4(\GC)} \| \dss \epsilon k \|_{L^2(\GC)} \\ &   \stackrel{(8)}{\leq} \frac{C \tau^2}2  \| \dss  \epsilon k\|_{H^1(\GC)}^2 +  \frac18  \| \dss  \epsilon k \|_{L^2(\GC)}^2,
\end{aligned}
\\
&
I_{10} =\tau  \int_{\GC} \bigg| \tfrac{\ds \chi \epsilon k  - \chikmu}{\tau}\bigg|^2 \dss \epsilon k  \dd x  \leq C \tau \left\| \tfrac{\ds \chi \epsilon k - \chikmu }{\tau} \right\|_{L^4(\GC)}^2\|\dss \epsilon k\|_{L^2(\GC)}
 \stackrel{(9)}{\leq}   \frac18  \| \dss  \epsilon k\|_{L^2(\GC)}^2 + C,
\end{aligned}
\]
where (2) follows from \eqref{right-from-enest}, and in  (3)  we have to  choose $\tau>0$ small enough so that the term $ C \tau^2  \| \ds \teta \epsilon k\|_{H^1(\Omega)}^2$ can
be absorbed by the left-hand side of \eqref{new-calcul-epsl}, taking into account \eqref{positiv-alpha-M};  (4) also follows from \eqref{right-from-enest}; (5)  and (6) from \eqref{cond-h} and \eqref{cond-ell}, respectively; (7) from the Lipschitz continuity of $
\lambda$ and (8) from \eqref{right-from-enest}, just like (9). Again, in (8) we choose  $\tau>0$ small enough  in such a way as to absorb $ C \tau^2  \| \dss  \epsilon k\|_{H^1(\GC)}^2$ into the left-hand side of
 \eqref{new-calcul-epsl}.
All in all, combining the above calculations with \eqref{new-calcul-epsl} we easily conclude that
\[
\| \ds \teta \epsilon k\|_{L^2(\Omega)}^2 +\| \dss  \epsilon k\|_{L^2(\GC)}^2
+
\tau \io \alpha_M(\ds \teta \epsilon k) |\nabla \ds \teta \epsilon k|^2 \dd x
+
\tau \int_{\GC} \alpha_M(\dss \epsilon k) |\nabla \dss \epsilon k|^2 \dd x  \leq C,
\]
whence the $H^1$-bounds for $\ds \teta \epsilon k$ and $\dss \epsilon k$.
\par
Since $H^1(\GC) $ embeds continuously in $L^q(\GC)$ for every $q\in [1,\infty)$,  the term $\left|\tfrac{\ds \chi \epsilon k {-} \ds \chi \tau {k-1}}{\tau}\right|^{\omega-2}
\tfrac{\ds \chi \epsilon k {-} \ds \chi \tau {k-1}}{\tau}$ is estimated in any $L^q(\GC)$; with arguments analogous to those in the previous lines,  it is not difficult to check that
the right-hand side of  the discrete flow rule \eqref{discr-flow-rule-trunc} is estimated in $L^2(\GC)$. Hence, by comparison, $(A\ds \chi\epsilon k 
)_\epsilon$ is also estimated in
$L^2(\GC)$,
whence the estimate for $(\ds \chi \epsilon k)_\epsilon $ in $H^2(\GC)$.
%
\end{proof}
\par
We are now in a position to pass to the limit as $\epsilon \down 0$, for fixed $M>0$,  $\tau>0$, and $k \in \{1, \ldots, K_\tau \}$, in system \eqref{discr-syst-trunc}.
In what follows, for convenience we shall suppose that $\frac1\epsilon  \in \N \setminus \{0\}$,
so that, up to labelling
\RCNEW $(\ds \teta \epsilon k, \ds \uu \epsilon k, \dss \epsilon k, \ds \chi \epsilon k, \ds \sigma \epsilon k)$
   by means of the natural number $m = \frac1\epsilon$, the functions $(\ds \teta \epsilon k, \ds \uu \epsilon k, \dss \epsilon k, \ds \chi \epsilon k, \ds \sigma \epsilon k)_\epsilon$ \EEE
form a sequence.
\begin{lemma}
\label{limit-passage-as-epsilon}
There exist a (not relabeled) subsequence of  \RCNEW $(\ds \teta \epsilon k, \ds \uu \epsilon k, \dss \epsilon k, \ds \chi \epsilon k, \ds \sigma \epsilon k )_\epsilon$
and a quintuple
\[
(\ds \teta  {\tau,M} k, \ds \uu {\tau,M} k, \dss {\tau,M} k, \ds \chi {\tau,M} k, \ds \sigma {\tau,M} k)
 \in H^1(\Omega) \times W_{\mathrm{D}}^{1,\omega}(\Omega;\R^3)
\times H^1(\GC) \times  H^2(\GC) \times L^\infty(\GC)
\]
such that the following convergences hold as $\epsilon \down 0$:
\begin{equation}
\label{convs-epsilon-down-0}
\begin{aligned}
&
\ds \teta \epsilon k \weakto \ds \teta  {\tau,M} k && \text{in } H^1(\Omega),  && \dss \epsilon k \weakto  \dss {\tau,M} k && \text{in } H^1(\GC), && &&
\\
&
 \ds \uu \epsilon k \weakto \ds \uu {\tau,M} k &&\text{in } W^{1,\omega}(\Omega;\R^3), &&  \ds \chi \epsilon k \weakto  \ds \chi {\tau,M} k  && \text{in } H^2(\GC),  &&  \ds \sigma \epsilon k \weaksto \ds \sigma {\tau,M} k
 && \text{in } L^\infty(\GC),
  \end{aligned}
\end{equation}
and the functions
$
(\ds \teta  {\tau,M} k, \ds \uu {\tau,M} k, \dss {\tau,M} k, \ds \chi {\tau,M} k, \ds \sigma {\tau,M} k)$  \EEE
fulfill
\begin{equation}
\label{non-negativity-temps}
\ds \teta {\tau,M} k > 0 \quad \aein\ \Omega, \qquad \dss {\tau,M} k > 0 \quad \aein\ \GC,
\end{equation}
as well as
\begin{subequations}
\label{discr-syst-trunc-M}
\begin{itemize}
\item[-] the discrete bulk temperature equation
\begin{equation}
\label{discr-b-heat-trunc-M}
\begin{aligned}
&
\io \frac{\ds\teta{\tau,M}k - \tetakmu}{\tau} v \dd x
- \io \ds\teta{\tau,M}k \mathrm{div} \left( \tfrac{\ds \uu {\tau,M}k-\uukmu}{\tau} \right) v \dd x
+ \io \alpha_{M}(\ds\teta{\tau,M}k) \nabla \ds\teta{\tau,M}k \nabla v \dd x
\\
& \quad
+ \int_{\GC} k(\chikmu)\ds\teta{\tau,M}k(\ds\teta{\tau,M}k-\dss {\tau,M} k) v \dd x
+ \int_{\GC} \nlocss {\chikmup}\, \chikmup  (\ds\teta{\tau,M}k)^2     v \dd x
\\
& \quad
- \int_{\GC} \nlocss {\chikmup \dss{\tau,M}k}\,  \chikmup \ds\teta{\tau,M}k v \dd x
\\ &
=
\io \eps \left( \tfrac{\ds \uu {\tau,M}k-\uukmu}{\tau} \right)  \, \vtens \, \eps \left( \tfrac{\ds \uu {\tau,M}k-\uukmu}{\tau} \right)  v  \dd x
+ \pairing{}{H^1(\Omega)}{\hk}{ v}  \qquad \text{for all } v \in H^1(\Omega);
\end{aligned}
\end{equation}
\item[-] the discrete momentum balance equation \MC \eqref{discr-mom-bal}\EEE, with $\ds \zzeta{\tau, M} k =  \etar(\ds \uu {\tau,M}k \cdot \mathbf{n}) \mathbf{n} $;  
\item[-] the discrete surface temperature equation
\begin{equation}
\label{discr-surf-temp-trunc-M}
\begin{aligned}
&
\int_{\GC} \frac{\dss{\tau,M}k - \tetaskmu}{\tau} v \dd x
- \int_{\GC} \dss{\tau,M}k  \frac{\lambda(\ds \chi{\tau,M} k)-\lambda(\ds \chi \tau{k-1})}{\tau}  v \dd x
+ \int_{\GC} \alpha_M(\dss{\tau,M}k) \nabla \dss{\tau,M}k \nabla v \dd x
\\
&
=
\int_{\GC} \bigg| \frac{\ds \chi{\tau,M}k - \chikmu}{\tau}\bigg|^2 v \dd x
+ \int_{\GC} k(\chikmu)(\ds \teta{\tau,M}k -\dss{\tau,M}k) \dss{\tau,M}k v \dd x
+ \int_{\GC} \nlocss {\chikmup \ds\teta{\tau,M}k}\, \chikmup \dss{\tau,M}k v \dd x
\\
& \qquad - \int_{\GC} \nlocss {\chikmup} \chikmup (\dss{\tau,M}k)^2 v \dd x
+ \pairing{}{H^1(\GC)}{\lk}{ v}
\qquad \text{for all } v \in H^1(\GC);
\end{aligned}
\end{equation}
\item[-]
the discrete flow rule for the adhesion parameter
\eqref{discr-flow-rule} a.e.\ in $\GC$. 
\end{itemize}
Finally,
\begin{equation}
 \label{Lp-est-4-recipr-M}
\begin{aligned}
&
\exists\, S_0 >0 \  \forall\, p \in [1,\infty) \  \exists\, \bar{\tau}_p>0 \
\forall\, M>0 \
\forall\, \tau \in (0,\bar{\tau}_p)\,
 \  \forall\, k \in \{1, \ldots, K_\tau\} \, :
 \\
 &
 \quad
\left\| \frac1{\ds \teta {\tau,  M} k}\right\|_{L^p(\Omega)} + \left\| \frac1{\dss  {\tau,M} k}\right\|_{L^p(\GC)} \leq S_0,
\end{aligned}
\end{equation}
\end{subequations}
\end{lemma}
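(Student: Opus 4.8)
The plan is to exploit the $\epsilon$- and $M$-uniform bounds of Lemma \ref{l:epsi-a-prio} together with compact Sobolev embeddings to extract a converging subsequence, to pass to the limit in the truncated system \eqref{discr-syst-trunc}, and to recover the strict positivity of the limiting temperatures from \eqref{Lp-est-4-recipr-eM} by Fatou's Lemma. First, by \eqref{a-prio-est} the quintuple $(\ds\teta\epsilon k,\ds\uu\epsilon k,\dss\epsilon k,\ds\chi\epsilon k,\xiek)_\epsilon$ is bounded in $H^1(\Omega)\times W^{1,\omega}_{\mathrm{D}}(\Omega;\R^3)\times H^1(\GC)\times H^2(\GC)\times L^\infty(\GC)$, so that, along a subsequence, the weak and weak-$*$ convergences \eqref{convs-epsilon-down-0} hold towards a quintuple in the asserted spaces. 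Using the compact embeddings $H^1(\Omega)\Subset L^q(\Omega)$ and, in the sense of traces, $H^1(\Omega)\Subset L^q(\GC)$ for a suitable $q>2$, $H^1(\GC)\Subset L^q(\GC)$ for all $q<\infty$, $H^2(\GC)\Subset W^{1,q}(\GC)\subset\mathrm{C}^0(\overlineGC)$ for all $q<\infty$, and $W^{1,\omega}(\Omega;\R^3)\Subset\mathrm{C}^0(\overline\Omega;\R^3)$ (since $\omega>4$), I would then upgrade \eqref{convs-epsilon-down-0} to strong convergence of $\ds\teta\epsilon k$ and $\dss\epsilon k$ in these Lebesgue spaces, of $\ds\chi\epsilon k$ in $W^{1,q}(\GC)$ and in $\mathrm{C}^0(\overlineGC)$, and of $\ds\uu\epsilon k$ in $\mathrm{C}^0$, hence of $\ds\uu\epsilon k\cdot\mathbf{n}$ and $|\ds\uu\epsilon k|^2$ in $L^q(\GC)$ ($\mathbf n$ being constant on the flat surface $\GC$). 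From the Lipschitz continuity of $\etar$ and the continuity property of $\nlname$ in Lemma \ref{lemmaK}, all the Yosida and nonlocal coefficients in \eqref{discr-syst-trunc} converge strongly (the latter in $L^\infty(\GC)$); moreover, by \eqref{non-negativity} the limit temperatures are non-negative, and, since $|\mathcal{T}_\epsilon(r)-r^+|\leq\epsilon$ uniformly, $\mathcal{T}_\epsilon(\ds\teta\epsilon k)\to\ds\teta{\tau,M}k$ and $\mathcal{T}_\epsilon(\dss\epsilon k)\to\dss{\tau,M}k$ strongly, together with the analogous convergence for the fixed previous-step temperatures.

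With these convergences in hand, all the lower-order, coupling, nonlocal and Yosida contributions in \eqref{discr-syst-trunc} pass to the limit; in the diffusion terms I would use that $\alpha_M$ is bounded and continuous, whence $\alpha_M(\ds\teta\epsilon k)\to\alpha_M(\ds\teta{\tau,M}k)$ strongly in every $L^q(\Omega)$ while $\nabla\ds\teta\epsilon k\weakto\nabla\ds\teta{\tau,M}k$ in $L^2(\Omega)$ (and symmetrically on $\GC$). I expect the main obstacle to be the $\rho$-regularising term $-\rho\,\mathrm{div}(|\eps(\uu_t)|^{\omega-2}\eps(\uu_t))$ in the momentum balance \eqref{discr-mom-bal-trunc}: as $\ds\uu\epsilon k$ converges only weakly in $W^{1,\omega}(\Omega;\R^3)$, I would first extract a weak limit of $|\eps(\tfrac{\ds\uu\epsilon k-\uukmu}{\tau})|^{\omega-2}\eps(\tfrac{\ds\uu\epsilon k-\uukmu}{\tau})$ in $L^{\omega'}(\Omega)$, then identify it with $|\eps(\tfrac{\ds\uu{\tau,M}k-\uukmu}{\tau})|^{\omega-2}\eps(\tfrac{\ds\uu{\tau,M}k-\uukmu}{\tau})$ via a Minty--Browder monotonicity argument --- testing \eqref{discr-mom-bal-trunc} by $\ds\uu\epsilon k$, lumping the viscosity and elasticity bilinear forms into the monotone part, and exploiting that all the remaining terms are weakly/strongly continuous thanks to the previous step --- and finally, by the uniform monotonicity of the $\omega$-power together with that of $\vfm$, upgrade to the \emph{strong} convergence $\eps(\ds\uu\epsilon k)\to\eps(\ds\uu{\tau,M}k)$ in $L^\omega(\Omega)$. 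This strong convergence is essential because, $\omega$ being $>4$, it is what allows the passage to the limit in the quadratic source $\eps(\uu_t)\vtens\eps(\uu_t)$ of the bulk heat equation \eqref{discr-b-heat-trunc}. For the flow rule, $\ds\chi{\tau,M}k\in H^2(\GC)$ is already contained in \eqref{convs-epsilon-down-0} (and can also be re-derived by comparison), and all its terms --- including $\rho|\tfrac{\ds\chi\epsilon k-\chikmu}{\tau}|^{\omega-2}\tfrac{\ds\chi\epsilon k-\chikmu}{\tau}$, which converges strongly in every $L^q(\GC)$ because of the compactness $H^2(\GC)\Subset W^{1,q}(\GC)$ --- pass to the limit, so that \eqref{discr-syst-trunc-M} and \eqref{discr-flow-rule} hold.

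Finally, extracting a further subsequence so that $\mathcal{T}_\epsilon(\ds\teta\epsilon k)\to\ds\teta{\tau,M}k$ a.e.\ in $\Omega$ and $\mathcal{T}_\epsilon(\dss\epsilon k)\to\dss{\tau,M}k$ a.e.\ in $\GC$, I would apply Fatou's Lemma to \eqref{Lp-est-4-recipr-eM} to obtain $\|1/\ds\teta{\tau,M}k\|_{L^p(\Omega)}\leq S_0$ and $\|1/\dss{\tau,M}k\|_{L^p(\GC)}\leq S_0$ for every $p\in[1,\infty)$; this yields at once estimate \eqref{Lp-est-4-recipr-M} and, in particular, the strict positivity \eqref{non-negativity-temps}. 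For the selection, from $\xiek\weaksto\ds\sigma{\tau,M}k$ in $L^\infty(\GC)$, $\xiek\in\partial\varphi(\ds\chi\epsilon k)$ a.e., $\ds\chi\epsilon k\to\ds\chi{\tau,M}k$ strongly in $L^2(\GC)$, and the resulting convergence $\int_{\GC}\xiek\,\ds\chi\epsilon k\dd x\to\int_{\GC}\ds\sigma{\tau,M}k\,\ds\chi{\tau,M}k\dd x$, the standard strong--weak closedness of the maximal monotone graph $\partial\varphi$ gives $\ds\sigma{\tau,M}k\in\partial\varphi(\ds\chi{\tau,M}k)$ a.e.\ in $\GC$, which completes the argument.
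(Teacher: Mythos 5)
Your argument is correct and follows essentially the same route as the paper's proof: weak/weak-$*$ compactness from the uniform bounds of Lemma \ref{l:epsi-a-prio}, strong convergences via the compact embeddings together with the Lipschitz continuity of $\etar$, $\betar$, $\gamma_\nu'$ and Lemma \ref{lemmaK}, a Minty-type monotonicity argument (the paper invokes \cite[Lemma~1.3, p.~42]{Barbu}, testing by the difference quotient rather than by $\ds{\uu}{\epsilon}{k}$, which is equivalent since $\uukmu$ is fixed) to identify the weak limit of $|\eps(\cdot)|^{\omega-2}\eps(\cdot)$ and upgrade to strong $L^\omega$ convergence of the strains for the quadratic heat source, strong--weak graph closedness of $\partial\varphi$ for $\ds{\sigma}{\tau,M}{k}$, and Fatou's Lemma applied to \eqref{Lp-est-4-recipr-eM} for the positivity \eqref{non-negativity-temps} and the bound \eqref{Lp-est-4-recipr-M}. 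The only cosmetic difference lies in the elliptic terms, where the paper derives an $H^1(\Omega)^*$ bound on $\mathcal{A}_M(\ds{\teta}{\epsilon}{k})$ by comparison in the equation, whereas your product argument closes the same step provided you add the one-line observation that, for fixed $M$, $\alpha_M$ is bounded, so that $\alpha_{M}(\ds{\teta}{\epsilon}{k})\nabla\ds{\teta}{\epsilon}{k}$ is bounded in $L^2(\Omega;\R^3)$ and its weak $L^r$ limit ($r<2$) is in fact a weak $L^2$ limit, admissible against $\nabla v$ for all $v\in H^1(\Omega)$.
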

\begin{proof}
Convergences \eqref{convs-epsilon-down-0}
 are an immediate consequence of estimates \eqref{a-prio-est}.
 Besides, from \eqref{non-negativity} it immediately follows
 that
 \[
\ds \teta {\tau,M} k \geq 0 \quad \aein\ \Omega, \qquad \dss {\tau,M} k \geq 0 \quad \aein\ \GC.
 \]
   Furthermore,
 there  exists $\mathsf{E} \in L^{\omega/(\omega{-}1)}(\Omega;\R^{3\times 3})$ such that
  \begin{equation}
  \label{conv-epsi-gamma}
 \left |\eps \left(\tfrac{\ds \uu \epsilon k {-} \ds \uu \tau {k-1}}{\tau} \right) \right|^{\omega-2}\eps \left(\tfrac{\ds \uu \epsilon k {-} \ds \uu \tau {k-1}}{\tau} \right) \weakto \mathsf{E} \qquad \text{in }
  L^{\omega/(\omega-1)}(\Omega;\R^{3\times 3})\,.
\end{equation}
\RCORR Taking into account that $W^{1,\omega}(\Omega;\R^3) $ continuously embeds
(in the sense of traces) in $L^\infty(\GC)$,
we also observe that
$\RCNEW (\ds \chi {\epsilon} k)^+  \ds\uu{\epsilon} k \to (\ds \chi {\tau,M} k)^+  \ds\uu{\tau,M} k  $ and
 that, by Lemma \ref{lemmaK}, $\nlocss{(\ds \chi {\epsilon} k))^+}\, (\ds \chi {\epsilon} k)^+  \ds\uu{\epsilon} k \to \nlocss{(\ds \chi {\tau,M} k)^+}\, \ds \chi {\tau,M} k  \ds\uu{\tau,M} k $ \RCORR
 in $L^\infty(\GC)$.  \EEE
  Since $\etar$ is Lipschitz continuous, we ultimately infer that
 \[
 \ds \zzeta  \epsilon k = \etar(\ds \uu \epsilon k \cdot \mathbf{n})  \mathbf{n}  \to \etar (\ds \uu {\tau,M} k \cdot \mathbf{n})  \mathbf{n} \doteq  \ds \zzeta   {\tau,M} k
 \quad\text{ in $L^\infty(\GC)$.}
 \]
Hence,
 we readily pass to the limit in the discrete momentum balance \eqref{discr-mom-bal-trunc} and conclude that
 the quintuple
 $(\ds \teta {\tau,M} k, \ds \uu {\tau,M} k,   \dss {\tau,M} k, \ds \zzeta {\tau,M} k, \mathsf{E} )$
 fulfills
\[
\begin{aligned}
&
\vfm \left( \tfrac{ \ds \uu {\tau,M} k-\uukmu}{\tau}, \vv \right)
+ \rho \io \mathsf{E} : \eps (\vv)  \dd x
+ \efm \left( \ds \uu {\tau,M}k , \vv \right)
  + \io \ds \teta{\tau,M} k \mathrm{div}(\vv) \dd x
  + \int_{\GC} \RCNEW (\ds \chi{\tau,M} k)^+ \EEE \ds \uu{\tau,M} k\vv \dd x
\\ & \quad+\int_{\GC} \ds \zzeta {\tau,M} k\vv \dd x
+ \int_{\GC} \nlocss { \RCNEW (\ds \chi{\tau,M} k)^+} \,   \RCNEW (\ds \chi{\tau,M} k)^+ \EEE  \ds \uu{\tau,M} k \vv \dd x
=
\langle \FF^k, \vv \rangle_{\bsVD} \qquad \text{for all } \vv \in \RCORR W^{1,\omega}_{\mathrm{D}}(\Omega;\R^3)\,. \EEE
\end{aligned}
\]
Furthermore, testing
 \eqref{discr-mom-bal-trunc}  by $\tfrac{\ds \uu {\epsilon} k - \uukmu}\tau$ and passing to the limit in the equation we readily show that
 \[
 \begin{aligned}
 &
 \limsup_{\epsilon \down 0} \left( \rho  \io  \left|\eps \left( \tfrac{\ds \uu {\epsilon} k - \uukmu}\tau\right)\right|^\omega   \right)  \dd x 
 \\
 &
\leq
- \vfm \left( \tfrac{ \ds \uu {\tau,M} k-\uukmu}{\tau},\tfrac{ \ds \uu {\tau,M} k-\uukmu}{\tau}  \right)
- \efm \left( \ds \uu {\tau,M}k ,  \tfrac{ \ds \uu {\tau,M} k-\uukmu}{\tau}, \right)
- \io \ds \teta{\tau,M} k \mathrm{div}( \tfrac{ \ds \uu {\tau,M} k-\uukmu}{\tau}) \dd x
\\
& \qquad
- \int_{\GC}  \RCNEW (\ds \chi{\tau,M} k)^+ \ds \uu{\tau,M} k  \tfrac{ \ds \uu {\tau,M} k-\uukmu}{\tau} \dd x
- \int_{\GC} \nlocss { \RCNEW (\ds \chi{\tau,M} k)^+} \, \RCNEW (\ds \chi{\tau,M} k)^+ \EEE   \ds \uu{\tau,M} k  \tfrac{ \ds \uu {\tau,M} k-\uukmu}{\tau} \dd x
- \langle \FF^k,  \tfrac{ \ds \uu {\tau,M} k-\uukmu}{\tau} \rangle_{\bsVD}
\\
& \qquad
- \int_{\GC} \ds \zzeta {\tau,M} k \cdot \tfrac{\ds \uu {\tau,M} k - \uukmu}\tau \dd x
\\ &
=  \rho \io \mathsf{E}  \cdot \eps (\tfrac{ \ds \uu {\tau,M} k-\uukmu}{\tau})  \dd x
\end{aligned}
 \]
Thus, by standard results on the theory of maximal monotone operators
(cf., e.g., \ \cite[Lemma~1.3, p.~42]{Barbu}), we infer that
\[
\mathsf{E} =  \left |\eps \left(\tfrac{\ds \uu {\tau,M} k {-} \ds \uu \tau {k-1}}{\tau} \right) \right|^{\omega-2}\eps \left(\tfrac{\ds \uu{\tau,M} k {-} \ds \uu \tau {k-1}}{\tau} \right).
\]
A fortiori, we conclude that
\begin{equation}
\label{a-fortiori}
\eps \left(\tfrac{\ds \uu {\epsilon} k {-} \ds \uu \tau {k-1}}{\tau} \right) \to \eps \left(\tfrac{\ds \uu {\tau,M} k {-} \ds \uu \tau {k-1}}{\tau} \right) \quad \text{in } L^\omega(\Omega;\R^{3\times 3})\,.
\end{equation}
This completes the limit passage in  \eqref{discr-mom-bal-trunc},  leading to  the discrete momentum balance \eqref{discr-mom-bal}.
\par
We now address the limit passage in the discrete truncated bulk heat equation \eqref{discr-b-heat-trunc}.
First of all,
we observe that, as $\epsilon \down 0$,
\begin{equation}
\label{conv-Lq-Teps}
\mathcal{T}_\epsilon(\ds \teta {\epsilon}k) \to (\ds \teta{\tau,M}k)^+  = \ds \teta{\tau,M} k \qquad \text{in $L^q(\Omega) $ for every $q\in [1,6)$,}
\end{equation}
 and that the traces
of $\mathcal{T}_\epsilon(\ds \teta {\epsilon}k)$ strongly  converge to  the trace of $ \ds \teta{\tau,M} k$ in $L^q(\GC)$ for every $q\in [1,4)$. Furthermore, taking into account that
$\|\chikmup\|_{L^\infty(\GC)} \leq C \|\chikmu\|_{H^2(\GC)} \leq C $, $\|k(\chikmu)\|_{L^\infty(\GC)} \leq C$,
 that $\| \nlocss {\chikmup \dss {\epsilon}k} \|_{L^\infty(\GC)} \leq  C \|\chikmup \dss {\epsilon}k\|_{L^1(\GC)} \leq C $ by Lemma \ref{lemmaK},
 and recalling \eqref{a-fortiori},
 by a comparison in \eqref{discr-b-heat-trunc} we see that
\begin{equation}
\label{estimateAM}
  \exists\, C>0 \ \forall\, \epsilon>0 \ \sup_{v \in H^1(\Omega)} \left| \io   \alpha_{M}(\ds \teta{\epsilon}k ) \nabla \ds \teta{\epsilon} k \cdot \nabla v \dd x \right| \leq C.
  \end{equation}
 \RCORR The above estimate \EEE  can be rephrased, in terms of the operator $\mathcal{A}_M$ from \eqref{def-AM},  as
 $
 \sup_{\epsilon>0} \| \mathcal{A}_M(\ds \teta{\epsilon}k )\|_{H^1(\Omega)^*} \leq C.
 $
 We  combine  this with the facts that
 $\nabla \ds \teta{\epsilon} k  \weakto \nabla \ds \teta{\tau,M}k $ in $L^2(\Omega;\R^3)$ and that
  $\alpha_M(\ds \teta{\epsilon} k ) \to \alpha_M(\ds \teta{\tau,M}k )$ in $L^q(\Omega) $ for every $1\leq q <\infty$
 (since $\ds \teta{\epsilon} k \to \ds \teta{\tau,M} k $ a.e.\ in $
\Omega$ and the function $\alpha_M : \R \to (0,+\infty)$ is bounded).
Therefore,
$\mathcal{A}_M(\ds \teta{\epsilon}k) \weakto \mathcal{A}_M(\ds \teta{\tau,M} k)$ in $W^{1,s}(\Omega)^*$ for every $s>2$. Thanks to  \eqref{estimateAM},
we conclude that
\begin{equation}
 \label{AM-epsilon}
\mathcal{A}_M(\ds \teta{\epsilon}k) \weakto \mathcal{A}_M(\ds \teta{\tau,M} k) \qquad \text{in } H^1(\Omega)^*.
\end{equation}
We also use \RCORR the strong convergences \EEE
\begin{equation}
\label{other-conv-props}
\begin{aligned}
&
 \ds \teta{\epsilon} k \mathrm{div}\left( \tfrac{\ds \uu \epsilon k {-} \uukmu}\tau\right) \to  \ds \teta{\tau,M} k \mathrm{div}\left( \tfrac{\ds \uu {\tau,M} k {-} \uukmu}\tau\right)  && \text{in } L^q(\Omega) \text{ for all }
 q \in [1,\tfrac32),
 \\
 &
  k(\chikmu)\mathcal{T}_\epsilon(\ds \teta{\epsilon}k)(\ds \teta {\epsilon} k-\dss {\epsilon} k) \to  k(\chikmu)\ds \teta{\tau,M}k(\ds\teta {\tau,M} k-\dss {\tau,M} k)  && \text{in } L^q(\GC)  \text{ for all }
 q \in [1,2),
 \\
 &
 \nlocss {\chikmup} \,\chikmup  \ds \teta{\epsilon}k  \mathcal{T}_\epsilon(\ds \teta{\epsilon}k) \to  \nlocss {\chikmup}\, \chikmup  (\ds \teta{\tau,M}k)^2  && \text{in } L^q(\GC)  \text{ for all }
 q \in [1,2),
\\
&
\nlocss {\chikmup \dss {\epsilon}k}\, \chikmup \mathcal{T}_\epsilon(\ds \teta{\epsilon} k) \to \nlocss {\chikmup \dss {\tau,M}k}\, \chikmup \ds \teta{\tau,M} k && \text{in } L^q(\GC)  \text{ for all }
 q \in [1,4),
\\
&
\eps \left(\tfrac{\ds \uu {\epsilon} k {-} \ds \uu \tau {k-1}}{\tau} \right)  \, \vtens \, \eps \left(\tfrac{\ds \uu {\epsilon} k {-} \ds \uu \tau {k-1}}{\tau} \right)  \to
\eps \left(\tfrac{\ds \uu {\tau,M} k {-} \ds \uu \tau {k-1}}{\tau} \right)  \, \vtens \, \eps \left(\tfrac{\ds \uu {\tau,M} k {-} \ds \uu \tau {k-1}}{\tau} \right) && \text{in } L^{\omega/2}(\Omega) \MC, \EEE
\end{aligned}
\end{equation}
due to convergences \eqref{convs-epsilon-down-0}, to the properties of $\nlname$ (cf.\ Lemma \ref{lemmaK}), and to the previously proved \eqref{a-fortiori}.
Combining \eqref{AM-epsilon} and \eqref{other-conv-props} we pass to the limit in \eqref{discr-b-heat-trunc}, thus obtaining the discrete bulk heat equation \eqref{discr-b-heat-trunc-M}.
\par
The limit passage in  the discrete flow rule \eqref{discr-flow-rule-trunc} as $\epsilon \down 0$ is an easy consequence of convergences \eqref{convs-epsilon-down-0}, which in particular imply that
\RCORR $\ds\chi\eps k \to \ds \chi {\tau,M} k$ strongly in $L^\infty(\GC)$.
Hence,
 by the Lipschitz continuity of $\betar$  and $\gamma_{\nu}'$,  we conclude that
$\betar(\ds\chi\eps k) \to \betar(\ds \chi \tau k)$  and  $\gamma_\nu' (\ds\chi\eps k)\to
\gamma_\nu' (\ds \chi \tau k)$   in $L^\infty(\GC)$.
\RCNEW Furthermore, $\ds \sigma \eps k \weaksto \ds \sigma {\tau,M} k $ in $L^\infty(\GC)$ and, by the strong weak closedness (in the sense of graphs) of the maximal monotone operator (induced by) $\partial\varphi$, we readily conclude that
\[
 \ds \sigma {\tau,M} k \in \partial\varphi (  \ds \chi {\tau,M} k ) \qquad \aein\, \GC\,.
\]
  Hence,  the triple $(\dss {\tau,M}k, \ds \chi {\tau,M}k, \RCNEW \ds \sigma {\tau,M}k)$  \EEE fulfills the discrete flow rule \eqref{discr-flow-rule}.
\par
Finally, we address the passage to the limit in the discrete truncated  surface temperature equation \eqref{discr-surf-temp-trunc}: it is based on the fact that
\begin{equation}
\label{conv-Lq-Teps-tetas}
\mathcal{T}_\epsilon(\dss  {\epsilon}k) \to (\dss {\tau,M}k)^+  = \dss {\tau,M} k \qquad \text{in $L^q(\GC) $ for every $q\in [1,\infty)$,}
\end{equation}
on the convergence
\begin{equation}
 \label{AM-epsilon-tetas}
\mathcal{A}_M(\dss {\epsilon}k) \weakto \mathcal{A}_M(\dss {\tau,M} k) \qquad \text{in } H^1(\GC)^*,
\end{equation}
(which can be inferred by the same arguments as \eqref{AM-epsilon}), on the analogues of \RCORR  convergences  \eqref{other-conv-props} for the terms on the right-hand side of \eqref{discr-surf-temp-trunc}, \EEE and on the fact that
\[
\left| \tfrac{\ds \chi \epsilon k - \chikmu}{\tau} \right|^{\omega-2} \tfrac{\ds \chi \epsilon k - \chikmu}{\tau}  \to \left| \tfrac{\ds \chi {\tau,M} k - \chikmu}{\tau} \right|^{\omega-2}
\tfrac{\ds \chi {\tau,M} k - \chikmu}{\tau} \qquad \text{in } L^\infty(\GC)  
\]
as $\ds\chi \eps k \to \ds \chi {\tau,M} k$ strongly in $L^\infty(\GC)$. We thus obtain the discrete surface heat equation \eqref{discr-surf-temp}.
\par
Finally, to prove the strict positivity
\eqref{non-negativity-temps} and estimate \eqref{Lp-est-4-recipr-M},
 we combine  estimate \eqref{Lp-est-4-recipr-eM} with convergences \eqref{conv-Lq-Teps} \& \eqref{conv-Lq-Teps-tetas}.
 Let us just detail the argument for $\ds \teta {\tau,M}k$ (the positivity property for $\dss {\tau,M} k$ follows analogously). 
On the one hand, from  \eqref{Lp-est-4-recipr-eM} we infer that
\[
\int_{\Omega} \liminf_{\epsilon \down 0} \frac1{\mathcal{T}_\epsilon (\ds \teta \epsilon k)} \dd x \leq \sup_{\epsilon>0} \int_\Omega \frac1{\mathcal{T}_\epsilon (\ds \teta \epsilon k)} \dd x \leq S_0,
\]
so that  $\liminf_{\epsilon \down 0} \frac1{\mathcal{T}_\epsilon (\ds \teta \epsilon k(x))} \dd x<+\infty$ a.e.\ in $\Omega$. On the other hand, from \eqref{conv-Lq-Teps} we
have that
\[
 \frac1{\mathcal{T}_\epsilon (\ds \teta \epsilon k(x))} \longrightarrow \begin{cases} \frac1{\ds \teta {\tau,M}k (x)} & \text{if }  \ds \teta {\tau,M}k (x)>0,
 \\
 +\infty & \text{otherwise}
 \end{cases}
 \qquad \foraa\, x \in \Omega.
\]
Therefore, we conclude that $\ds \teta {\tau,M}k>0$ a.e.\ in $\Omega$, and estimate \eqref{Lp-est-4-recipr-M} follows by lower semicontinuity.
This finishes the proof.
\end{proof}
\subsection{Proof of Proposition \ref{prop:exists-discrete}}
\label{ss:4.3}
We shall carry out the proof of Proposition  \ref{prop:exists-discrete} by passing to the limit
 as $M \to +\infty$, for fixed  $\tau>0$, and $k \in \{1, \ldots, K_\tau \}$, in system \eqref{discr-syst-trunc-M}.
In what follows,  we shall  suppose that $M \in \N \setminus \{0\}$. \RCORR For simplicity, we shall drop the parameter $\tau$ \EEE
and just denote by
$(\ds \teta M k, \ds \uu M k, \dss M k, \ds \chi M k)_M$,
\RCNEW with associated selections $\ds \sigma Mk \in \partial\varphi(\ds \chi Mk)$ a.e.\ in $\GC$, \EEE
the sequence of solutions to system \eqref{discr-syst-trunc-M}.
We split the proof of the limit passage in some steps.
\par\noindent
Preliminarily, we will need the following result
(whose proof is left to the reader) collecting properties of the primitives of $\alpha$; observe that \eqref{growth-primitives} is a consequence of
  \eqref{hyp-alpha}.
\begin{lemma}
Define \[
\begin{aligned}
&
\doublehhat{\alpha} : \R^+ \to \R^+  \qquad \doublehhat{\alpha}(r): = \int_0^r \hhat{\alpha}(s) \dd s,
\end{aligned}
\]
(with $\widehat{\alpha}$ from \eqref{def-hat-alpha}).
The function $\hhat{\alpha}$ is (strictly) increasing and thus  $ \doublehhat{\alpha} $ is (strictly) convex.
Furthermore,
\begin{equation}
\label{growth-primitives}
\begin{aligned}
&
c_0 \left(r + \frac{r^{\mu+1}}{\mu+1} \right) \leq \hhat{\alpha}(r) \leq c_1 \left(r + \frac{r^{\mu+1}}{\mu+1} \right),
\\
&
c_0 \left(\frac {r^2}2 + \frac{r^{\mu+2}}{(\mu+2)(\mu+1)} \right) \leq \doublehhat{\alpha}(r) \leq c_1 \left(\frac{r^2}2 + \frac{r^{\mu+2}}{(\mu+2)(\mu+1)} \right).
\end{aligned}
\end{equation}
The functions
\begin{equation}
\label{important-for-monot-arguments}
\R^+ \ni r \mapsto \hhat{\alpha} (\mathcal{T}_M(r)) \qquad  \text{ and } \qquad   \R^+\ni r  \mapsto r \widehat{\alpha}(\mathcal{T}_M(r)) \text{  are non-decreasing.}
\end{equation}
Finally, the function
\begin{equation}
\doublehhat{\alpha}_M : \R^+ \to \R^+  \qquad \doublehhat{\alpha}_M(r): = \int_0^r \hhat{\alpha}( \mathcal{T}_M(s)) \dd s
\end{equation}
is convex, and
\begin{equation}
\label{control-hat-doublehat}
\exists\,  C_1', \, C_2'>0  \ \ \forall\, M>0 \ \ \forall\, r \in \R^+\, : \quad \widehat{\alpha} (\mathcal{T}_M(r)) \leq C_1' \doublehhat{\alpha}_M(r) + C_2'\,.
\end{equation}
An analogous estimate holds with $\widehat\alpha$  \RCORR in place of $\widehat\alpha \circ \mathcal{T}_M$ and $\doublehhat{\alpha}$ in place of $\doublehhat{\alpha}_M$. \EEE
\end{lemma}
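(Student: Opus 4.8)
The plan is to prove the three claims about the primitives in the order they are stated, each relying only on elementary calculus and the growth bound \eqref{hyp-alpha}.

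First I would establish that $\widehat\alpha$ is strictly increasing: this is immediate from $\widehat\alpha'(r)=\alpha(r)>0$, which holds because $\alpha$ is continuous and $\alpha(r)\geq c_0(1+r^\mu)\geq c_0>0$ by \eqref{hyp-alpha}. Consequently $\doublehhat\alpha'(r)=\widehat\alpha(r)$ is strictly increasing, so $\doublehhat\alpha$ is strictly convex. The growth bounds \eqref{growth-primitives} follow by integrating the two-sided inequality $c_0(1+s^\mu)\leq\alpha(s)\leq c_1(1+s^\mu)$ from $0$ to $r$, which gives the first line of \eqref{growth-primitives}, and then integrating that line once more from $0$ to $r$, yielding the second line. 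These are routine one-variable integrations.

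Next I would treat the monotonicity statements \eqref{important-for-monot-arguments}. For $r\mapsto\widehat\alpha(\mathcal T_M(r))$: since $\mathcal T_M$ is non-decreasing (it is the composition of $\max\{\cdot,0\}$ with $\min\{\cdot,M\}$) and $\widehat\alpha$ is increasing, the composition is non-decreasing. For $r\mapsto r\widehat\alpha(\mathcal T_M(r))$ on $\R^+$: write it as a product of the non-negative non-decreasing function $r\mapsto r$ and the non-negative non-decreasing function $r\mapsto\widehat\alpha(\mathcal T_M(r))$ (note $\widehat\alpha\geq 0$ on $\R^+$ since $\alpha\geq 0$), and a product of two non-negative non-decreasing functions is non-decreasing. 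Then $\doublehhat\alpha_M$ is convex because its derivative $r\mapsto\widehat\alpha(\mathcal T_M(r))$ is non-decreasing.

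The one step I expect to require a little care is \eqref{control-hat-doublehat}, i.e.\ controlling $\widehat\alpha(\mathcal T_M(r))$ by $\doublehhat\alpha_M(r)$ uniformly in $M$. The idea is: for $r\geq 1$ one has $\doublehhat\alpha_M(r)\geq\int_{r-1}^r\widehat\alpha(\mathcal T_M(s))\,\mathrm ds\geq\widehat\alpha(\mathcal T_M(r-1))$ by monotonicity; then one compares $\widehat\alpha(\mathcal T_M(r))$ with $\widehat\alpha(\mathcal T_M(r-1))$ using the polynomial growth \eqref{growth-primitives} of $\widehat\alpha$ and the Lipschitz-type bound $|\mathcal T_M(r)-\mathcal T_M(r-1)|\leq 1$, which yields $\widehat\alpha(\mathcal T_M(r))\leq C(1+\widehat\alpha(\mathcal T_M(r-1)))$ with $C$ independent of $M$ (here the elementary inequality $(a+1)^{\mu+1}\leq 2^\mu(a^{\mu+1}+1)$ is used). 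For $0\leq r\leq 1$ one simply bounds $\widehat\alpha(\mathcal T_M(r))\leq\widehat\alpha(1)=:C_2'$. Combining the two ranges gives \eqref{control-hat-doublehat}. The analogous estimate with $\widehat\alpha$ in place of $\widehat\alpha\circ\mathcal T_M$ and $\doublehhat\alpha$ in place of $\doublehhat\alpha_M$ is obtained by the identical argument with $\mathcal T_M$ removed throughout (equivalently, formally letting $M\to\infty$). No genuinely hard obstacle arises; the only subtlety is making sure all constants in \eqref{control-hat-doublehat} are chosen independently of $M$, which the argument above guarantees since $\mathcal T_M$ enters only through the bound $|\mathcal T_M(r)-\mathcal T_M(r-1)|\leq 1$ and through monotonicity.
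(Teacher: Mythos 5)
Your proposal is correct: the paper explicitly leaves this lemma's proof to the reader, and your argument supplies exactly the intended elementary verification — integrating the two-sided growth bound \eqref{hyp-alpha} for \eqref{growth-primitives}, monotonicity of compositions and products for \eqref{important-for-monot-arguments}, and convexity from the non-decreasing derivative. The only delicate point, the $M$-uniformity in \eqref{control-hat-doublehat}, is handled soundly by bounding $\doublehhat{\alpha}_M(r)\geq \widehat\alpha(\mathcal{T}_M(r-1))$ for $r\geq 1$ and comparing $\widehat\alpha(\mathcal{T}_M(r))$ with $\widehat\alpha(\mathcal{T}_M(r-1))$ via the $1$-Lipschitz property of $\mathcal{T}_M$ together with the polynomial bounds of \eqref{growth-primitives}, so the constants indeed do not depend on $M$.
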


\paragraph{\bf Proof of Proposition \ref{prop:exists-discrete}}

\noindent
\emph{Step $1$: a priori estimates.} Since the constant in estimate \eqref{a-prio-est} was independent of the parameter $M$, by virtue of convergences \eqref{convs-epsilon-down-0}
and lower semicontinuity arguments we immediately conclude that
\begin{equation}
\label{a-prio-est-M}
\sup_{M>0} \left(\| \ds\teta M k\|_{H^1(\Omega)} {+} \| \dss M k\|_{H^1(\GC)} {+} \|  \ds\uu M k\|_{W^{1,\omega}(\Omega;\R^3)}
{+} \| \ds \chi M k \|_{H^2(\GC)} 
\RCNEW {+}  \|\ds \sigma  M k\|_{L^\infty(\GC)}\EEE \right) \leq S_1\,. 
\end{equation}
\par
Next, we test \eqref{discr-b-heat-trunc-M} by $\tau \hhat{\alpha}(\mathcal{T}_M(\ds \teta{M} k))$, \eqref{discr-surf-temp-trunc-M}
by $\tau \hhat{\alpha}(\mathcal{T}_M(\dss {M} k))$ and add the resulting equations
(it is a standard matter to check that $\hhat{\alpha}(\mathcal{T}_M(\ds \teta{M} k)) \in H^1(\Omega)$ and
$\hhat{\alpha}(\mathcal{T}_M(\ds \teta{M} k)) \in H^1(\GC)$).
By convexity of $\doublehhat{\alpha}_M$, we have that
\[
\begin{aligned}
&
\io
\doublehhat{\alpha}_M (\ds\teta{M}k) \dd x - \io \doublehhat{\alpha}_M  (\tetakmu) \dd x \leq
\io (\ds\teta{M}k {-} \tetakmu)  \hhat{\alpha}(\mathcal{T}_M(\ds \teta{M} k)) \dd x,
\\
 &
\int_{\GC}
\doublehhat{\alpha}_M (\dss {M}k) \dd x - \int_{\GC} \doublehhat{\alpha}_M  (\tetaskmu) \dd x \leq
\int_{\GC} (\dss{M}k {-} \tetaskmu)  \hhat{\alpha}(\mathcal{T}_M(\dss {M} k)) \dd x\,.
\\
\end{aligned}
\]
Using that
\begin{equation}
\label{Marcus-Mizel}
\nabla ( \hhat{\alpha}(\mathcal{T}_M(\ds \teta{M} k)))  =  \alpha_{M}(\ds\teta{M}k) \nabla (\mathcal{T}_M(\ds \teta{M} k))
\end{equation}
 (cf., e.g.,
  \cite{MarcusMizel}),
 it is immediate to check that
\[
\tau \io \alpha_{M}(\ds\teta{M}k) \nabla \ds\teta{M}k \nabla ( \hat{\alpha}(\mathcal{T}_M(\ds \teta{M} k)))   \dd x
= \tau \io   \left| \nabla ( \hhat{\alpha}(\mathcal{T}_M(\ds \teta{M} k)))\right|^2 \dd x
\]
and we deal with the term $\tau \int_{\GC} \alpha_{M}(\dss{M}k) \nabla \dss {M}k \nabla ( \hhat{\alpha}(\mathcal{T}_M(\dss {M} k)))   \dd x$ analogously.
By the second monotonicity property
in \eqref{important-for-monot-arguments},
we have that
\[
\tau \int_{\GC} k(\chikmu)(\ds\teta{M}k-\dss {M} k)  (\ds\teta{M}k \hhat{\alpha}(\mathcal{T}_M(\ds \teta{M} k)){-}\dss{M}k \hhat{\alpha}(\mathcal{T}_M(\dss{M} k)))   \dd x
\geq 0,
\]
while with calculations completely analogous to those for \eqref{posult4}  we can check that
\[
\begin{aligned}
&
- \tau \int_{\GC}   \hhat{\alpha}(\mathcal{T}_M(\ds \teta{M} k))
  \left(  \nlocss {\chikmup}\, \chikmup  (\ds \teta M k)^2 {-}  \nlocss {\chikmup \dss M k} \,\chikmup \ds \teta M k \right)    \dd x
\\
& \quad
+\tau  \int_{\GC}  \hhat{\alpha}(\mathcal{T}_M(\dss {M} k)) \left(  \nlocss {\chikmu \ds \teta M k} \chikmu \dss M k {-} \nlocss {\chikmup} \chikmup (\dss M k)^2  \right) \dd x \geq 0\,.
\\
\end{aligned}
\]
Taking into account the above calculations,
  we end up with the following estimate
 \begin{equation}
 \label{initial-estimate}
 \begin{aligned}
 &
\io
\doublehhat{\alpha}_M (\ds\teta{M}k) \dd x  +\int_{\GC}
\doublehhat{\alpha}_M (\dss {M}k) \dd x  + \tau
 \| \nabla(  \hhat{\alpha}(\mathcal{T}_M(\ds \teta{M} k)))\|_{L^2(\Omega)}^2 +
  \tau \| \nabla  (\hhat{\alpha}(\mathcal{T}_M(\dss {M} k)))\|_{L^2(\GC)}^2
  \\
   &
   \leq
    \io \doublehhat{\alpha}_M  (\tetakmu) \dd x
    +  \int_{\GC} \doublehhat{\alpha}_M  (\tetaskmu) \dd x
    +  \tau \int_\Omega  \ds\teta{M}k  \hhat{\alpha}(\mathcal{T}_M(\ds \teta{M} k)) \mathrm{div} \left( \tfrac{\ds \uu {M}k-\uukmu}{\tau} \right)  \dd x
    \\
    & \quad
   + \tau \io \eps \left( \tfrac{\ds \uu {M}k-\uukmu}{\tau} \right)  \, \vtens \, \eps \left( \tfrac{\ds \uu {M}k-\uukmu}{\tau} \right)  \hhat{\alpha}(\mathcal{T}_M(\ds \teta{M} k))   \dd x
   \\
   & \quad
   + \tau  \pairing{}{H^1(\Omega)}{\hk}{ \hhat{\alpha}(\mathcal{T}_M(\ds \teta{M} k))}
   + \tau  \pairing{}{H^1(\GC)}{\lk}{ \hhat{\alpha}(\mathcal{T}_M(\dss {M} k))}
   \\
   & \quad
   +\tau \int_{\GC} \dss{M}k  \hhat{\alpha}(\mathcal{T}_M(\dss {M} k)) \frac{\lambda(\ds \chi{M} k)-\lambda(\ds \chi \tau{k-1})}{\tau}  v \dd x
   +\tau \int_{\GC} \bigg| \frac{\ds \chi{\tau,M}k - \chikmu}{\tau}\bigg|^2  \hhat{\alpha}(\mathcal{T}_M(\dss {M} k))  \dd x
   \\
   & \doteq I_1+I_2+I_3+I_4+I_5+I_6+I_7+I_8\,.
\end{aligned}
\end{equation}
Now, taking into account
\eqref{growth-primitives}, it is not difficult to check that  
\[
I_1 +I_2 \leq C \left(1+  \| \tetakmu \|_{L^{\mu+2}(\Omega)}^{\mu+2} +   \| \tetakmu \|_{L^{\mu+2}(\GC)}^{\mu+2}  \right) \leq C,
\]
since, by assumption (cf.\ \eqref{previous-data}), $\tetakmu \in L^{\mu+2}(\Omega)$ and  $\tetaskmu \in L^{\mu+2}(\GC)$.
In order to estimate the ensuing terms, we will use that
\begin{equation}
\label{Poincare-hat}
\begin{aligned}
\| \hhat{\alpha}(\mathcal{T}_M(\ds \teta{M} k))\|_{H^1(\Omega)}  & \stackrel{(1)}{\leq} C \left(\| \nabla(\hhat{\alpha}(\mathcal{T}_M(\ds \teta{M} k)) )\|_{L^2(\Omega)} +
\| \hhat{\alpha}(\mathcal{T}_M(\ds \teta{M} k))\|_{L^1(\Omega)}  \right)
\\
&  \stackrel{(2)}{\leq} C' \left(\| \nabla(\hhat{\alpha}(\mathcal{T}_M(\ds \teta{M} k)) )\|_{L^2(\Omega)} {+}
\| \doublehhat{\alpha}_M(\ds \teta{M} k)\|_{L^1(\Omega)}  {+}1 \right),
\end{aligned}
\end{equation}
where (1) follows from the Poincar\'e inequality, and (2) from \eqref{control-hat-doublehat}. Clearly, an analogous estimate holds for
$\| \hhat{\alpha}(\mathcal{T}_M(\dss {M} k))\|_{H^1(\GC)}$.
Hence
\begin{equation}
\label{est-I-3}
\begin{aligned}
|I_3|    & \leq C \tau
  \| \ds\teta{M}k \|_{L^4(\Omega)} \| \hhat{\alpha}(\mathcal{T}_M(\ds \teta{M} k))) \|_{L^2(\Omega)} \left\| \mathrm{div} \left( \tfrac{\ds \uu {M}k-\uukmu}{\tau} \right)
  \right\|_{L^4(\Omega)}
  \\
  & \stackrel{(3)}{\leq} C'\tau \| \hhat{\alpha}(\mathcal{T}_M(\ds \teta{M} k))) \|_{L^2(\Omega)}
  \\
  &  \stackrel{(4)}{\leq}  \frac{\tau}4 \| \nabla(\hhat{\alpha}(\mathcal{T}_M(\ds \teta{M} k)) )\|_{L^2(\Omega)}^2 + \overline{C}_1 \tau
  \| \doublehhat{\alpha}_M(\ds \teta{M} k)\|_{L^1(\Omega)}
  + C \MC, \EEE
 \end{aligned}
\end{equation}
where for (3) we have exploited the previously observed \RCORR estimate \EEE \eqref{a-prio-est-M}, while (4) follows from \eqref{Poincare-hat} and Young's inequality.
With analogous calculations, again taking into account \eqref{a-prio-est-M} and combining \RCORR it \EEE  with
\eqref{hyp-lambda} and  \eqref{cond-h}--\eqref{cond-ell},  we easily conclude that
\begin{equation}
\label{est-I-4}
\begin{aligned}
I_4+I_5+I_6+I_7+I_8
 \leq
  &  \frac{\tau}4 \| \nabla(\hhat{\alpha}(\mathcal{T}_M(\ds \teta{M} k)) )\|_{L^2(\Omega)}^2
    \frac{\tau}4 \| \nabla(\hhat{\alpha}(\mathcal{T}_M(\dss {M} k)) )\|_{L^2(\GC)}^2
    \\
    & \quad + \RCORR \overline{C}_2  \EEE \tau \| \doublehhat{\alpha}_M(\ds \teta{M} k)\|_{L^1(\Omega)}
  + \RCORR  \overline{C}_2 \EEE \tau \| \doublehhat{\alpha}_M(\dss{M} k)\|_{L^1(\GC)}
  + C\,.
  \end{aligned}
\end{equation}
Hence, we choose $\tau>0$ sufficiently small such that $\tau ( \overline{C}_1 {+} \overline{C}_2 )<\tfrac12$, so that the terms
with  $ \| \doublehhat{\alpha}_M(\ds \teta{M} k)\|_{L^1(\Omega)} $ and
$\| \doublehhat{\alpha}_M(\dss{M} k)\|_{L^1(\GC)} $ can be absorbed into the
right-hand side of \eqref{initial-estimate}.
Them, combining \eqref{initial-estimate} with \eqref{est-I-3} and \eqref{est-I-4}, and again taking into account \eqref{Poincare-hat}, we infer that
\begin{equation}
\label{a-prio-est-M-2}
\sup_{M>0} \left( \| \doublehhat{\alpha}_M(\ds \teta{M} k)\|_{L^1(\Omega)}{+}\| \doublehhat{\alpha}_M(\dss{M} k)\|_{L^1(\GC)} {+}
\| \hhat{\alpha}(\mathcal{T}_M(\ds \teta{M} k))\|_{H^1(\Omega)} {+}
\| \hhat{\alpha}(\mathcal{T}_M(\dss {M} k))\|_{H^1(\GC)}
 \right) \leq S_2\,.
\end{equation}
In particular, in view of \eqref{growth-primitives} we conclude that
\[
\exists\, C>0 \ \ \forall\, M>0 \, : \quad \| \ds \teta{M} k \|_{L^{\mu+2}(\Omega)} +
\RCORR \| \EEE \dss {M} k \|_{L^{\mu+2}(\GC)} \leq C\,.
\]
\emph{Step $2$: limit passage as $M\uparrow \infty$.}
In view of estimates \eqref{a-prio-est-M},
there exist a (not relabeled) subsequence and  functions $(\ds \teta \tau k, \ds \uu \tau k, \dss \tau k, \ds \chi \tau k,  \RCNEW \ds \sigma \tau k)$ \EEE  
 that
\begin{equation}
\label{convs-epsilon-down-0-1}
\begin{aligned}
&
\ds \teta M k \weakto \ds \teta  {\tau} k && \text{in } H^1(\Omega) \cap L^{\mu+2}(\Omega),  && \dss M k \weakto  \dss {\tau} k && \text{in } \RCORR H^1(\GC), \EEE 
&& &&
\\
&
 \ds \uu M k \weakto \ds \uu {\tau} k &&\text{in } W^{1,\omega}(\Omega;\R^3), &&  \ds \chi M k \weakto  \ds \chi {\tau} k  && \text{in } H^2(\GC),  && \RCNEW \ds \sigma M k \weaksto  \ds \sigma {\tau} k  \EEE  && \text{in } L^\infty(\GC).
  \end{aligned}
\end{equation}
With the very same arguments as in the proof of Lemma \ref{limit-passage-as-epsilon},
from estimate \eqref{Lp-est-4-recipr-M} we conclude that
$\ds \teta  {\tau} k>0$ a.e.\ in $\Omega$ and $\dss \tau k>0$ a.e.\ in $\GC$ and the validity of estimate  \eqref{Lp-est-4-recipr}.
\par
The limit passage in  the momentum balance and in the flow rule for the adhesion parameter in system
 \eqref{discr-syst-trunc-M} follows \RCORR by the very same arguments \EEE
 as in the proof of
  Lemma \ref{limit-passage-as-epsilon}. In this way, we conclude that the
  \RCNEW quintuple $(\ds \teta \tau k, \ds \uu \tau k, \dss \tau k, \ds \chi \tau k, \ds \sigma \tau k)$
   solve \eqref{discr-mom-bal} and \eqref{discr-flow-rule}. \EEE
  \par
 Therefore, repeating
 the arguments in the proof of   Lemma \ref{limit-passage-as-epsilon}, we pass to the limit in
the discrete bulk heat equation
\eqref{discr-b-heat-trunc-M}
and in the surface heat equations \eqref{discr-surf-temp-trunc-M}. We only detail the passage to the limit in the term featuring the operator $\mathcal{A}(\ds \teta M k): H^1(\Omega)\to H^1(\Omega)^*$  defined by
$\pairing{}{H^1(\Omega)}{\mathcal{A
}(\ds \teta Mk)}{v}: =  \int_{\Omega} \alpha_M(\ds \teta M k) \nabla \ds \teta M k \cdot \nabla v \dd x $.
On the one hand,  the sequence of operators $(\mathcal{A}(\ds \teta M k))_M \subset H^1(\Omega)^*$ is bounded, by comparison in
\eqref{discr-b-heat-trunc-M}.  On the other hand, we observe that
  $ \mathcal{T}_M(\ds \teta M k) \to \ds \teta \tau k$ a.e.\ in $\Omega$ and hence
  $\alpha(\mathcal{T}_M(\ds \teta M k)) \to \alpha(\ds \teta \tau k) $ a.e.\ in
  $\Omega$, whereas  estimates
 \eqref{a-prio-est-M-2},
 combined with the growth properties of $\widehat\alpha$,
  guarantee that $( \mathcal{T}_M(\ds \teta M k))_M$  is bounded in $L^{6(\mu+1)}(\Omega)$, so that $\alpha_M (\ds \teta M k) = \alpha(\mathcal{T}_M(\ds \teta M k)) \to \alpha(\ds \teta \tau k)$
 in $L^{6s}(\Omega)$ for every $1 \leq s \EEE  < \frac{\mu+1}{\mu}$. Therefore,  the sequence
 $(\alpha_M(\ds \teta M k) \nabla \ds \teta M k)_M$ weakly converges to $\alpha(\ds \teta \tau k) \nabla \ds \teta \tau k$ in
 $L^{3/2}(\Omega;\R^3)$. This is sufficient to conclude that
 \begin{equation}
 \label{AM-emme}
\mathcal{A}(\ds \teta M k) \weakto \mathcal{A}_M(\ds \teta{\tau} k) \qquad \text{in } H^1(\Omega)^*.
\end{equation}
 With the same argument we perform the passagge to the limit in the analogous term for $\dss \tau k$. \EEE
 All in all, we deduce
 that
$(\ds \teta \tau k, \ds \uu \tau k, \dss \tau k, \ds \chi \tau k)$ fulfill the discrete bulk and surface heat equations
 \eqref{discr-b-heat}  and \eqref{discr-surf-temp}.
\par\noindent

\emph{Step $3$: proof of
\eqref{discr-tot-enbal}.} The total energy inequality
\eqref{discr-tot-enbal} follows by repeating the very same calculations as for
\eqref{discr-tot-enbal-trunc}.
\par\noindent
This finishes the proof \RCORR of Proposition \ref{prop:exists-discrete}. \EEE
\fin

\section{Existence for the regularized system}
\label{s:6}
In this section we address the  limit passage in \RCORR the discrete \EEE
system \eqref{discr-syst} \RCORR (formulated \EEE
in terms of suitable interpolants of the discrete solutions, cf.\ \eqref{interp-syst} below)
as the time step $\tau \down 0$, and in this way  \RCORR we shall \EEE conclude the existence of (weak) solutions to the Cauchy problem for the  regularized system \eqref{PDE-regul}.
Prior to that, let us set up some notation.
\paragraph{\bf Notation and preliminaries.}
For a given $K_\tau$-uple of discrete elements $(\ds{\mathfrak{h}} \tau k)_{k=0}^{K_\tau} \subset \mathbf{B}$, with  $\mathbf{B}$ a \RCORR given \EEE   Banach space,
 we  introduce the (left-continuous and right-continuous) piecewise constant, and the piecewise linear
  interpolants $\pwc {\mathfrak{h}} \tau,\, \upwc {\mathfrak{h}} \tau,\, \pwl {\mathfrak{h}}  \tau: [0,T] \to X$ defined by
$\pwc {\mathfrak{h}} \tau(0)=  \upwc {\mathfrak{h}} \tau (0)= \pwl {\mathfrak{h}} \tau(0) := \ds {\mathfrak{h}} \tau 0$ and \RCORR by \EEE
\begin{equation}
\begin{aligned} \label{itermpM}
&
\pwc {\mathfrak{h}} \tau(t): = \ds {\mathfrak{h}}  \tau k \text{ for } t \in (\ds t\tau {k-1}, \ds t\tau k], \qquad
\upwc {\mathfrak{h}}  \tau(t): = \ds {\mathfrak{h}}  \tau {k-1} \text{ for } t \in [\ds t\tau {k-1}, \ds t\tau k),
\\
&
\pwl {\mathfrak{h}} \tau(t): = \frac{t-\ds t \tau{k-1}}{\tau}  \ds {\mathfrak{h}} \tau k
+\frac{\ds t\tau k - t }{\tau}  \ds {\mathfrak{h}} \tau {k-1}  \text{ for } t \in (\ds t\tau {k-1}, \ds t\tau k]\,.
\end{aligned}
\end{equation}
For later use, we also recall that
\begin{subequations}
 \label{regtratti1}
\begin{eqnarray}
 \label{regtratti2}
\| \pwc {\mathfrak{h}} \tau - \pwl {\mathfrak{h}} \tau \|_{L^2(0,T; \mathbf{B})}
\leq \| \pwc {\mathfrak{h}} \tau - \upwc {\mathfrak{h}} \tau \|_{L^2(0,T; \mathbf{B})}
&\leq&  \tau \| \partial_t \pwl {\mathfrak{h}} \tau \|_{L^2(0,T; \mathbf{B})}, \\ \label{regtratti3}
\| \pwc {\mathfrak{h}} \tau - \pwl {\mathfrak{h}} \tau \|_{L^{\infty}(0,T; \mathbf{B})}
\leq  \| \pwc {\mathfrak{h}} \tau - \upwc {\mathfrak{h}} \tau \|_{L^\infty(0,T; \mathbf{B})}
&\leq&  \sqrt{\tau} \| \partial_t \pwl {\mathfrak{h}} \tau \|_{L^2(0,T; \mathbf{B})},
\end{eqnarray}
\end{subequations}
as well as  the well-known discrete by-part integration formula
\begin{equation}
\label{discrete-by-parts}
\sum_{i=1}^{j} \tau \pairing{}{\mathbf{B}}{\ds {\mathfrak{l}}\tau i}{\tfrac{\ds {\mathfrak{h}}\tau i{-}\ds {\mathfrak{h}}\tau {i-1}}\tau}
=  \pairing{}{\mathbf{B}}{\ds {\mathfrak{l}}\tau j}{\ds {\mathfrak{h}}\tau j}  - \pairing{}{\mathbf{B}}{\ds {\mathfrak{l}}\tau 0}{\ds {\mathfrak{h}}\tau 0}
 -\sum_{i=1}^{j} \tau \pairing{}{\mathbf{B}}{\tfrac{\ds {\mathfrak{l}}\tau k{-}\ds {\mathfrak{l}}\tau {k-1}}\tau}{\ds {\mathfrak{h}}\tau i}
\end{equation}
for all $K_\tau$-uples $(\ds {\mathfrak{h}}\tau k)_{k=1}^{K_\tau} \subset \mathbf{B}$,
$(\ds {\mathfrak{l}}\tau k)_{k=1}^{K_\tau} \subset \mathbf{B}^*$. 
\paragraph{\bf Approximate solutions.}
With  ${\mathfrak{h}} \in \RCNEW \{\teta, \uu, \tetas, \chi, \sigma, \mathbf{F}, h, \ell \}$, \EEE
we thus obtain the piecewise constant and linear interpolants of the discrete solution quadruples
$(\ds \teta \tau k, \ds \uu \tau k, \dss \tau k, \ds \chi \tau k, \RCNEW \ds \sigma \tau k )$ \EEE
and of the discrete data  $(\mathbf{F}_{\tau}^k,\hk, \lk)$. 
Finally, we also introduce the piecewise constant interpolants $\pwc {\mathsf{t}}\tau:[0,T] \to [0,T]$ and
 $\upwc {\mathsf{t}}\tau: [0,T] \to [0,T]$ associated with the partition $\mathscr{P}_\tau = (\ds t\tau k)_{k=1}^{N_\tau}$ and defined by $\pwc {\mathsf{t}}\tau(0) = \upwc {\mathsf{t}}\tau(0):=0$, $\pwc {\mathsf{t}}\tau(T) = \upwc {\mathsf{t}}\tau(T):=T$, and
 \[
 \pwc {\mathsf{t}}\tau(t) = \ds t\tau k \quad \text{for } t \in (\ds t\tau{k-1}, \ds t \tau k], \qquad  \pwc {\mathsf{t}}\tau(t) = \ds t\tau {k-1} \quad \text{for } t \in [\ds t\tau{k-1}, \ds t \tau k).
 \]
 \par
In terms of the above interpolants, the discrete system \eqref{discr-syst} rephrases as
\begin{subequations}
\label{interp-syst}
\begin{align}
&
\label{interp-b-heat}
\begin{aligned}
&
 \io  \pwl \teta \tau' (t) v \dd x
- \io \pwc\teta\tau (t) \mathrm{div} (\pwl \uu \tau'(t)) v \dd x
+ \io \alpha(\pwc \teta\tau (t)) \nabla \pwc\teta\tau(t) \nabla v \dd x
\\
& \quad
+ \int_{\GC} k(\upwc \chi\tau(t))\pwc\teta\tau(t)(\pwc\teta\tau(t)-\pwcs \teta \tau (t)) v \dd x
+ \int_{\GC} \nlocss {\RCNEW(\upwc\chi\tau (t))^+}\,  \RCNEW(\upwc\chi\tau (t) )^+\EEE \left( \pwc\teta\tau (t) \right)^2 v \dd x
\\
& \quad
- \int_{\GC} \nlocss {( \upwc\chi\tau (t) )^+  \pwcs \teta\tau(t)}\, \RCNEW( \upwc\chi\tau (t) )^+\EEE \pwc\teta\tau (t) v \dd x
\\ &
=
\io \eps \left( \pwl \uu\tau'(t)\right) \, \vtens \,  \eps \left( \pwl \uu\tau'(t)\right) v  \dd x
+ \pairing{}{H^1(\Omega)}{\pwc h\tau(t)}{v}  \qquad \text{for all } v \in H^1(\Omega),
\end{aligned}
\\
&
\label{interp-mom-bal}
\begin{aligned}
&
\vfm \left( \pwl \uu\tau'(t), \vv \right)
+ \rho \io \left| \eps \left( \pwl \uu\tau'(t)\right) \right|^{\omega - 2} \eps \left( \pwl \uu\tau'(t)\right)  \eps (\vv)  \dd x
+ \efm \left( \pwc\uu\tau(t) , \vv \right)
\\ & \quad  + \io \pwc\teta\tau(t) \mathrm{div}(\vv) \dd x
+ \int_{\GC} \RCNEW(\pwc\chi\tau(t) )^+\pwc\uu\tau(t) \vv \dd x
+\int_{\GC} \pwc\zzeta\tau(t) \cdot \vv \dd x
+ \int_{\GC} \nlocss {\RCNEW(\pwc\chi\tau(t))^+}\, \RCNEW( \pwc\chi\tau(t))^+ \EEE \pwc\uu\tau(t) \vv \dd x
\\
&
=
\langle  \pwc{\mathbf{F}}\tau(t), \vv \rangle_{\bsVD} \qquad \text{for all } \vv \in W^{1,\omega}_{\mathrm{D}}(\Omega;\R^3),
\\
&
\text{with } \pwc\zzeta\tau(t)  :=  \etar(\pwc\uu \tau(t)\cdot \mathbf{n})  \mathbf{n},
\end{aligned}
\\
&
\label{interp-surf-temp}
\begin{aligned}
&
\int_{\GC} \pwls \teta\tau'(t)  v \dd x
- \int_{\GC} \pwcs \teta \tau (t)  \frac{\lambda(\pwc \chi\tau(t))-\lambda(\upwc \chi\tau(t))}{\tau}  v \dd x
+ \int_{\GC} \alpha(\pwcs \teta \tau(t)) \nabla \pwcs\teta\tau(t) \nabla v \dd x
\\
&
=
\int_{\GC} \left| \pwl\chi\tau'(t)\right|^2 v \dd x
+ \int_{\GC} k(\upwc\chi\tau(t))(\pwc\teta\tau(t) -\pwcs\teta\tau(t)) \pwcs \teta\tau(t) v \dd x
+ \int_{\GC} \nlocss {\RCNEW(\upwc\chi\tau(t))^+ \EEE  \pwc\teta\tau(t)}\, \RCNEW(\upwc\chi\tau(t) )^+ \EEE \pwcs\teta\tau(t) v \dd x
\\
& \qquad - \int_{\GC} \nlocss {\RCNEW(\upwc\chi\tau(t))^+}\,\RCNEW( \upwc\chi\tau(t) )^+ \EEE \big(\pwcs\teta\tau(t) \big)^2 v \dd x
+ \pairing{}{H^1(\GC)}{\pwc\ell\tau}{v}
\qquad \text{for all }  v \in  H^1(\GC).
\end{aligned}
\\
&
\label{interp-flow-rule}
\begin{aligned}
&
\pwl\chi\tau'(t)
+\rho \left|\pwl\chi\tau'(t)  \right|^{\omega-2} \pwl\chi\tau'(t)
 + A\pwc\chi\tau(t)  +
\betar(\pwc\chi\tau(t))
+\gamma_\nu'(\pwc\chi\tau(t)) - \nu \upwc\chi\tau(t)
\\
&
= - \lambda_\delta'(\upwc\chi\tau(t)) \pwcs \teta\tau(t) -\delta \pwc\chi\tau(t) \pwcs \teta\tau(t)
\\
& \quad -
 \frac12 |\upwc\uu\tau(t)|^2 \RCNEW \pwc \sigma \tau(t)  -\frac12 \nlocss {\RCNEW(\pwc\chi\tau(t))^+} \, |\upwc\uu\tau(t)|^2  \pwc \sigma \tau(t) -\frac12 \nlocss {(\upwc\chi\tau(t))^+ |\upwc\uu\tau(t)|^2} \, \pwc \sigma \tau(t)
\qquad \aein \, \GC\,
\\
&
\RCNEW \text{with } \pwc \sigma\tau(t) \in \partial\varphi(\pwc\chi\tau(t)) \qquad \aein\, \GC
\end{aligned}
\end{align}
\end{subequations}
\EEE for almost all $t\in (0,T)$,
supplemented with the Cauchy data $(\teta_\rho^0,\uu_\rho^0, \dss \rho 0, \chi_0 ) $ as in \eqref{approx-initial-data} and \eqref{cond-chi0}.
 It is in system \eqref{interp-syst} that we shall pass to the limit as $\tau \down 0$, thus proving the existence of solutions to
(the weak formulation of) the Cauchy problem for system   \eqref{PDE-regul}. In the following
results we shall omit to specify the standing assumptions on the problem and on the Cauchy data.
\subsection{A priori estimates}
The following result collects a series of a priori estimates on the approximate solutions that
will be at the basis of the limit passage procedure as $\tau \down 0$ performed in Sec.\ \ref{ss:5.2}
and leading to the  proof of the existence of solutions to system \eqref{PDE-regul}.
In view of the further \RCORR  limit passages \ as $\rho \down 0 $  and $\varsigma \down 0$ carried out in
Sec.\ \ref{s:7}, \EEE  we shall distinguish the estimates that hold
\emph{uniformly} w.r.t. $\tau$ \emph{and}  $\rho,\, \varsigma$, from those that are not uniform w.r.t.\ $\rho,\, \varsigma$.
In the statement of Lemma \ref{l:a-prio-tau} below we will use the notation
\[
\mathrm{Var}_{\mathbf{B}} (\mathfrak{h}; [0,T]): = \sup \left\{ \sum_{i=1}^{m} \|  \mathfrak{h}(\sigma_i) {-} \mathfrak{h}(\sigma_{i-1})\|_{\mathbf{B}}\, : \ (\sigma_i)_{i=1}^m \text{ partition of } [0,T] \right\}
\]
for the total variation of a function  $\mathfrak{h}:[0,T]\to \mathbf{B}$.
\begin{lemma}[A priori estimates]
\label{l:a-prio-tau}
There exists   a constant $S>0$ 
such that the following estimates hold for every $\tau>0$ and $\rho, \, \varsigma>0$:
\begin{subequations}
\label{a-prio-tau}
\begin{align}
&
\label{teta-a-prio-tau}
\| \pwc \teta \tau\|_{L^\infty(0,T;L^1(\Omega))} + \| \pwl \teta \tau\|_{L^\infty(0,T;L^1(\Omega))}  \leq S, 
\\
&
\label{u-a-prio-tau}
\| \pwc \uu \tau \|_{L^\infty(0,T;H^1(\Omega;\R^3))}  %
+\| \pwl \uu \tau \|_{L^\infty(0,T;H^1(\Omega;\R^3))}  \leq S,
\\
&
\label{tetas-a-prio-tau}
\| \pwcs \teta \tau\|_{L^\infty(0,T;L^1(\GC))} + \| \pwls \teta \tau\|_{L^\infty(0,T;L^1(\GC))} \   \leq S, %
  \\
  &
  \label{chi-a-prio-tau}
  \| \pwc \chi \tau \|_{L^\infty(0,T;H^1(\GC))} 
  +\| \pwl \chi \tau \|_{L^\infty(0,T;H^1(\GC))} 
  \leq S,
  \\
  &
  \label{upsilon-a-prio-tau}
\RCNEW  \| \pwc \sigma \tau \|_{L^\infty(0,T;L^\infty(\GC))} \leq S\,.    \EEE
\end{align}
\end{subequations}
Furthermore,  for every $\rho >0$ there exists a constant $S_{\rho}>0$ such that for every $\tau>0$ and $\varsigma>0:$
\begin{subequations}
\label{a-prio-tau-rho}
\begin{align}
&
\label{teta-a-prio-tau-rho}
 \| \pwc\teta\tau\|_{L^2(0,T;H^1(\Omega))\cap L^\infty(0,T;L^2(\Omega))}
+ \| \widehat{\alpha}(\pwc\teta\tau)\|_{L^2(0,T;H^1(\Omega))}+
 \| \pwl\teta\tau\|_{L^2(0,T;H^1(\Omega)) \cap H^1(0,T;H^1(\Omega)^*)}
  \leq  S_{\rho}, 
  \\
  &
\label{VAR-teta-a-prio-tau}
\mathrm{Var}_{H^1(\Omega)^*}(\pwc \teta \tau; [0,T])
  \leq  S_{\rho}, 
  \\
  &
\label{u-a-prio-tau-rho}
\| \pwc \uu \tau \|_{L^\infty(0,T;W^{1,\omega}(\Omega;\R^3))} +
 \| \pwl \uu \tau \|_{W^{1,\omega}(0,T;W^{1,\omega}(\Omega;\R^3))}\leq  S_{\rho}, 
\\
&
\label{tetas-a-prio-tau-rho}
\begin{aligned}
&
\| \pwcs \teta \tau\|_{L^2(0,T;H^1(\GC))\cap L^\infty(0,T;L^2(\GC))} + \| \widehat{\alpha}(\pwcs \teta\tau)\|_{L^2(0,T;H^1(\GC))}
\\
& \quad
+ \| \pwls\teta\tau\|_{L^2(0,T;H^1(\GC)) \cap H^1(0,T;H^1(\GC)^*)}
  \leq  S_{\rho}, 
  \end{aligned}
  \\
    &
\label{VAR-tetas-a-prio-tau}
\mathrm{Var}_{H^1(\GC)^*}(\pwcs \teta \tau; [0,T])
  \leq  S_{\rho}, 
  \\
    &
    \label{chi-a-prio-tau-rho}
     \| \pwl \chi \tau \|_{W^{1,\omega}(0,T;L^\omega(\GC))}  S_{\rho}. 
     \end{align}
\end{subequations}
 Moreover, for every $\rho, \varsigma >0$ there exists a constant $S_{\rho, \varsigma}>0$ such that for every $\tau>0$:
\begin{equation}
\label{chi-a-prio-tau-rhoH2}
    \| A\pwc \chi \tau \|_{L^{\omega/(\omega{-}1)}(\GC\times(0,T))}  \leq S_{\rho,\varsigma}.
\end{equation}
Finally,
\begin{equation}
\label{est-recipr-interp}
 \exists S_0>0 \ \forall\, p \in [1,\infty) \ \ \exists\, \bar{\tau}_p>0 \ \  \forall\, \tau \in (0,\bar{\tau}_p)\,  \ \ \forall\,\rho,\, \varsigma>0 : \quad
\left\| \frac1{\pwc \teta \tau}\right\|_{L^\infty(0,T;L^p(\Omega))} + \left\| \frac1{\pwcs \teta \tau}\right\|_{L^\infty(0,T;L^p(\GC))} \leq S_0.
\end{equation}
\end{lemma}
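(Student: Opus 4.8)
The plan is to mimic, on the time-discrete level, the chain of formal a priori estimates of Section \ref{s:3-calculations}, starting from the discrete total energy inequality \eqref{discr-tot-enbal} and from the discrete positivity information \eqref{Lp-est-4-recipr}, and exploiting that the $\rho$-regularizing terms upgrade the $L^1$-character of the quadratic sources in the heat equations. I will carefully separate the constants that stay uniform in $\tau,\rho,\varsigma$ from those that blow up as $\rho\down 0$ or $\varsigma\down 0$.

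\emph{Basic energy estimates \eqref{teta-a-prio-tau}--\eqref{upsilon-a-prio-tau}.} Summing \eqref{discr-tot-enbal} over $k\in\{1,\dots,j\}$ and treating the term $\tau\sum \pairing{}{\bsVD}{\ds{\mathbf{F}}\tau k}{(\uu^k{-}\uu^{k-1})/\tau}$ by the discrete by-parts formula \eqref{discrete-by-parts} exactly as in \eqref{E0}--\eqref{E1}, one absorbs $\tfrac{C_1}2\|\pwc\uu\tau(t)\|_{\bsVDn}^2$ into $\calE_\varsigma$ through its coercivity (the analogue of \eqref{coerc-ene}, valid for $\calE_\varsigma$ up to the $L^\infty(\GC)$-control and the positivity of $\chi$), and then applies the discrete Gronwall Lemma \ref{l:discrG1/2}. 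Since $\calE_\varsigma(\teta_\rho^0,\dss\rho0,\uu_\rho^0,\chi_0)$ is bounded uniformly in $\rho,\varsigma$ — recall $\wetar\le\widehat\eta$ and $\wbetar\le\widehat\beta$ pointwise, together with \eqref{approx-initial-data}, \eqref{cond-u0}, \eqref{cond-chi0} — this yields \eqref{teta-a-prio-tau}--\eqref{chi-a-prio-tau} for the piecewise-constant interpolants with constant independent of $\tau,\rho,\varsigma$; the piecewise-linear versions follow because $\pwl{\mathfrak h}\tau(t)$ lies, at each time, on the segment joining two consecutive values. Estimate \eqref{upsilon-a-prio-tau} is immediate from $\partial\varphi\subset[0,1]$, cf.\ \eqref{subdiff-pos-part}.

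\emph{The $\rho$- and $(\rho,\varsigma)$-dependent estimates.} From \eqref{discr-tot-enbal} one also reads off $\rho\|\tensoret(\pwl\uu\tau')\|_{L^\omega(\Omega\times(0,T))}^\omega+\rho\|\pwl\chi\tau'\|_{L^\omega(\GC\times(0,T))}^\omega\le C$; via Korn's inequality \eqref{korn} and integration in time (using $\uu_\rho^0\in W^{1,\omega}_{\mathrm{D}}(\Omega;\R^3)$, $\chi_0\in H^1(\GC)$) this gives \eqref{u-a-prio-tau-rho} and \eqref{chi-a-prio-tau-rho}. Because $\omega>4$, the quadratic terms $\tensoret\,\vtens\,\tensoret$ and $|\chi_t|^2$ are thereby bounded in $L^{\omega/2}$ with $\omega/2>2$, which is precisely what makes it possible to replicate the Second and Third a priori estimates on the discrete level: one tests the discrete bulk and surface heat equations by $\tetak$, $\dss\tau k$, and by $\widehat\alpha(\tetak)\in H^1(\Omega)$, $\widehat\alpha(\dss\tau k)\in H^1(\GC)$ (admissible by the defining property of the spaces $X$, $X_{\mathrm s}$). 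Using the growth condition \eqref{hyp-alpha}, the bound on $\|\tensoret\|_{L^\omega}$, estimate \eqref{divut} together with Young's inequality (to absorb the term featuring $\theta\,\mathrm{div}(\uu_t)$, exploiting that $\theta\,\widehat\alpha(\theta)$ is a power of $\widehat\alpha(\theta)$ with exponent $<2$), the previously obtained \eqref{teta-a-prio-tau}--\eqref{chi-a-prio-tau}, Lemma \ref{lemmaK} for the nonlocal coupling terms, and the discrete Gronwall Lemma, one obtains \eqref{teta-a-prio-tau-rho}, \eqref{tetas-a-prio-tau-rho} with a constant degenerating as $\rho\down 0$. The $H^1(0,T;\cdot^*)$-bounds follow by comparison in \eqref{interp-b-heat}, \eqref{interp-surf-temp} — every term other than $\pwl\teta\tau'$, resp.\ $\pwls\teta\tau'$, being bounded in the relevant dual space, with $\alpha(\pwc\teta\tau)\nabla\pwc\teta\tau=\nabla\widehat\alpha(\pwc\teta\tau)\in L^2$ — and \eqref{VAR-teta-a-prio-tau}, \eqref{VAR-tetas-a-prio-tau} then follow since $\mathrm{Var}_{H^1(\Omega)^*}(\pwc\teta\tau;[0,T])=\int_0^T\|\pwl\teta\tau'\|_{H^1(\Omega)^*}$. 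Finally, \eqref{chi-a-prio-tau-rhoH2} comes by comparison in the discrete flow rule \eqref{discr-flow-rule}: $|\pwl\chi\tau'|^{\omega-2}\pwl\chi\tau'$ is bounded in $L^{\omega/(\omega-1)}$ by \eqref{chi-a-prio-tau-rho}; $\betar$, $\gamma_\nu'$ are globally Lipschitz (with $\betar(0)=0$, Lipschitz constant $\le 1/\varsigma$), whence $\betar(\pwc\chi\tau)$ is estimated by $\varsigma^{-1}\|\pwc\chi\tau\|_{L^\infty(0,T;H^1(\GC))}$; the remaining terms are controlled via \eqref{tetas-a-prio-tau-rho}, \eqref{u-a-prio-tau-rho} (the traces of $\pwc\uu\tau$, $\upwc\uu\tau$ are bounded in $L^\infty(\GC)$) and \eqref{upsilon-a-prio-tau}, so $A\pwc\chi\tau\in L^{\omega/(\omega-1)}(\GC\times(0,T))$ with a constant depending on $\rho,\varsigma$. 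Estimate \eqref{est-recipr-interp} is merely \eqref{Lp-est-4-recipr} rephrased, since $\|1/\pwc\teta\tau\|_{L^\infty(0,T;L^p(\Omega))}=\max_k\|1/\tetak\|_{L^p(\Omega)}$ (likewise on $\GC$), with the same $p$-independent constant $S_0$ and the same threshold $\bar\tau_p$ as in Proposition \ref{prop:exists-discrete}.

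The main obstacle is the $\rho$-dependent $H^1$-bound \eqref{teta-a-prio-tau-rho}--\eqref{tetas-a-prio-tau-rho} for the temperatures: one must verify that the $L^{\omega/2}$-integrability of the quadratic sources — itself a consequence of the $\rho$-regularizing terms — suffices to close the test by $\theta$ and $\theta_{\mathrm s}$ against the nonlinear elliptic term $\alpha(\theta)\nabla\theta$ and the bulk–surface and nonlocal couplings, while keeping exact track of the non-uniform dependence on $\rho$; this is the discrete counterpart of the fact, stressed in Section \ref{s:3.4}, that the Second estimate can only be performed on the regularized system. A secondary care is ensuring that all Gronwall-type arguments deliver constants that are genuinely $\tau$-uniform (and $\rho,\varsigma$-uniform for the basic estimates).
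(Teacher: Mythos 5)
Your proposal is correct and follows essentially the same route as the paper: the uniform bounds come from summing the discrete total energy inequality \eqref{discr-tot-enbal}, discrete by-parts on the $\mathbf{F}$-term, coercivity of $\calE_\varsigma$ and the discrete Gronwall Lemma (with \eqref{upsilon-a-prio-tau} read off from \eqref{subdiff-pos-part}), while the $\rho$-dependent temperature bounds are obtained exactly as in the paper by testing the discrete heat equations first by $\ds\teta\tau k$, $\dss\tau k$ and then by $\widehat\alpha(\ds\teta\tau k)$, $\widehat\alpha(\dss\tau k)$, exploiting the $L^\omega$-control ($\omega>4$) of $\eps(\pwl\uu\tau')$ and $\pwl\chi\tau'$ furnished by the $\rho$-terms, with the remaining bounds (time derivatives, variations, $A\pwc\chi\tau$, reciprocals) obtained by comparison and from \eqref{Lp-est-4-recipr}. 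Apart from loose phrasing in the treatment of the $\theta\,\mathrm{div}(\uu_t)\,\widehat\alpha(\theta)$ term (the paper uses H\"older plus the Poincar\'e-type bound \eqref{Poincare-hat} and Gronwall on $\|\doublehhat{\alpha}(\pwc\teta\tau)\|_{L^1}$), the argument matches the paper's proof.
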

\begin{proof}
\emph{Step $1$: energy estimates.}
We sum the discrete total energy inequality \eqref{discr-tot-enbal} over the index $k \in \{1, \ldots, J\}$, for every $J  \in \{1, \ldots, K_\tau\}$.
Applying the discrete by-part integration formula \eqref{discrete-by-parts} to the term
$\sum_{k=1}^{J}\tau\pairing{}{\bsVD}{\ds {\mathbf{F}}\tau k}{\frac{\ds \uu {\tau} k-\ds \uu \tau {k-1}}{\tau}}$ we obtain for all $0 \leq s < t \leq T$
\begin{equation}
\label{discr-tot-enbal-interp}
\begin{aligned}
&
\calE_\varsigma (\pwc\teta\tau(t),  \pwcs \teta \tau(t),  \pwc \uu \tau (t), \pwc\chi\tau(t))
+\rho\int_{\upwc \sft \tau(s)}^{\pwc \sft \tau(t)} \int_{\Omega}
 \left|\eps (\pwl \uu\tau')\right|^\omega \dd x \dd r   + \rho  \int_{\upwc \sft \tau(s)}^{\pwc \sft \tau(t)} \int_{\GC} \left| \pwl \chi\tau' \right|^\omega \dd x \dd r
\\
& \quad
+ \int_{\upwc \sft \tau(s)}^{\pwc \sft \tau(t)}
\int_{\GC} k(\upwc \chi\tau) (\pwc \teta \tau {-} \pwcs \teta \tau)^2  \dd x \dd r
\dd x
+ \int_{\upwc \sft \tau(s)}^{\pwc \sft \tau(t)}  \iint_{\GC\times\GC}  j(x,y) \RCNEW (\upwc\chi\tau (x))^+ (\upwc\chi\tau  (y))^+ \EEE (\pwc\teta\tau (x){-} \pwcs \teta \tau(y))^2 \dd x \dd y  \dd r
\\
&
 \leq \calE_\varsigma (\upwc\teta\tau(s),  \upwcs \teta \tau(s),  \upwc \uu \tau (s), \upwc\chi\tau(s))  +
\RCORR  \int_{\upwc \sft \tau(s)}^{\pwc \sft \tau(t)} \EEE  \int_{\Omega} \pwc h \tau \dd x \dd r
 +
\RCORR  \int_{\upwc \sft \tau(s)}^{\pwc \sft \tau(t)} \EEE  \int_{\GC} \pwc \ell \tau \dd x \dd r
 \\
 & \quad
\RCORR + \EEE   \pairing{}{\bsVD}{\pwc {\mathbf{F}}\tau (\pwc \sft \tau(t))}{\pwc \uu \tau (\pwc \sft \tau(t))} - \pairing{}{\bsVD}{\pwc {\mathbf{F}}\tau (\upwc \sft \tau(s))}{\upwc \uu \tau (\pwc \sft \tau(s))}
-\int_{\upwc \sft \tau(s)}^{\pwc \sft \tau(t)}  \pairing{}{\bsVD}{\pwl {\mathbf{F}}\tau'}{\pwc \uu \tau} \dd r\,.
 \end{aligned}
\end{equation}
We now take $s=0$ in \eqref{discr-tot-enbal-interp}, and observe that $\calE_\varsigma(\upwc\teta\tau(0),  \upwcs \teta \tau(0),  \upwc \uu \tau (0), \upwc\chi\tau(0)) =
\calE_\varsigma(\teta_\rho^0,\dss \rho 0,
\uu_\rho^0,\chi_0) \leq C$ thanks to  \eqref{cond-chi0} and \eqref{approx-initial-data}.
For the second and third integrals on the right-hand side, we use that $\| \pwc h \tau \|_{L^1(0,T;L^1(\Omega))} \leq C$ by \eqref{cond-h} and that
$\| \pwc \ell \tau \|_{L^1(0,T;L^1(\GC))} \leq C$ by \eqref{cond-ell}, respectively,  while we deal with the last three terms by
 mimicking the calculations from \eqref{E1}. Namely, they can be controlled by the left-hand side of \eqref{discr-tot-enbal-interp} thanks to the coercivity estimate
 \eqref{coerc-ene}. All in all,  as in Section \ref{sss:3.3.1} we conclude that
 \begin{equation}
 \label{energy-bound-rho}
 \sup_{t \in (0,T)} \left| \calE_\varsigma (\pwc\teta\tau(t),  \pwcs \teta \tau(t),  \pwc \uu \tau (t), \pwc\chi\tau(t))
\right| \leq C.
 \end{equation}
 An analogue of the coercivity estimate   \eqref{coerc-ene} holds for the functional $\calE_\varsigma$: the only difference is that, since
  $\calE_\varsigma$ features
  $\widehat\betar$ in place of $ \widehat\beta$, it does no longer control the $L^\infty(\GC)$-norm of $\chi$.
  However, it is not difficult to see that
  $
  \int_{\GC}\widehat\betar(\chi) \dd x \geq c \|\chi\|_{L^2(\GC)}^2 -C.
  $
   Therefore,  from \eqref{energy-bound-rho}
we deduce
estimates \eqref{teta-a-prio-tau}, \eqref{u-a-prio-tau}, \eqref{tetas-a-prio-tau}, and  \eqref{chi-a-prio-tau}.
\par
 Furthermore, also taking into account the positivity of the fourth and fifth terms on the
 left-hand side of \eqref{discr-tot-enbal-interp}, we deduce a bound for the second and third summands,
 which gives \eqref{u-a-prio-tau-rho} and \eqref{chi-a-prio-tau-rho}.
 \par
 \RCNEW Finally, estimate \eqref{upsilon-a-prio-tau} follows from the fact that $\pwc \sigma\tau \in \partial\varphi(\pwc\chi\tau)$ a.e.\ in $\GC \times (0,T)$, cf.\ \eqref{subdiff-pos-part}.  \EEE
 \medskip

\emph{Step $2$: Estimates for the temperature variables.}
We test \eqref{discr-b-heat} by $\tau \ds \teta \tau k$, \eqref{discr-surf-temp} by $\tau\dss \tau k$, add the resulting relations and
 sum over the index $k \in \{1, \ldots, J\}$ for an arbitrary $J \in \{1,\ldots, K_\tau\}$.
 Let $t \in (0,T]$ satisfy $(J-1) \tau \leq t \leq J\tau $: we obtain
 \[
 \begin{aligned}
 &
 \sum_{k=1}^J \tau  \int_\Omega \left( \frac{\ds \teta \tau k-\ds \teta \tau{k-1}}{\tau} \right) \ds \teta\tau k \dd x \geq \frac12 \| \pwc \teta \tau(t)\|_{L^2(\Omega)}^2 -
  \frac12 \| \teta_\rho^0 \|_{L^2(\Omega)}^2,
  \\
 &
 \sum_{k=1}^J \tau  \int_{\GC} \left( \frac{\dss  \tau k-\dss \tau{k-1}}{\tau} \right) \dss\tau k \dd x \geq \frac12 \| \pwcs \teta \tau(t)\|_{L^2(\GC)}^2 -
 \frac12 \| \dss \rho 0 \|_{L^2(\GC)}^2.
\end{aligned}
 \]
 Mimicking the calculations in Lemma \ref{l:epsi-a-prio} (cf.\ \eqref{new-calcul-epsl})
  we obtain
\begin{equation}
\label{new-calcul-cont}
\begin{aligned}
 &\frac12 \| \pwc \teta \tau(t)\|_{L^2(\Omega)}^2 +\frac12 \| \pwcs \teta \tau(t)\|_{L^2(\GC)}^2
+
c_0 \int_0^{\pwc \sft \tau(t)} \io |\nabla \pwc \teta \tau|^2 \dd x  \dd r
+
c_0 \int_0^{\pwc \sft \tau(t)} \int_{\GC}|\nabla \pwcs \teta \tau|^2 \dd x  \dd r
+ I_1 +I_2
\\ &  \leq  I_3 +I_4+I_5+I_6+I_7+I_8+I_9+I_{10}
 \end{aligned}
\end{equation}
with
\[
\begin{aligned}
&
I_1= \int_0^{\pwc \sft \tau(t)}\int_{\GC} k(\upwc \chi \tau) ((\pwc \teta \tau)^2  {-} (\pwcs \teta \tau)^2) (\pwc \teta \tau{-} \pwcs \teta \tau)\dd x \dd r  \stackrel{(1)}{\geq} 0,
\\
&
\begin{aligned}
I_2
 & =   \int_0^{\pwc \sft \tau(t)} \int_{\GC}   (\pwc \teta \tau)^2\left(  \nlocss {\RCNEW (\upwc \chi \tau)^+}\, \RCNEW (\upwc \chi \tau)^+ \EEE \pwc\teta \tau  {-} \nlocss {\RCNEW(\upwc \chi \tau)^+ \EEE \pwcs \teta \tau}\,  \RCNEW(\upwc \chi \tau)^+  \right) \dd x  \dd  r
 \\
 & \quad
 -  \int_0^{\pwc \sft \tau(t)} \int_{\GC} ( \pwcs \teta \tau)^2 \left( \nlocss {\RCNEW(\upwc \chi \tau)^+ \EEE \pwc\teta\tau}\, \RCNEW (\upwc \chi \tau)^+\, {-} \nlocss {\RCNEW(\upwc \chi \tau)^+}\, \RCNEW (\upwc \chi \tau)^+  \EEE \pwcs \teta \tau \right) \dd x \MC \dd  r \EEE
\stackrel{(2)}{\geq} 0
\end{aligned}
\end{aligned}
\]
where
for
 (1) we have used the estimate
 $(a^2-b^2)(a-b) = (a+b)(a-b)^2 \geq 0 $ for all $a,\,b \in [0,\infty)$ and for (2)
 the very same \RCORR monotonicity \EEE arguments used for \eqref{key-posit-conto}, based on the fact  that $[0,+\infty) \ni r\mapsto r^2$
 is non-decreasing.
As for the terms on the right-hand side of \eqref{new-calcul-cont},  we have
\begin{align}
&
I_3 =  \frac12 \| \teta_\rho^0 \|_{L^2(\Omega)}^2 \leq C, \qquad I_4= \frac12 \| \dss \rho 0 \|_{L^2(\GC)}^2  \leq C,
\nonumber
\\
&
\nonumber
\begin{aligned}
I_5  & =  \int_0^{\pwc \sft \tau(t)} \int_\Omega  \mathrm{div} \left(\pwl \uu \tau' \right) |\pwc\teta\tau|^2  \dd x \dd r
\\
&
\leq C  \int_0^{\pwc \sft \tau(t)}  \left\| \pwl \uu \tau'  \right\|_{W^{1,4}(\Omega)} \| \pwc\teta\tau \|_{L^4(\Omega)}
\|\pwc \teta \tau \|_{L^2(\Omega)} \dd r   \\
& \stackrel{(3)}{\leq}
 C  \int_0^{\pwc \sft \tau(t)}  \left\| \pwl \uu \tau'  \right\|_{W^{1,4}(\Omega)} \| \nabla \pwc\teta\tau \|_{L^2(\Omega;\R^3)}
\|\pwc \teta \tau \|_{L^2(\Omega)} \dd r +  \int_0^{\pwc \sft \tau(t)}  \left\| \pwl \uu \tau'  \right\|_{W^{1,4}(\Omega)} \|  \pwc\teta\tau \|_{L^1(\Omega;\R^3)}
\|\pwc \teta \tau \|_{L^2(\Omega)} \dd r
\\
& \stackrel{(4)}{\leq}  \frac{c_0}4  \int_0^{\pwc \sft \tau(t)}  \| \nabla \pwc\teta\tau \|_{L^2(\Omega;\R^3)}^2 \MC \dd  r \EEE + C
 \int_0^{\pwc \sft \tau(t)}  \left\| \pwl \uu \tau'  \right\|_{W^{1,4}(\Omega)}^2 \|\pwc \teta \tau \|_{L^2(\Omega)}^2  \dd r + C
\end{aligned}
\nonumber
\intertext{where (3), with $c_0$ the constant from \eqref{hyp-alpha}, follows from the continuous
 embedding $H^1(\Omega) \subset L^4(\Omega)$ and from the Poincar\'e inequality,  and (4) from the previously proved
\eqref{teta-a-prio-tau},}
&
\nonumber
\begin{aligned}
I_6 &=  \int_0^{\pwc \sft \tau(t)}\int_\Omega
 \eps\left( \pwl \uu\tau'\right) \, \vtens \, \eps\left( \pwl \uu \tau'\right)  \pwc \teta \tau    \dd x  \dd  r
 \leq C \int_0^{\pwc \sft \tau(t)} \| \eps\left( \pwl \uu\tau'\right) \|_{L^4(\Omega)}^2 \|  \pwc \teta \tau   \|_{L^2(\Omega)} \dd r
 \\
  &   \leq  \frac12 \int_0^{\pwc \sft \tau(t)}
 \|  \pwc \teta \tau   \|_{L^2(\Omega)}^2 \dd r + C \int_0^{\pwc \sft \tau(t)}\| \pwl \uu \tau'\|_{W^{1,4}(\Omega;\R^3)}^4 \dd r
 \end{aligned}
\\
&
\nonumber
\begin{aligned}
I_7   & =  \int_0^{\pwc \sft \tau(t)} \pairing{}{H^1(\Omega)}{\pwc h\tau}{\pwc \teta \tau}  \dd r   \\ &   \stackrel{(5)}{\leq}
 \frac{c_0}4  \int_0^{\pwc \sft \tau(t)}  \| \nabla \pwc\teta\tau \|_{L^2(\Omega)}^2 \dd r  + C  \int_0^{\pwc \sft \tau(t)} \| \pwc h\tau \|_{H^1(\Omega)^*}^2 \dd r
 +C  \int_0^{\pwc \sft \tau(t)} \| \pwc h\tau \|_{H^1(\Omega)^*} \| \pwc \teta \tau\|_{L^1(\Omega)} \dd r
 \\
 & \RCORR \leq  \frac{c_0}4  \int_0^{\pwc \sft \tau(t)}  \| \nabla \pwc\teta\tau \|_{L^2(\Omega)}^2 \dd r  + C  \EEE
 \end{aligned}
\intertext{where (5) again follows from Poincar\'e inequality, and analogously}
&
\nonumber
\begin{aligned}
\nonumber
I_8   & =  \int_0^{\pwc \sft \tau(t)} \pairing{}{H^1(\GC)}{\pwc \ell\tau}{\pwcs \teta \tau}  \dd r   \\ &\leq
 \frac{c_0}4  \int_0^{\pwc \sft \tau(t)}  \| \nabla \pwcs \teta \tau \|_{L^2(\GC)}^2 \dd r  + C  \int_0^{\pwc \sft \tau(t)} \| \pwc \ell\tau \|_{H^1(\GC)^*}^2 \dd r
 +C  \int_0^{\pwc \sft \tau(t)} \| \pwc \ell\tau \|_{H^1(\GC)^*} \| \pwcs \teta \tau\|_{L^1(\GC)} \dd r
 \nonumber
 \\
 \RCORR &  \leq
 \frac{c_0}4  \int_0^{\pwc \sft \tau(t)}  \| \nabla \pwcs \teta \tau \|_{L^2(\GC)}^2 \dd r  + C  \EEE
 \end{aligned}
\nonumber
\\
&
\begin{aligned}
\nonumber
I_9  & =   \int_0^{\pwc \sft \tau(t)} \int_{\GC} \tfrac{\lambda(\pwc\chi\tau)-\lambda(\upwc\chi\tau)}\tau  (\pwcs\teta\tau)^2  \dd x
\dd r
\\  &  \stackrel{(6)}{\leq}   C \MC \int_0^{\pwc \sft \tau(t)} \EEE\| \pwl\chi\tau' \|_{L^4(\GC)}
\| \pwcs \teta \tau \|_{L^4(\GC)} \| \pwcs \teta \tau \|_{L^2(\GC)} \dd r \\ &   \stackrel{(7)}{\leq}
 \frac{c_0}4  \int_0^{\pwc \sft \tau(t)}  \| \nabla \pwcs\teta\tau \|_{L^2(\GC;\R^2)}^2 \MC \dd  r \EEE + C
 \int_0^{\pwc \sft \tau(t)}  \left\| \pwl \chi \tau'  \right\|_{L^{4}(\GC)}^2 \|\pwcs \teta \tau \|_{L^2(\GC)}^2  \dd r + C
\end{aligned}
\intertext{where (6) follows from the Lipschitz continuity of $\lambda$, and (7) from the same arguments as for inequality (3), \RCORR and, finally,}
&
\nonumber
I_{10} =\int_0^{\pwc \sft \tau(t)}  \int_{\GC} (\pwl\chi\tau')^2   \pwcs\teta\tau \dd x
\dd r  \leq	  \frac12 \int_0^{\pwc \sft \tau(t)}
 \|  \pwcs \teta \tau   \|_{L^2(\GC)}^2 \dd r + C \int_0^{\pwc \sft \tau(t)}\| \pwl \chi \tau'\|_{L^{4}(\GC)}^4 \dd r \,.
\end{align}
All in all,  we arrive at
\[
\begin{aligned}
&
\frac12 \| \pwc \teta \tau(t)\|_{L^2(\Omega)}^2 +\frac12 \| \pwcs \teta \tau(t)\|_{L^2(\GC)}^2
+
\frac{c_0}2 \int_0^{\pwc \sft \tau(t)} \MC \io \EEE  |\nabla \pwc \teta \tau|^2 \dd x  \dd r
+
\frac{c_0}2 \int_0^{\pwc \sft \tau(t)} \int_{\GC} 
\RCORR |\nabla \pwcs \teta \tau|^2 \EEE \dd x  \dd r
\\
 & \leq C
+
C
 \int_0^{\pwc \sft \tau(t)}  (\left\| \pwl \uu \tau'  \right\|_{W^{1,4}(\Omega;\R^3)}^2{+}1)  \|\pwc \teta \tau \|_{L^2(\Omega)}^2  \dd r
+
C
 \int_0^{\pwc \sft \tau(t)}  \left(\left\| \pwl \chi \tau'  \right\|_{L^{4}(\GC)}^2{+}1\right)  \|\pwcs \teta \tau \|_{L^2(\GC)}^2  \dd r\,.
 \end{aligned}
\]
Applying the discrete Gronwall Lemma \ref{l:discrG1/2}
and  taking into account the previously proved \eqref{u-a-prio-tau-rho} and  \eqref{chi-a-prio-tau-rho},
we conclude that
\begin{equation} \label{GiovStep2}
\| \pwc\teta\tau\|_{L^2(0,T;H^1(\Omega))\cap L^\infty(0,T;L^2(\Omega))} + \| \pwcs\teta\tau\|_{L^2(0,T;H^1(\GC))\cap L^\infty(0,T;L^2(\GC))} \leq C_\rho,
\end{equation}
with the constant $C_\rho$ depending on the parameter $\rho$.
 \medskip

 \noindent
\emph{Step $3$: Further  estimates for the temperature variables.}
We test \eqref{discr-b-heat} by $\tau \widehat\alpha( \ds \teta \tau k)$, \eqref{discr-surf-temp} by $\tau\widehat{\alpha}(\dss \tau k)$, add the resulting relations and sum over the index $k \in \{1, \ldots, J\}$ for an arbitrary $J \in \{1,\ldots, K_\tau\}$.
Let $t \in (0,T]$ satisfy $(J-1) \tau \leq t \leq J\tau $.
By  the convexity and positivity  of $\doublehhat{\alpha}$ (cf.\  \eqref{growth-primitives}),
 we have
\begin{eqnarray} \nonumber
\sum_{k=1}^J \int_\Omega ( \ds \teta \tau k-\ds \teta \tau{k-1} ) \widehat{\alpha}(\ds \teta \tau k ) \dd x
& \geq &
\sum_{k=1}^J \Big( \| \doublehhat{\alpha} (\ds \teta \tau k ) \|_{L^1(\Omega)} - \| \doublehhat{\alpha}  (\tetakmu) \|_{L^1(\Omega)}  \Big) \\
\label{step3stiminizM}
&= & \| \doublehhat{\alpha} (\pwc \teta \tau(t)) \|_{L^1(\Omega)} - \| \doublehhat{\alpha}  (\teta_\rho^0 ) \|_{L^1(\Omega)} ,
\end{eqnarray}
and, analogously,
\begin{eqnarray} \nonumber
\sum_{k=1}^J \int_{\GC} ( \dss \tau k-\dss \tau{k-1} ) \widehat{\alpha}(\dss \tau k ) \dd x
&\geq&
 \| \doublehhat{\alpha} (\pwcs \teta \tau(t)) \|_{L^1(\GC)} - \| \doublehhat{\alpha}  (\dss \rho 0  ) \|_{L^1(\GC)} .
\end{eqnarray}
Since the function $ \R^+ \ni r\mapsto r \widehat{\alpha}(r)$ is increasing,
we have that
\begin{equation}
\int_0^{\pwc \sft \tau(t)} \int_{\GC} k(\upwc \chi \tau) (\pwc \teta \tau  {-} \pwcs \teta \tau)
(\pwc \teta \tau \widehat{\alpha} (\pwc \teta \tau) {-} \pwcs \teta \tau \widehat{\alpha} (\pwcs \teta \tau ) )\dd x \dd r  \geq 0,
\end{equation}
while,
using that $\widehat\alpha$ is (strictly) increasing and
mimicking the calculations in  \eqref{posult4}, we observe that
\begin{equation} \label{step3stimfinalM}
\begin{aligned}
&
\int_0^{\pwc \sft \tau(t)} \int_{\GC}  \widehat{\alpha} (\pwc\teta \tau )  \RCNEW (\upwc \chi \tau)^+  \pwc\teta \tau
\Big(  \nlocss {\RCNEW(\upwc \chi \tau)^+}\, \pwc\teta \tau - \nlocss {\RCNEW(\upwc \chi \tau)^+ \EEE \pwcs \teta \tau} \Big)
 \dd x \dd r
\\
& \quad
 - \int_0^{\pwc \sft \tau(t)} \int_{\GC}
  \widehat{\alpha} (\pwcs \teta \tau )  (\upwc \chi \tau)^+ \pwcs \teta \tau
\Big(\nlocss {\RCNEW(\upwc \chi \tau)^+\EEE \pwc\teta\tau}\, - \nlocss {(\upwc \chi \tau)^+} \, \pwcs \teta \tau \Big) \dd x \dd r \geq 0.
 \end{aligned}
\end{equation}

Combining \eqref{step3stiminizM}--\eqref{step3stimfinalM}
and observing that
 $\nabla \widehat{\alpha} (\ds \teta \tau k) = \alpha(\ds \teta \tau k) \nabla \ds \teta \tau k$ and
$\nabla \widehat{\alpha} (\dss \tau k) = \alpha(\dss \tau k) \nabla \dss \tau k$,
 we arrive at
\begin{equation}
 \label{stp3Itot}
 \begin{aligned}
 &
\| \doublehhat{\alpha} (\pwc \teta \tau(t)) \|_{L^1(\Omega)}
+ \| \doublehhat{\alpha} (\pwcs \teta \tau(t)) \|_{L^1(\GC)}
+ \int_0^{\pwc \sft \tau(t)} \| \nabla \widehat{\alpha} (\pwc \teta \tau) \|_{L^2(\Omega)}^2 \dd r
+ \int_0^{\pwc \sft \tau(t)} \| \nabla \widehat{\alpha} (\pwcs \teta \tau ) \|_{L^2(\GC)}^2  \dd r
\\
&
\leq
C \Big(1+ \| \doublehhat{\alpha}  (\teta_\rho^0 ) \|_{L^1(\Omega)} +\| \doublehhat{\alpha}  (\dss \rho 0  ) \|_{L^1(\GC)} \Big)
+  \int_0^{\pwc \sft \tau(t)} \io  \pwc\teta\tau \mathrm{div} \left(\pwl \uu \tau' \right)  \widehat{\alpha} (\pwc \teta \tau)  \dd x \dd r
\\
&
\quad
+ \int_0^{\pwc \sft \tau(t)} \pairing{}{H^1(\Omega)}{\pwc h\tau}{ \widehat{\alpha} (\pwc \teta \tau) }  \dd r
+ \int_0^{\pwc \sft \tau(t)} \pairing{}{H^1(\GC)}{\pwc \ell\tau}{ \widehat{\alpha} (\pwcs \teta \tau) }  \dd r
\\
&
\quad
+  \int_0^{\pwc \sft \tau(t)} \int_\Omega \eps\left( \pwl \uu\tau'\right) \, \vtens \, \eps\left( \pwl \uu \tau'\right)
\widehat{\alpha} (\pwc \teta \tau) \dd x \dd r
+\int_0^{\pwc \sft \tau(t)}  \int_{\GC} (\pwl\chi\tau')^2   \widehat{\alpha} (\pwcs \teta \tau) \dd x \dd r
\\
& \quad
+ \int_0^{\pwc \sft \tau(t)} \int_{\GC} \tfrac{\lambda(\pwc\chi\tau)-\lambda(\upwc\chi\tau)}\tau \pwcs\teta\tau
\widehat{\alpha} (\pwcs \teta \tau) \dd x \dd r
\doteq I_1 + I_2 + I_3 + I_4 + I_5+I_6+I_7.
\end{aligned}
\end{equation}
Thanks to \eqref{approx-initial-data}  and \eqref{growth-primitives}, we have 
\begin{equation} \label{stp3I1}
I_1 \leq C'' \left(1+  \| \teta_\rho^0 \|_{L^{\mu+2}(\Omega)}^{\mu+2} +   \| \dss \rho 0 \|_{L^{\mu+2}(\GC)}^{\mu+2}  \right) \leq C.
\end{equation}
With calculations analogous to those of \eqref{Poincare-hat} we infer that
\begin{equation}
\label{simhatatratti}
\begin{aligned}
\| \hhat{\alpha}(\pwc \teta \tau) \|_{H^1(\Omega)}
&\leq C \left(\| \nabla(\hhat{\alpha}(\pwc \teta \tau) )\|_{L^2(\Omega)} +
\| \hhat{\alpha}(\pwc \teta \tau)\|_{L^1(\Omega)}  \right) \\
&\leq C' \left(\| \nabla(\hhat{\alpha}(\pwc \teta \tau))\|_{L^2(\Omega)} +
\| \doublehhat{\alpha}(\pwc \teta \tau)\|_{L^1(\Omega)}  + 1 \right).
\end{aligned}
\end{equation}
By virtue of \RCORR estimate \EEE \eqref{GiovStep2}, we have that $(\pwc\teta\tau)_\tau$ is \RCORR bounded in   \EEE $   L^{\infty}(0,T;L^1(\Omega)) \cap L^2(0,T;L^6(\Omega))$ and, a fortiori, in $L^4(0,T;L^{12/7}(\Omega))$ by interpolation. Therefore,
\begin{equation}
 \label{stp3I2}
 \begin{aligned}
I_2
&\leq   \int_0^{\pwc \sft \tau(t)} \io  \pwc\teta\tau |\mathrm{div} \left(\pwl \uu \tau' \right)|
\widehat{\alpha} (\pwc \teta \tau)  \dd x \dd r
\\
&\stackrel{(1)}{\leq} \int_0^{\pwc \sft \tau(t)}   \| \pwc\teta\tau \|_{L^{12/7}(\Omega)} \| \mathrm{div}
\left(\pwl \uu \tau' \right) \|_{L^{4}(\Omega)} \| \widehat{\alpha} (\pwc \teta \tau)\|_{L^6(\Omega)}
\dd r
\\
&
\stackrel{(2)}{\leq}  C \int_0^{\pwc \sft \tau(t)}  \| \pwc\teta\tau \|_{L^{12/7}(\Omega)} \| \mathrm{div}
\left(\pwl \uu \tau' \right) \|_{L^{4}(\Omega)} \left( \| \doublehhat{\alpha}(\pwc \teta \tau)\|_{L^1(\Omega)}  + 1 \right) \dd r
\\
 & \qquad + C \int_0^{\pwc \sft \tau(t)}  \| \pwc\teta\tau \|_{L^{12/7}(\Omega)}^2 \| \mathrm{div}
\left(\pwl \uu \tau' \right) \|_{L^{4}(\Omega)}^2 \dd r +
 \frac{1}{4}  \int_0^{\pwc \sft \tau(t)} \| \nabla \widehat{\alpha} (\pwc \teta \tau) \|_{L^2(\Omega)}^2 \dd r
\end{aligned}
\end{equation}
with (1) due to the  H\"older inequality and (2) to \eqref{simhatatratti} and the Young inequality.
Analogously,  we  have 
\begin{align}
\label{stp3I3}
&
\begin{aligned}
I_5  & \leq C  \int_0^{\pwc \sft \tau(t)} \int_\Omega |\eps\left( \pwl \uu\tau'\right) |^2
\widehat{\alpha} (\pwc \teta \tau) \dd x \dd r
\\
 & \leq  C \int_0^{\pwc \sft \tau(t)} \| \eps\left( \pwl \uu\tau'\right) \|^2_{L^4(\Omega)}
 \left(\| \nabla(\hhat{\alpha}(\pwc \teta \tau))\|_{L^2(\Omega)} +
\| \doublehhat{\alpha}(\pwc \teta \tau)\|_{L^1(\Omega)}  + 1 \right) \dd r
\\
&\leq
\frac{1}{4}  \int_0^{\pwc \sft \tau(t)} \| \nabla \widehat{\alpha} (\pwc \teta \tau) \|_{L^2(\Omega)}^2 \dd r
+  C \int_0^{\pwc \sft \tau(t)}  \| \eps\left( \pwl \uu\tau'\right) \|^2_{L^4(\Omega)}  \left(   \| \doublehhat{\alpha} (\pwc \teta \tau) \|_{L^1(\Omega)} +1 \right) \dd r
\\
& \qquad + C \int_0^{\pwc \sft \tau(t)} \| \eps\left( \pwl \uu\tau'\right) \|^4_{L^4(\Omega)}
 \dd r \,,
\end{aligned}
\\
&
\begin{aligned}
&
I_6 = \int_0^{\pwc \sft \tau(t)}  \int_{\GC} (\pwl\chi\tau')^2   \widehat{\alpha} (\pwcs \teta \tau) \dd x \dd r
\\
&
\leq
\frac{1}{4}  \int_0^{\pwc \sft \tau(t)} \| \nabla \widehat{\alpha} (\pwcs \teta \tau) \|_{L^2(\GC)}^2 \dd r
+  C \int_0^{\pwc \sft \tau(t)}  \|  \pwl \chi\tau' \|^2_{L^4(\GC)}  \left(   \| \doublehhat{\alpha} (\pwcs \teta \tau) \|_{L^1(\GC)} +1 \right) \dd r
\\
& \qquad + C \int_0^{\pwc \sft \tau(t)} \|  \pwl \chi\tau' \|^4_{\RCORR L^4(\GC)\EEE} \dd r.
\end{aligned}
\end{align}
Furthermore,
again observing that
$(\pwcs \teta \tau)_\tau$ is bounded in $L^4 (0,T;L^{12/7}(\GC))$
\RCORR (a higher integrability estimate actually holds) \EEE
 by interpolation and arguing as for
\eqref{stp3I2}, we find that
\begin{equation}
\begin{aligned}
&
I_7 \leq C \int_0^{\pwc \sft \tau(t)}  \int_{\GC} \frac{|\pwc\chi\tau - \upwc\chi\tau|}{\tau} \pwcs \teta \tau
    \widehat{\alpha} (\pwcs \teta \tau) \dd x \dd r
    \\
    & \leq
    C \int_0^{\pwc \sft \tau(t)}    \|  \pwl \chi\tau' \|_{L^4(\GC)} \|  \pwcs \teta \tau \|_{L^{12/7}(\GC)}
    \|  \widehat{\alpha} (\pwcs \teta \tau) \|_{H^1(\GC)}  \dd r
    \\
    &
    \leq  C \int_0^{\pwc \sft \tau(t)}  \| \pwcs\teta\tau \|_{L^{12/7}(\GC)} \|
\pwl \chi\tau'  \|_{L^{4}(\GC)} \left( \| \doublehhat{\alpha}(\pwcs \teta \tau)\|_{L^1(\GC)}  + 1 \right) \dd r
\\
 & \qquad + C \int_0^{\pwc \sft \tau(t)}  \| \RCORR \pwcs\teta\tau \EEE  \|_{L^{12/7}(\GC)}^2 \|
\pwl \chi \tau'  \|_{L^{4}(\GC)}^2 \dd r +
 \frac{1}{4}  \int_0^{\pwc \sft \tau(t)} \| \nabla \widehat{\alpha} (\pwcs \teta \tau) \|_{L^2(\GC)}^2 \dd r\,.
\end{aligned}
\end{equation}
Finally, we observe that
\begin{equation} \label{stp3I456}
I_3   \leq  \int_0^{\pwc \sft \tau(t)}  \| \pwc h \tau \|_{H^1(\Omega)^*}  \left( \| \doublehhat{\alpha}(\RCORR \pwc \teta \tau)\EEE\|_{L^1(\GC)}  + 1 \right) \dd r
+ \int_0^{\pwc \sft \tau(t)}  \| \pwc h \tau \|_{H^1(\Omega)^*}^2 \dd r +
 \frac{1}{4}  \int_0^{\pwc \sft \tau(t)} \| \nabla \widehat{\alpha} (\pwc \teta \tau) \|_{L^2(\Omega)}^2 \dd r\,,
\end{equation}
and we estimate in the very same way the term $I_4$.
All in all, from  \eqref{stp3I1} and  \eqref{stp3I2}--\eqref{stp3I456}, also
taking into account the previously obtained estimates,
 we conclude
\begin{equation}
\begin{aligned}
&
\| \doublehhat{\alpha} (\pwc \teta \tau(t)) \|_{L^1(\Omega)}
+ \| \doublehhat{\alpha} (\pwcs \teta \tau(t)) \|_{L^1(\GC)}
+ \frac{1}{4}  \int_0^{\pwc \sft \tau(t)} \| \nabla \widehat{\alpha} (\pwc \teta \tau) \|_{L^2(\Omega)}^2 \dd r
+  \frac{1}{4}  \int_0^{\pwc \sft \tau(t)} \| \nabla \widehat{\alpha} (\pwcs \teta \tau ) \|_{L^2(\GC)}^2  \dd r
\\
&
\leq
C +
C \int_0^{\pwc \sft \tau(t)} m_\tau \left( \| \doublehhat{\alpha} (\pwc \teta \tau) \|_{L^1(\Omega)}
+ \| \doublehhat{\alpha} (\pwcs \teta \tau) \|_{L^1(\GC)} \right) \dd r\,,
\end{aligned}
\end{equation}
with $
(m_\tau )_\tau$ a sequence bounded in $L^1(0,T)$.
Therefore, applying the discrete Gronwall Lemma \ref{l:discrG1/2}, we conclude that
\begin{equation} \label{alfacappellinofin}
\| \widehat{\alpha}(\pwc \teta\tau)\|_{L^2(0,T;H^1(\Omega))} + \| \widehat{\alpha}(\pwcs \teta\tau)\|_{L^2(0,T;H^1(\GC))} + \| \doublehhat{\alpha} (\pwc \teta \tau) \|_{L^\infty(0,T;L^1(\Omega))}
+ \| \doublehhat{\alpha} (\pwcs \teta \tau) \|_{L^\infty(0,T;L^1(\GC))}
\leq C_\rho.
\end{equation}
\noindent
\emph{Step $4$: Comparison estimates.}
Taking into account  estimates \eqref{alfacappellinofin}  and the previously found bounds,
by comparison in the heat equations  \eqref{interp-b-heat} and
\eqref{interp-surf-temp},
 respectively, we infer that
\begin{equation}
\| \pwl \teta\tau\|_{H^1(0,T;H^1(\Omega)^*)} +
\| \pwls\teta\tau\|_{H^1(0,T;H^1(\GC)^*)} \leq C_{\rho}\,,
\end{equation}
so that  \eqref{teta-a-prio-tau-rho} and \eqref{tetas-a-prio-tau-rho} immediately follow.
\par The total variation estimate \eqref{VAR-teta-a-prio-tau} then immediately ensues, taking into account that
\[
\mathrm{Var}_{H^1(\Omega)^*}(\pwc \teta \tau; [0,T])   \leq \| \pwl \teta\tau'\|_{L^1(0,T;H^1(\Omega)^*)};
\]
 the very same arguments also yield \eqref{VAR-tetas-a-prio-tau}.
\par
Moreover,  taking into account the previous estimates,
the Lipschitz continuity of
$\betar$,
$\gamma'$
and $\lambda$,  by comparison in the flow rule  \eqref{interp-flow-rule} we find  estimate \eqref{chi-a-prio-tau-rhoH2}, with a constant also depending on $\varsigma$.
 Finally, estimate \eqref{est-recipr-interp} follows from \eqref{Lp-est-4-recipr}. 
\end{proof}
\subsection{Limit passage as $\tau \down 0$}
\label{ss:5.2}
 Lemma  \ref{l:a-prio-tau-compact} ahead   fixes the compactness properties of the sequences of approximate solutions for which the estimates  of Lemma \ref{l:a-prio-tau} hold.
\par
The most delicate point is the proof  of the relative compactness, a.e.\ in $\Omega \times (0,T) $ and
a.e.\ in $\GC \times (0,T)$, of the families of functions
$(\pwc \teta{\tau})_\tau$ and $(\pwcs \teta\tau)_{\tau}$; from this information we can indeed infer the compactness, a.e.\  $\Omega \times (0,T) $ and
a.e.\ in $\GC \times (0,T)$, of the sequences $\left(\frac1{\pwc \teta\tau} \right)_\tau$ and
$\left(\frac1{\pwcs \teta\tau} \right)_\tau$, which, combined with  estimates \eqref{est-recipr-interp}, ultimately yields \eqref{che-fatica-pos} below.
In fact, for proving the pointwise (in space and time) convergence of $(\pwc \teta{\tau})_\tau$ and $(\pwcs \teta\tau)_{\tau}$ we shall resort to the following Helly-type compactness result, which we quote from
\cite{RocRos}  in a slightly simplified version.
In the statement we will use  the following space
\[
\begin{aligned}
\mathrm{B} ([0,T];\mathbf{Y}^*):=\{{\bf h}:[0,T] \to \mathbf{Y}^* : \hbox { measurable and such that }
{\bf h}(t) \hbox { is defined at every } t\in [0,T]\}. 
\end{aligned}
\]
\begin{theorem}[Theorem A.5, \cite{RocRos}]
\label{th:mie-theil}
Let $\mathbf{V}$ and $\mathbf{Y}$ be two (separable) reflexive Banach spaces  such that
$\mathbf{V} \subset \mathbf{Y}^*$ continuously.
Let $(\mathfrak{h}_n)_n \subset L^p
(0,T;\mathbf{V}) \cap \mathrm{B} ([0,T];\mathbf{Y}^*)$ be  bounded  in $L^p
(0,T;\mathbf{V}) $ and suppose in addition  that
\begin{align}
\label{ell-n-0}
&
\text{$(\mathfrak{h}_n(0))_n\subset \mathbf{Y}^*$ is bounded},
\\
&
\label{BV-bound}
\exists\, C>0 \  \ \  \forall\, n \in \N\, : \quad
 \mathrm{Var}_{\mathbf{Y}^*}(\mathfrak{h}_n; [0,T] )  \leq C.
\end{align}

Then, there exists  a subsequence
 $(\mathfrak{h}_{n_k})_k$ of $(\mathfrak{h}_n)_n$
 and a function $\mathfrak{h} \in L^p (0,T;\mathbf{V}) \cap L^\infty (0,T; \mathbf{Y}^*) $ 
 such that  as $k\to \infty$
 \begin{align}
 \label{weak-LpB}
 &
 \mathfrak{h}_{n_k} \weaksto \mathfrak{h} \quad \text{ in } L^p (0,T;\mathbf{V}) \cap L^\infty (0,T;\mathbf{Y}^*),
 \\
\label{weak-ptw-B}
&
\mathfrak{h}_{n_k}(t) \weakto \mathfrak{h}(t) \quad \text{ in } \mathbf{V}\quad \foraa\, t \in (0,T).
\end{align}
\end{theorem}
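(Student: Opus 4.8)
\textbf{Proof strategy for Theorem \ref{th:mie-theil}.}

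The plan is to combine a weak-compactness argument in the Bochner space $L^p(0,T;\mathbf V)$ with a Helly-type pointwise selection principle in the (weaker) dual space $\mathbf Y^*$, exploiting the uniform bounded-variation bound \eqref{BV-bound}. First I would extract, using reflexivity of $\mathbf V$ and the boundedness of $(\mathfrak h_n)_n$ in $L^p(0,T;\mathbf V)$, a subsequence such that $\mathfrak h_{n_k} \weaksto \mathfrak h$ in $L^p(0,T;\mathbf V)$ for some limit $\mathfrak h$. This already gives the first half of \eqref{weak-LpB}; the $L^\infty(0,T;\mathbf Y^*)$ bound for the $\mathfrak h_n$ follows from \eqref{ell-n-0} together with \eqref{BV-bound}, since for every $t$ one has $\|\mathfrak h_n(t)\|_{\mathbf Y^*} \leq \|\mathfrak h_n(0)\|_{\mathbf Y^*} + \mathrm{Var}_{\mathbf Y^*}(\mathfrak h_n;[0,T]) \leq C$, and this bound passes to the limit once we know $\mathfrak h_{n_k}(t)$ converges in $\mathbf Y^*$.

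The core of the argument is the pointwise-in-time selection. I would apply the classical Helly selection theorem in its vector-valued form: the functions $t \mapsto \mathfrak h_n(t)$ take values in the bounded set $\{y \in \mathbf Y^* : \|y\|_{\mathbf Y^*} \leq C\}$, which is weakly-$*$ relatively compact and metrizable (using separability of $\mathbf Y$), and they have uniformly bounded variation with respect to the $\mathbf Y^*$-norm (hence also with respect to the metrizing weak-$*$ metric). The Helly theorem then yields a further subsequence (not relabeled) and a function $\mathfrak h \colon [0,T] \to \mathbf Y^*$ of bounded variation such that $\mathfrak h_{n_k}(t) \weakto \mathfrak h(t)$ in $\mathbf Y^*$ — hence in $\mathbf V$ wherever the values lie in $\mathbf V$ — for every $t \in [0,T]$; restricting to $(0,T)$ gives \eqref{weak-ptw-B}. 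The only subtlety here is the compatibility of the two limits: one must check that the $L^p(0,T;\mathbf V)$-weak-$*$ limit obtained in the first step coincides a.e.\ with the pointwise $\mathbf Y^*$-limit from Helly's theorem. This is done by testing against functions of the form $\psi(t)\,\varphi$ with $\psi \in L^{p'}(0,T)$, $\varphi \in \mathbf Y$, passing to the limit in $\int_0^T \psi(t)\,{}_{\mathbf Y}\langle \mathfrak h_{n_k}(t),\varphi\rangle_{\mathbf Y^*}\,\mathrm dt$ via dominated convergence on the one side (using the pointwise convergence and the uniform $\mathbf Y^*$-bound) and via the weak-$*$ $L^p$-convergence on the other, and invoking density of $\mathbf Y \hookrightarrow \mathbf V^*$ together with reflexivity to identify the two limits; this also shows $\mathfrak h \in L^\infty(0,T;\mathbf Y^*)$ and completes \eqref{weak-LpB}.

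The step I expect to be the main obstacle — or at least the one requiring the most care — is precisely this reconciliation of the two notions of limit, since $\mathbf V$ and $\mathbf Y^*$ carry a priori unrelated topologies and one must make sure no mass is lost in time when matching the a.e.-defined Bochner limit with the everywhere-defined Helly limit; metrizability of bounded sets of $\mathbf Y^*$ (from separability of $\mathbf Y$) and the continuous embedding $\mathbf V \subset \mathbf Y^*$ are what make it work. Since the statement is quoted verbatim (in simplified form) from \cite{RocRos}, I would in fact simply refer to the proof given there, sketching the above two ingredients and noting that the present version is obtained by specializing the hypotheses of \cite[Theorem A.5]{RocRos}.
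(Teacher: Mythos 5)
The paper does not prove Theorem \ref{th:mie-theil} at all: it is imported verbatim (in simplified form) from \cite{RocRos}, Theorem A.5, and used as a black box, so your final fallback of simply citing that result is exactly the paper's treatment. As for the sketch you give of the underlying argument: the extraction of a weak(-$*$) limit in $L^p(0,T;\mathbf V)$, the uniform bound $\|\mathfrak h_n(t)\|_{\mathbf Y^*}\le \|\mathfrak h_n(0)\|_{\mathbf Y^*}+\mathrm{Var}_{\mathbf Y^*}(\mathfrak h_n;[0,T])$, the vector-valued Helly selection in $\mathbf Y^*$ (using separability/reflexivity to metrize bounded sets in the weak-$*$ topology), and the identification of the two limits by testing with $\psi(t)\varphi$, $\varphi\in\mathbf Y$, are all correct and essentially routine; the reconciliation step you single out as the main obstacle is in fact the easy part.

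The genuine gap is in the passage from the pointwise weak-$*$ convergence in $\mathbf Y^*$ to \eqref{weak-ptw-B}. You write ``hence in $\mathbf V$ wherever the values lie in $\mathbf V$'', but weak convergence in $\mathbf Y^*$ of a sequence whose terms belong to $\mathbf V$ does \emph{not} yield weak convergence in $\mathbf V$: for that one needs, at (almost) every fixed $t$, boundedness of $(\mathfrak h_{n_k}(t))_k$ in $\mathbf V$, and this does not follow from the bound in $L^p(0,T;\mathbf V)$ evaluated at a fixed time. What Fatou gives is only $\liminf_k\|\mathfrak h_{n_k}(t)\|_{\mathbf V}<\infty$ for a.a.\ $t$; from this, every $\mathbf V$-bounded sub-subsequence of $(\mathfrak h_{n_k}(t))_k$ converges weakly in $\mathbf V$, and its limit must coincide with the Helly limit $\mathfrak h(t)$ by the continuous embedding $\mathbf V\subset\mathbf Y^*$ and uniqueness of the $\mathbf Y^*$-limit; in particular $\mathfrak h(t)\in\mathbf V$ for a.a.\ $t$ and $\mathfrak h\in L^p(0,T;\mathbf V)\cap L^\infty(0,T;\mathbf Y^*)$, which settles \eqref{weak-LpB}. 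Upgrading this to convergence of the \emph{whole} selected subsequence in the weak topology of $\mathbf V$ for a.a.\ $t$ --- which is precisely what \eqref{weak-ptw-B} asserts, and the reason the conclusion is stated only for a.a.\ $t$ --- is the nontrivial content of \cite[Thm.\ A.5]{RocRos} and requires the argument carried out there; your sketch treats it as immediate, and as written it does not establish \eqref{weak-ptw-B}. Either reproduce that argument in detail or, as the paper does, quote the result and refer to \cite{RocRos} for its proof.
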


\begin{lemma}[Compactness results]
\label{l:a-prio-tau-compact}
Let $\rho,\, \varsigma >0$ be fixed.
%
For any  sequence
$(\tau_k)_k \subset (0,+\infty)$ with $\tau_k \downarrow 0$ as $k \rightarrow + \infty$ there exist a (not relabeled) subsequence
and a \RCNEW quintuple $(\teta,  \uu, \tetas, \chi, \sigma)$ \EEE  with
\[
\begin{cases}
\teta \in L^2(0,T;H^1(\Omega)) \cap \mathrm{C}^0_{\mathrm{weak}}([0,T];L^2(\Omega)) \cap H^1(0,T;H^1(\Omega)^*),
\\
\uu \in W^{1,\omega}(0,T;W^{1,\omega}(\Omega;\R^3)),
\\
\tetas \in L^2(0,T;H^1(\GC)) \cap \mathrm{C}^0_{\mathrm{weak}}([0,T];L^2(\GC)) \cap H^1(0,T;H^1(\GC)^*),
\\
\chi \in  \mathrm{C}^0_{\mathrm{weak}}([0,T];H^1(\GC)) \cap  W^{1,\omega}(0,T;L^\omega(\GC)), \quad A\chi \in \RCORR  L^{\omega/(\omega-1)}(\GC{\times}(0,T)), \EEE
\\
\RCNEW \sigma \in L^\infty(\GC{\times}(0,T)), \EEE
\end{cases}
\]
 such that the following weak and strong converges hold as $k\to\infty$
\begin{subequations}
\label{convergences-tau}
\begin{align}
&
\label{cv-t-teta-1}
\pwl \teta {\tau_k}  \weaksto \teta && \text{in } L^2(0,T;H^1(\Omega)) \cap L^\infty(0,T;L^2(\Omega)) \cap H^1(0,T;H^1(\Omega)^*),
\\
&
\label{cv-t-teta-pwc-1}
\pwc \teta {\tau_k}  \weaksto \teta && \text{in } L^2(0,T;H^1(\Omega)) \cap L^\infty(0,T;L^2(\Omega)),
\\
&
\label{cv-t-teta-2}
\pwl \teta {\tau_k} \to \teta && \text{in }  L^2(0,T;L^{6-\epsilon}(\Omega))  
\quad \text{for all } \epsilon\in (0,5]
\\
&
 \label{also-useful-later}
 \pwc \teta {\tau_k}(t)  \weakto \teta(t)  && \text{in } H^1(\Omega) \  \foraa\, t \in (0,T),
 \\
&
\label{cv-t-teta-pwc-ptw}
\pwc \teta {\tau_k} \to \teta && \aein   \ \Omega \times (0,T),
\\
&
\label{cv-t-tetas-1}
\pwls \teta {\tau_k}  \weaksto \tetas && \text{in } L^2(0,T;H^1(\GC)) \cap L^\infty(0,T;L^2(\GC)) \cap H^1(0,T;H^1(\GC)^*),
\\
&
\label{cv-t-tetas-pwc-1}
\pwcs \teta {\tau_k}  \weaksto \tetas && \text{in } L^2(0,T;H^1(\GC)) \cap L^\infty(0,T;L^2(\GC)),
\\
&
\label{cv-t-tetas-2}
\pwls \teta {\tau_k} \to \tetas && \text{in }  L^2(0,T;L^q(\GC))
 \text{ for all }  1\leq q<\infty,
\\
&
 \label{also-useful-later-tetas}
 \pwcs \teta {\tau_k}(t)  \weakto \tetas(t) && \text{in } H^1(\GC) \  \foraa\, t \in (0,T),
 \\
&
\label{cv-t-tetas-pwc-ptw}
\pwcs \teta {\tau_k} \to \tetas && \aein \ \GC \times (0,T),
\\
&
\label{cv-u-1}
\pwl \uu {\tau_k}  \weakto  \uu	&& \text{in }  W^{1,\omega}(0,T;W^{1,\omega}(\Omega;\R^3)),
\\
&
\label{cv-u-2}
\pwl \uu {\tau_k}  \to  \uu	&& \text{in } \mathrm{C}^0([0,T];\mathrm{C}^0(\overline\Omega;\R^3)),
\\
&
\label{cv-u-pwc-1-weak}
\pwc \uu {\tau_k},\, \upwc \uu {\tau_k}  \weaksto  \uu	&& \text{in }  L^\infty(0,T;W^{1,\omega}(\Omega;\R^3)),
\\
&
\label{cv-u-pwc-1}
\pwc \uu {\tau_k},\,  \upwc \uu {\tau_k} \to  \uu	&& \text{in }  L^\infty(0,T;\mathrm{C}^0(\overline\Omega;\R^3)),
\\
&
\label{cv-chi-1}
\pwl \chi {\tau_k}  \weaksto  \chi	&& \text{in }  L^\infty(0,T;H^1(\GC)) \cap  W^{1,\omega}(0,T;L^\omega(\GC)),
\\
&
\label{cv-chi-2}
\pwl \chi {\tau_k}  \to  \chi	&& \text{in } \mathrm{C}^0 ([0,T];L^q(\GC)) \text{ for all }  1\leq q<\infty.
\\
&
\label{cv-chi-pwc-1}
\pwc \chi {\tau_k}  \weaksto  \chi	&& \text{in } L^\infty (0,T;H^1(\GC)),
\\
&
\label{cv-chi-pwc-2}
\pwc \chi {\tau_k} , \, \upwc \chi {\tau_k}  \ \to  \chi	&& \text{in } L^\infty (0,T;L^q(\GC)) \text{ for all }  1\leq q<\infty,
\\
&
\label{upsilon-cv}
\RCNEW \pwc \sigma {\tau_k}  \weaksto  \sigma \EEE	&& \text{in } L^\infty (\GC{\times}(0,T))\,. 
\end{align}
\end{subequations}
Furthermore,
\begin{equation}
\label{che-fatica-pos}
\left\|\frac1{\teta}\right\|_{L^\infty(\Omega\times (0,T))} + \left\|\frac1{\tetas}\right\|_{L^\infty(\GC\times (0,T))} \leq S_0,
\end{equation}
with the constant $S_0$ from \eqref{Lp-est-4-recipr}.  Therefore, the functions $\teta$ and $\tetas$ enjoy the  positivity properties  \RCORR \eqref{teta-strict-pos}  with constants \EEE $\bar\teta$ and $\bar\tetas$ independent of $\rho$ and $\varsigma$.
\end{lemma}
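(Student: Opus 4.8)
The argument is a fairly standard extraction of convergent subsequences from the a priori estimates of Lemma \ref{l:a-prio-tau}, with the one delicate point being the strong (pointwise) convergence of the temperature interpolants, which is what ultimately feeds the positivity property \eqref{che-fatica-pos}. First I would deal with the displacement: from \eqref{u-a-prio-tau-rho} the sequence $(\pwl \uu {\tau_k})_k$ is bounded in $W^{1,\omega}(0,T;W^{1,\omega}(\Omega;\R^3))$, and since $\omega>4$ we have $W^{1,\omega}(\Omega;\R^3) \Subset \mathrm{C}^0(\overline\Omega;\R^3)$ and $W^{1,\omega}(0,T;X)\Subset \mathrm{C}^0([0,T];X)$; by Aubin--Lions--Simon we get \eqref{cv-u-1}--\eqref{cv-u-2}. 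The interpolation estimates \eqref{regtratti2}--\eqref{regtratti3} then transfer these convergences to the piecewise-constant interpolants $\pwc \uu {\tau_k}$, $\upwc \uu {\tau_k}$, giving \eqref{cv-u-pwc-1-weak}--\eqref{cv-u-pwc-1}. Analogously, \eqref{chi-a-prio-tau}, \eqref{chi-a-prio-tau-rho} and \eqref{chi-a-prio-tau-rhoH2} give $\chi$-boundedness: by Aubin--Lions (using $H^1(\GC)\Subset L^q(\GC)$ for all $q<\infty$, and $H^2(\GC)$ for the extra bound on $A\pwc\chi{\tau_k}$) we obtain \eqref{cv-chi-1}--\eqref{cv-chi-pwc-2}, while \eqref{cv-chi-2} in $\mathrm{C}^0([0,T];L^q(\GC))$ follows from the Ascoli-type refinement of Aubin--Lions. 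The weak-$*$ convergence \eqref{upsilon-cv} of $\pwc \sigma {\tau_k}$ in $L^\infty(\GC\times(0,T))$ is immediate from the uniform bound \eqref{upsilon-a-prio-tau}.

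The core of the argument is the pointwise convergence of the temperatures. Here I would invoke the Helly-type Theorem \ref{th:mie-theil} with $\mathbf{V}=H^1(\Omega)$, $\mathbf{Y}^*=H^1(\Omega)^*$ applied to the bounded (in $L^2(0,T;H^1(\Omega))$, cf.\ \eqref{teta-a-prio-tau-rho}) family $(\pwc \teta {\tau_k})_k$: the hypothesis \eqref{ell-n-0} holds because $\pwc \teta {\tau_k}(0)=\teta_\rho^0$ is fixed, and \eqref{BV-bound} is exactly the total variation estimate \eqref{VAR-teta-a-prio-tau}. This yields \eqref{cv-t-teta-pwc-1} and the pointwise-in-time weak convergence \eqref{also-useful-later}. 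The same theorem, with $\mathbf{V}=H^1(\GC)$, gives \eqref{cv-t-tetas-pwc-1} and \eqref{also-useful-later-tetas}. The weak convergence \eqref{cv-t-teta-1} of the piecewise-linear interpolant follows from \eqref{teta-a-prio-tau-rho}, and \eqref{cv-t-teta-pwc-1} again from \eqref{regtratti2}. To upgrade to strong convergence, I would use that $(\pwl \teta {\tau_k})_k$ is bounded in $L^2(0,T;H^1(\Omega))\cap H^1(0,T;H^1(\Omega)^*)$, so by Aubin--Lions $\pwl \teta {\tau_k}\to\teta$ strongly in $L^2(0,T;L^2(\Omega))$, hence a.e.\ in $\Omega\times(0,T)$ up to a further subsequence; interpolating with the $L^\infty(0,T;L^2(\Omega))$-bound (or rather with the higher integrability of $\widehat\alpha(\pwc\teta{\tau_k})$ coming from \eqref{teta-a-prio-tau-rho} via \eqref{growth-primitives}) gives \eqref{cv-t-teta-2} in $L^2(0,T;L^{6-\epsilon}(\Omega))$. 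Using \eqref{regtratti2} once more, $\pwc\teta{\tau_k}$ and $\pwl\teta{\tau_k}$ share the same a.e.\ limit, so \eqref{cv-t-teta-pwc-ptw} holds. The surface temperature is handled identically, with $H^1(\GC)\Subset L^q(\GC)$ for every $q<\infty$ yielding \eqref{cv-t-tetas-2}, \eqref{cv-t-tetas-pwc-ptw}.

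Finally, for the positivity \eqref{che-fatica-pos} I would argue exactly as in Lemma \ref{limit-passage-as-epsilon}: from \eqref{cv-t-teta-pwc-ptw}, $\tfrac1{\pwc \teta {\tau_k}}\to \tfrac1{\teta}$ a.e.\ on $\{\teta>0\}$ and $\to+\infty$ on $\{\teta=0\}$, while the uniform bound \eqref{est-recipr-interp}, valid for every $p<\infty$ once $\tau<\bar\tau_p$, together with Fatou's lemma forces $\liminf_k \tfrac1{\pwc\teta{\tau_k}(x,t)}<+\infty$ for a.e.\ $(x,t)$; hence $\teta>0$ a.e.\ in $\Omega\times(0,T)$, and by lower semicontinuity $\big\|\tfrac1\teta\big\|_{L^p(\Omega\times(0,T))}\le S_0$ for every $p$ (the fact that $S_0$ is $p$-independent being crucial), so letting $p\to\infty$ gives $\big\|\tfrac1\teta\big\|_{L^\infty(\Omega\times(0,T))}\le S_0$, i.e.\ $\teta\ge \overline\theta:=1/S_0$; the same for $\tetas$. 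Since $S_0$ does not depend on $\rho$ or $\varsigma$, neither do the resulting constants $\overline\theta$, $\overline\theta_{\mathrm{s}}$, which is what is asserted. The main obstacle is precisely orchestrating the temperature compactness: the estimates are $\rho$-dependent (so the argument lives at fixed $\rho$), the reciprocal-temperature bound \eqref{est-recipr-interp} only holds for $\tau<\bar\tau_p$ with $\bar\tau_p\to0$, and one must be careful that the Helly theorem and the Aubin--Lions argument are applied to compatible subsequences so that the a.e.\ limit of $\pwc\teta{\tau_k}$ genuinely coincides with the weak limit $\teta$.
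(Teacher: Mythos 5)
Your proposal is correct and follows essentially the same route as the paper: weak/weak$^*$ compactness from the estimates of Lemma \ref{l:a-prio-tau}, Simon-type compactness for the strong convergences of $\pwl\uu{\tau_k}$, $\pwl\chi{\tau_k}$, $\pwl\teta{\tau_k}$, $\pwls\teta{\tau_k}$, the Helly-type Theorem \ref{th:mie-theil} together with the total-variation bounds \eqref{VAR-teta-a-prio-tau}, \eqref{VAR-tetas-a-prio-tau} for the pointwise-in-time convergences, the interpolation estimates \eqref{regtratti1} to identify the limits of the piecewise-constant interpolants, and the Fatou/Lebesgue-point argument (with the $p$-independence of $S_0$) for \eqref{che-fatica-pos}.

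One step should be stated more carefully: deducing that $\pwc\teta{\tau_k}$ and $\pwl\teta{\tau_k}$ "share the same a.e.\ limit" by \eqref{regtratti2} alone is not immediate, since with $\mathbf{B}=H^1(\Omega)^*$ (the only space in which $\partial_t\pwl\teta{\tau_k}$ is bounded) that estimate only gives $\pwc\teta{\tau_k}-\pwl\teta{\tau_k}\to 0$ in $L^2(0,T;H^1(\Omega)^*)$, which by itself does not yield a.e.\ convergence of the difference. You can repair this by interpolating with the $L^2(0,T;H^1(\Omega))$-bound of both interpolants, via $\|v\|_{L^2(\Omega)}^2\leq \|v\|_{H^1(\Omega)^*}\|v\|_{H^1(\Omega)}$, to get smallness in $L^2(0,T;L^2(\Omega))$ and hence a.e.\ convergence along a further subsequence; the paper instead obtains \eqref{cv-t-teta-pwc-ptw} directly from the Helly pointwise-in-time weak $H^1(\Omega)$-convergence \eqref{also-useful-later} combined with the compact embedding $H^1(\Omega)\Subset L^p(\Omega)$, $p<6$, which bypasses the transfer altogether.
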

\begin{proof}
Convergences \eqref{cv-t-teta-1}, \eqref{cv-t-tetas-1}, \eqref{cv-u-1}, \eqref{cv-chi-1}, \RCNEW and \eqref{upsilon-cv} \EEE  follow from estimates
\eqref{a-prio-tau} and \eqref{a-prio-tau-rho} by weak  and weak$^*$ compactness arguments.
In view of \eqref{chi-a-prio-tau-rhoH2}, we also have
\begin{equation}
\label{AchiLomega}
A\pwc \chi{\tau_k} \weakto A\chi \qquad \text{in } \RCORR L^{\omega/(\omega-1)}(\GC{\times}(0,T)). \EEE
\end{equation}
As for  \eqref{cv-t-teta-pwc-1},
combining
estimates \eqref{regtratti1}
and the fact that the sequence $(\pwl \teta{\tau_k}')_k$
 is bounded in $L^2(0,T;H^1(\Omega)^*)$, we conclude that
 $\|\pwl \teta{\tau_k}{-} \pwc \teta{\tau_k} \|_{L^\infty(0,T;H^1(\Omega)^*)} \to 0 $ as $\tau_k\down 0$.
This identifies $\teta$ as the weak$^*$ limit of
 $(\pwc \teta{\tau_k})_k$ in $L^2(0,T;H^1(\Omega)) \cap L^\infty(0,T;L^2(\Omega))$.
 With the same argument
   we also  infer  convergence \eqref{cv-t-tetas-pwc-1}.
   Clearly, we have that $\teta \geq 0$ a.e.\ in $\Omega \times (0,T)$ and $\tetas \geq 0$ a.e.\ in $\GC \times (0,T)$.
   Analogously,
    \eqref{cv-u-pwc-1-weak} and \eqref{cv-u-pwc-1} (\eqref{cv-chi-pwc-1} \&  \eqref{cv-chi-pwc-2}, respectively) shall follow from \eqref{cv-u-2} (\eqref{cv-chi-2}, resp.).
\par
For \eqref{cv-t-teta-2} we
apply, e.g., the compactness result \cite[Cor.\ 4]{Simon87}, which ensures that
$(\pwl \teta{\tau_k})_k$ is relatively compact in  \RCORR $L^2(0,T;X)  \EEE \cap \mathrm{C}^0([0,T];Y)$ \EEE for any Banach spaces
$X$ and $Y$ such that
$H^1(\Omega) \Subset X \subset L^2(\Omega)$ and
$L^2(\Omega) \Subset Y \subset H^1(\Omega)^*$. In the same way, \eqref{cv-t-tetas-2} follows, recalling that $H^1(\GC) \Subset L^q(\GC)$ for every $1\leq q <\infty$.
The strong convergence \eqref{cv-u-2} can be deduced by the same result, taking into account that
$W^{1,\omega}(\Omega;\R^3) \Subset \mathrm{C}^0(\overline\Omega)$ since  $\omega > 4$,  by the Rellich-Kondrachov Theorem.
  Analogously, we have
 \eqref{cv-chi-2}. 
 \par
 Finally,  applying to the sequences
 $(\pwc \teta{\tau_k})_k$ and $(\pwl \teta{\tau_k})_k$ the compactness Theorem \ref{th:mie-theil} (also recalling estimate \eqref{VAR-teta-a-prio-tau}), we infer the \emph{pointwise-in-time} convergences
 whence, in particular, \eqref{cv-t-teta-pwc-ptw} (since
  $H^1(\Omega) \Subset L^p(\Omega)$  for all $1\leq p <6$).
We recover \eqref{cv-t-tetas-pwc-ptw} in the very same way.
\par
Therefore,
\[
\frac1{\pwc \teta {\tau_k}(x,t)} \to L(x,t): = \begin{cases}
\frac1{\teta(x,t)} & \text{if } \teta(x,t)>0,
\\
+\infty & \text{otherwise} \qquad \foraa\, (x,t) \in \Omega\times (0,T)
\end{cases}
\]
In turn, combining the Fatou Lemma with estimate   \eqref{est-recipr-interp}  for, e.g.,  $p=2$ we infer that
\[
\int_{\Omega\times (0,T)} L^2(x,t) \dd x  \dd t \leq\liminf_{k\to \infty} \int_0^T \int_{\Omega} \frac1{\pwc \teta {\tau_k}^2  (x,t)} \dd x  \dd t \leq S_0 T,
\]
so that $L (x,t)<\infty$ for a.a.\ $(x,t)\in \Omega \times (0,T)$.
Hence, $\teta >0 $ a.e.\ in $\Omega\times (0,T)$ and $L = \frac1{\teta}$.  A fortiori, again  in view of   \eqref{est-recipr-interp}  and the Fatou Lemma   we conclude that
\[
\int_{t_0-r}^{t_0+r} \left\| \frac1{\teta(s)}\right\|_{L^p(\Omega)} \dd s \leq \liminf_{k\to\infty}
\int_{t_0-r}^{t_0+r} \left\| \frac1{\pwc\teta{\tau_k}(s)}\right\|_{L^p(\Omega)} \dd s \leq 2 S_0 r
\]
for every $t_0 \in (0,T)$ and $r \in (t_0,T-t_0)$.  In particular, picking  a Lebesgue point for $\|\frac1{\teta(\cdot)}\|_{L^p(\Omega)}$ we gather that
\[
 \left\| \frac1{\teta(t_0)}\right\|_{L^p(\Omega)} \leq S_0 \qquad \foraa\, \RCORR  t_0 \in (0,T) \EEE   \text{ and for all } p \in [1,\infty),
\]
whence estimate \eqref{che-fatica-pos} for  $\tfrac1\teta$. 
We conclude the estimate for  $\left\|\tfrac1{\tetas}\right\|_{L^\infty(\GC\times (0,T))}$ in the very same way.
  This finishes the proof.
\end{proof}

\par
We are now in a position to prove our existence result for the Cauchy problem for (a weak formulation of) the regularized system
\eqref{PDE-regul}.
\begin{theorem}
\label{thm:exist-regul}
Assume
\eqref{ass:domain}--\eqref{hyp-k} and \eqref{hyp-alpha}--\eqref{cond-data}.
Let $\rho,  \varsigma >0 $ be fixed. Then, for any quadruple
$(\teta_\rho^0,\uu_\rho^0, \dss \rho 0, \chi_0) $ as in \eqref{approx-initial-data} and \eqref{cond-chi0},
there exists a \RCNEW quintuple $(\teta,\uu,\tetas,\chi,\sigma)$, \EEE with
\[
\begin{aligned}
&
\teta \in L^2(0,T;H^1(\Omega)) \cap \mathrm{C}_{\mathrm{weak}}^0([0,T];L^2(\Omega)) \cap H^1(0,T;H^1(\Omega))^* \quad \text{ and } \quad  \widehat\alpha(\teta) \in L^2(0,T;H^1(\Omega)),
\\
&
\uu \in W^{1,\omega}(0,T;W^{1,\omega}(\Omega;\R^3)),
\\
&
\tetas \in L^2(0,T;H^1(\GC)) \cap \mathrm{C}_{\mathrm{weak}}^0([0,T];L^2(\GC)) \cap H^1(0,T;H^1(\GC))^* \quad \text{ and } \quad  \RCOMMN \widehat\alpha(\tetas) \EEE \in L^2(0,T;H^1(\GC)),
\\
&
\chi \in  L^2 (0,T;H^2(\Omega)) \cap L^\infty (0,T;H^1(\GC)) \cap  W^{1,\omega}(0,T; L^\omega(\Omega)),
\\
& \RCNEW \sigma \in L^\infty(\GC{\times}(0,T)), \EEE
\end{aligned}
\]
fulfilling the initial conditions
\begin{equation}
\label{initial-rho}
\teta(0) = \teta_\rho^0 \ \aein\ \Omega, \ \tetas(0) = \dss \rho 0 \ \aein\ \GC, \ \uu(0) = \uu_\rho^0 \ \aein\  \Omega, \ \chi(0)=\chi_0 \ \aein\  \GC,
\end{equation}
and the weak formulation of system \eqref{PDE-regul}, consisting of
\begin{subequations}
\label{weak-368}
\begin{enumerate}
\item the weak formulation 
of the bulk heat equation for almost all $t\in (0,T)$
\begin{equation}
 \label{form_debole_theta-chip}
\begin{aligned}
  &  \pairing{}{H^1(\Omega)}{\theta_t}{v} 
- \io \theta\mathrm{div}(\uu_t) v \dd x
+ \io \ab(\theta) \nabla\theta \nabla v \dd x
+ \int_{\GC} k(\chi) \theta (\theta - \thetas) v \dd x
\\
& \qquad
+ \int_{\GC} \nlocss{\chip}\, \chip \theta^2 v \dd x
- \int_{\GC} \nlocss{\chip \thetas} \,\chip \theta v \dd x
 = \io \tensoret \mathbb{V} \tensoret v  \dd x + \io h v \dd x
\end{aligned}
\end{equation}
for all $v\in H^1(\Omega)$;
\item the weak formulation of the displacement equation for almost all $t\in (0,T)$
\begin{equation}
\label{weak-U-rho}
\begin{aligned}
&
\vfm(\uu_t,\mathbf{v}) +\rho \int_\Omega |\eps(\uu_t)|^{\omega-2} \eps(\uu_t) \eps(\vv) \dd x
+ \efm(\uu,\mathbf{v})
+\int_{\Omega} \teta \mathrm{div}(\vv) \dd x +
 \int_{\GC} \chip \uu \mathbf{v} \dd x
 \\
  & \qquad +
 \int_{\GC} \zzeta \cdot \mathbf{v} \dd x
+  \int_{\GC} \chip \nlocss{\chip}\,  \uu \mathbf{v} \dd x =
 \pairing{}{\bsVD}{\mathbf{F}}{\vv}
\end{aligned}
\end{equation}
 for all $ \vv \in  W^{1,\omega}_{\mathrm{D}}(\Omega;\R^3)$, 
 with  $\zzeta(t) = \etar(\uu(t) \cdot \mathbf{n}) \mathbf{n}$  $\foraa\, t \in (0,T)$;
 \item the weak formulation 
 of the surface heat equation
 for almost all $t\in (0,T)$
 \begin{equation} \label{form_debole_thetas-chip}
\begin{aligned}
&
 \pairing{}{H^1(\GC)}{\partial_t {\thetas}}{w} 
- \int_{\GC} \thetas \lambda'(\chi) \chi_t w \dd x
+ \int_{\GC} \as(\thetas) \nabla\thetas \nabla w  \dd x
\\
& =
\int_{\GC} \ell w \dd x +
  \int_{\GC} |\chi_t|^2 w \dd x
+ \int_{\GC}  k(\chi)\thetas(\theta - \thetas) w \dd x
+ \int_{\GC} \nlocss{\chip\theta}\, \chip \thetas w \dd x
- \int_{\GC} \nlocss{\chip}\, \chip \thetasq  w \dd x
 \end{aligned}
\end{equation}
   with test functions $w\in H^1(\GC)$,  a.e.\ in $(0,T)$;
\item the flow rule for the adhesion parameter
\begin{equation}
\label{ptw-flow-rule}
\begin{aligned}
&
 \chi_t + \rho |\chi_t|^{\omega-2}\chi_t  +A\chi +\betar(\chi)   +
		\gamma'(\chi)+\lambda'(\chi)\thetas
		\\ &
		 \quad  = \RCNEW -\frac 1 2\vert{\uu }\vert^2 \sigma-\frac 12\nlocss{\chip}\,|\uu|^2\sigma - \frac 1 2\nlocss{\chip |\uu|^2} \,\sigma \quad  \aein\  \GC
		\times (0,T)
		\\
		& \RCNEW \text{with } \sigma \in \partial\varphi(\chi) \quad \aein\ \GC\,.
		\end{aligned}
\end{equation}
\end{enumerate}
\EEE
Furthermore, estimate \eqref{che-fatica-pos} holds and the quadruple $(\teta,\uu, \tetas,\chi)$ satisfies the total energy balance (with the stored energy $\calE_\varsigma$ from \eqref{stored-energy-rho})
\begin{equation}
\label{total-enbal-rho-cont}
\begin{aligned}
& \calE_\varsigma (\teta(t),\tetas(t),\uu(t),\chi(t))
+\rho \int_s^t \int_\Omega |\eps(\uu_t)|^\omega \dd x \dd r + \rho \int_s^t \int_{\GC} |\chi_t|^\omega \dd x \dd r
 \\
 &
 \qquad
\RCORR  + \EEE \int_s^t \int_{\GC} k(\chi)(\teta{-}\tetas)^2 \dd x \dd r
\int_s^t \int_{\GC} \kr(x,y)\RCNEW (\chi(x))^+(\chi(y))^+ \EEE (\teta(x){-}\tetas(y))^2 \dd x \dd y  \dd r
\\
&
=
\calE_\varsigma(\teta(s),\tetas(s),\uu(s),\chi(s)) + \int_s^t \int_\Omega h \dd x \dd r + \int_s^t \int_{\GC} \ell \dd x \dd r +\int_s^t \pairing{}{\bsVD}{\mathbf{F}}{\uu_t} \dd r
\end{aligned}
\end{equation}
\end{subequations}
for all $0\leq s \leq t \leq T$.
\end{theorem}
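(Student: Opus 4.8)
The plan is to obtain Theorem \ref{thm:exist-regul} by passing to the limit, as the time step $\tau \downarrow 0$, in the time-discrete system \eqref{interp-syst}, using the compactness information collected in Lemma \ref{l:a-prio-tau-compact}. The ambient regularity of the limit quintuple $(\teta,\uu,\tetas,\chi,\sigma)$, as well as the positivity estimate \eqref{che-fatica-pos} and hence \eqref{teta-strict-pos} with constants independent of $\rho,\varsigma$, is already provided by that lemma; what remains is to identify the limits of the nonlinear terms in each of the four (interpolated) equations, to identify $\sigma$ as a selection in $\partial\varphi(\chi)$, to pass to the limit in the total energy inequality \eqref{discr-tot-enbal-interp} and upgrade it to the balance \eqref{total-enbal-rho-cont}, and to check the initial conditions \eqref{initial-rho}.

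First I would treat the displacement equation \eqref{interp-mom-bal}. The linear elliptic, viscous, temperature, and load terms pass to the limit by the weak convergences \eqref{cv-u-1}, \eqref{cv-u-pwc-1-weak} and \eqref{cv-t-teta-pwc-1}; the coupling terms $(\pwc\chi{\tau_k})^+\pwc\uu{\tau_k}$ and $\nlocss{(\pwc\chi{\tau_k})^+}(\pwc\chi{\tau_k})^+\pwc\uu{\tau_k}$ converge strongly by \eqref{cv-chi-pwc-2}, \eqref{cv-u-pwc-1} and Lemma \ref{lemmaK}, while $\pwc\zzeta{\tau_k}=\etar(\pwc\uu{\tau_k}\cdot\mathbf n)\mathbf n \to \etar(\uu\cdot\mathbf n)\mathbf n$ by Lipschitz continuity of $\etar$ and \eqref{cv-u-pwc-1}. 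The nonlinear monotone term $|\eps(\pwl\uu{\tau_k}')|^{\omega-2}\eps(\pwl\uu{\tau_k}')$ is dealt with exactly as in the proof of Lemma \ref{limit-passage-as-epsilon}: it converges weakly in $L^{\omega/(\omega-1)}$ to some $\mathsf E$, then one tests \eqref{interp-mom-bal} by $\pwl\uu{\tau_k}'$, passes to the $\limsup$, and invokes \cite[Lemma~1.3, p.~42]{Barbu} to identify $\mathsf E=|\eps(\uu_t)|^{\omega-2}\eps(\uu_t)$, obtaining a fortiori the strong convergence $\eps(\pwl\uu{\tau_k}')\to\eps(\uu_t)$ in $L^\omega(\Omega\times(0,T);\R^{3\times3})$. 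The latter is crucial for handling the quadratic viscous dissipation term $\eps(\pwl\uu{\tau_k}')\,\vtens\,\eps(\pwl\uu{\tau_k}')$ on the right-hand side of the bulk heat equation. For the flow rule \eqref{interp-flow-rule}, the strong convergence \eqref{cv-chi-2} in $\mathrm C^0([0,T];L^q(\GC))$ together with \eqref{AchiLomega}, the Lipschitz continuity of $\betar,\gamma_\nu',\lambda_\delta',\lambda$, convergences \eqref{cv-t-tetas-2}, \eqref{cv-u-pwc-1}, \eqref{upsilon-cv} and Lemma \ref{lemmaK} allow passage to the limit a.e. in $\GC\times(0,T)$; the inclusion $\sigma\in\partial\varphi(\chi)$ a.e. follows from $\pwc\sigma{\tau_k}\weakstar\sigma$ in $L^\infty$, $\pwc\chi{\tau_k}\to\chi$ strongly, and the strong--weak closedness of the maximal monotone operator induced by $\partial\varphi$, as in Lemma \ref{limit-passage-as-epsilon}.

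The heat equations require the most care. For the bulk equation \eqref{interp-b-heat}, the elliptic term $\alpha(\pwc\teta{\tau_k})\nabla\pwc\teta{\tau_k}$ is rewritten as $\nabla(\widehat\alpha(\pwc\teta{\tau_k}))$; by estimate \eqref{alfacappellinofin} and the a.e. convergence \eqref{cv-t-teta-pwc-ptw} one gets $\widehat\alpha(\pwc\teta{\tau_k})\weakto\widehat\alpha(\teta)$ in $L^2(0,T;H^1(\Omega))$, and a comparison/Marcus--Mizel argument as in \eqref{Marcus-Mizel}–\eqref{AM-emme} identifies $\nabla\widehat\alpha(\teta)=\alpha(\teta)\nabla\teta$. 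The terms $\pwc\teta{\tau_k}\mathrm{div}(\pwl\uu{\tau_k}')$, the surface coupling terms with $k(\cdot)$ and with $\nlname$, and the dissipation term $\eps(\pwl\uu{\tau_k}')\,\vtens\,\eps(\pwl\uu{\tau_k}')$ converge strongly in some $L^q$ with $q>1$ thanks to \eqref{cv-t-teta-2}, \eqref{cv-u-1}, the already established strong convergence of the strain rates, \eqref{cv-chi-pwc-2} and Lemma \ref{lemmaK}; note that the higher integrability of $\pwc\teta{\tau_k}$ coming from \eqref{alfacappellinofin} (namely $\pwc\teta{\tau_k}$ bounded in $L^{\mu+2}$ in space, and in an $L^4(0,T;L^{12/7})$-type space by interpolation) is what makes these products equi-integrable. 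The surface heat equation \eqref{interp-surf-temp} is handled analogously, using \eqref{cv-t-tetas-2}, \eqref{cv-chi-2} and the strong convergence of $\pwl\chi{\tau_k}'$ in $L^\omega(\GC\times(0,T))$ (which follows, as for the strains, by a monotonicity/$\limsup$ argument on the flow rule, or can be extracted already at this level). The main obstacle, as signalled in the introduction, is precisely controlling the passage to the limit in the $L^1$-type right-hand sides of the heat equations and in the quadratic coupling terms $|\pwl\chi{\tau_k}'|^2$ and $\eps(\pwl\uu{\tau_k}')\,\vtens\,\eps(\pwl\uu{\tau_k}')$: here the regularizing $\rho$-terms are indispensable, since they furnish the strong $L^\omega$ convergence of $\chi_t$ and $\eps(\uu_t)$ (with $\omega>4$), so that the quadratic terms converge strongly in $L^{\omega/2}$, an $L^r$-space with $r>1$, rather than merely weakly-$*$ in measures. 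Finally, to obtain the energy balance \eqref{total-enbal-rho-cont} I would pass to the $\liminf$ in the discrete inequality \eqref{discr-tot-enbal-interp} using lower semicontinuity of $\calE_\varsigma$ and of the dissipation integrals (weak lower semicontinuity of convex functionals, Fatou, and \eqref{cv-t-teta-pwc-ptw}, \eqref{cv-t-tetas-pwc-ptw}, \eqref{also-useful-later}, \eqref{also-useful-later-tetas} for the pointwise-in-time values), obtaining $\leq$; the reverse inequality then follows by testing the limit equations \eqref{form_debole_theta-chip}--\eqref{ptw-flow-rule} by $1$, $\uu_t$, $1$, $\chi_t$ respectively, adding, integrating over $(s,t)$ and retracing the formal computation \eqref{energy-test}--\eqref{key-posit-conto} — now rigorous because all the regularity needed for the chain rule (in particular $\uu_t\in L^\omega(0,T;W^{1,\omega})$, $\chi_t\in L^\omega$, $\teta,\tetas\in L^2(0,T;H^1)\cap\mathrm C^0_{\mathrm{weak}}([0,T];L^2)$) is in place. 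The initial conditions \eqref{initial-rho} are read off from \eqref{cv-t-teta-2}, \eqref{cv-u-2}, \eqref{cv-t-tetas-2}, \eqref{cv-chi-2} and the construction of the interpolants.
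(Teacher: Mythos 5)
Your proposal is correct and follows essentially the same route as the paper's proof of Theorem \ref{thm:exist-regul}: a limit passage in \eqref{interp-syst} based on Lemma \ref{l:a-prio-tau-compact}, with the $\omega$-growth monotone terms identified via the $\limsup$ test by $\pwl\uu{\tau_k}'$ (resp.\ $\pwl\chi{\tau_k}'$) and \cite[Lemma~1.3, p.~42]{Barbu}, the resulting strong $L^\omega$ convergence of the rates used to handle the quadratic heat sources, the elliptic operators identified through $\widehat\alpha(\pwc\teta{\tau_k})\weakto\widehat\alpha(\teta)$ plus the enhanced integrability of $\pwc\teta{\tau_k}$, and $\sigma\in\partial\varphi(\chi)$ obtained by strong--weak closedness. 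The only (harmless) deviation is in the energy balance: the paper obtains \eqref{total-enbal-rho-cont} directly by testing the limit equations by $1$, $\uu_t$, $1$, $\chi_t$, so your preliminary $\liminf$ passage in the discrete inequality \eqref{discr-tot-enbal-interp} is not needed.
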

\begin{proof}
 We shall pass to the limit in system \eqref{interp-syst}
 relying on convergences \eqref{convergences-tau} for the approximate solutions.
\paragraph{\bf Step $1$: limit passage in the  momentum balance:}
First of all, we  focus on the limit passage in equation \eqref{interp-mom-bal}, which we integrate in time.
Thanks to convergences  \eqref{cv-t-teta-pwc-1},  \eqref{cv-u-1},  and \eqref{cv-u-pwc-1-weak} we pass to the limit in the first, third and fourth integral terms on the left-hand side of \eqref{interp-mom-bal}. As for the second term, we observe that there exists $\textbf{E} \in L^{\omega/(\omega-1)}(\Omega\times (0,T); \mathbb{R}^{3 \times 3})$ such that
\begin{equation}
\label{weak-limit-E}
 \left|\eps \left( \pwl \uu{\tau_k}'\right) \right|^{\omega-2}\eps \left( \pwl \uu{\tau_k}'\right) \weakto \mathbf{E} \quad \text{in } L^{\omega/(\omega-1)}(\Omega\times (0,T); \mathbb{R}^{3 \times 3})
\end{equation}
as $\tau_k \down 0$.
We also use that
\begin{subequations}
\label{further-cv-4mombal}
\begin{equation}
\begin{cases}
\RCNEW (\pwc\chi{\tau_k})^+ \EEE \pwc\uu{\tau_k} \to  (\chi)^+ \uu,
\\
\nlocss {\RCNEW (\pwc\chi{\tau_k})^+}\, ( \pwc\chi{\tau_k} )^+ \EEE  \pwc\uu{\tau_k}  \to  \nlocss {(\chi)^+}  (\chi)^+\uu
\end{cases}
\qquad \text{in } L^\infty(0,T;L^q(\GC)) \quad \text{for all } 1\leq q <\infty
\end{equation}
as $\tau_k\down 0$
thanks to convergences \eqref{cv-u-pwc-1}, \eqref{cv-chi-pwc-2}, and Lemma \ref{lemmaK}.
By the Lipschitz continuity of $\etar$ we readily have that
\begin{equation}
\pwc \zzeta{\tau_k} \to \zzeta: = \etar(\uu \cdot \mathbf{n})\mathbf{n} \qquad \text{ in } \mathrm{C}^0(0,T;\mathrm{C}^0(\overlineGC)).
\end{equation}
\end{subequations}
Finally, we use that 
\begin{equation}
\label{conv-tau-F}
\pwc{\mathbf{F}}{\tau_k} \rightarrow \mathbf{F}	 	\quad \quad \textrm{in $L^2(0,T; H^1(\Omega, \mathbb{R}^3)^*)$}.
\end{equation}
All in all, we conclude that
\begin{equation}
\label{temporary=mombal}
\begin{aligned}
&
\int_s^t \left( \vfm(\uu_t,\mathbf{v}) +\rho\, \mathbf{E} : \eps(\vv) \dd x
+ \efm(\uu,\mathbf{v})
+\int_{\Omega} \teta \mathrm{div}(\vv) \dd x +
 \int_{\GC} \RCNEW (\chi)^+ \EEE \uu \mathbf{v} \dd x \right) \dd r
 \\
  & \qquad +
\int_s^t  \left( \int_{\GC} \zzeta \cdot \mathbf{v} \dd x
+  \int_{\GC}  \nlocss{\RCNEW (\chi)^+}\, \RCNEW (\chi)^+ \EEE \uu \mathbf{v} \dd x \right) \dd r  =
 \int_s^t \pairing{}{\bsVD}{\mathbf{F}}{\vv}  \dd r
\end{aligned}
\end{equation}
for every test function $ \vv \in  W^{1,\omega}_{\mathrm{D}}(\Omega;\R^3)$ 
and every $0 \leq s \leq t \leq T$.
Then,
testing \eqref{interp-mom-bal} by $\pwl \uu{\tau_k}'$
we infer that for every $(s,t)\subset (0,T)$ there holds
\begin{equation} \label{mom-tau1}
\begin{aligned}
&
\limsup_{\tau_k \searrow 0}
\rho \int_s^t \io \left| \eps \left( \pwl \uu{\tau_k}'\right) \right|^{\omega} \dd x \dd r
\\
&
\leq
\limsup_{k\to\infty} \Big( -\int_s^t \left( \vfm( \pwl \uu{\tau_k}',  \pwl \uu{\tau_k}')  + e(\pwc\uu{\tau_k},\pwl \uu{\tau_k}') +\int_\Omega \pwc\teta{\tau_k}\mathrm{div}(\pwl \uu {\tau_k}')  \dd x 
 +
 \int_{\GC} \RCNEW(\pwc\chi{\tau_k})^+ \EEE   \pwc\uu{\tau_k} \pwl \uu{\tau_k}' \dd x \right) \dd r
 \\
 & \qquad \qquad \quad
 -\int_s^t \left( \int_{\GC} \langle \pwc\zzeta{\tau_k} \cdot  \pwl \uu{\tau_k}' \dd x
+ \int_{\GC} \nlocss {\RCNEW (\pwc\chi{\tau_k})^+}\, \RCNEW (\pwc\chi{\tau_k})^+ \EEE \pwc\uu{\tau_k}  \pwl \uu{\tau_k}' \dd x  - \langle  \pwc{\mathbf{F}}{\tau_k}(t),   \pwl \uu{\tau_k}'  \rangle_{\bsVD}  \right) \dd r    \Big)
\\
&
\stackrel{(1)}{\leq}
- \int_s^t
 \left( \vfm(\uu_t,\uu_t)
+ \efm(\uu,\uu_t)
+\int_{\Omega} \teta \mathrm{div}(\uu_t) \dd x +
 \int_{\GC} \RCNEW (\chi)^+ \EEE \uu \uu_t \dd x \right) \dd r
 \\
  & \qquad \qquad \quad
   -
\int_s^t  \left( \int_{\GC} \zzeta \cdot \uu_t \dd x
+  \int_{\GC}  \nlocss{\RCNEW (\chi)^+}\,  \RCNEW (\chi)^+ \EEE \uu \uu_t \dd x
- \pairing{}{\bsVD}{\mathbf{F}}{\uu_t} \right)  \dd r
\\
&
\stackrel{(2)}{=}
\rho\int_s^t  \mathbf{E} : \eps(\uu_t) \dd r\,,
\end{aligned}
\end{equation}
where (1) ensues from convergences \eqref{convergences-tau} (which, in particular, yield that $\pwc \teta{\tau_k}\to\teta$ in $L^2(0,T;L^2(\Omega))$, for instance) and \eqref{further-cv-4mombal}, while (2) follows from the previously obtained \eqref{temporary=mombal}.
Hence,
\cite[Lemma~1.3, p.~42]{Barbu} yields that
\begin{equation}
\label{mom-tau2}
\textbf{E} = \left| \eps \left( \uu' \right) \right|^{\omega-2} \eps \left(  \uu' \right) \  \aein \ \Omega \times (0,T)
\quad \text{ and } \quad    \eps ( \pwl \uu{\tau_k}' )\to \eps ( \uu_t) \ \text{ strongly in } L^{\omega}(\Omega{\times}(0,T);\mathbb{R}^{3 \times 3})\,,
\end{equation}
and, since the interval $(s,t)$ in
\eqref{temporary=mombal} is chosen arbitrarily,
 we thus conclude the momentum balance equation \eqref{weak-U-rho}.
We remark for later use that \eqref{mom-tau2} yields   that
\begin{equation} \label{mom-tau23}
\eps ( \pwl \uu{\tau_k}' (t)) \vtens \eps ( \pwl \uu{\tau_k}'(t) )
\to  \eps (\RCORR \uu_t(t) ) \vtens \eps(  \uu_t (t)) \EEE
\quad \textrm{ strongly in $L^2(\Omega)$ for a.e. $t \in (0,T)$}.
\end{equation}
\paragraph{\bf Step $2$: limit passage in the flow rule \eqref{interp-flow-rule}:}
We now address the limit passage in the approximate flow rule 
\eqref{interp-flow-rule}, integrated on a generic interval $(s,t) \subset (0,T)$. We use that
there exists $\varLambda \in L^{\omega/(\omega-1)}(\GC{\times}(0,T))$ such  that
\[
| \pwl \chi{\tau_k}'|^{\omega-2} \pwl \chi{\tau_k'} \weakto \varLambda \quad \text{ in } L^{\omega/(\omega-1)}(\GC{\times}(0,T)),
\]
and that $\betar(\pwc\chi{\tau_k})\to \betar(\chi)$ and $\gamma_\nu'
(\pwc\chi{\tau_k}) \to \gamma_\nu'(\chi)$ in $L^\infty(0,T;L^q(\GC))$ for every $1\leq q<\infty$ by the Lipschitz continuity of $\betar$ and $\gamma_\nu'$.
Also taking into account
convergence \eqref{cv-chi-pwc-2} for the right-continuous piecewise constant interpolants $(\pwc\chi{\tau_k})_k$, we carry out the limit passage for the terms on the left-hand side of \eqref{interp-flow-rule}. As for the right-hand side,  we use that $\lambda_\delta'$ is Lipschitz continuous and that, for instance,
$\pwcs \teta{\tau_k}\to \tetas $ in $L^2(0,T;L^2(\GC))$, so that
\[
 - \lambda_\delta'(\upwc\chi{\tau_k}) \pwcs \teta{\tau_k} -\delta \pwc\chi{\tau_k} \pwcs \teta{\tau_k}
 \to -\lambda'(\chi) \tetas \qquad \text{in } L^2(0,T;L^{\omega/(\omega-1)}(\GC{\times}(0,T)).
\]
We  combine \RCNEW \eqref{cv-chi-pwc-1},    \eqref{cv-chi-pwc-2}, and \eqref{upsilon-cv} \EEE  yielding that
\[
\begin{cases}
-\frac12 |\upwc \uu{\tau_k}|^2 \RCNEW \pwc \sigma{\tau_k} \weaksto -\frac12 |\uu|^2 \sigma,
\\
-\frac12 \nlocss{\RCNEW (\pwc\chi{\tau_k})^+}\,  |\upwc \uu{\tau_k}|^2 \RCNEW \pwc \sigma{\tau_k} \weaksto -\frac12 \nlocss{(\chi)^+}|\uu|^2 \sigma,
\\
-\frac12 \nlocss{\RCNEW (\upwc\chi{\tau_k})^+ |\upwc\uu{\tau_k}|^2}\, \pwc\sigma{\tau_k} \weaksto - \frac12 \nlocss{(\chi)^+ |\uu|^2}\, \sigma
\end{cases}
\qquad \text{ in $L^\infty(\GC {\times}(0,T))$}
\]
also in view of Lemma \ref{lemmaK}.
All in all, also recalling \eqref{AchiLomega} we take the limit of \eqref{interp-flow-rule}
and obtain that
\[
\begin{aligned}
&
\int_s^t \left( \int_{\GC}\chi_t v \dd x +\rho \int_{\GC}\varLambda  v \dd x +
 \int_{\GC} A\chi  v \dd x +\int_{\GC}(\betar(\chi){+}\gamma'(\chi)  + \lambda'(\chi)\thetas) v \dd x  \right) \dd r
\\
&
=  -\frac12 \EEE \int_s^t \left(  \int_{\GC} \left(|\uu|^2 \sigma +  \nlocss{(\chi)^+} |\uu|^2 \sigma + \nlocss{(\chi)^+|\uu|^2} \sigma \right)   v   \dd x \right)\dd r
\end{aligned}
\]
for every $v\in L^{\omega}(\GC)$ and every sub-interval $[s,t]\subset (0,T)$.
\RCNEW By the strong-weak closedness  in the sense of graphs
of (the maximal monotone operator induced by) $\partial\varphi$,
we have that $\sigma\in \partial\varphi(\chi)$ a.e.\ in $\GC\times (0,T)$. \EEE
 In order to identify the weak limit
$\varLambda$, we proceed as for the weak limit $\mathbf{E}$ from \eqref{weak-limit-E} and conclude that
that
\begin{equation}
\label{identification-varLambda}
\varLambda = |\chi_t|^{\omega-2}\chi_t \text{ and } \pwl\chi{\tau_k}' \to \chi_t \text{ strongly in } L^\omega(\GC{\times}(0,T))\,.
\end{equation}
In particular,
\begin{equation}
\label{ciliegina-chi}
| \pwl\chi{\tau_k}' (t)|^2 \to |\chi_t(t)|^2 \quad \text{strongly in } L^2(\GC) \ \foraa\, t \in (0,T)\,.
\end{equation}
From \eqref{identification-varLambda} we deduce the validity of the flow rule for the adhesion parameter integrated along an arbitrary time interval, and with arbitrary test functions $v\in L^\omega(\GC)$. Then, \eqref{ptw-flow-rule} ensues.
\paragraph{\bf Step $3$: limit passage in the  bulk heat equation:}
We are now in a position to perform the limit passage in
 \eqref{interp-b-heat}, integrated in time. For this, we need to refine the convergences available for the
 sequences $(\pwc\teta{\tau_k})$ and $(\pwcs\teta{\tau_k})$.
 \begin{enumerate}
 \item
 In order to pass to the limit in the elliptic operator, we will use that
 $\nabla\widehat{\alpha}(\pwc\teta{\tau_k}) =\alpha(\pwc\teta{\tau_k}) \nabla \pwc\teta{\tau_k}$ a.e.\  in $\Omega\times (0,T)$.
 First of all,
 we notice that, by  \eqref{teta-a-prio-tau-rho}
there exists $\phi \in L^2(0,T;H^1(\Omega))$ 
such that, along a not relabeled subsequence,
\[
\begin{aligned}
&
\widehat{\alpha}(\pwc\teta{\tau_k}) \rightharpoonup \phi 	 	&&  \textrm{in } L^2(0,T;H^1(\Omega)).
\end{aligned}
\]
On the other hand, $\widehat{\alpha}(\pwc\teta{\tau_k})\to \widehat{\alpha}(\teta)$ a.e.\ in $\Omega \times (0,T)$ thanks to
\eqref{cv-t-teta-pwc-ptw}.
We combine this with the fact that $(\pwc \teta{\tau_k})_k$ is bounded in
$L^\infty(0,T;L^{\mu+2}(\Omega))$, thanks to \eqref{alfacappellinofin}  and the growth properties of $\doublehhat\alpha$, to deduce that
$\widehat{\alpha}(\pwc\teta{\tau_k})\weaksto \widehat\alpha(\teta)$ in $L^\infty(0,T;L^{(\mu+2)/(\mu+1)}(\Omega))$
(by the growth properties of $\widehat\alpha$). Therefore, we ultimately conclude
\RCORR that $\phi = \widehat{\alpha}(\teta)$, so that  \EEE
\begin{equation}
\label{alpha-teta-cv}
\widehat{\alpha}(\pwc\teta{\tau_k}) \rightharpoonup \widehat\alpha(\teta) \quad \text{in }  	L^2(0,T;H^1(\Omega)).
\end{equation}
\item
In order to
identify the elliptic operator
featuring in the bulk heat equation, we argue in a similar way as we did in the proof of
Proposition  \ref{prop:exists-discrete}. Indeed,
from the fact that $( \widehat{\alpha}(\pwc\teta {\tau_k}))_k$ is bounded in $L^2(0,T;H^1(\Omega))$
and from the growth properties of $\widehat\alpha$  we deduce that
$(\pwc\teta{\tau_k})_k$ is bounded in
$L^{2(\mu+1)}(0,T;L^{6(\mu+1)}(\Omega))$, with $\mu>1$. Taking into account \eqref{cv-t-teta-pwc-ptw}  we deduce, a fortiori, that
\begin{equation}
\label{enhanced-teta-k}
\pwc \teta{\tau_k} \to \teta \quad \text{in } L^4(\Omega{\times}(0,T)),
\end{equation}
as well as $\alpha (\pwc \teta{\tau_k})\to \alpha (\teta)$ in
$L^{2s}(0,T;L^{6s}(\Omega))$ for every $1\leq s <1+\frac1{\mu}$. This is enough to pass to the limit in
the relation $\int_\Omega \nabla\widehat{\alpha}(\pwc \teta{\tau_k}) \cdot \nabla v \dd x = \int_\Omega \alpha(\pwc \teta{\tau_k}) \nabla \pwc \teta{\tau_k} \cdot \nabla v \dd x$ for every $v\in H^1(\Omega)$ by suitably adapting the arguments developed at the end of the proof of Prop.\  \ref{prop:exists-discrete}.
\item It follows from  \eqref{also-useful-later}, combined with the fact that $H^1(\Omega) \Subset L^{p}(\GC)$
(in the sense of traces) for every $1\leq p<4$,
 that
\[
\pwc\teta{\tau_k} \to \teta \qquad \aein\, \GC\times (0,T).
\]
In turn, from the fact that  $( \widehat{\alpha}(\pwc\teta {\tau_k}))_k$ is bounded
in $L^2(0,T;L^4(\GC))$ we gather that
 $(\pwc\teta {\tau_k})_k$ is bounded in $L^{2(\mu+1)}(0,T;L^{4(\mu+1)}(\GC))$.  Combining this with
the above pointwise convergence
  we immediately infer that
 \begin{equation}
 \label{L4-cv-teta-1}
 \pwc\teta{\tau_k} \to \teta  \qquad \text{in } L^{4+\epsilon}(\GC{\times}(0,T)) \qquad \text{for all $\epsilon \in (0,2\mu-2)$}
 \end{equation}
 (so that $4+\epsilon<2\mu+2$).
 \item Analogously, combining \eqref{also-useful-later-tetas} with the estimate for
 $( \widehat{\alpha}(\pwcs\teta {\tau_k}))_k$
 is bounded in $L^2(0,T;H^1(\GC))$, which continuously embeds into $L^2(0,T;L^q(\GC))$ for all $1\leq q<\infty$, we deduce, for instance,  that
  \begin{equation}
 \label{L4-cv-teta}
 \pwcs\teta{\tau_k} \to \tetas  \qquad \text{in } L^{4+\epsilon}(\GC{\times}(0,T)) \qquad \text{for all $\epsilon \in (0,2\mu-2)$}.
 \end{equation}
\end{enumerate}
\par
In view of the enhanced convergences \eqref{enhanced-teta-k}--\eqref{L4-cv-teta}, we infer that
\begin{equation}
\label{minute-convergences}
\begin{aligned}
&
\pwc\teta{\tau_k} \mathrm{div}(\pwl \uu{\tau_k}') \weakto \teta \mathrm{div}(\uu') && \text{in } L^{2}(\Omega{\times}(0,T)),
\\
&
k(\pwc \chi{\tau_k}) \pwc\teta{\tau_k} (\pwc\teta{\tau_k}- \pwcs \teta{\tau_k}) \to k(\chi)\teta(\teta-\tetas) && \text{in } L^2(\GC{\times}(0,T)),
\\
&
 \nlocss {\RCNEW (\upwc\chi{\tau_k})^+}\,  \RCNEW( \upwc\chi{\tau_k})^+   \pwc\teta{\tau_k}^2 \to  \nlocss{(\chi)^+} \, (\chi)^+ \teta^2  &&  \RCORR \text{in } \EEE L^2(\GC{\times}(0,T)),
 \\
  &
 \nlocss {\RCNEW(\upwc\chi{\tau_k})^+  \pwcs \teta{\tau_k}}\, (\upwc\chi{\tau_k})^+ \pwc\teta{\tau_k} \to
  \nlocss{\chip\tetas}\,\chip\teta &&  \RCORR \text{in } \EEE L^2(\GC{\times}(0,T)),
\end{aligned}
\end{equation}
where we have also used that $k(\upwc \chi{\tau_k}) \to k(\chi)$ in $L^q(\GC{\times}(0,T))$ for all $1\leq q<\infty$ thanks to \eqref{cv-chi-pwc-2} and the polynomial growth of $k$, that $ \nlocss {\upwc\chi{\tau_k}}  \to \nlocss \chi$ in $L^\infty(\GC{\times}(0,T))$ by Lemma \ref{lemmaK}, as well as that
$ \nlocss {\upwc\chi{\tau_k}\pwcs \teta{\tau_k}}  \to \nlocss {\chi\teta}$ in $L^4(\GC{\times}(0,T))$.
\par
Also recalling \eqref{mom-tau23} and the fact that
 \begin{equation} \label{conv-tau-h}
\pwc h{\tau_k} \rightarrow h	 	\quad \quad \textrm{in $L^2(0,T;H^1(\Omega)^*)$}
\end{equation}
we conclude the limit passage in \eqref{interp-b-heat}. This yields
 the weak formulation \RCORR \eqref{form_debole_theta-chip}  \EEE of the bulk heat equation,  with test functions $v\in H^1(\Omega)$,  a.e.\ in $(0,T)$.
\paragraph{\bf Step $4$: limit passage in the surface heat equation:} For the limit passage in
\eqref{interp-surf-temp} we use that
\[
\widehat{\alpha}(\pwcs\teta{\tau_k}) \rightharpoonup \widehat\alpha(\tetas) \quad \text{in }  	L^2(0,T;H^1(\GC))
\]
(which can be shown by the very same arguments as for \eqref{alpha-teta-cv}).
Arguing as we did for the bulk heat equation, we identify the elliptic operator featuring in the limiting surface heat equation.
 Furthermore, we observe that
\begin{equation} \label{lambda-dev-prima-tau}
\begin{aligned}
\frac{\lambda(\pwc \chi{\tau_k})-\lambda(\upwc \chi{\tau_k})}{{\tau_k}}
&=\frac1{\tau_k} \int_0^1 \frac{\dd}{\dd r} f(r) \dd r  && \text{with } f(r) = \lambda(\upwc \chi{\tau_k}+r(\pwc \chi{\tau_k}{-}\upwc \chi{\tau_k}))
\\
& = \int_0^1 \lambda'(\upwc \chi{\tau_k}+r(\pwc \chi{\tau_k}{-}\upwc \chi{\tau_k})) \frac1{\tau_k}  (\pwc \chi{\tau_k}{-}\upwc \chi{\tau_k}) \dd r &&
\\
& \longrightarrow \int_0^1 \lambda'(\chi) \chi_t \dd r =  \lambda'(\chi) \chi_t  && \text{strongly  in } L^2(\GC{\times}(0,T)),
\end{aligned}
\end{equation}
thanks to the Lipschitz continuity of $\lambda'$ combined with  convergences \eqref{cv-chi-pwc-2} and \eqref{ciliegina-chi}. The latter convergence also allows us to pass to the limit in the first term on the right-hand side of \eqref{interp-surf-temp}; the limit passage in the second, third, and fourth terms follows by the same arguments leading to \eqref{minute-convergences}. Finally, we observe that
\[
\pwc\ell{\tau_k}\to \ell
\quad \text{in $L^2(0,T;H^1(\GC)^*)$}.
\]
All in all, we deduce that the triple $(\teta, \tetas, \chi)$ fulfills the  weak formulation
\RCORR \eqref{form_debole_thetas-chip} \EEE of the surface heat equation,  with test functions $w\in H^1(\GC)$,  a.e.\ in $(0,T)$.
\par
Finally, \eqref{total-enbal-rho-cont} follows from testing the weak formulation \RCORR  \eqref{form_debole_theta-chip} \EEE of the bulk heat equation by $1$, the weak momentum balance \eqref{weak-U-rho} by $\uu_t$, the weak surface heat equation  \RCORR \eqref{form_debole_thetas-chip} \EEE by $1$, \RCNEW  the flow rule \eqref{ptw-flow-rule} by $\chi_t$,  \EEE adding the resulting relations, and integrating \RCORR them \EEE  over the generic interval $[s,t]\subset [0,T]$.
This concludes the proof.
\end{proof}

\section{Proof of Theorem \ref{thm:1}}
\label{s:7}
In order to prove Theorem \ref{thm:1}, we will perform a double limit passage in
 system \eqref{PDE-regul} (more precisely, in its weak formulation that was specified in Theorem \ref{thm:exist-regul}). We shall first pass to the limit as $\rho \down 0$, with
 the parameter
 $\varsigma>0$ fixed, and then as $\varsigma \down 0$.
Let us thus consider a family $(\teta_{\rho,\varsigma},\uu_{\rho,\varsigma},\zzeta_{\rho,\varsigma}, \tetasrhos,\chi_{\rho,\varsigma},  \uprs,  \xi_{\rho,\varsigma})_{\rho,\varsigma}$,
with
\[
\zzetars: = \etar(\uu_{\rho,\varsigma} {\cdot}\mathbf{n})\mathbf{n},\qquad
\xirs=\betar(\chi_{\rho,\varsigma}),
\]
of weak solutions to the Cauchy problem for the approximate   system
 \eqref{PDE-regul}; the first result of this Section collects all the a priori estimates, uniform w.r.t.\ $\rho$
 \emph{and} $\varsigma$,
  on which our compactness arguments shall rely. As we will see, these estimates can be obtained by replicating
 the formal estimates carried out in Section \ref{s:3-calculations} on the level of system  \eqref{PDE-regul}.
 \begin{proposition}
There holds for every $\rho,\, \varsigma>0$
 \begin{equation}
 \label{uniform-strict-positivity}
\tetars \geq \frac1{S_0} >0 \quad \aein\, \Omega\times (0,T), \qquad \tetasrhos \geq \frac1{S_0} >0 \quad \aein\, \GC\times (0,T),
\end{equation}
with $S_0$ from \eqref{che-fatica-pos}. Furthermore,
  exists a constant $\overline{S}>0$ such that the following estimates hold
 for all $\rho,\,\varsigma>0$: 
\begin{subequations}
\label{estimates-unif-rho}
\begin{align}
&
\label{e:teta-rho}
\| \tetars \|_{L^{2}(0,T;H^1(\Omega)) \cap L^{\infty}(0,T;L^1(\Omega)) \cap W^{1,1}(0,T; W^{1,3+\epsilon}(\Omega)^*)} \leq \overline{S},  
\\
&
\label{e:teta-nu}
\| (\tetars)^{(\mu+\nu)/2} \|_{\RCORR L^{2}(0,T;H^1(\Omega))} \leq \overline{S},  
\\
&
\label{e:tetas-rho}
 \| \tetasrhos \|_{L^{2}(0,T;H^1(\GC)) \cap L^{\infty}(0,T;L^1(\GC)) \cap W^{1,1}(0,T; W^{1,2+\epsilon}(\GC)^*)}
\leq \overline{S}, 
\\
&
\label{e:tetas-nu}
\| (\tetasrhos)^{(\mu+\nu)/2} \|_{\RCORR L^{2}(0,T;H^1(\GC))} \leq \overline{S},  
\\
&
\label{e:u-rho}
\| \uurs  \|_{H^{1}(0,T;H_{\GD}^1(\Omega;\R^3))} + \rho^{1/\omega} \| \eps(\partial_t \uurs)\|_{L^{\omega}(\Omega{\times}(0,T);\R^{3\times 3})} \leq \overline{S}, &
\\
&
\label{e:zeta-rho}
\|\zzetars \|_{L^{\omega/(\omega-1)}(0,T;W^{1,\omega}(\Omega;\R^3)^*)}  \leq \overline{S}, 
\\
&
\label{e:chi-rho}
 \| \chirs \|_{L^{\infty}(0,T;\VC) \cap \RCORR H^1(0,T;L^2(\GC))} +   \rho^{1/\omega} \| \partial_t \chirs \|_{L^{\omega}(\Omega{\times}(0,T))} \leq \overline{S}, 
 \\
 &
 \label{e:xi-rho}
 \| A\chirs + \xirs \|_{L^{\omega/(\omega-1)}(\GC{\times}(0,T))} \leq \overline{S},
 \\
 & \label{e:ups-rho}
 \RCNEW \| \uprs\|_{L^\infty(\GC{\times}(0,T))} \leq \overline{S}. \EEE
 \end{align}
\end{subequations}
\end{proposition}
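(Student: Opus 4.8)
The proof recasts, in the rigorous framework of system \eqref{PDE-regul}, the formal a priori estimates carried out in Sections \ref{ss:3.2} and \ref{s:3-calculations}. The cornerstone is that, by Theorem \ref{thm:exist-regul}, every weak solution of \eqref{PDE-regul} satisfies the strict positivity bound \eqref{che-fatica-pos} with a constant $S_0$ \emph{independent} of $\rho$ and $\varsigma$; thus \eqref{uniform-strict-positivity} is just a restatement of \eqref{che-fatica-pos}, and, more importantly, the negative and fractional powers $\tetars^{-p}$, $\tetars^{\nu-1}$, $\tetasrhos^{-p}$, $\tetasrhos^{\nu-1}$ become admissible test functions: since $\tetars\in L^2(0,T;H^1(\Omega))$ with $\tetars\ge 1/S_0$ a.e., $\ab(\tetars)\ge c_0$, and $\nabla(\widehat\alpha(\tetars))=\ab(\tetars)\nabla\tetars\in L^2$, the chain rule shows these functions lie in $H^1(\Omega)$ for a.a.\ $t$ (and analogously on $\GC$), so they can be inserted in the weak formulations \eqref{form_debole_theta-chip} and \eqref{form_debole_thetas-chip}. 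Throughout, all the monotonicity arguments of Section \ref{ss:3.2}--\ref{s:3-calculations} (cf.\ \eqref{key-posit-conto}, \eqref{posult4}) carry over verbatim with the positive parts $\chip\ge 0$ in place of $\chi$.

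First I would perform the \textbf{energy estimate}: taking $s=0$ in the total energy balance \eqref{total-enbal-rho-cont}, I bound the initial energy $\calE_\varsigma(\teta_\rho^0,\dss\rho0,\uu_\rho^0,\chi_0)$ by a constant through \eqref{approx-initial-data}--\eqref{cond-chi0}, estimate the heat-source and load contributions exactly as in \eqref{E1}, and invoke the coercivity of $\calE_\varsigma$ (the analogue of Lemma \ref{LemmaE}; note that $\int_{\GC}\wbetar(\chi)\,\dd x\ge c\|\chi\|_{L^2(\GC)}^2-C$ with $c,C$ independent of $\varsigma$ once $\varsigma\le 1$, which is all that matters since $\varsigma\downarrow 0$) together with the Gronwall Lemma. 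This yields the $L^\infty$-in-time bounds in \eqref{e:teta-rho}, \eqref{e:tetas-rho}, \eqref{e:u-rho}, \eqref{e:chi-rho}, as well as the $\rho^{1/\omega}$-weighted bounds on $\eps(\partial_t\uurs)$ and $\partial_t\chirs$; estimate \eqref{e:ups-rho} is immediate from $\uprs\in\partial\varphi(\chirs)$ and \eqref{subdiff-pos-part}.

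Then I would run the remaining estimates of Section \ref{s:3-calculations} in order. The \textbf{Second estimate} (testing the heat equations by suitable powers of $\tetars$, $\tetasrhos$, splitting into the cases $\mu\in(1,2)$, $\mu>2$, $\mu=2$) gives the $L^2(0,T;H^1)$ bounds in \eqref{e:teta-rho}, \eqref{e:tetas-rho}. The \textbf{Third estimate} (testing by $\tetars^{\nu-1}$, $\tetasrhos^{\nu-1}$ for arbitrary $\nu\in(0,1)$) gives \eqref{e:teta-nu}, \eqref{e:tetas-nu}. The \textbf{Fourth estimate} (testing \eqref{weak-U-rho} by $\partial_t\uurs$ and \eqref{ptw-flow-rule} by $\partial_t\chirs$) supplies the $H^1(0,T;H^1_{\GD}(\Omega))$ and $H^1(0,T;L^2(\GC))$ parts of \eqref{e:u-rho}, \eqref{e:chi-rho}. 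A comparison in \eqref{ptw-flow-rule}, using that $\rho\,|\partial_t\chirs|^{\omega-2}\partial_t\chirs$ has $L^{\omega/(\omega-1)}(\GC{\times}(0,T))$-norm $\le \rho^{1/\omega}\overline S$, yields \eqref{e:xi-rho}; a comparison in \eqref{weak-U-rho}, with the analogous remark for $\rho\,\mathrm{div}(|\eps(\partial_t\uurs)|^{\omega-2}\eps(\partial_t\uurs))$ in $W^{1,\omega}(\Omega;\R^3)^*$, yields \eqref{e:zeta-rho}. Finally the \textbf{Sixth} and \textbf{Seventh estimates} (interpolation via \eqref{interpolation-Lebesgue}, then comparison in \eqref{form_debole_theta-chip}, \eqref{form_debole_thetas-chip} tested against $W^{1,3+\epsilon}(\Omega)$, $W^{1,2+\epsilon}(\GC)$ functions) give the $W^{1,1}(0,T;W^{1,3+\epsilon}(\Omega)^*)$ and $W^{1,1}(0,T;W^{1,2+\epsilon}(\GC)^*)$ components of \eqref{e:teta-rho}, \eqref{e:tetas-rho}.

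The main point that needs care is that every constant be independent of \emph{both} $\rho$ and $\varsigma$. For the temperature estimates this is guaranteed by the uniform positivity \eqref{uniform-strict-positivity} and the growth condition \eqref{hyp-alpha}. For the terms involving $k(\chi)$ the difficulty is that — unlike in Section \ref{s:3-calculations}, where the constraint $\chi\in[0,1]$ was at hand — here one only disposes of $\chirs\in L^\infty(0,T;H^1(\GC))$; the required bounds are therefore obtained through the polynomial growth \eqref{hyp-kappa} of $k$ combined with Hölder's inequality and the embedding $H^1(\GC)\hookrightarrow L^q(\GC)$, exactly as in \eqref{ancora-kappa} and \eqref{thanks-polynomial-growth} (this is the very role of \eqref{hyp-kappa}, cf.\ Remark \ref{rmk:could-be-dispensed}). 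For the $\rho$-regularizing terms, uniformity follows from the $\rho^{1/\omega}$-weighted bounds of the first step, which force the $\rho$-dependent nonlinearities to be uniformly bounded (and to vanish as $\rho\downarrow 0$) in the dual spaces quoted above. There is no essential new difficulty beyond this bookkeeping, since the regularizing terms only help: they appear with a favourable sign both in the energy balance and in the tests by powers of the temperatures, exactly as already observed in Sections \ref{ss:3.2}--\ref{s:3-calculations}.
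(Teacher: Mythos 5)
Your proposal is correct and follows essentially the same route as the paper: positivity inherited from \eqref{che-fatica-pos} making the power-type test functions admissible in $H^1$, the energy estimate from \eqref{total-enbal-rho-cont} with coercivity of $\calE_\varsigma$, then the Second/Third/Fourth/Sixth/Seventh estimates of Section \ref{s:3-calculations} rendered rigorously, and comparison arguments for \eqref{e:zeta-rho} and \eqref{e:xi-rho}. The only (immaterial) deviation is in the Fourth estimate, where you test \eqref{weak-U-rho} and \eqref{ptw-flow-rule} by $\partial_t\uurs$ and $\partial_t\chirs$ directly, whereas the paper obtains the same mechanical energy inequality by subtracting the heat equations tested by $1$ from the total energy balance; both yield the identical bounds, uniformly in $\rho$ and $\varsigma$.
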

\begin{proof}
The positivity property \eqref{uniform-strict-positivity} clearly follows from estimate \eqref{che-fatica-pos}.
The bounds for $(\tetars)_{\rho,\varsigma}$ in $L^\infty(0,T;L^1(\Omega))$, for $(\tetasrhos)_{\rho,\varsigma}$ in
$L^\infty(0,T;L^1(\GC)$, for $(\rho^{1/\omega}  \eps(\partial_t \uurs ))_{\rho,\varsigma}$ in
$L^{\omega}(\Omega{\times}(0,T);\R^{3\times 3})$,  for $(\chirs)_{\rho,\varsigma}$ in $L^\infty(0,T;H^1(\GC))$, 
and for $(\rho^{1/\omega} \partial_t \chirs)_{\rho,\varsigma}$ in
$L^{\omega}(\GC{\times}(0,T))$ follow from the total energy balance \eqref{total-enbal-rho-cont}, arguing in the very same way as in Section \ref{sss:3.3.1}. \RCNEW Estimate \eqref{e:ups-rho} simply follows from the fact that $\uprs \in \partial\varphi(\chirs)$ a.e.\ in  $\GC{\times}(0,T)$. 
\par
We then proceed to the Second a priori estimate (cf.\ Sec.\ \ref{sss:3.3.2}) and test the weak formulations \eqref{form_debole_theta} and \eqref{form_debole_thetas} of the heat equations
\begin{compactenum}
\item
by
$\tetars^{\nu-1}$ and $\tetasrhos^{\nu-1}$, with $\nu = 2-\mu$, in the case $\mu \in (1,2)$;
\item by
$-\tetars^{-1}$ and $-\tetasrhos^{-1}$ in the case $\mu =2$;
\item by $-\tetars^{-q}$ and $-\tetasrhos^{-q}$, with
$q=\mu-1$, in the case $\mu>2$.
\end{compactenum}
Observe that in all of the above cases
the test functions are admissible (namely, they belong to $H^1(\Omega)$ and $H^1(\GC)$, respectively), thanks to \eqref{uniform-strict-positivity}, combined with the fact that $\tetars \in L^2(0,T;H^1(\Omega))$ and
$\tetasrhos \in L^2(0,T;H^1(\GC))$, respectively.
We then add the resulting relations, integrate in time, and perform, in the three cases $\mu \in (1,2)$, $\mu=2$,
and $\mu>2$, the very same calculations as in Section \ref{sss:3.3.2}. In this way, we conclude
that $\| \tetars \|_{L^{2}(0,T;H^1(\Omega))} \leq C$ and $\| \tetasrhos \|_{L^{2}(0,T;H^1(\GC))}\leq C$.
These estimates are enhanced  to \eqref{e:teta-nu} and \eqref{e:tetas-nu}  by repeating the calculations from Section \ref{sss:3.3.3}.
\par
In order to replicate the  Fourth a priori estimate \RCORR  from Section \ref{sss:3.3.4}, \EEE  we   subtract from the total energy balance
 \eqref{total-enbal-rho-cont} the bulk and surface heat equations tested by $1$ and integrated in time. This leads to  the analogue of the \emph{mechanical energy inequality} \eqref{mechanical-energy-bala}, additionally featuring the integrals
 $\rho \int_0^t \RCORR \int_\Omega \EEE  |\eps(\partial_t\uurs)|^\omega \dd x \dd r$ and  $\rho \int_0^t  \RCOMMN
 \int_{\GC} \EEE   |\partial_t\chirs|^\omega \dd x \dd r$ on the left-hand side. Repeating the very same calculations as in Sec.\ \ref{sss:3.3.4}, we conclude the estimates for $\| \uurs  \|_{H^{1}(0,T;H_{\GD}^1(\Omega;\R^3))} $ and $\|\chirs \|_{H^1(0,T;L^2(\GC))}$.
 \par
 Relying on the Sixth a priori estimate  \RCORR (cf.\ Sec.\ \ref{sss:3.3.6}), \EEE
 which yields the bounds \eqref{genialata-gc} and \eqref{genialata-gc-2} for the sequences $(\tetars)_{\rho,\varsigma}$
 and $(\tetasrhos)_{\rho,\varsigma}$,
 we are in a position to rigorously render the calculations for Seventh a priori estimate, cf.\ Sec.\ \ref{sss:3.3.7}.
 Thus, we deduce the bounds for $(\partial_t \tetars)_{\rho,\varsigma} \subset L^1(0,T; W^{1,3+\epsilon}(\Omega)^*)$ and
$(\partial_t \tetasrhos)_{\rho,\varsigma} \subset L^1(0,T; W^{1,2+\epsilon}(\GC)^*)$  for every $\epsilon>0$.
\par
\RCNEW Finally, estimates \eqref{e:zeta-rho} and \eqref{e:xi-rho} follow from a comparison in the momentum balance equation and in the flow rule for the adhesion parameter. \EEE
\end{proof}
\par
We shall now prove \underline{\textbf{Theorem \ref{thm:1}}} in two main steps,  carried out in the ensuing Sections \ref{ss:6.1} and \ref{ss:6.2}. More precisely,
\begin{enumerate}
\item
in Sec.\  \ref{ss:6.1} we  will pass to the limit in system  \eqref{PDE-regul} as $\rho\down 0$ and $\varsigma>0$ is kept fixed; in this way, we shall prove the existence of \emph{weak energy} solutions (in the sense of
Definition \ref{def:weak-sol}) of system   \eqref{PDE-regul},  in which $\rho$ is set equal to $0$;
\item in Sec.\  \ref{ss:6.2} we will finally perform the limit passage as $\varsigma \down 0$, thus concluding the proof of   Thm.\ \ref{thm:1}.
\end{enumerate}
\subsection{Limit passage as $\rho \down 0$, for fixed $\varsigma>0$}
\label{ss:6.1}
Since the parameter $\varsigma>0$ is kept fixed, we shall not highlight the dependence
on $\varsigma$ of the  solutions  
to system \eqref{PDE-regul} and just denote them
by $(\teta_\rho,\uu_\rho,\teta_{\mathrm{s},\rho},\chi_\rho, \RRRN \sigma_\rho\EEE)$.
\par
 Let $(\rho_j)_j \subset (0,+\infty)$  be a null sequence and, correspondingly, let
 $(\tetaj,\uuj,\tetasj,\chij,  \upj)_j$
 be a sequence of solutions to system \eqref{PDE-regul}, formulated as in the statement of Thm.\ \ref{thm:exist-regul} and supplemented by the initial conditions \eqref{initial-rho}, with sequences $(\tetaj^0)_j$, $(\tetasj^0)_j$   and $(\uuj^0)_j$ 
 of initial data fulfilling \eqref{approx-initial-data}; set $\zzetaj: = \RCORR \eta_{\varsigma}(\uuj \cdot \mathbf{n} ) \EEE \mathbf{n}$, $\xij: = \beta_{\varsigma}(\chij)$, \RCNEW $\upj: = \RCNEW \sigma_{\rho_j,\varsigma}$. \EEE
 In what follows we will show that, up to a subsequence, the  quintuples  $(\tetaj,\uuj,\tetasj,\chij, \upj)_j $ 
 converge to a `weak energy solution' $(\teta,\uu,\tetas,\chi,\RCNEW \sigma)$
   \EEE  to the Cauchy problem for system
 \eqref{PDE-regul}, in which $\rho=0$. Namely, we will prove that  $(\teta,\uu,\tetas,\chi)$
\begin{itemize}
\item[-]
 enjoy the integrability and regularity properties \eqref{reg-entropic}, and the positivity property \eqref{teta-strict-pos};
\item[-]
 fulfill the weak formulations  \RCNEW \eqref{form_debole_theta-chip} and \eqref{form_debole_thetas-chip}
 \EEE
  of the bulk and surface heat equations, \RCORR with test functions $v\in W^{1,3+\epsilon}(\Omega)$
  and $w\in W^{1,2+\epsilon}(\GC)$, respectively,  for every $\epsilon>0$; \EEE
 \item[-]
 fulfill the weak formulation of the displacement equation \eqref{weak-displ},  with $\zzeta \in \mathrm{C}^0([0,T]; L^4(\GC;\R^3))$ given by $\zzeta = \eta_{\varsigma}(\uu \cdot \mathbf{n})  \mathbf{n}$;
 \item[-]
 fulfill the pointwise formulation
 \eqref{ptw-flow-rule}
of the flow rule
in which $\rho$ is set equal to $0$.
\end{itemize}
We shall
 split the argument into some steps.
\paragraph{\bf Step $1.0$: compactness.}  There exist a (not relabeled) subsequence and a \RCNEW quintuple $(\teta,\uu,\tetas,\chi,\RCNEW \sigma)$, \EEE  with
\begin{equation}
\label{regularity-quadruple}
\begin{aligned}
&
\teta \in L^2(0,T;H^1(\Omega)) \cap L^\infty(0,T;L^1(\Omega)),  && \uu \in    H^1(0,T;\bsVD),
\\
&
\tetas \in
L^2(0,T;H^1(\GC)) \cap L^\infty(0,T;L^1(\GC)),  &&
\chi \in L^\infty(0,T;H^1(\GC)) {\cap}H^1(0,T;L^2(\GC)),
\\ &   \RCNEW\sigma \in L^\infty(\GC{\times}(0,T)),  && \EEE
\end{aligned}
\end{equation}
 such that the following weak and strong convergences hold as $j\to\infty$:
\begin{subequations}
\label{convs-tetaj}
\begin{align}
\label{w-teta-j}
&
\tetaj \weaksto \teta && \text{in } L^2(0,T;H^1(\Omega)) \cap L^\infty (0,T;W^{1,3+\epsilon}(\Omega)^*) &&  \text{for all } \epsilon>0,
\\
& \label{w-teta-j-ptw}
\tetaj(t) \weakto \teta(t) && \text{in } H^1(\Omega) && \foraa\, t \in (0,T),
\\
&
\label{s-teta-j}
\tetaj \to \teta && \text{in } L^2(0,T;L^p(\Omega)) \cap L^q(0,T;L^1(\Omega)) && \text{for all } p \in [1,6) \text{ and } q \in [1,\infty),
\\
\label{w-tetas-j}
&
\tetasj \weaksto \tetas && \text{in } L^2(0,T;H^1(\GC)) \cap L^\infty (0,T;W^{1,2+\epsilon}(\GC)^*) &&  \text{for all } \epsilon>0,
\\
& \label{w-tetas-j-ptw}
\tetasj(t) \weakto \tetas(t) && \text{in } H^1(\GC) && \foraa\, t \in (0,T),
\\
&
\label{s-tetas-j}
\tetasj \to \tetas && \text{in } L^2(0,T;L^q(\GC)) \cap L^q(0,T;L^1(\Omega)) && \text{for all } q \in [1,\infty),
\\
\label{w-uu-j}
&
\uuj \weakto \uu && \text{in } H^1(0,T;\bsVD) &&
\\
\label{s-uu-0}
&
\uuj \to \uu && \text{in }  \mathrm{C}^0([0,T];H^{1-\epsilon}(\Omega;\R^3)) && \text{for all } \epsilon \in (0,1) 
\\
\label{s-uu-j}
&
\rho_j^{\upsilon} \uuj \to 0 && \text{in } W^{1,\omega}(0,T; W^{1,\omega}(\Omega;\R^3)) && \text{for all } \upsilon >\frac1{\omega},
\\
&
\label{w-chi-j}
\chij \weaksto \chi && \text{in } L^\infty(0,T;H^1(\GC)) \cap H^1(0,T;L^2(\GC)),
\\
&
\label{s-chi-j}
\chij \to \chi && \text{in } \mathrm{C}^0(0,T;L^q(\GC))&& \text{for all } q \in [1,\infty),
\\
&
\label{rho-chi-j}
\rho_j^{\upsilon} \chij \to 0 && \text{in } W^{1,\omega}(0,T;L^\omega(\GC))&& \text{for all } \upsilon>\frac1\omega,
\\
& \label{upsilon-j}
\upj \weaksto \sigma && \text{in } L^\infty(\GC{\times}(0,T))\,. &&
\end{align}
\end{subequations}
Indeed, convergences \eqref{w-teta-j}, \eqref{w-tetas-j}, \eqref{w-uu-j},  \eqref{w-chi-j},  \RCNEW
and \eqref{upsilon-j} \EEE
 immediately follow from estimates
 \eqref{estimates-unif-rho} via weak compactness arguments.
Convergence
\eqref{s-uu-j} is a straightforward consequence of the second of \eqref{e:u-rho} also in view of Korn's inequality.
 Analogously, \eqref{rho-chi-j} follows from estimate \eqref{e:chi-rho}.
  Arguing as in the proof of Lemma \ref{l:a-prio-tau-compact} and resorting to the aforementioned results
  from \cite{Simon87} we  deduce the strong convergences \eqref{s-teta-j}, \eqref{s-tetas-j},
   \eqref{s-uu-0},
   and \eqref{s-chi-j}.
  Likewise, the pointwise convergences \eqref{w-teta-j-ptw} and \eqref{w-tetas-j-ptw} ensue from combining estimates  \eqref{estimates-unif-rho}  with Theorem \ref{th:mie-theil}.
  \par
  Combining the  estimates for $(\tetaj)_j$ and $(\tetasj)_j$ in $L^\infty (0,T;L^1(\Omega))$ and $L^\infty(0,T;L^1(\GC))$ with the pointwise convergences \eqref{w-teta-j-ptw} and \eqref{w-tetas-j-ptw}
  we immediately deduce that $\teta \in L^\infty (0,T;L^1(\Omega))$ and $\tetas \in L^\infty(0,T;L^1(\GC))$.
 \par
 Clearly, from the strong convergences \eqref{s-teta-j} and \eqref{s-tetas-j} and the strict positivity properties \eqref{uniform-strict-positivity} we conclude that the limiting temperatures
 $\teta$ and $\tetas$ also fulfill
  \begin{equation}
 \label{uniform-strict-positivity-bis}
\teta \geq \frac1{S_0} >0 \quad \aein\, \Omega\times (0,T), \qquad \tetas \geq \frac1{S_0} >0 \quad \aein\, \GC\times (0,T).
\end{equation}
  \par
  Furthermore, from convergences
  \eqref{s-uu-0} (which in particular yields, for the traces $(\uuj)_j$,
  the convergence $\uu_j \to \uu $ in $\mathrm{C}^0([0,T];L^q(\GC;\R^3))$ for every $1\leq q<4$ since
  $\bsVD \Subset L^q(\GC;\R^3)$ in the sense of traces),
   and \eqref{s-chi-j} we derive, taking into account that the functions $\eta_{\varsigma} :\R \to \R$
  and $\beta_{\varsigma}:\R \to \R$ are Lipschitz continuous, that
  \begin{subequations}
  \label{convs-max-mon}
  \begin{align}
  &
\label{w-zeta-j}
\zzetaj \to \zzeta := \eta_\varsigma (\uu \cdot \mathbf{n})\mathbf{n} && \text{in } \
\mathrm{C}^0([0,T];L^q(\GC;\R^3)), &&   \text{ for all } q \in [1,4),
\\
\label{weak-xi-j}
& \xij  \to \xi : = \beta_{\varsigma}(\chi) && \text{in }  \RCORR \mathrm{C}^{0}([0,T] ;L^q(\GC)) \EEE && \text{ for all } q \in [1,\infty).
\end{align}
\end{subequations}
  \paragraph{\bf Step $1.1$: limit passage in the momentum balance.}
  We integrate \RCORR the weak formulation \eqref{weak-U-rho}   of the momentum balance \EEE
  over an arbitrary time interval $(s,t)$ and pass to the limit as $j\to\infty$ in   \eqref{weak-U-rho}. We handle the first, second, third, fourth  and sixth integrals on the left-hand side by resorting to convergences \eqref{w-teta-j},  \eqref{w-uu-j}, \eqref{s-uu-j},  and \eqref{w-zeta-j}.
  For the remaining terms, we use that
  \begin{equation}
  \label{remaing-terms}
  \begin{cases}
  \chijp \uuj \to \chip \uu,
  \\
   \chijp\uuj \nlocss{\chijp} \to \chip \uu \nlocss{\chip}
   \end{cases}
    \text{ in } \mathrm{C}^0([0,T]; L^q(\GC))  \quad \text{for all } 1 \leq q<4
  \end{equation}
  which follow from the strong convergences \eqref{s-uu-0} and \eqref{s-chi-j}, also by Lemma \ref{lemmaK}.
  All in all, we conclude that the quadruple $(\teta,\uu,\chi,\zzeta)$, with $\zzeta =  \eta_\varsigma (\uu \cdot \mathbf{n})\mathbf{n}$ fulfill
  \begin{equation}
  \label{temp-momen-balance}
\begin{aligned}
&
\int_s^t \left( \vfm(\uu_t,\mathbf{v})
+ \efm(\uu,\mathbf{v})
+\int_{\Omega} \teta \mathrm{div}(\vv) \dd x +
 \int_{\GC} \chip\uu \mathbf{v} \dd x \right) \dd r
 \\
  & \qquad +
\int_s^t \left( \int_{\GC}\zzeta \cdot \mathbf{v} \dd x
+  \int_{\GC} \chip\uu \nlocss{\chip} \mathbf{v} \dd x \right) \dd r  =
\int_s^t  \pairing{}{\bsVD}{\mathbf{F}}{\vv} \dd r
\end{aligned}
\end{equation}
for all $\vv \in W^{1,\omega}(\Omega;\R^3) \cap \in \bsVD$,
which translates into a relation holding at almost all $t\in (0,T)$ by the arbitrariness of the interval $(s,t)$. Furthermore, taking into account the integrability properties of $\uu$, $\teta$ and $\chi$, it is immediate to see that \eqref{temp-momen-balance} extends to all test functions $\vv \in \bsVD$. Therefore, we have proved  \eqref{weak-displ}
 (where $\chi$ is replaced by $(\chi)^+$). 
\par
Lastly, in view
of the limit passage in the bulk heat equation,
let us improve the weak convergence $\eps(\partial_t \uuj)\weakto \eps(\uu_t)$
in $L^2(0,T;L^2(\Omega;\R^{3\times 3}))$ to
  a strong one. To this end,
  we revert to  \eqref{weak-U-rho}, test it by $\partial_t \uuj$ and integrate it in time.
  Passing to the limit as $j\to\infty$ we find that
  \[
  \begin{aligned}
  &
  \limsup_{j\to\infty}\left(  \int_s^t  \vfm(\partial_t \uuj,\partial_t \uuj) \dd r +  \rho_j\int_s^t \int_\Omega |\eps(\partial_t \uuj)|^{\omega} \dd x \dd r \right)      \\ &
  \leq -
  \liminf_{j\to\infty} \int_s^t  \efm(\uuj,\partial_t \uuj) \dd r
   -\liminf_{j\to\infty} \int_s^t \int_{\Omega} \tetaj \mathrm{div}(\partial_t \uuj) \dd x \dd r
  \\
  & \qquad
   -\liminf_{j\to\infty} \int_s^t \int_{\GC} \chijp \uuj \cdot \partial_t \uuj  \dd x \dd r
  - \liminf_{j\to\infty} \int_s^t\int_{\GC} \zzetaj \cdot \partial_t \uuj \dd x \dd r
    \\
  &
  \qquad
  -  \liminf_{j\to\infty} \int_s^t\int_{\GC} \chijp \nlocss{\chijp}\, \uuj \cdot \partial_t \uuj  \dd x \dd r
  + \lim_{j\to\infty} \int_s^t  \RCORR  \pairing{}{\bsVD}{\mathbf{F}}{\partial_t\uuj} \EEE  \dd r
  \\
  &
\RCORR  \stackrel{(1)}{\leq}  \EEE -  \int_s^t \left(
 \efm(\uu,\mathbf{u}_t)
+\int_{\Omega} \teta \mathrm{div}(\uu_t) \dd x +
 \int_{\GC} \chip\uu \mathbf{u}_t \dd x \right) \dd r
 \\
  & \qquad -
\int_s^t \left(
\int_{\GC} \zzeta \cdot \uu_t \dd x
  \int_{\GC} \chip\uu \nlocss{\chip}\, \mathbf{u}_t \dd x +
\RCORR \pairing{}{\bsVD}{\mathbf{F}}{\uu_t} \EEE \right) \dd r\stackrel{\eqref{temp-momen-balance}}
= \int_s^t    \vfm(\uu_t,\uu_t)  \dd x \dd r\,.
  \end{aligned}
  \]
  For (1), we have used that
\[
\begin{aligned}
  \liminf_{j\to\infty} \int_s^t  \efm(\uuj,\partial_t \uuj) \dd r  & = \liminf_{j\to\infty} \left(  \frac12 \efm(\uuj(t), \uuj(t))  -  \frac12 \efm(\uuj(s),\uuj(s))\right)
  \\
  &  \geq \frac12 \efm(\uu(t),\uu(t)) - \frac12 \efm(\uu(s),\uu(s))
   =  \int_s^t  \efm(\uu,\mathbf{u}_t) \dd r
  \end{aligned}
\]
thanks to \eqref{s-uu-0}, as well as  the strong convergences \eqref{s-teta-j} \RCORR and
\eqref{w-zeta-j}. \EEE
All in all, we conclude that
\[
 \int_s^t  \vfm(\partial_t \uuj,\partial_t \uuj) \dd r  \to  \int_s^t    \vfm(\uu_t,\uu_t)  \dd x \dd r,
\]
which, combined with \eqref{s-uu-0},  immediately gives the desired strong convergence
\begin{equation}
\label{s-uu-j-bis}
\uuj \to \uu \qquad \text{in } H^1(0,T; \bsVD) \qquad \text{as } j \to \infty.
\end{equation}
  \paragraph{\bf Step $1.2$: limit passage in the flow rule.}
We take the limit  as $j\to\infty$ of \eqref{ptw-flow-rule} integrated on an arbitrary time interval $(s,t)\subset (0,T)$. For the left-hand side we use convergences \eqref{w-chi-j}, \eqref{rho-chi-j}, \eqref{s-chi-j}
(which also yields  strong convergences for the terms $\gamma'(\chij)$ and $\lambda'(\chij)$ by the Lipschitz continuity of $\gamma'$ and $\lambda$), \eqref{s-tetas-j}, and \eqref{weak-xi-j}.
We also use that, in view of  estimate  \eqref{e:xi-rho} and  the previously observed
\eqref{weak-xi-j},  there holds
\begin{equation}
\label{weak-Achi}
A\chij\weakto A\chi \text{ in } L^{\omega/(\omega-1)} (\GC{\times}(0,T)) \qquad \text{as } j\to\infty.
\end{equation}
 As for the right-hand side,
we use that
\[
\begin{cases}
-\frac12 |\uuj|^2 \upj  \RCNEW \weaksto -\frac12 |\uu|^2 \sigma & \text{ in } L^\infty (0,T; L^q(\GC)) \text{ for all } 1 \leq q<2,
\\
-\frac12 \nlocss{\chijp}\,  |\uuj|^2 \upj  \RCNEW \weaksto -\frac12 \nlocss{\chip}\, |\uu|^2 \sigma & \text{ in } L^\infty
(0,T; L^q(\GC)) \text{ for all } 1 \leq q<2,
\\
-\frac12 \nlocss{\chijp |\uuj|^2} \upj \RCNEW \weaksto - \frac12 \nlocss{\chip |\uu|^2}\, \sigma &
\text{ in  } L^\infty(\GC {\times}(0,T))
\end{cases}
\qquad
\]
\EEE
also in view of Lemma \ref{lemmaK}.
All in all,
we conclude the validity of \eqref{ptw-flow-rule} with $\rho=0$.
\RCNEW Again by the strong weak closedness of the graph of the (operator induced by) $\partial\varphi$, we
have
$\sigma\in \partial\varphi(\chi)$ a.e.\ in $\GC\times (0,T)$. \EEE
 A comparison argument in \eqref{ptw-flow-rule}  immediately yields that
$A\chi \in L^2(\GC{\times}(0,T))$, so that we ultimately infer that $\chi \in L^2(0,T;H^2(\GC))$.
\par
Lastly,  in view of the limit passage in the surface heat equation, let us enhance the weak convergence of $\partial_t \chij$ to a strong one. \RCNEW With this aim, we test \eqref{weak-U-rho} by
$\partial_t \uuj$,
\eqref{ptw-flow-rule}
by $\partial_t\chij$, add the resulting relations, and integrate in time (cf.\ \eqref{mechanical-energy-bala}).
Taking  the limit as $j\to\infty$
 we have
\begin{equation}
\label{611}
\begin{aligned}
&
\limsup_{j\to\infty}
\left(  \int_0^t  \int_{\GC} |\partial_t\chij|^2 \dd x \dd r+
   \rho_j\int_0^t \int_{\GC} |\partial_t \chij|^{\omega} \dd x \dd r \right)
   \\
   &
  \leq
  -\lim_{j\to\infty}
  \Big(
   \int_0^t \left(  \vfm(\partial_t\uuj,\partial_t\uuj) {+} \int_\Omega \tetaj \mathrm{div}(\partial_t \uuj) \dd x \right) \dd r   + \frac12\efm(\uuj(t),\uuj(t)) +\int_{\GC} \widehat{\eta}_{\varsigma} (\uuj(t){\cdot} \mathbf{n}) \dd x
\\
& \quad \quad  \quad \quad\quad
    +\frac12\int_{\GC} \chijpt \RCNEW |\uuj(t)|^2 \dd x + \frac12 \int_{\GC}\chijpt \RCNEW |\uuj(t)|^2  \nlocss{\chijpt}  \RCNEW \dd x  \Big)
 \\
&
\quad
 +
 \lim_{j\to\infty} \int_0^t   \pairing{}{\bsVD}{\mathbf{F}}{\partial_t\uuj}   \dd r
  -\liminf_{j\to\infty}
 \left( \int_{\GC} \left( \frac12 |\nabla \chij(t)|^2 + \widehat{\beta}_\varsigma (\chij(t)) + \gamma (\chij(t)) \right) \dd x \right)
 \\
 &
\quad
+\lim_{j\to\infty} \Big(
\frac12\efm(\uu_0^{\rho_j},\uu_0^{\rho_j}) +  \int_{\GC} \widehat{\eta}_{\varsigma}(\uu_0^{\rho_j}{\cdot}\mathbf{n}) \dd x   +\frac12\int_{\GC} \RCNEW (\chi_0)^+ |\uu_0^{\rho_j}|^2 \dd x + \frac12 \int_{\GC} (\chi_0)^+ |\uu_0^{\rho_j}|^2  \nlocss{(\chi_0)^+}\, \dd x  %
\\
& \quad \quad  \quad \quad\quad  +
\RCNEW  \int_{\GC} \big( \frac12 |\nabla \chi_0|^2 + \widehat{\beta}_\varsigma (\chi_0) +\gamma(\chi_0) \big) \dd x \Big)
- \liminf_{j\to\infty} \int_0^t \int_{\GC}\lambda'(\chij) \tetasj \partial_t\chij \dd x \dd  r
\\
&
 \stackrel{(1)}{\leq}
-   \int_0^t \left(  \vfm(\uu_t,\uu_t) {+}\int_\Omega \teta \mathrm{div}(\uu_t) \dd x  \right) \dd r  + \frac12\efm(\uu(t),\uu(t))
+\int_{\GC} \widehat{\eta}_{\varsigma} (\uu(t){\cdot} \mathbf{n}) \dd x
\\
& \quad   +\frac12\int_{\GC} \chipt \RCNEW |\uu(t)|^2 \dd x + \frac12 \int_{\GC}\chipt \RCNEW |\uu(t)|^2  \nlocss{\chipt}  \RCNEW \dd x   +\int_0^t   \pairing{}{\bsVD}{\mathbf{F}}{\uu_t}   \dd r
 \\
&
\quad
- \int_{\GC} \left( \frac12 |\nabla \chi(t)|^2 + \widehat{\beta}_\varsigma (\chi(t)) + \gamma (\chi(t)) \right) \dd x
+
\frac12\efm(\uu_0,\uu_0) +\int_{\GC} \widehat{\eta}_\varsigma(\uu_0 {\cdot}\mathbf{n}) \dd x
 \\
 &
\quad
+\frac12\int_{\GC} \RCNEW (\chi_0)^+ |\uu_0|^2 \dd x + \frac12 \int_{\GC} (\chi_0)^+ |\uu_0|^2  \nlocss{(\chi_0)^+}\dd x
+ \int_{\GC} \big( \frac12 |\nabla \chi_0|^2 + \widehat{\beta}_\varsigma (\chi_0) +\gamma(\chi_0) \big) \dd x \Big)
\\
& \quad
- \int_0^t \int_{\GC}\lambda'(\chi) \tetas \partial_t\chi \dd x \dd  r
\stackrel{(2)}{=} \int_0^t  \int_{\GC} | \chi_t|^2 \dd x \dd r
 \end{aligned}
\end{equation}
%
%
%
%
  where for (1)  we have used the previously found convergences properties, while  (2)  follows from testing
  the weak momentum balance  \eqref{temp-momen-balance} 
  by $\uu_t$, the flow rule \eqref{ptw-flow-rule}  by $\chi_t$,  adding the resulting relations and integrating in time.
  \EEE
All in all, from the  above chain of inequalities we infer
  \begin{equation}
\label{s-chi-j-bis}
\chij \to \chi \qquad \text{in } H^1(0,T; L^2(\GC)) \qquad \text{as } j \to \infty.
\end{equation}
  \paragraph{\bf Step $1.3$: limit passage in the bulk heat equation.}
  We shall pass to the limit in \RCNEW \eqref{form_debole_theta-chip}  \EEE with test functions $v\in W^{1,3+\epsilon}(\Omega)$, for  an arbitrary $\epsilon>0$. 
In analogy with \eqref{stim6-mic}, we  rewrite the bulk heat  equation by  grouping  its terms  in the following way:
\begin{equation}
\label{later-comparison-j}
\partial_t \tetaj(t) = \mathcal{L}_{1,j}(t)  - \mathcal{A}(\tetaj (t))  +  \mathcal{L}_{2,j}(t)  \text{ in } W^{1,3+\epsilon}(\Omega)^* \qquad
\foraa\, t \in (0,T),
\end{equation}
with (omitting the $t$-dependence of the operators below to simplify notation)
\[
\begin{cases}
 \mathcal{L}_{1,j} := \tetaj \mathrm{div}(\partial_t \uuj) +  \eps(\partial_t \uuj) \mathbb{V} \eps(\partial_t \uuj) + h \in L^1(\Omega)
 \\
 \mathcal{L}_{2,j} \in   W^{1,3+\epsilon}(\Omega)^* \text{ defined by }
 \\
 \qquad
  \pairing{}{W^{1,3+\epsilon}(\Omega)}{ \mathcal{L}_{2,j} }{v}: =  \int_{\GC} \left(   \nlocss{\chijp \tetasj}\, \chijp \tetaj  - k(\chij) \tetaj (\tetaj - \tetasj) - \nlocss{\chijp} \,\chijp \tetaj^2 \right) v \dd x,
 \\
 \mathcal{A}(\tetaj) \in  W^{1,3+\epsilon}(\Omega)^* \text{ defined by }
 \\
 \qquad  \pairing{}{W^{1,3+\epsilon}(\Omega)}{\mathcal{A}(\tetaj)}{v} : = \int_{\Omega} \alpha(\tetaj)\nabla\tetaj \cdot \nabla v \dd x = \int_{\Omega} \nabla(\widehat{\alpha}(\tetaj)) \cdot \nabla v \dd x
\end{cases}
\]
with $\widehat\alpha$ from \eqref{def-hat-alpha}.
\par
Now, it follows from
\eqref{s-uu-j-bis} and  \eqref{s-teta-j} 
that
\begin{equation}
\label{L1j}
 \mathcal{L}_{1,j} \to  \mathcal{L}_{1} \text{ in } L^1(0,T;L^1(\Omega)) \quad \text{with }
  \mathcal{L}_{1}(t) := \teta(t) \mathrm{div}(\uu_t(t)) +  \eps(\uu_t(t)) \mathbb{V} \eps(\uu_t(t)) + h(t)
\end{equation}
for a.a.\ $t\in (0,T)$.
\par
Let us now address the limit of the operators $ \mathcal{A}(\tetaj)$:
for this, we rely on the interpolation estimate \eqref{genialata-gc} 
implying, for all $\mu>1$ and $0<\nu<1$, that the sequence
 \begin{equation}
 \label{al-pelo}
 (\tetaj)_j \text{ is bounded in } L^{\mu-\nu+2} (0,T;L^{3(\mu-\nu+2)/(7-6\nu)}(\Omega)).
 \end{equation}
Choosing $\tfrac{2}{3}<\nu<1$, we have that $\mu-\nu+2>\mu+1$ and $\tfrac{3(\mu-\nu+2)}{7-6\nu}>\mu+1$ and hence, from \eqref{al-pelo}, we infer the estimate
 \begin{equation}
 \label{questo-ci-salva}
\sup_j \|\tetaj^{\mu+1}\|_{L^{1+\delta}(\Omega{\times}(0,T))} \leq C \qquad \text{for some } \delta>0.
 \end{equation}
Now, we use \eqref{questo-ci-salva} to settle the compactness properties of the sequence $(\widehat{\alpha}(\tetaj))_j$.
First of all, it follows from \eqref{w-teta-j-ptw} that
$\tetaj \to \teta$ a.e.\ in $\Omega\times (0,T)$ and hence
$\widehat{\alpha}(\tetaj) \to \widehat{\alpha}(\teta)$ a.e.\ in $\Omega\times (0,T)$.
Combining  this information with the fact that 
  $|\widehat{\alpha}(\tetaj) |\leq C(\tetaj^{\mu+1}+1)$ (cf.\ \eqref{growth-primitives}) and with estimate \eqref{questo-ci-salva},
\EEE
  we ultimately infer that
  \[
  \widehat{\alpha}(\tetaj)  \to \widehat{\alpha}  (\teta) \qquad \text{ in } L^1(\Omega{\times}(0,T)).
  \]
  Therefore, $\nabla   (\widehat{\alpha}(\tetaj) ) \to \nabla(\widehat{\alpha}(\teta)) $ in the sense of distributions on $\Omega\times (0,T)$.
  \par
Now, we need to improve the convergence properties of the sequence
  $(\nabla   (\widehat{\alpha}(\tetaj) ) )_j$.
  We again interpolate estimates \eqref{e:teta-rho} and \eqref{e:teta-nu}  and (cf.\ \eqref{genialata-gc}) deduce that
 \begin{equation}
\label{finer-teta-1}
 \sup_j \|\tetaj \|_{L^{r}(0,T;L^{s}(\Omega)}  \leq C \quad \text{for some } r > \mu-\nu+2 \text{ and } s <\frac{3(\mu-\nu+2)}{7-6\nu}\,.
\end{equation}
Therefore,
the \RCORR  sequence \EEE
  \[
  (\tetaj^{(\mu-\nu+2)/2})_j \text{ is bounded in } L^{2r/(\mu-\nu+2 )} (0,T;L^{2s/(\mu-\nu+2 )}(\Omega)).
  \]
In turn, mimicking the calculations from Sec.\ \ref{sss:3.3.7} we find that
  \begin{equation}
  \label{careful-holder}
  \begin{aligned}
  &
  \left| \pairing{}{W^{1,3+\epsilon(\Omega)}}{ \mathcal{A}(\tetaj)(t) }{v}\right|
  \\
  &  \leq
  C  \| \nabla \tetaj(t)\|_{L^2(\Omega)} \| \nabla v \|_{L^2(\Omega)} + C\| \tetaj^{(\mu-\nu+2)/2}(t) \|_{L^{2s/(\mu-\nu+2 )}(\Omega)}
\| \nabla (\tetaj^{(\mu+\nu)/2}(t) )\|_{L^2(\Omega)} \| \nabla v \|_{L^{3+\epsilon}(\Omega)}
\end{aligned}
\end{equation}
where we have applied H\"older's inequality, choosing  $\nu\in (\tfrac23,1)$  such that
\[
\frac{\mu-\nu+2}{2s} + \frac12 + \frac1{3+\epsilon} =1.
\]
Hence, from \eqref{finer-teta-1}  and \eqref{careful-holder}
we deduce that the sequence
\begin{equation}
\label{ci-serve-anche-questa}
(\mathcal{A}(\tetaj))_j \text{ is bounded in $L^{1+\delta}(0,T;W^{1,3+\epsilon}(\Omega)^*)$ for some $\delta>0.$}
\end{equation}
Therefore,
\begin{equation}
\label{convergences-A-tetaj}
\begin{aligned}
&
\mathcal{A}(\tetaj) \weakto \mathcal{A}(\teta) \quad \text{in } L^1(0,T; W^{1,3+\epsilon}(\Omega)^*), \qquad \text{with }
\\
&  \pairing{}{W^{1,3+\epsilon}(\Omega)}{\mathcal{A}(\teta(t))}{v}: = \int_\Omega \nabla (\widehat{\alpha}(\teta(t))) \cdot \nabla v \dd x  
\RCORR \quad \foraa\, t \in (0,T). \EEE
\end{aligned}
\end{equation}
\par
Finally, in order to take the limit of the operators $(\mathcal{L}_{2,j})_j$
 we need to refine the convergences available for the traces of $(\tetaj)_j$. Indeed, taking into account that
the sequence $(\tetaj^{(\mu+\nu)/2})_j $ is bounded in $L^2(0,T;H^1(\Omega))$ for every $\nu \in (0,1)$ and that, a fortiori, its traces are bounded in $L^2(0,T;L^4(\GC))$, we infer that
$(\tetaj)_j$ is bounded in $L^{\mu+\nu}(0,T;L^{2(\mu+\nu)}(\GC))$ for every $\nu\in (0,1)$. Since $\mu>1$,  we may choose
 $\nu\in (\tfrac23,1)$  such that $\mu +\nu>2$. Thus, from this estimate we improve the weak convergence of $(\tetaj)_j $ in $L^2(0,T;L^4(\GC))$ to a strong convergence, i.e.\
\begin{equation}
\label{e'-pure-forte}
\tetaj \to \teta \quad \text{in } L^2(0,T;L^4(\GC)).
\end{equation}
Therefore, in view of   \eqref{s-chi-j} we find that
$\chijp \tetaj \to \chip \teta$ in $L^2(0,T; L^q(\GC))$ for every $q\in [1,4)$.
\RCORR We now use \EEE
 that $\chijp\tetasj \to \chip \tetas$ in $L^2(0,T;L^s(\GC))$ for every $s \in [1,\infty)$ thanks to \eqref{s-tetas-j}  so that, by Lemma \eqref{lemmaK},
 $\nlocss{\chij\tetasj} \to \nlocss{\chi\tetas}$ in $L^2(0,T;L^\infty(\GC))$. \RCORR Hence  we have \EEE that, as $j\to\infty$,
\[
  \nlocss{\chijp \tetasj}\, \chijp \tetaj   \to \nlocss{\chip\tetas}\,\chip\teta \qquad \text{in } L^1(0,T; L^q(\GC)) \quad \text{ for all } q\in [1,4).
\]
In order to pass to the limit in the  second contribution to $\mathcal{L}_{2,j}$, we recall that $k(\chij)\to k(\chi)$ in $L^\infty(0,T;L^q(\GC))$ for every $q\in [1,\infty)$ by \eqref{s-chi-j}
and the polynomial growth of $k$. Hence,  in view of \eqref{e'-pure-forte} and \eqref{s-tetas-j} we have that
\[
 k(\chij) \tetaj (\tetaj - \tetasj)
 \to k(\chi)\teta(\teta-\tetas) \qquad \text{in } L^1(0,T; L^1(\GC)).
 \]
Analogously, we find that
\[
\nlocss{\chijp}  \chijp  \tetaj^2 \to \nlocss{\chip}\,  (\chi)^+  \teta^2  \qquad \text{in } L^1(0,T; L^1(\GC)).
\]
All in all, we have  that
\begin{equation}
\label{L2j}
\begin{aligned}
&
 \mathcal{L}_{2,j} \weakto  \mathcal{L}_{2} \qquad \text{ in } L^1(0,T;W^{1,3+\epsilon}(\Omega)^*) \qquad \text{with }
 \\
 &
\pairing{}{W^{1,3+\epsilon}(\Omega)}{\mathcal{L}_{2}(t)}{v} :=\int_{\GC} \left(   \nlocss{(\chi(t))^+ \tetas(t)}  (\chi(t))^+ \teta(t)  - k(\chi(t)) \teta(t) (\teta(t) - \tetas(t)) - \nlocss{ (\chi(t))^+} (\chi(t))^+\teta(t)^2 \right)  v \dd x
\end{aligned}
\end{equation}
for a.a.\ $t\in (0,T)$.
\par
Combining \eqref{later-comparison-j} with \eqref{L1j}, \eqref{convergences-A-tetaj}, and \eqref{L2j} we ultimately conclude, \RCORR by comparison in the bulk heat equation, \EEE  that, a fortiori, $\teta \in  W^{1,1}(0,T; W^{1,3+\epsilon}(\Omega)^*)$ for every $\epsilon>0$ and
\begin{equation}
\label{also-tetaj}
\partial_t\tetaj \weakto  \teta_t \quad \text{in } L^{1}(0,T; W^{1,3+\epsilon}(\Omega)^*)\,.
\end{equation}
This concludes the limit passage in the bulk heat equation \RCNEW \eqref{form_debole_theta-chip}. \EEE
%

  \begin{remark}
  \upshape
  \label{rmk:n-esimo-problema}
  We have not succeeded in showing that the elliptic operator $\mathcal{A}(\teta)$ from
  \eqref{convergences-A-tetaj} satisfies
$\pairing{}{W^{1,3+\epsilon}(\Omega)}{\mathcal{A}(\teta)}{v}=\int_\Omega \RCOMM \alpha(\teta) \EEE \nabla \teta  \cdot \nabla v \dd x$ for every $v\in W^{1,3+\epsilon}(\Omega)$. Indeed,  from \eqref{questo-ci-salva} we are just in a position to infer that $\alpha(\tetaj) \to \alpha(\teta) $ in $L^{1+\zeta}(\Omega{\times} (0,T))$ for some $\zeta>0$, which is not sufficient to identify the weak limit of the sequence $(\alpha(\tetaj)\nabla\tetaj)_j$
in any $L^p$ space. \RCORR Thus, we are not in a position
to pass \EEE  to the limit  in the relation  $\pairing{}{W^{1,3+\epsilon}(\Omega)}{\mathcal{A}(\tetaj)}{v} : = \int_{\Omega} \alpha(\tetaj)\nabla\tetaj \cdot \nabla v \dd x$.
  \end{remark}
    \paragraph{\bf Step $1.4$: limit passage in the surface heat equation.}
    We pass to the limit in \RCNEW \eqref{form_debole_thetas-chip}, \EEE written for test functions
    $w\in  W^{1,2+\epsilon}(\GC)$ for all $\epsilon>0$
     as
     \begin{equation}
\label{later-comparison-js}
\partial_t \tetasj(t) = \mathcal{F}_{j}(t) - \mathcal{A}_{\mathrm{s}}(\tetasj(t)) \qquad \text{in } W^{1,2+\epsilon}(\GC)^*
\foraa \  t \in (0,T),
\end{equation}
with
\begin{equation}
\begin{aligned}
&
\mathcal{F}_j:= \tetasj \lambda'(\chij) \partial_t\chij +\ell + |\partial_t\chij|^2
+ k(\chij)\tetasj(\tetaj - \tetasj) + \nlocss{\chijp\tetaj} \,\chijp \tetasj
\\ & \qquad \qquad
- \nlocss{\chijp}\, \chijp \tetasj^2,
\\
&
 \mathcal{A}_{\mathrm{s}}(\tetasj) \in  W^{1,2+\epsilon}(\GC)^* \text{ defined by }
 \\
 &
 \qquad  \pairing{}{W^{1,2+\epsilon}(\GC)}{\mathcal{A}_{\mathrm{s}}(\tetasj)}{w} : = \int_{\GC} \alpha(\tetasj)\nabla\tetasj \cdot \nabla w \dd x = \RCOMMN \int_{\GC} \EEE \nabla(\widehat{\alpha}(\tetasj)) \cdot \nabla w \dd x
 \,.
\end{aligned}
\end{equation}
Taking into account convergences \eqref{convs-tetaj}, \eqref{s-chi-j-bis}, and \eqref{e'-pure-forte}, the Lipschitz continuity of $\lambda$ and the polynomial growth of $k$, it is easy to show that
\begin{equation}
\label{strongL1}
\begin{aligned}
&
\mathcal{F}_j \to \mathcal{F} \quad \text{in } L^1(0,T;L^1(\GC))
\\
 & \text{with } \mathcal{F}: = \tetas\lambda'(\chi) \chi_t +\ell + |\chi_t|^2
+ k(\chi)\tetas(\teta - \tetas) + \nlocss{\chip\teta} \,\chip \tetas
- \nlocss{\chip}\, \chip \thetasq \,.
\end{aligned}
\end{equation}
In order to pass to the limit in the elliptic operators $( \mathcal{A}_{\mathrm{s}}(\tetasj))_j$ we adapt the very same arguments for the operators  \RCORR $(\mathcal{A}(\teta_{\rho_j}))_j$, \RF cf.\
\eqref{al-pelo}--\eqref{questo-ci-salva}. 
 \EEE Namely,
on the one hand,
 arguing by interpolation we
deduce from the bound for \RCORR  $(\tetasj)_j \subset L^{\mu+\nu}(0,T;L^q(\GC)) \cap L^\infty(0,T;L^1(\GC))$ \EEE that  $\widehat{\alpha}(\tetasj) \to \widehat{\alpha}(\teta)$ in $L^1(0,T;L^1(\GC))$, and thus $\nabla\widehat{\alpha}(\tetasj)  \to \nabla \widehat{\alpha}(\tetas)  $ in the sense of distributions on $(0,T)\times \GC$. On the other hand,   relying on  the estimates in Sec.\ \ref{sss:3.3.8} in the same way as we have done in Step 1.3,
 we show that the sequence
$
(\mathcal{A}_{\mathrm{s}}(\tetasj))_j $ is bounded in $L^{1+\delta}(0,T;W^{1,2+\epsilon}(\GC)^*)$ for some $\delta>0$, so that
\begin{equation}
\label{convergences-As-tetaj}
\begin{aligned}
&
\mathcal{A}_{\mathrm{s}}(\tetasj) \weakto \mathcal{A}_{\mathrm{s}}(\tetas) \quad \text{in } L^1(0,T; W^{1,2+\epsilon}(\GC)^*), \qquad \text{with }
\\
&  \pairing{}{W^{1,2+\epsilon}(\GC)}{\mathcal{A}_{\mathrm{s}}(\tetas(t))}{w}: = \int_{\GC} \nabla (\widehat{\alpha}(\tetas(t))) \cdot \nabla w \dd x  
\RCORR \quad \foraa\, t \in (0,T). \EEE
\end{aligned}
\end{equation}
By comparison in  \eqref{later-comparison-js} and convergences \eqref{strongL1}, \eqref{convergences-As-tetaj}  we deduce that, a fortiori,
$\tetas\in W^{1,1}(0,T;W^{1,2+\epsilon}(\GC)^*)$ for every $\epsilon>0$, and
 \[
 \partial_t \tetasj \weakto \partial_t \tetas \quad \text{ in $L^1(0,T;W^{1,2+\epsilon}(\GC)^*)$.}
 \]
Hence, we pass to the limit in the surface heat equation  \RCNEW \eqref{form_debole_thetas-chip}.  \EEE
\par
We have thus shown that the quintuple  \RCNEW $(\teta,\uu,\tetas,\chi,\sigma)$ \EEE is a  `weak energy' solution    to the Cauchy problem for system
 \eqref{PDE-regul}  with  $\rho=0$.
\subsection{Limit passage as $\varsigma\down 0$ and conclusion of the proof of Theorem \ref{thm:1}}
\label{ss:6.2}
We shall only sketch the argument for the limit passage, as it is completely analogous to that carried out in Section \ref{ss:6.1} up to the identification of this maximal monotone operators in the momentum balance and in the flow rule for the adhesion parameter.
\par
Let $(\tetan,\uun,\tetasn,\chin,\zzetan,\xin,\upin)_n$ be a sequence of  weak energy solutions to  the Cauchy problem for system
 \eqref{PDE-regul}, in which  $\rho=0$  and $\varsigma =\varsigma_n$ with  $\varsigma_n\down 0$ as $n\to\infty$; we have set
 $\zzetan := \eta_{\varsigma_n}(\uun \cdot \mathbf{n}) \mathbf{n}$ and $\xin := \beta_{\varsigma_n}(\chin)$.
We suppose that for every $n\in \N$ the seventuple $(\tetan,\uun,\tetasn,\chin,\zzetan,\xin,\upin)$ has been obtained by the limiting procedure described in Sec.\ \ref{ss:6.1}, so that, by lower semicontinuity arguments, estimates \eqref{estimates-unif-rho} hold for the sequence  $(\tetan,\uun,\tetasn,\chin,\zzetan,\xin,\upin)_n$, uniformly w.r.t.\ $n$. Therefore, there exists a \RCNEW quintuple
$(\teta,\uu,\tetas,\chi,\sigma)$ \EEE as in \eqref{regularity-quadruple}  such that convergences \eqref{convs-tetaj} hold, as $n\to\infty$,  along a not relabeled subsequence.
Then, the limiting temperatures $\teta$ and $\tetas$ enjoy the positivity properties \eqref{uniform-strict-positivity-bis}.
\par
In turn, we are in a position to improve
estimates \eqref{e:zeta-rho} and \eqref{e:xi-rho}
for the sequences
$(\zzetan)_n$
and $(\xin)_n$. Indeed,
a comparison argument in the momentum balance \eqref{weak-displ} shows that the sequence $(\zzetan)_n$ is bounded in $L^2(0,T;\bsY^*)$. Analogously, by comparison in the pointwise flow rule \eqref{ptw-flow-rule}  (cf.\ Sec.\ \ref{sss:3.3.5})
we infer that the sequence $(A\chin {+}\xin)_n$ is bounded in $L^2(0,T;L^2(\GC))$ and then,
\RCORR a fortiori,  we easily \EEE deduce that the sequence  \RCORR $(\xin)_n$ \EEE  is bounded in $L^2(0,T;L^2(\GC))$.
Hence, there exist $\zzeta \in L^2(0,T;\bsY^*)$ and $\xi \in L^2(0,T;L^2(\GC))$ such that, up to a subsequence,
there holds
\begin{equation}
\label{cv-xin-zetan}
\begin{aligned}
&
\zzetan\weakto \zzeta  && \text{in }  L^2(0,T;\bsY^*),
\\
& \xin \weakto \xi && \text{in } L^2(0,T;L^2(\GC).
\end{aligned}
\end{equation}
Finally, since $(\chin)_n$ is bounded in $L^2(0,T;H^2(\GC))$, we ultimately have that
\begin{equation}
\label{cv-chin}
\chin \weakto \chi \quad \text{in } L^2(0,T;H^2(\GC)).
\end{equation}
\par
Let us now outline the argument for the limit passage  in the weak formulation of
system \eqref{PDE-regul}.
\paragraph{\bf Step $2.1$: limit passage in the momentum balance.}
Thanks to convergences \eqref{w-teta-j} and \eqref{cv-xin-zetan},
with the very same arguments as in Sec.\ \ref{ss:6.1} we conclude that the quadruple $(\teta,\uu,\chi,\zzeta)$ fulfills \eqref{temp-momen-balance}
for every $\vv \in \bsVD$,
namely the weak formulation \eqref{weak-displ} of the momentum balance. It remains to show that
$\zzeta(t) \in \eeta(\uu(t))$ in $\bsY^*$ for almost all $t\in (0,T)$. With this aim,
%
%
%
%
%
we
test \eqref{weak-displ} by $\uun$ and integrate it in time. Passing to the limit as $n\to\infty$ we find that
  \[
  \begin{aligned}
  &
  \limsup_{n\to\infty} \int_s^t    \int_{\GC}\zzetan \cdot \uun \dd x
   \dd r
   \\ &
  \leq -\liminf_{n\to\infty} \int_s^t  \vfm(\partial_t \uun,\uun) \dd r
  -\liminf_{n\to\infty} \int_s^t  \efm(\uun,\uun) \dd r
    -\liminf_{n\to\infty} \int_s^t \int_{\Omega} \tetan \mathrm{div}(\uun) \dd x \dd r
   \\
  & \quad
  -\liminf_{n\to\infty} \int_s^t \int_{\GC} \chinp|\uun|^2 \dd x \dd r
  -  \liminf_{n\to\infty} \int_s^t\int_{\GC} \chinp \nlocss{\chinp}\,  |\uun|^2 \dd x \dd r
  \\
  &
  + \lim_{n\to\infty} \int_s^t   \pairing{}{\bsVD}{\mathbf{F}}{\uun} \dd r
  \\
  &
  \leq -  \int_s^t \left( \vfm(\uu_t,\uu)
+ \efm(\uu,\mathbf{u})
+\int_{\Omega} \teta \mathrm{div}(\uu) \dd x +
 \int_{\GC} \chip\uu \mathbf{u} \dd x \right) \dd r
 \\
  & \qquad -
\int_s^t \left(  \int_{\GC} \chip\uu \nlocss{\chip}\, \mathbf{u} \dd x +
\pairing{}{\bsVD}{\mathbf{F}}{\uu} \right) \dd r\stackrel{\eqref{weak-displ}}
= \int_s^t \langle \zzeta, \mathbf{u}\rangle_{\bsY} \dd r
  \end{aligned}
  \]
  where we have used that
  \[
  \begin{aligned}
  \liminf_{n\to\infty} \int_s^t  \vfm(\partial_t \uun,\uun) \dd r  &  =   \liminf_{n\to\infty} \left( \frac12 \vfm(\uun(t),\uun(t))  - \frac12 \vfm(\uun(s),\uun(s)) \right)
 \\ &  \geq \frac12 \vfm(\uu(t),\uu(t))  - \frac12 \vfm(\uu(s),\uu(s))
  =  \int_s^t  \vfm(\partial_t \uu,\uu) \dd r
  \end{aligned}
  \]
  by the chain rule and convergence \eqref{s-uu-0}, and that
  \[
  \begin{aligned}
  &
   \lim_{n\to\infty}  \int_s^t \int_{\GC} \chinp|\uun|^2 \dd x \dd r  =  \int_s^t \int_{\GC}\chip |\uu|^2 \dd x \dd r,
  \\
&  \lim_{n\to\infty}  \int_s^t\int_{\GC} \chinp \nlocss{(\chin)^+} |\uun|^2 \dd x \dd r  =  \int_s^t\int_{\GC} \chip \nlocss{(\chi)^+} |\uu|^2 \dd x \dd r
\end{aligned}
  \]
  by well-known lower semicontinuity results. Therefore, we conclude that for every $\vv \in \bsVD$ such that $\widehat{\eta}(\vv \cdot \mathbf{n}) \in L^1(\GC)$ there holds
  \[
  \begin{aligned}
  \int_s^t\left( \widehat{\balpha}(\vv){-} \widehat{\balpha}(\uu) \right) \dd r
 &   = \int_s^t \int_{\GC} (\widehat{\eta}(\vv {\cdot} \mathbf{n}) {-} \widehat{\eta}(\uu {\cdot} \mathbf{n})  ) \dd x \dd r
  \\ & \stackrel{(1)}{\geq} \limsup_{n \to\infty}  \int_s^t \int_{\GC} (\widehat{\eta}_{\varsigma_n}(\vv {\cdot} \mathbf{n}){-}\widehat{\eta}_{\varsigma_n}(\uun {\cdot} \mathbf{n})) \dd x  \dd r
 \\ &    \stackrel{(2)}{\geq} \limsup_{n \to\infty}  \int_s^t \int_{\GC} \eta_{\varsigma_n}(\uun {\cdot} \mathbf{n})  \mathbf{n} \cdot (\vv{-}\uun) \dd x \dd r
 \\ &    \stackrel{(3)}{\geq} \int_s^t \langle \zzeta, \vv{-}\uu \rangle_{\bsY} \dd r,
    \end{aligned}
  \]
  which \RCORR yields \EEE  the desired \eqref{zeta}. We have thus shown that the quadruple $(\teta,\uu,\chi,\zzeta)$ fulfills
  \eqref{weak-U} (where $\chi$ is, \RF momentarily, \EEE replaced by $(\chi)^+$).
  \paragraph{\bf Step $2.2$: limit passage in the flow rule.}
 With convergences \eqref{w-teta-j} and \eqref{cv-xin-zetan} and the arguments developed for the limit passage in system \eqref{PDE-regul} as $\rho_j \down 0$  we show that the quintuple
 $(\uu,\tetas,\chi,\xi,\sigma)$ fulfills \eqref{weak-chi-sigma}.
 Combining the weak convergence and the strong convergence of $(\xin)_n$
 and $ (\chin)_n$
  in $L^2(0,T;L^2(\GC))$    we infer that
  \[
  \lim_{n\to\infty}\int_s^t \int_{\GC}\xin \chin \dd x \dd r =   \int_s^t \int_{\GC}\xi \chi \dd x \dd r \qquad \text{for all } (s,t) \subset (0,T),
  \]
whence we deduce that $\xi \in \beta(\chi)$ a.e.\ in $\Omega\times (0,T)$ 
\RCNEW so that, in particular,
\[
\chi \geq 0 \qquad \aein\, \GC \times (0,T)\,.
\]
All in all,  the quintuple
 $(\uu,\tetas,\chi,\xi,\sigma)$ fulfills  \eqref{weak-chi-sigma}.
   \paragraph{\bf Steps $2.3$ \& $2.4$: limit passage in the bulk and surface equations} These limit procedures can be performed by the very same arguments as in Steps $1.3$ and  $1.4$. We thus obtain the
   weak formulations of the bulk and surface equations \eqref{weak_theta}  and \eqref{weak_thetas}.
   \paragraph{\bf Conclusion of the proof.}
   We have shown that the seventuple $(\teta,\uu,\tetas,\chi,\zzeta,\xi,\sigma)$
   \begin{compactenum}
   \item  enjoy the regularity, integrability, and positivity properties \eqref{reg-entropic}, \eqref{teta-pos}, and  \eqref{reg-selections};
   \item
    fulfill   the Cauchy conditions \eqref{init-cond} as a trivial consequences of convergences \eqref{convs-tetaj}; \item fulfill the weak formulation of system \eqref{PDE-true} consisting of \eqref{weak_theta}--\eqref{weak-displ}
    and
    \eqref{weak-chi-sigma}.
    \end{compactenum}
   \RCNEW The total energy balance \eqref{total-enbal} follows
   by testing \eqref{weak_theta} by $1$,  \eqref{weak_thetas} by $1$, \eqref{weak-displ} by $\uu_t$,
    \eqref{weak-chi-sigma} by $\chi_t$, and carrying out the same calculations
   as in Sec.\ \ref{ss:3.1}.  \EEE
   \par
   This finishes the proof
 of Theorem \ref{thm:1}.
\QED

\end{document}